\let\openright=\clearpage
\setlist[1]{itemsep=0em} 	
\def\ThesisTitle{Semigroup-valued metric spaces}
\def\ThesisAuthor{Matěj Konečný}
\def\Department{Department of Applied Mathematics}
\def\DeptType{Department}
\def\Supervisor{Mgr. Jan Hubi\v cka, Ph.D.}
\def\Consultant{prof. RNDr. Jaroslav Nešetřil, DrSc.}
\def\SupervisorsDepartment{Department of Applied Mathematics}
\def\ConsultantsDepartment{Computer Science Institute}
\newcommand{\shrug}[1][]{%
\begin{tikzpicture}[baseline,x=0.8\ht\strutbox,y=0.8\ht\strutbox,line width=0.125ex,#1]
\def\arm{(-2.5,0.95) to (-2,0.95) (-1.9,1) to (-1.5,0) (-1.35,0) to (-0.8,0)};
\draw \arm;
\draw[xscale=-1] \arm;
\def\headpart{(0.6,0) arc[start angle=-40, end angle=40,x radius=0.6,y radius=0.8]};
\draw \headpart;
\draw[xscale=-1] \headpart;
\def\eye{(-0.075,0.15) .. controls (0.02,0) .. (0.075,-0.15)};
\draw[shift={(-0.3,0.8)}] \eye;
\draw[shift={(0,0.85)}] \eye;
\draw (-0.1,0.2) to [out=15,in=-100] (0.4,0.95); 
\end{tikzpicture}}
\def\Dedication{%
\begin{center}
This thesis is dedicated to everyone you hate.
\vskip 1cm
\Huge \shrug
\end{center}


}
\def\Abstract{%
The structural Ramsey theory is a field on the boundary of combinatorics and model theory with deep connections to topological dynamics. Most of the known Ramsey classes in finite binary symmetric relational language can be shown to be Ramsey by utilizing a variant of the shortest path completion (e.g. Sauer's $S$-metric spaces, Conant's generalised metric spaces, Braunfeld's $\Lambda$-ultrametric spaces or Cherlin's metrically homogeneous graphs). In this thesis we explore the limits of the shortest path completion. We offer a unifying framework --- semigroup-valued metric spaces --- for all the aforementioned Ramsey classes and study their Ramsey expansions and EPPA (the extension property for partial automorphisms). Our results can be seen as evidence for the importance of studying the completion problem for amalgamation classes and have some further applications (such as the stationary independence relation).

As a corollary of our general theorems, we reprove results of Hubi\v cka and Ne\v set\v ril on Sauer's $S$-metric spaces, results of Hubi\v cka, Ne\v set\v ril and the author on Conant's generalised metric spaces, Braunfeld's results on $\Lambda$-ultrametric spaces and the results of Aranda et al. on Cherlin's primitive 3-constrained metrically homogeneous graphs. We also solve several open problems such as EPPA for $\Lambda$-ultrametric spaces, $S$-metric spaces or Conant's generalised metric spaces.

Our framework seems to be universal enough that we conjecture that every pri\-mitive strong amalgamation class of complete edge-labelled graphs with finitely many labels is in fact a class of semigroup-valued metric spaces.
}
\def\Keywords{%
{metric space}, {semigroup}, {structural Ramsey theory}, {homogeneous structure}, {Ramsey expansion}, {EPPA}
}
\def\@makechapterhead#1{
  {\parindent \z@ \raggedright \normalfont
   \Huge\bfseries \thechapter. #1
   \par\nobreak
   \vskip 20\p@
}}
\def\@makeschapterhead#1{
  {\parindent \z@ \raggedright \normalfont
   \Huge\bfseries #1
   \par\nobreak
   \vskip 20\p@
}}
\theoremstyle{plain}
\newtheorem{theorem}{Theorem}
\numberwithin{theorem}{chapter}
\newtheorem{lemma}[theorem]{Lemma}
\newtheorem{claim}[theorem]{Claim}
\newtheorem{prop}[theorem]{Proposition}
\newtheorem{corollary}[theorem]{Corollary}
\newtheorem{observation}[theorem]{Observation}
\theoremstyle{definition}
\newtheorem{definition}[theorem]{Definition}
\newtheorem{example}[theorem]{Example}
\newtheorem{question}{Question}
\newtheorem{conjecture}{Conjecture}
\theoremstyle{remark}
\newtheorem{remark}[theorem]{Remark}
\renewcommand{\restriction}{\mathord{\upharpoonright}}
\newcommand{\Forb}{\mathop{\mathrm{Forb}}\nolimits}
\newcommand{\str}[1]{\mathbf{#1}}
\def\Fraisse{Fra\"{\i}ss\' e}
\def\rel#1#2{R_{\mathbf{#1}}^{#2}}
\def\func#1#2{F_{\mathbf{#1}}^{#2}}
\newcommand{\arity}[1]{a(#1)}
\def\dom{\mathop{\mathrm{Dom}}\nolimits}
\def\range{\mathop{\mathrm{Range}}\nolimits}
\def\Age{\mathop{\mathrm{Age}}\nolimits}
\newcommand{\overbar}[1]{\mkern 1.5mu\overline{\mkern-1.5mu#1\mkern-1.5mu}\mkern 1.5mu}
\def\Aut{\mathop{\mathrm{Aut}}\nolimits}
\def\acts{\curvearrowright}
\def\Aclass{\mathcal A^\delta_{K_1,K_2,C}}
\def\Fclass{\mathcal F^\delta_{K_1,K_2,C}}
\def\Semig{{\mathfrak M}}
\def\ESemig{\Semig=(M,\oplus,\mleq)}
\def\Block{{\mathcal B}}
\def\mleq{\preceq}
\def\nmleq{\not\mleq}
\def\mgeq{\succeq}
\def\nmgeq{\not\succeq}
\def\mgt{\succ}
\def\mlt{\prec}
\def\meet{\wedge}
\def\mus{\mathop{\mathrm{mus}}\nolimits}
\def\bigfoplus{\mathop{\mathrm{\bigoplus}}\limits}
\def\func#1#2{F_{\mathbf{#1}}^{#2}}
\def\nbfunc#1#2{F_{#1}^{#2}}
\def\pocs{partially ordered commutative semigroup}
\def\MF{\mathcal M_\Semig \cap \Forb\left(\mathcal F\right)}
\def\Mstarleq{\mathcal M^{\star,\leq}_\Semig}
\def\Mstar{\mathcal M^{\star}_\Semig}
\def\Mstarf{\mathcal M^{\star}_{\Semig}\cap \Forb(\mathcal F)}
\def\Hstarf{\mathcal H^\star_{\Semig}\cap \Forb(\mathcal F)}
\def\Hstarleq{\mathcal H^{\star,\leq}_{\Semig}}
\def\Hstarleqf{\mathcal H^{\star,\leq}_{\Semig}\cap \Forb(\mathcal F)}
\def\classstarf{\Mstar\cap \Forb(\mathcal F)}
\def\classstarleqf{\Mstarleq\cap \Forb(\mathcal F)}
\def\GstarS{\mathcal G^\star_{\Semig}\cap \Forb(\mathcal F)}
\def\Ind#1#2{#1\setbox0=\hbox{$#1x$}\kern\wd0\hbox to 0pt{\hss$#1\mid$\hss}
\lower.9\ht0\hbox to 0pt{\hss$#1\smile$\hss}\kern\wd0}
\def\Notind#1#2{#1\setbox0=\hbox{$#1x$}\kern\wd0\hbox to
0pt{\mathchardef\nn="0236\hss$#1\nn$\kern1.4\wd0\hss}\hbox 
to 0pt{\hss$#1\mid$\hss}\lower.9\ht0
\hbox to 0pt{\hss$#1\smile$\hss}\kern\wd0}
\def\ind{\mathop{\mathpalette\Ind{}}}
\def\tp{\mathop{\mathrm{tp}}\nolimits}
\def\magicsemig{\Semig_{M,C}^\delta}
\def\ExSemig{{\mathfrak D}}
\begin{document}


\pagestyle{empty}
\hypersetup{pageanchor=false}
\begin{center}

\vspace{-8mm}
\vfill

{\bf\Large THESIS (Unofficial version)}

\vfill

{\LARGE\ThesisAuthor}

\vspace{15mm}

{\LARGE\bfseries\ThesisTitle}

\vfill

\Department \\ Faculty of Mathematics and Physics \\ Charles University

\vfill

\begin{tabular}{rl}

Supervisor of the thesis: & \Supervisor \\
\noalign{\vspace{2mm}}
Consultant: & \Consultant
\end{tabular}

\vfill

\end{center}

\openright

\noindent
\Dedication

\newpage


\openright

\vbox to 0.5\vsize{
\setlength\parindent{0mm}
\setlength\parskip{5mm}

Title:
\ThesisTitle

Author:
\ThesisAuthor

\DeptType:
\Department

Supervisor:
\Supervisor, \SupervisorsDepartment

Consultant:
\Consultant, \ConsultantsDepartment

Abstract:
\Abstract

Keywords:
\Keywords

\vss}

\newpage

\openright
\pagestyle{plain}
\pagenumbering{arabic}
\setcounter{page}{1}


\renewcommand{\baselinestretch}{0.995}\normalsize
\tableofcontents
\renewcommand{\baselinestretch}{1.0}\normalsize
\chapter{Introduction}\label{ch:preface}
In~2007 Ne\v set\v ril~\cite{Nevsetvril2007} proved that the class of all linearly ordered finite metric spaces is Ramsey (see Section~\ref{subsec:category}). For graphs (and isometric embeddings) a similar result was obtained by Dellamonica and R\"odl~\cite{Dellamonica2012} in 2012; it also follows from a more general result of Hubi\v cka and Ne\v set\v ril~\cite{Hubicka2016}. Ma\v su\-lo\-vi\'c~\cite{masulovic2016pre} gave a simpler proof by a reduction to partially ordered sets which are known to be Ramsey~\cite{Nevsetvril1984,Trotter1985}. Solecki~\cite{solecki2005} and Vershik~\cite{vershik2008} independently proved that the class of all finite metric spaces has EPPA (the extension property for partial automorphisms, see Definition~\ref{defn:eppa}).

Sauer~\cite{Sauer2013b} in 2013 classified the sets $S\subseteq \mathbb R^{\geq 0}$ for which there is a universal homogeneous complete separable metric space with distances from $S$. Ramsey expansions of Sauer's $S$-metric spaces were then fully determined by Hubi\v cka and Ne\v set\v ril~\cite{Hubicka2016} (extending partial results by Nguyen Van Th\'e~\cite{The2010}).

Conant~\cite{Conant2015} studied EPPA in the context of generalised metric spaces where the distances come from a linearly ordered monoid and Hubi\v cka, Ne\v set\v ril and the author~\cite{Hubicka2017sauerconnant} later found Ramsey expansions for all such spaces.

Braunfeld~\cite{Sam}, motivated by his classification of generalised permutation structures~\cite{Sam2} found Ramsey expansions of $\Lambda$-ultrametric spaces which are ``metric spaces'' where the distances come from a finite distributive lattice.

Aranda, Bradley-Williams, Hubi{\v c}ka, Karamanlis, Kompatscher, Pawliuk and the author~\cite{Aranda2017} studied Ramsey expansions of metric spaces from Cherlin's list of metrically homogeneous graphs~\cite{Cherlin2013,Cherlin2011b}. These are metric spaces with distances from $\{0, 1, \ldots, \delta\}$ with some other families of triangles also forbidden.

The Ramsey property of all of these (and also other) classes follows, sometimes directly, sometimes less so, from the fact that they admit some form of the \emph{shortest path completion} (see Chapter~\ref{ch:shortestpath}). In this thesis we explore the boundaries of this method and introduce the concept of semigroup-valued metric spaces, a unifying framework for the aforementioned results. We prove that under certain assumptions, such classes have the strong amalgamation property, the Ramsey property and EPPA.

\section{Our results}
A \emph{commutative semigroup} is a tuple $(M, \oplus)$, where $\oplus\colon M^2\rightarrow M$ is a commutative and associative operation.

\begin{definition}[Partially ordered commutative semigroup]\label{def:POS}
We say that a tuple $\ESemig$ is a \emph{\pocs} if
\begin{enumerate}
\item $(M,\oplus)$ is a commutative semigroup;
\item $(M, \mleq)$ is a partial order which is reflexive ($a\mleq a$ for every $a\in M$);
\item For every $a, b\in M$ it holds that $a\mleq a\oplus b$;
\item \label{def:POS:monot} For every $a,b,c\in M$ it holds that if $b\mleq c$ then $a\oplus b \mleq a\oplus c$ ($\oplus$ is monotone with respect to $\mleq$).
\end{enumerate}
Note that point~\ref{def:POS:monot} implies that if $a\mleq b$ and $c\mleq d$, then $a\oplus c\mleq b\oplus d$.
\end{definition}

The following definition is motivated by earlier work of Conant and Braunfeld~\cite{Conant2015,Sam}, see also~\cite{Kabil2018}. It is one of the key definitions of this thesis.
\begin{definition}\label{def:Mmetric}
Let $\ESemig$ be a \pocs. Given a set $A$ and a function $d\colon {A\choose 2}\to M$, we call $(A,d)$ an \emph{$\Semig$-valued metric space} (or just \emph{$\Semig$-metric space}) if for every $x,y,z\in A$ it holds that $d(\{x,z\})\mleq d(\{x,y\})\oplus d(\{y,z\})$.

Given a \pocs{} $\Semig$, we let $\mathcal M_\Semig$ denote the class of all finite $\Semig$-metric spaces.
\end{definition}
The function $d$ being defined on the set of all pairs instead of on $A^2$ is simply due to the fact that there is no identity in $\Semig$ to set $d(x,x)$ to. In the rest of the paper we will treat $d$ as a two-variable symmetric function not defined on any pair $x,x$.

\begin{example}\label{example1}
The following are some examples of partially ordered commutative semigroups:
\begin{enumerate}
\item\label{ex:Smetric} Let $S$ be a set of positive reals and for $a,b\in S$ define $a \oplus_S b = \sup\{c\in S; c\leq a+b\}$. Sauer~\cite{Sauer2013b} classified the sets $S$ such that $S\cup\{0\}$ is closed and $\oplus_S$ is an associative operation and hence $(S, \oplus, \leq)$ is a \pocs.
\item\label{infini} Consider the set of non-negative real numbers extended by infinitesimal ele\-ments, i.e. $R^* = \left\{a + b\cdot \mathrm{dx}\mid a,b\in\mathbb R^+_0\right\}$ with piece-wise addition $+$ and order $\mleq$ given by the standard order of reals and $\mathrm{dx}\mlt a$ for every positive real number $a$. Then $(R^*, +, \mleq)$ is also a \pocs{}.
\item The ultrametric $(\{1, \ldots, n\}, \max, \leq)$, where $\leq$ is the linear order of integers is a \pocs{}.
\item A distributive lattice $\Lambda = (L, \wedge, \vee, 0)$ with minimum $0$ can be viewed as a partially ordered commutative semigroup, where the operation is $\vee$ and the order is the standard partial order of $\Lambda$. The $\Lambda$-valued metric spaces are essentially Braunfeld's $\Lambda$-ultrametric spaces~\cite{Sam} (see Remark~\ref{rem:sam}).
\item The multiplicative monoid $(\mathbb Z^{\geq 1}, \cdot, |)$, where the order is given by the ``is a divisor of'' relation is also a \pocs{}. The divisibility metric spaces behave very differently from the standard real-valued ones and one of the original contributions of this thesis is finding their Ramsey expansions.
\end{enumerate}
\end{example}

\begin{remark}\label{rem:sam}
For Definition~\ref{def:Mmetric} it would be more convenient to work with monoids instead of semigroups (that is, semigroups which contain a neutral element $0$) and demand that if $d(x,y)=0$, then $x=y$. It would also simplify Sections~\ref{sec:lstar} and~\ref{sec:convord}. On the other hand, it would make some other parts of this thesis more notationally complicated (if $0$ represents the identity, then it necessarily needs to be treated differently than all the other distances).

In the context of this thesis, the advantages of not having any special distances outweighed the disadvantages. However, all the mentioned results work with monoids. In particular, Braunfeld's definition of $\Lambda$-ultrametric spaces is stronger than Definition~\ref{def:Mmetric} for $\Semig$ being a distributive lattice, because it allows for the neutral element (representing the identity in the $\Lambda$-ultrametric spaces) to be \emph{meet-reducible} (that is, to be the meet of two non-neutral elements, see Definition~\ref{defn:meetreducible}). Such classes do not have the \emph{strong amalgamation property} (see Definition~\ref{defn:amalgamation}) and are out of this thesis' scope.
\end{remark}

Now we state the main theorem of this thesis. There are several undefined notions which will be defined in the subsequent chapters.

\begin{theorem}\label{thm:main}
Let $\ESemig$ be a \pocs{} and $\mathcal F$ be a family of $\Semig$-edge-labelled cycles (Definition~\ref{defn:graphs}). Suppose that the following conditions hold:
\begin{enumerate}
\item $\mathcal F$ is $\Semig$-omissible (Definition~\ref{defn:omissible});
\item $\mathcal F$ contains all $\Semig$-disobedient cycles (Definition~\ref{defn:disobedient});
\item $\mathcal F$ synchronizes meets (Definition~\ref{defn:meetsync}); and
\item $\mathcal F$ is confined (Definition~\ref{defn:slocallyfinite}).
\end{enumerate}

Then the class $\MF$ of all finite $\Semig$-valued metric spaces $\str A$ such that there is no $\str F \in \mathcal F$ with a homomorphism $\str F\to\str A$ has the strong amalgamation property and the class $\overrightarrow{\mathcal M}_\Semig\cap \Forb(\mathcal F)$ of all convexly ordered (Definition~\ref{defn:Js}) finite $\Semig$-valued metric spaces $\str A$ such that there is no $\str F \in \mathcal F$ with a homomorphism $\str F\to\str A$ has the Ramsey property.
\end{theorem}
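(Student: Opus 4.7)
My proof plan rests on the shortest path completion, which is the unifying technique behind all the cited special cases. For the strong amalgamation property, given a diagram $\str{A}\subseteq\str{B}_1,\str{B}_2$ in $\MF$, I would form the amalgam $\str{C}$ on the pushout vertex set $B_1\cup_A B_2$, keeping all distances inside each $\str{B}_i$ and defining, for $x\in B_1\setminus A$ and $y\in B_2\setminus A$, the new distance $d(x,y)$ as the $\mleq$-minimum of $d(x,a)\oplus d(a,y)$ over $a\in A$. Well-definedness of this minimum is precisely what the confinement hypothesis is designed to guarantee; the triangle inequality in $\str{C}$ then follows from associativity and the monotonicity clause of Definition~\ref{def:POS}, since any longer detour $x,a,a',y$ with $a,a'\in A$ is dominated by $d(a,a')\leq d(a,y)\oplus d(y,a')$ routed inside $\str{B}_2$ (and symmetrically).

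The central technical step is then to show $\str{C}\in\Forb(\mathcal F)$. I would argue by contradiction: suppose some $\str{F}\in\mathcal F$ admits a homomorphism into $\str{C}$. Any such image decomposes along $A$ into arcs lying in $\str{B}_1$ or $\str{B}_2$, and every ``cross'' edge is by construction realised by some shortest path through $A$. Using $\Semig$-omissibility, one may contract each cross edge back into the concatenation of its realising path, obtaining a longer cycle whose edges all lie in $\str{B}_1\cup\str{B}_2$. The meet-synchronization hypothesis is what lets me control how the $\mleq$-minima witnessing different shortest paths interact when several cross edges share endpoints, and the assumption that $\mathcal F$ contains all $\Semig$-disobedient cycles means that every ``ill-behaved'' combination is already forbidden. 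Splitting the expanded cycle at vertices of $A$ into cycles lying entirely in $\str{B}_1$ or $\str{B}_2$, each such piece would contradict $\str{B}_i\in\MF$.

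For the Ramsey statement, I would pass to the convex expansion $\overrightarrow{\mathcal M}_\Semig\cap\Forb(\mathcal F)$ and apply the Hubička--Nešetřil machinery: an amalgamation class which sits as a \emph{locally finite subclass} of a known Ramsey class, witnessed by a completion that commutes with embeddings, is itself Ramsey. The ambient Ramsey class is the class of all convexly ordered complete $\Semig$-edge-labelled graphs (equivalently, finite relational structures in the language with one binary relation per element of $M$ plus a linear order), whose Ramsey property follows from the Nešetřil--Rödl theorem for the relational version. The shortest path completion from the first part, together with the confinement hypothesis, serves as the required locally finite completion: any finite partial $\Semig$-graph whose ``obvious'' obstructions are absent can be completed to an element of $\MF$ with only finitely many possible outcomes depending on its isomorphism type. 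The convex order ensures compatibility of the completion with the ordering, which is needed for the partite construction.

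The main obstacle is the second paragraph: controlling cycle creation under shortest path completion when several cross edges interact. This is where all four hypotheses on $\mathcal F$ are simultaneously needed, and writing the cycle-splitting argument carefully --- in particular, keeping track of where the $\mleq$-minima in different shortest paths are attained, so that meet-synchronization can be invoked uniformly --- is the delicate part. Everything else is a relatively routine combination of the standard shortest-path triangle argument with the Hubička--Nešetřil locally finite subclass theorem.
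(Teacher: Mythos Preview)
Your high-level idea of using the shortest path completion for strong amalgamation, and then the Hubi\v{c}ka--Ne\v{s}et\v{r}il locally finite subclass theorem for Ramsey, matches the paper. But the proposal has two concrete problems.

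First, you have misattributed the hypotheses. The well-definedness of the infimum $\inf_{a\in A}\bigl(d(x,a)\oplus d(a,y)\bigr)$ has nothing to do with confinement; confinement says only that for each finite $S\subseteq M$ there are finitely many $S$-edge-labelled cycles in $\mathcal F$. What guarantees that the infima encountered by the shortest path completion exist (and distribute with $\oplus$) is the hypothesis that $\mathcal F$ contains all disobedient cycles. Similarly, meet synchronization is not about ``how the $\mleq$-minima witnessing different shortest paths interact''; it controls in which \emph{block} the infimum of a family of paths lands, and is needed so that the shortest path completion does not merge balls that were distinct in the free amalgam. For strong amalgamation of $\MF$ itself only omissibility and the disobedient-cycle condition are used.

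Second, and more seriously, your Ramsey argument has a genuine gap. You claim that the shortest path completion, together with confinement, makes $\overrightarrow{\mathcal M}_\Semig\cap\Forb(\mathcal F)$ a locally finite subclass of ordered $\Semig$-edge-labelled graphs. This is false in general. Take $\Semig=(\{1,2\},\max,\leq)$ and $\mathcal F=\emptyset$: then for every $n$ the cycle with one edge of length $2$ and $n-1$ edges of length $1$ is non-$\Semig$-metric, so there is no bound $n(\str C_0)$ as Definition~\ref{def:locallyfinite} requires. Confinement bounds the forbidden cycles in $\mathcal F$, not the non-$\Semig$-metric cycles, and the latter can be arbitrarily long precisely because block equivalences (``$d(u,v)$ lies below some element of $\Block$'') are definable but not represented in the language. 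The paper's entire Chapter~\ref{ch:orderless} is devoted to fixing this: one passes to an expansion $L^\star_\Semig$ that adds explicit \emph{ball vertices} for each meet-irreducible block and links original vertices to them by unary functions, eliminating the block imaginaries. Only in this expanded language can one show (via the $\mus$ approximation of block maxima, Proposition~\ref{prop:Sequivalence}) that every non-metric $\star$-cycle contains a bounded-size witness, yielding local finiteness. The convex order must then be transported to this expansion (Definition~\ref{defn:mstarleq}), and the Ramsey property is proved for $\Mstarleq\cap\Forb(\mathcal F)$ and pulled back along a category isomorphism. Your proposal skips this machinery entirely, and without it the locally-finite-subclass step simply does not go through.
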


This theorem looks quite technical, but often some of the conditions are tri\-vial to satisfy. In particular, when the order is linear, any family $\mathcal F$ including the empty one contains all disobedient cycles and synchronizes meets and hence we obtain a strengthening of the results of Hubi\v cka, Ne\v set\v ril and the author~\cite{Hubicka2017sauerconnant} on Conant's generalised metric spaces and the results of Hubi\v cka and Ne\v set\v ril~\cite{Hubicka2016} on Sauer's $S$-metric spaces. On the other hand, when $\Semig$ is a distributive lattice (i.e. $\vee$ is the operation) and $\mathcal F$ is empty, one obtains the results of Braunfeld~\cite{Sam} on $\Lambda$-ultrametric spaces (only ones with the strong amalgamation property, see Remark~\ref{rem:sam}). In Section~\ref{sec:methom} we shall see that for a suitable \pocs{} $\Semig$ (the \emph{magic semigroup}) and a suitable family $\mathcal F$, Theorem~\ref{thm:main} also extends the results of~\cite{Aranda2017} on Cherlin's primitive 3-constrained metrically homogeneous graphs.

\begin{remark}
In our proofs we make use of all the conditions on $\mathcal F$. For example, $\Semig$-omissibility is necessary for the family $\mathcal F$ to be consistent with the shortest path completion. $\mathcal F$ containing all $\Semig$-disobedient cycles is a weaker version of requiring $\mleq$ to be a lattice and $\oplus$ to distribute with the infima. We need this weaker variant in order to be able to represent the metrically homogeneous graphs. Because we want to prove local finiteness (see Definition~\ref{def:locallyfinite}) of structures omitting $\mathcal F$, we have to put some conditions on $\mathcal F$, namely confinedness, to ensure that it does not break the local finiteness.
\end{remark}



To illustrate the flexibility of our techniques we also prove EPPA.
\begin{theorem}\label{thm:eppa}
Let $\ESemig$ be a \pocs{} and $\mathcal F$ be a family of $\Semig$-edge-labelled cycles (Definition~\ref{defn:graphs}). Suppose that the following conditions hold:
\begin{enumerate}
\item $\mathcal F$ is $\Semig$-omissible (Definition~\ref{defn:omissible});
\item $\mathcal F$ contains all $\Semig$-disobedient cycles (Definition~\ref{defn:disobedient});
\item $\mathcal F$ synchronizes meets (Definition~\ref{defn:meetsync}); and
\item $\mathcal F$ is confined (Definition~\ref{defn:slocallyfinite}).
\end{enumerate}
Then the class $\mathcal M_\Semig \cap \Forb\left(\mathcal F\right)$ has EPPA (Definition~\ref{defn:eppa}).
\end{theorem}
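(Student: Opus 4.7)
The plan is to combine a Herwig--Lascar type EPPA result for $\Semig$-edge-labelled graphs with the $\Semig$-shortest-path completion, paralleling the strategy behind Theorem~\ref{thm:main}. Given $\str A \in \MF$ and a finite set of partial automorphisms of $\str A$, I first forget the triangle inequality and regard $\str A$ as a complete $\Semig$-edge-labelled graph. Because $\str A$ is finite it uses only finitely many distances, and by confinedness of $\mathcal F$ the set of distances producible by shortest-path completing such graphs lies in a finite set $M_1 \subseteq M$, so one can work within a finite language.

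Next I apply a Herwig--Lascar style EPPA theorem for complete $M_1$-edge-labelled graphs omitting the (finite, by confinedness) family $\mathcal F_1$ of cycles from $\mathcal F$ that are labelled by $M_1$. Cycles are irreducible structures, so this is the setting in which Herwig--Lascar type results yield an EPPA-witness; the output is a complete $M_1$-edge-labelled graph $\str C \supseteq \str A$ in $\Forb(\mathcal F_1)$ such that every prescribed partial automorphism of $\str A$ extends to an automorphism of $\str C$.

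I then apply the $\Semig$-shortest-path completion to $\str C$ to obtain $\str B$. The four hypotheses on $\mathcal F$ do distinct work here: forbidding $\Semig$-disobedient cycles together with meet-synchronization guarantees that the completion yields a well-defined $\Semig$-metric space; $\Semig$-omissibility guarantees that no forbidden cycle is created during completion (any such cycle would pull back to a forbidden configuration already sitting in $\str C$); and confinedness keeps $\str B$ finite with labels in $M_1$. Since $\str A$ already satisfies the triangle inequality, the completion leaves its distances unchanged, so $\str A \hookrightarrow \str B$; and since the shortest-path distance between two vertices is canonically determined by the labelled graph structure, every automorphism of $\str C$ is automatically an automorphism of $\str B$. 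Hence $\str B \in \MF$ is the required EPPA-witness.

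The main obstacle is the edge-labelled EPPA step together with the compatibility of completion with $\Forb(\mathcal F)$. Standard Herwig--Lascar results address finite families of forbidden homomorphisms in a finite relational language, so the reduction to $M_1$ via confinedness, and the verification that avoiding $\mathcal F_1$ at the graph level is enough to avoid all of $\mathcal F$ after completion, are essential. Establishing this absorption statement --- that every forbidden cycle appearing in $\str B$ must already appear (as a forbidden configuration) in $\str C$ --- is the $\Semig$-omissibility clause applied to the canonical completion, and will likely be the technical heart of the argument.
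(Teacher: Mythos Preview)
Your argument has a genuine gap at the Herwig--Lascar step. You forbid only the cycles in $\mathcal F_1 \subseteq \mathcal F$, but you do \emph{not} forbid non-$\Semig$-metric cycles (and you cannot: $\mathcal F$ is omissible, so by Definition~\ref{defn:omissible} it contains no non-metric cycles). Hence the EPPA witness $\str C$ you obtain may well contain a non-$\Semig$-metric cycle, and then the shortest-path completion either fails to exist or alters distances inside $\str A$. Concretely, take $\Semig = (\{1,2\},\max,\leq)$ and $\mathcal F = \emptyset$: for any $k$, the cycle with one edge of length $2$ and $k$ edges of length $1$ is non-metric, yet none of these are in your finite $\mathcal F_1$, and nothing stops Herwig--Lascar from producing such a configuration in $\str C$. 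Adding the non-metric cycles to the forbidden family does not help either, because for non-archimedean $\Semig$ there are infinitely many of them with labels in any finite $M_1$, so the hypothesis of a finite forbidden family is lost. (A side remark: cycles on $\geq 4$ vertices are \emph{not} irreducible in the sense used here, since non-adjacent vertices share no relation.)

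This is exactly the obstacle the paper's proof is built to overcome. Rather than working directly with $\Semig$-edge-labelled graphs, the paper passes to the $L^\star_\Semig$-expansion $\Hstarf$ (Definition~\ref{defn:mstar}), adding ball vertices and unary functions so that the block equivalences are represented explicitly. In this expanded language, Proposition~\ref{prop:Sequivalence} and Observation~\ref{obs:indeedlocallyfinite} show that every non-$\Semig$-metric $\star$-cycle with labels in a fixed finite set has a witness of bounded size; this is precisely the local finiteness needed for Theorem~\ref{thm:herwiglascar}. One then gets EPPA for $\Hstarf$, transfers it to $\classstarf$ via the automorphism-preserving completion of Proposition~\ref{prop:starfcompletion}, and finally reads off EPPA for $\MF$ through the category isomorphism $L^\star$. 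Your direct approach would succeed when $\Semig$ is archimedean (then non-metric cycles with labels in $M_1$ are indeed bounded in size), but the whole point of Chapter~\ref{ch:orderless} is to handle the non-archimedean case, and that machinery is not optional here.
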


As a corollary of Theorems~\ref{thm:main} and~\ref{thm:eppa}, we re-prove several results and also solve some open problems. The most important examples are summarised in the following table:

\begin{table}[h!]
\centering
\begin{tabular}{ p{6cm}  l  l }
	\toprule
    \textbf{Class} & \textbf{Ramsey} & \textbf{EPPA} \\
    \midrule
    $S$-metric spaces & \cite{Hubicka2016} & Open (part~\cite{Conant2015}) \\
    Monoid-valued spaces & \cite{Hubicka2017sauerconnant} &  Open (part~\cite{Conant2015}) \\
    $\Lambda$-ultrametric spaces & \cite{Sam} & Open \\
    Metrically homogeneous graphs & \cite{Aranda2017} & \cite{Aranda2017} \\
    $4$-edge-labelled graphs & Open (using~\cite{Li2018}) & Open (using~\cite{Li2018})\\
    \bottomrule
\end{tabular}
\caption{Selected corollaries of Theorems~\ref{thm:main} and~\ref{thm:eppa}}
\end{table}
All these corollaries are discussed in Chapter~\ref{ch:applications}.

\medskip

In the appendix of~\cite{Cherlin1998}, Cherlin gave a classification of classes of complete $\{A,B,C,D\}$-edge-labelled graphs determined by forbidden triangles which form a non-free strong amalgamation class whose \Fraisse{} limit is \emph{primitive} (no nontrivial definable equivalences). There are 26 of them. Li~\cite{Li2018} studied simplicity of the automorphism groups of \Fraisse{} limits of these classes and in the process found partially ordered commutative semigroups that, as it turns out, can be plugged into our machinery. In other words, every triangle constrained primitive strong amalgamation class of $\{A,B,C,D\}$-edge-labelled complete graphs fits into our framework. A similar classification was obtained by the author for $\{A,B,C,D,E\}$-edge-labelled graphs (using a computer program, there are more than 1400 such primitive strong amalgamation classes) and all these classes can also be understood as semigroup-valued metric spaces which fit into Theorems~\ref{thm:main} and~\ref{thm:eppa}.\footnote{This is still work-in-progress, the computer program needs to be verified.} This motivates the following conjecture:
\begin{conjecture}\label{conj:universal}
Let $L$ be a finite set and let $\mathcal C$ be a strong amalgamation class of complete $L$-edge-labelled graphs whose \Fraisse{} limit is primitive. Then there is an archimedean (see Section~\ref{sec:blocks}) \pocs{} $\Semig$ on the set $L$, a family $\mathcal F$ of $\Semig$-edge-labelled graphs and a family $\mathcal H$ of Henson constraints (see Section~\ref{sec:henson}) such that $\Semig$ and $\mathcal F$ satisfy the conditions of Theorems~\ref{thm:main} and~\ref{thm:eppa} and $\mathcal C = \MF\cap\Forb(\mathcal H)$.
\end{conjecture}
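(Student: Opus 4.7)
The plan is to extract the \pocs{} structure $(L,\oplus,\mleq)$ directly from the class $\mathcal C$ itself and then carve out the families $\mathcal F$ and $\mathcal H$ to absorb whatever additional constraints $\mathcal C$ imposes beyond the \pocs{} triangle inequality. I would first define the partial order on $L$ by declaring $a\mleq b$ to mean that $a$ is ``weaker'' than $b$, in the sense that every triangle of $\mathcal C$ featuring $b$ remains in $\mathcal C$ when that occurrence is replaced by $a$. Primitivity should prevent the antisymmetry failures that would otherwise collapse distinct labels into nontrivial equivalence classes. The operation would then be defined by $a\oplus b:=\max_\mleq\{c\in L:\{a,b,c\}\in\mathcal C\}$, the largest label admissible on the edge opposite sides of lengths $a$ and $b$; that this maximum exists uniquely is where primitivity and the strong amalgamation property should combine.

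The core technical step is to verify that $(L,\oplus,\mleq)$ really is a \pocs. Commutativity is built in, and both $a\mleq a\oplus b$ and the monotonicity of $\oplus$ with respect to $\mleq$ follow readily from the two definitions. Associativity is the delicate point: $(a\oplus b)\oplus c$ and $a\oplus(b\oplus c)$ label the two possible diagonals completing a $4$-cycle $abcd$, and showing that they agree amounts to a $4$-point amalgamation argument which must crucially exploit strong amalgamation together with primitivity. The archimedean condition promised in the statement would be read off the block decomposition of $L$ discussed in Section~\ref{sec:blocks} and should follow from finiteness of $L$ together with primitivity.

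With the semigroup in place, I would set $\mathcal F$ to be the union of all $\Semig$-disobedient cycles (required by condition~(2) of Theorem~\ref{thm:main}) together with any further cycles forbidden by $\mathcal C$ that are not already excluded by the \pocs{} triangle inequality, and let $\mathcal H$ collect those forbidden substructures of $\mathcal C$ of size at least four that cannot be reduced to cycle constraints. The four properties of $\mathcal F$ demanded by Theorems~\ref{thm:main} and~\ref{thm:eppa}---$\Semig$-omissibility, containment of disobedient cycles, meet synchronization and confinedness---should then follow from the strong amalgamation property of $\mathcal C$ together with the specific way $\oplus$ and $\mleq$ were constructed, so that the shortest path completion inside $\MF$ reproduces amalgamation witnesses in $\mathcal C$. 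The equality $\mathcal C=\MF\cap\Forb(\mathcal H)$ will then reduce to checking that every obstruction to membership in $\mathcal C$ has been sorted into either $\mathcal F$ or $\mathcal H$.

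The principal obstacle, in my view, is the interaction between primitivity and the algebraic identities required of $\oplus$: it is not at all obvious that primitivity should force $\oplus$ to be associative, let alone force $\mathcal F$ to synchronize meets. Remark~\ref{rem:sam} already shows that dropping the strong amalgamation property breaks the framework, and one expects similarly subtle counterexamples to lurk once primitivity is dropped. The current evidence is essentially experimental, via Li's enumeration~\cite{Li2018} for $|L|=4$ and the author's computer search for $|L|=5$; a conceptual proof will likely require first establishing a structural classification of primitive strong amalgamation classes of edge-labelled complete graphs---an analogue of Cherlin's four-label result for arbitrary finite $L$---and then verifying the existence of a compatible \pocs{} label by label within that classification.
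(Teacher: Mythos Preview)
The statement you are attempting to prove is Conjecture~\ref{conj:universal}, and the paper does \emph{not} prove it. It is presented as an open conjecture, supported only by computational evidence: Li's verification for $|L|=4$~\cite{Li2018}, the author's (unverified) computer search for $|L|=5$, and the fact that Cherlin's primitive $3$-constrained metrically homogeneous graphs fit the framework (Section~\ref{sec:methom}). In the concluding Chapter~\ref{ch:conclusion} the conjecture is revisited only to discuss the necessity of its hypotheses (finiteness of $L$, primitivity) via Example~\ref{ex:sharpurysohn}, and to propose a possible weakening. There is therefore no proof in the paper to compare your proposal against.

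Your proposal is in effect a research programme rather than a proof, and you yourself identify the genuine gap: nothing in your outline forces $\oplus$ to be associative, nor $\mathcal F$ to be omissible, confined, or meet-synchronizing. Your definition of $a\oplus b$ as the $\mleq$-maximum label compatible with $a,b$ presupposes that such a maximum exists and is unique; this is exactly what fails in the metrically homogeneous graph case, where the natural $\mleq$ is not linear (see Observation~\ref{obs:incomparable} and Figure~\ref{fig:magicorder}) and the semigroup $\magicsemig$ had to be discovered by hand~\cite{Aranda2017} rather than read off the class. Your final paragraph correctly diagnoses that a proof would likely require a prior structural classification of all such classes --- which is precisely why the paper leaves this as a conjecture.
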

This conjecture implies that such $\mathcal C$ admits a shortest path completion (see Section~\ref{sec:henson}). Another example where this conjecture holds very non-trivially are metrically homogeneous graphs (see Section~\ref{sec:methom}). For more details see Chapter~\ref{ch:conclusion}.

\section{Organization of the thesis}
In Chapter~\ref{ch:background} we review the history of the Ramsey theory and the study of homogeneous structures with emphasis on different metric-like structures. We also present all the necessary notions and definitions including the results of~\cite{Hubicka2016} on multiamalgamation classes, which are going to be a key ingredient for proving the Ramsey property.

Chapter~\ref{ch:shortestpath} is dedicated to introducing the setting which we are going to work in, stating all the necessary definitions and proving some basic results on amalgamation and completion. In Chapter~\ref{ch:orderless} we study \emph{blocks} (maximal archimedean subsemigroups) and the corresponding definable equivalences in $\Semig$-valued metric spaces. We define an expansion which explicitly represents these equivalences and prove that these expanded classes have the strong amalgamation property while being locally finite. These results suffice to prove EPPA (Section~\ref{sec:eppaproof}).

Because for the Ramsey property one needs an order, we add such an order in Chapter~\ref{ch:withorder} (in a way very similar to Braunfeld~\cite{Sam}) and again prove the strong amalgamation property and local finiteness. This enables us to prove Theorem~\ref{thm:main} and also the expansion property. In Chapter~\ref{ch:applications} we sketch how Cherlin's metrically homogeneous graphs relate to $\Semig$-valued metric spaces and discuss other extensions and applications of our results.

Throughout the thesis we introduce many new notions and several different classes derived from the $\Semig$-valued metric spaces. Appendix~\ref{ap:vocabulary} contains the important notions together with short (often informal) definitions and links to the proper definitions, Appendix~\ref{ap:classes} contains a list of the classes which we introduced and work with. Again, their descriptions are informal; these appendices should be used as cheat sheets while reading the thesis, not as replacements for reading and understanding the definitions.

\section{Outline of the methods}\label{sec:outline}
To carry out the ideas of this thesis correctly takes some effort and perhaps obscures the key insights a bit. In this section we try to give a very intuitive outline of what is going to happen on the upcoming pages. We omit many details and technical complications and for simplicity only talk about edge-labelled graphs (that is, binary symmetric structures).

Recently (cf. Theorems~\ref{thm:hn} and~\ref{thm:herwiglascar}), the following meta-question became of interest in the combinatorial model theory: ``Given a finite graph $\str G$ with edges labelled by symbols from some set $L$, is it possible to add the remaining edges and their symbols so that the resulting complete edge-labelled graph belongs to a given class $\mathcal C$?'' The answer is often found constructively, that is, by finding an explicit \emph{completion procedure} which, given a graph $\str G$, produces a complete edge-labelled graph $\str G'$ such that $\str G'$ is from $\mathcal C$ unless $\str G$ has no \emph{completion} in $\mathcal C$.

A prime example is the \emph{shortest path completion} which works for $\mathcal C$ being a class of, say, \emph{integer-valued metric spaces}, that is, complete graphs with edges labelled by positive integers which contain no \emph{non-metric triangles} (triples of vertices such that the label of one of the edges is larger than the sum of the labels of the other two edges). The shortest path completion sets the \emph{distance} (label of the edge) of each pair of vertices to be the \emph{length} (sum of the labels) of the shortest path connecting them (shortest in terms of the length, not the number of edges).

The shortest path completion has been adapted for many different classes of edge-labelled complete graphs. In this thesis, we study the boundaries of this method. Instead of integers, we allow for the labels to come from an arbitrary commutative semigroup $\Semig$ with operation $\oplus$. And instead of the linear order of the integers, we work with a partial order $\mleq$ of $\Semig$. Our classes thus consist of finite $\Semig$-edge-labelled complete graphs such that in every triangle every edge is smaller (in $\mleq$) than the $\oplus$-sum of the other two; we call such graphs \emph{$\Semig$-valued metric spaces} or simply \emph{$\Semig$-metric spaces}. In the shortest path completion, instead of setting the distance of two vertices to the length of the shortest path connecting them, we set it to the infimum of the $\Semig$-lengths of all the paths connecting the two vertices.

The last sentence was deliberately slightly imprecise. The partial order on the semigroup surely cannot be arbitrary. One should at least require $\oplus$ to be monotone. And in order for the previous paragraph to describe a well-defined procedure, the partial order would actually have to be a semi-lattice. This, however, turns out to be too strong a condition (cf. Section~\ref{sec:methom}). Instead, we refine the relevant classes and allow to further forbid (homomorphic images of) a family of $\Semig$-edge-labelled cycles $\mathcal F$. In other words, the classes which we are interested in are the intersections of all finite $\Semig$-valued metric spaces and $\Forb(\mathcal F)$ (the class of all finite $\Semig$-edge-labelled graphs containing no homomorphic image of a member of $\mathcal F$) for some \pocs{} $\Semig$ and a family $\mathcal F$. 

Such a family $\mathcal F$ then ensures that all infima of paths which the shortest path completion procedure encounters are well-defined. Of course, not every family of cycles can be forbidden, the least we have to demand is that it is \emph{compatible} with the shortest path completion, that is, if the input graph $\str G$ is from $\Forb(\mathcal F)$, then its shortest path completion $\str G'$ must also be from $\Forb(\mathcal F)$.

In Chapter~\ref{ch:shortestpath} we describe precisely the conditions on $\mathcal F$ and study the basic properties of the shortest path completion. In particular, we observe that a graph from $\Forb(\mathcal F)$ has a completion to a $\Semig$-metric space if and only if it contains no non-$\Semig$-metric cycle (a cycle which has no completion).

\medskip

While the completion procedure is an important ingredient, for some applications --- for example for finding Ramsey expansions --- it is not enough. In order to use the Hubi\v cka--Ne\v set\v ril theorem (Theorem~\ref{thm:hn}), one needs for every finite set of distances $S\subseteq \Semig$ to find a finite family of \emph{obstacles} $\mathcal O_S$ such that an $S$-edge-labelled graph has a completion in the given class if and only if it is from $\Forb(\mathcal O_S)$.

Such families, however, do not exist for every \pocs{} $\Semig$ and family $\mathcal F$. For example, for $\Semig = (\{1,2\}, \max, \leq)$ and $\mathcal F = \emptyset$, every cycle with one edge labelled 2 and all the other edges labelled 1 has no completion. The reason for this is that ``being in distance 1'' is an equivalence relation. In Chapter~\ref{ch:orderless} we deal with this phenomenon in general and study the \emph{block structure} of \pocs{}s, or in other words, we study the equivalence relations given by ``being in a distance from a given subset of $\Semig$''.

We prove (in Section~\ref{sec:nonimportant}) that for a fixed finite $S\subseteq \Semig$, one can select a bounded number of \emph{important} edges from each non-$\Semig$-metric cycle such that the other edges effectively only say that some vertices are block-equivalent for some block. The next thing to do then is to remove the unimportant edges and replace them by something which can concisely represent the ``these vertices are block-equivalent'' relation.

If the reader is familiar with model theory, they know that we want to find an expansion with \emph{elimination of imaginaries}. Specifically, we will add new vertices (\emph{ball vertices}) to represent the equivalence classes of all the block equivalences and link the \emph{original vertices} to them by unary functions. Two vertices are then equivalent if and only if they ``point'' (by the unary functions) to the same vertex corresponding to the given equivalence. This means that one can indeed forget about the unimportant edges and only keep the ball vertices certifying that some original vertices are equivalent, and hence making the family $\mathcal O_S$ of obstacles finite.

The previous paragraphs summarised the ideas from Chapter~\ref{ch:orderless}, but to carry it out correctly takes some effort, we ignored several important details here.

\medskip

While being interesting and useful by itself, Chapter~\ref{ch:orderless} is not enough to get Ramsey expansions of the $\Semig$-valued metric spaces, because a Ramsey class has to have an order, which is the topic of Chapter~\ref{ch:withorder}. In particular, one also has to order the ball vertices. And as is standard in the structural Ramsey theory, the order of the ball vertices has to depend on the order of the original vertices. Braunfeld~\cite{Sam} has found such an ordering for his $\Lambda$-ultrametric spaces and we use the same ideas.
\chapter{Preliminaries and background}\label{ch:background}
We first review some standard model-theoretic notions regarding structures with relations and functions (see e.g.~\cite{Hodges1993}) with a small variation that our functions will be partial.

A \emph{language} $L$ is a collection $L=L_{\mathcal{R}}\cup L_{\mathcal{F}}$ of relational symbols $\rel{}{}\in L_{\mathcal{R}}$ and function symbols $F\in L_{\mathcal{F}}$ each having associated arities. For relations the arity is denoted by $\arity{\rel{}{}}>0$ for relations, for functions $\arity{\func{}{}}$ is the arity of the domain, in this thesis range will always have arity one.

An \emph{$L$-structure} $\str A$ is then a tuple $(A, \{\rel{A}{}\}_{\rel{}{}\in L_{\mathcal R}}, \{\func{A}{}\}_{\func{}{}\in L_{\mathcal F}})$, where $A$ is the \emph{vertex set}, $\rel{A}{}\subseteq A^{\arity{\rel{}{}}}$ is an interpretation of $\rel{}{}$ for each $\rel{}{}\in L_{\mathcal R}$ and $\func{A}{}\colon A^{\arity{\func{}{}}}\rightarrow A$ is a partial function for each $\func{}{}\in L_{\mathcal F}$. We denote by $\dom(\func{A}{})$ the domain of $\func{}{}$ (i.e. the set of tuples of vertices of $\str A$ for which $\func{}{}$ is defined).  An $L$-structure is \emph{finite} (or has \emph{finite support}) if its vertex set is finite.

An $L$-structure $\str A$ is \emph{connected} if for every two vertices $u,v\in A$ there exists a sequence of vertices $u=v_0, v_1, \ldots, v_k=v$ such that for every $i$ there is a tuple $\bar x\in A^k$ and a relation $\rel{}{}\in L$ or a function $\func{}{}\in L$ such that $\bar x\in \rel{}{}$ or $\bar x\in \func{}{}$ respectively (where we understand a function as a relation with some outdegree condition).

Notationally, we shall distinguish structures from their underlying sets by typesetting structures in bold font. When the language $L$ is clear from the context, we will use it implicitly.

Let $\str{A}$ and $\str B$ be $L$-structures. A \emph{homomorphism} $f\colon\str{A}\to \str{B}$ is a mapping $f\colon A\rightarrow B$ satisfying for every $\rel{}{}\in L_{\mathcal R}$ and for every $\func{}{}\in L_\mathcal F$ the following three statements:
\begin{enumerate}
\item[(a)] $(x_1,\ldots, x_{\arity{\rel{}{}}})\in \rel{A}{}\Rightarrow (f(x_1),\ldots,f(x_{\arity{\rel{}{}}}))\in \rel{B}{}$;
\item[(b)] $f(\dom(\func{A}{}))\subseteq \dom(\func{B}{})$; and
\item[(c)] $f(\func{A}{}(x_1,\allowbreak \ldots, x_{\arity{\func{}{}}}))=\func{B}{}(f(x_1),\allowbreak \ldots,\allowbreak f(x_{\arity{\func{}{}}}))$ for every $(x_1,\allowbreak \ldots, x_{\arity{\func{}{}}})\in \dom(\func{A}{})$.
\end{enumerate} 
For a subset $A'\subseteq A$ we denote by $f(A')$ the set $\{f(x);x\in A'\}$ and by $f(\str{A})$ the homomorphic image of a structure.

If $f$ is and injective homomorphism, it is a \emph{monomorphism}. A monomorphism is an \emph{embedding} if
for every $\rel{}{}\in L_{\mathcal R}$ and $\func{}{}\in L_\mathcal F$ the following holds:
\begin{enumerate}
\item[(a)] $(x_1,\ldots, x_{\arity{\rel{}{}}})\in \rel{A}{}\iff (f(x_1),\ldots,f(x_{\arity{\rel{}{}}}))\in \rel{B}{}$, and,
\item[(b)]  
$(x_1,\ldots, x_{\arity{\func{}{}}})\in\dom(\func{A}{}) \iff (f(x_1),\ldots,f(x_{\arity{\func{}{}}}))\in \dom(\func{B}{}).$
\end{enumerate}

If $f$ is a bijective embedding then it is an \emph{isomorphism} and we say that $\str A$ and $\str B$ are \emph{isomorphic}. An isomorphism $\str A\to\str A$ is called \emph{automorphism}. If the inclusion $\str A\subseteq \str B$ is an embedding then $\str{A}$ is a \emph{substructure} of $\str{B}$. For $\str A$ and $\str B$ structures, we denote by $\str B\choose \str A$ the set of all embeddings of $\str A$ to $\str B$. Note that while for purely relational languages every set $A\subseteq B$ gives a substructure of $\str B$, it does not hold in general for languages with functions (we need $A$ to be \emph{closed} on functions).

For a family $\mathcal F$ of $L$-structures, we denote by $\Forb(\mathcal F)$ the class of all finite $L$-structures (or $L^+$-structures with $L^+\supseteq L$ if this is clear from the context) $\str A$ such that there is no $\str F\in \mathcal F$ with a homomorphism $\str F\rightarrow \str A$. If $f\colon X\rightarrow Z$ is a function and $Y\subseteq X$, we denote by $f\restriction_Y$ the restriction of $f$ on $Y$.

Let $\str A, \str B$ be $L$-structures and let $f\colon A\rightarrow B$ be an injective map. We say that $f$ is \emph{automorphism-preserving} if there is a group homomorphism $h\colon\Aut(\str A)\to \Aut(\str B)$ such that $f\circ\alpha\circ f^{-1} \subseteq h(\alpha)$ for every $\alpha\in\Aut(\str A)$. In particular, for $f$ being the identity and $A=B$ this means that $\Aut(\str A)\subseteq \Aut(\str B)$.

\section{Homogeneous structures}\label{sec:homog}
The main reference for this section is the survey on homogeneous structures by~Macpherson~\cite{Macpherson2011}.

We say that an $L$-structure $\str M$ is \emph{homogeneous} (sometimes called \emph{ultrahomogeneous}) if for every finite $\str A,\str B\subseteq \str M$ and every isomorphism $g\colon  \str A\rightarrow \str B$ there is an automorphism $f$ of $\str M$ with $g\subseteq f$.

Let $\mathcal C$ be a class of (not necessarily all) $L$-structures and let $\str M$ be an $L$-structure. We say that $\str M$ is \emph{universal} for $\mathcal C$ if for every $\str C\in \mathcal C$ there exists an embedding $\str C\rightarrow \str M$.

\begin{example}
The structure $(\mathbb Q, <)$, where $<$ is the standard linear order of the rationals, is homogeneous and universal for countable linear orders.
\end{example}
\begin{proof}
First we observe that $(\mathbb Q, <)$ satisfies the \emph{extension property}, which is a very direct generalisation of density and means that for every finite $S\subset \mathbb Q$ and $B\subset\mathbb Q$ such that for every $s\in S$ and $b\in B$ it holds that $s < b$ there is $x\in\mathbb Q$ such that for every $s\in S$ it holds that $s < x$ and for every $b\in B$ it holds that $x < b$. For both $S$ and $B$ nonempty this follows by using density of $(\mathbb Q, <)$ for $\max S$ and $\min B$, for $S$ or $B$ empty is follows by $(\mathbb Q, <)$ having no endpoints.

Let $(X, \prec)$ be a countable (i.e. finite or countably infinite) linear order. We will construct an embedding $f\colon  (X, \prec) \rightarrow (\mathbb Q, <)$. Enumerate vertices of $X$ arbitrarily as $x_1, x_2, \ldots$. Define $f(x_1)$ to be an arbitrary vertex of $\mathbb Q$. Now assume that $f$ is already defined on $x_1, \ldots, x_{i-1}$. By the extension property there is a vertex $y\in \mathbb Q$ different from all already defined $f(x_j)$ such that for all $j<i$ it holds that $y < f(x_j)$ if and only if $x_i < x_j$. Thus we can set $f(x_i) = y$. Formally we are creating a chain of partial embeddings and the embedding $(X, \prec) \rightarrow (\mathbb Q, <)$ will then be their union.

To prove that $(\mathbb Q, <)$ is homogeneous, we will again construct an embedding of $(\mathbb Q, <)$ into $(\mathbb Q, <)$ step-by-step, but now we have to make sure that it is a surjection and that it extends the given partial automorphism. In order to do this, we use the so-called \emph{back-and-forth} argument.

Let $g$ be an isomorphism of finite substructures of $(\mathbb Q, <)$. We will define a chain $g=f_0\subset f_1\subset \cdots$ of partial automorphisms of $(\mathbb Q, <)$ as follows: Enumerate vertices of $\mathbb Q$ arbitrarily as $q_1, q_2, \ldots$. Assume that $g=f_0, \ldots, f_{i-1}$ are already defined and we want to extend $f_{i-1}$ by one point to construct $f_i$.

If $i$ is odd, let $j$ be the smallest integer such that $q_j\notin \dom(f_{i-1})$ where $\dom(f_{i-1})$ is the domain of $f_{i-1}$. By the extension property there is $y\in \mathbb Q\setminus \range(f_{i-1})$ such that for all $x\in \dom(f_{i-1})$ it holds that $q_j < x$ if and only if $y < f_{i-1}(x)$. Then we can set $f_i(q_j) = y$.

If $i$ is even, let $j$ be the smallest integer such that $q_j\notin \range(f_{i-1})$. Again, by the extension property there is $x\in \mathbb Q\setminus \dom(f_{i-1})$ such that for all $x'\in \dom(f_{i-1})$ it holds that $x < x'$ if and only if $q_j < f_{i-1}(x')$. Then we can set $f_i(x) = q_j$.

From the construction it follows that $f=\bigcup_{i=0}^\infty f_i$ is an automorphism of $(\mathbb Q, <)$ such that $g\subset f$.
\end{proof}

In the early 1950s \Fraisse{}~\cite{Fraisse1953, Fraisse1986} noticed that while $(\mathbb Q, <)$ is homogeneous and universal for countable linear orders, $(\mathbb N, <)$ is neither of those (though it is universal for finite linear orders) and as a (very successful) attempt to extract the necessary properties and generalise this phenomenon he proved Theorem~\ref{thm:fraisse}. The extracted properties are summarised in the following definition.

\begin{definition}[Amalgamation property~\cite{Fraisse1953}]\label{defn:amalgamation}
Fix a language $L$ and let $\mathcal C$ be a non-empty class of finite $L$-structures. We say that $\mathcal C$ is an \emph{amalgamation class} if it has the following properties:
\begin{enumerate}
\item $\mathcal C$ is closed under isomorphisms and substructures;
\item $\mathcal C$ has the \emph{joint embedding property (JEP)}: For all $\str B_1, \str B_2\in \mathcal C$ there is $\str C\in \mathcal C$ and embeddings $\beta_1\colon  \str B_1\rightarrow \str C$ and $\beta_2\colon \str B_2\rightarrow \str C$; and
\item $\mathcal C$ has the \emph{amalgamation property (AP)}: For all $\str A, \str B_1, \str B_2\in \mathcal C$ and embeddings $\alpha_1\colon \str A\rightarrow \str B_1$ and $\alpha_2\colon \str A\rightarrow \str B_2$, there is $\str C\in \mathcal C$ and embeddings $\beta_1\colon  \str B_1\rightarrow \str C$ and $\beta_2\colon \str B_2\rightarrow \str C$ such that $\beta_1\circ\alpha_1 = \beta_2\circ\alpha_2$. We call such $\str C$ the \emph{amalgamation} (or \emph{amalgam}) of $\str B_1$ and $\str B_2$ over $\str A$ (with respect to $\alpha_1$ and $\alpha_2$, but these embeddings are often assumed implicitly).
\end{enumerate}
It is common in the area to identify members of $\mathcal C$ with their isomorphism types.

Let $\str C$ be an amalgamation of $\str B_1$ and $\str B_2$ over $\str A$. We say that $\str C$ is a \emph{strong amalgamation} (or \emph{strong amalgam}) of $\str B_1$ and $\str B_2$ over $\str A$ if $\beta_1(x_1) = \beta_2(x_2)$ if and only if $x_1\in \alpha_1(\str A)$ and $x_2\in \alpha_2(\str A)$, which means that no vertices outside of the copies of $\str A$ are identified.

A strong amalgamation is a \emph{free amalgamation} (\emph{free amalgam}) if $C=\beta_1(x_1)\cup \beta_2(x_2)$ and furthermore for every relation $\rel{}{}$ and every tuple $\bar{x}=(x_1, \ldots, x_{\arity{\rel{}{}}})\in \rel{C}{}$ it holds that $\bar{x}\in\beta_i(B_i)$ for some $i\in\{1,2\}$ and similarly for every function $\func{}{}$, every tuple $\bar{x}=(x_1, \ldots, x_{\arity{\func{}{}}})\in \dom(\func{C}{})$ and $y\in C$ such that $\func{C}{}(\bar{x})=y$ it holds that $\bar{x}, y\in \beta_i(\str B_i)$ for the same $i\in\{1,2\}$.
\end{definition}

The joint embedding property says that for every two structures in $\mathcal C$ there is a structure in $\mathcal C$ containing both of them. If present in $\mathcal C$, their disjoint union could play such a role. But at the other extreme, if $\str B_1 = \str B_2$ then one can also put $\str C = \str B_1$.

The amalgamation property says that if one glues two structures $\str B_1$ and $\str B_2$ over a common substructure $\str A$, there is a structure $\str C$ in $\mathcal C$ which contains this patchwork, see Figure~\ref{fig:amalgamation}. By definition it is possible that the embeddings of $\str B_1$ and $\str B_2$ in $\str C$ overlap by more vertices than just the vertices of $\str A$ and also that $\str C$ has more vertices than just $\beta_1(B_1)\cup \beta_2(B_2)$.

If $\mathcal C$ contains the empty structure, then AP for $\str A$ being the empty structure is precisely JEP.

\begin{example}
Let $\mathcal C$ be the class of all finite metric spaces (with distances from, say, $\mathbb R^{\geq 0}$) and suppose that $\str A = (A, d_{\str A})$, $\str B_1 = (B_1, d_{\str B_1})$ and $\str B_2 = (B_2, d_{\str B_2})$ are structures from $\mathcal C$ such that $\str A\subseteq \str B_1$ and $\str A \subseteq \str B_2$. Define the metric space $\str C$ such that its vertex set is $C = B_1\cup B_2$ and the metric is defined as
$$d_{\str C}(u,v) = 
 \begin{cases} 
   d_{\str B_1}(u,v) & \text{if } u,v\in B_1 \\
   d_{\str B_2}(u,v) & \text{if } u,v\in B_2 \\
   \min_{w\in A} d_{\str B_1}(u,w) + d_{\str B_2}(v,w) & \text{if } u\in B_1,v\in B_2 \\
   \min_{w\in A} d_{\str B_2}(u,w) + d_{\str B_1}(v,w) & \text{if } u\in B_2,v\in B_1.
  \end{cases}$$
Then $\str C$ is the strong amalgam of $\str B_1$ and $\str B_2$ over $\str A$ where all the embeddings are inclusions. Furthermore, out of all strong amalgams of $\str B_1$ and $\str B_2$ over $\str A$, each distance in $\str C$ is as large as possible.
\end{example}

Let $\str M$ be an $L$-structure. Then the \emph{age of $\str M$} is defined as $$\Age(\str M) = \left\{\str A : \str A\text{ is a \textbf{finite} $L$-structure with an embedding $\alpha\colon \str A\rightarrow \str M$}\right\}.$$
Again, it is common to identify the age with the class of all isomorphism types of structures in the age.

If $\str M$ is a countable homogeneous $L$-structure such that for every finite $X\subseteq M$ there is a finite substructure $\str Y\subseteq \str M$ with $X\subseteq Y$ then we call $\str M$ a \emph{\Fraisse{} structure}. A \emph{\Fraisse{} class} is an amalgamation class with only countably many members up to isomorphism.
The main condition on a homogeneous structure to be \Fraisse{} says that finite sets of vertices generate only finite substructures. This is sometimes called local finiteness (for example in group theory), but unfortunately in the structural Ramsey theory this name is reserved for something else (see Definition~\ref{def:locallyfinite}).

\begin{figure}
\centering
\includegraphics{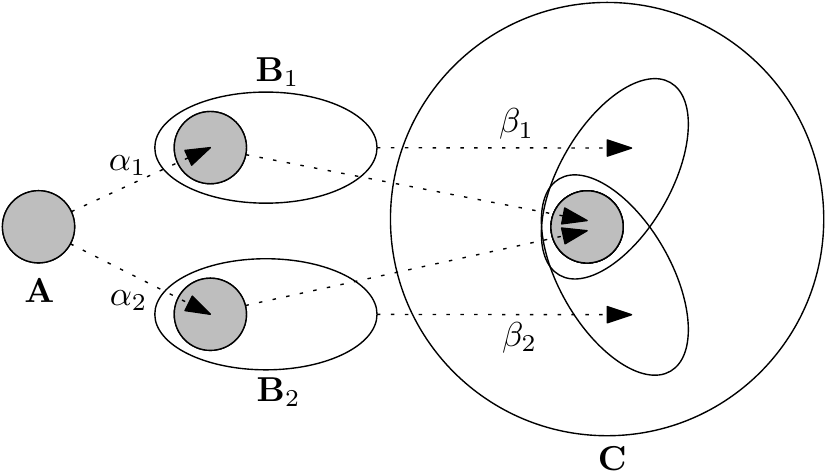}
\caption{An amalgamation of $\str{B}_1$ and $\str{B}_2$ over $\str{A}$.}
\label{fig:amalgamation}
\end{figure}

\begin{theorem}[\Fraisse{}~\cite{Fraisse1953}]\label{thm:fraisse}\leavevmode 
\begin{enumerate}
\item Let $\str M$ be a \Fraisse{} structure. Then $\Age\left(\str M\right)$ is a \Fraisse{} class.
\item For every \Fraisse{} class $\mathcal C$ there is a \Fraisse{} structure $\str M$ such that $\Age(\str M) = \mathcal C$. Furthermore, if $\str N$ is a countable homogeneous $L$-structure such that $\Age(\str N) = \mathcal C$, then $\str M$ and $\str N$ are isomorphic.
\end{enumerate}
\end{theorem}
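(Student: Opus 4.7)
The plan is to establish the two parts separately, using the back-and-forth method already demonstrated for $(\mathbb Q,<)$ as the main engine.

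For part (1), I would show that $\Age(\str M)$ satisfies each defining property. Closure under isomorphism is immediate, and closure under substructures follows because a substructure of a finite substructure of $\str M$ is itself finite and embeds into $\str M$ via composition; here I would use the local-finiteness hypothesis on $\str M$ (every finite subset sits inside a finite substructure), which is needed because the language may contain function symbols. Countability of the age (up to isomorphism) follows from the countability of $\str M$. For JEP and AP, the idea is to leverage homogeneity: given $\str A\subseteq \str B_1,\str B_2$ in $\Age(\str M)$, fix embeddings $\beta_1\colon \str B_1\to \str M$ and $\alpha\colon \str A\to\str M$ with $\alpha=\beta_1\restriction A$ (which can be arranged by homogeneity applied to any two embeddings of $\str A$), then embed $\str B_2$ into $\str M$ in any way and use homogeneity to adjust by an automorphism of $\str M$ so that its restriction to $\str A$ agrees with $\alpha$. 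The amalgam $\str C$ is then the finite substructure of $\str M$ generated by $\beta_1(B_1)\cup \beta_2(B_2)$, which exists by local finiteness. JEP is the special case when $\str A$ is empty (or handled directly by the same argument).

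For part (2), I would construct the Fra\"{\i}ss\'{e} limit by a chain argument. Enumerate $\mathcal C$ (up to isomorphism) as $\str C_0,\str C_1,\ldots$ and build a chain $\str M_0\subseteq \str M_1\subseteq \cdots$ in $\mathcal C$ by a bookkeeping process that, at stage $n$, takes care of some pair $(\str A\subseteq \str B,\iota\colon \str A\to \str M_n)$ in a scheduled enumeration of all finite one-point extension requirements and uses AP in $\mathcal C$ to amalgamate $\str B$ with $\str M_n$ over $\iota(\str A)$, setting $\str M_{n+1}$ to be the resulting structure (which remains in $\mathcal C$ by its closure properties and the choice of finite amalgam). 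The union $\str M=\bigcup_n \str M_n$ is countable, has age contained in $\mathcal C$ because each $\str M_n\in\mathcal C$ and $\mathcal C$ is closed under substructures, and conversely contains every $\str C_n$ by JEP applied at the appropriate stage. The key \emph{extension property} that results is: for every $\str A\subseteq \str B$ in $\mathcal C$ and every embedding $\str A\to\str M$, there is an extending embedding $\str B\to\str M$. Homogeneity of $\str M$ then follows by a standard back-and-forth, exactly mirroring the $(\mathbb Q,<)$ argument: enumerate $M$, build a chain of finite partial isomorphisms extending a given one $g$, alternately adding one element of $M$ to the domain and one to the range, at each step using the extension property to realise the required amalgamation inside $\str M$.

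Uniqueness in part (2) is another back-and-forth. Given two countable homogeneous structures $\str M,\str N$ with $\Age(\str M)=\Age(\str N)=\mathcal C$, enumerate both and construct a chain of finite partial isomorphisms $f_0\subseteq f_1\subseteq \cdots$ whose union is the desired isomorphism; at odd stages extend the domain to cover the next unused element of $M$, at even stages extend the range to cover the next unused element of $N$, in each case using the extension property (which follows from homogeneity plus $\Age=\mathcal C$) on the other side to realise the extension.

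The main obstacle is carrying out these extensions cleanly in the presence of partial function symbols: a finite vertex set need not be closed under functions, so one must repeatedly pass from a finite set of vertices to the finite substructure it generates, and must schedule the bookkeeping so that every finitely generated substructure eventually appears. This is where the local-finiteness hypothesis on Fra\"{\i}ss\'{e} structures, and the closure of $\mathcal C$ under (finite) substructures plus AP, do the necessary work to keep every intermediate stage inside $\mathcal C$.
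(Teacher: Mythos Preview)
The paper does not provide its own proof of this theorem; it is stated as a classical result with a citation to Fra\"{\i}ss\'{e}'s original paper and then used as background. Your proposal is the standard textbook argument (back-and-forth for homogeneity and uniqueness, chain-with-bookkeeping for existence, homogeneity-plus-local-finiteness for AP of the age), and it is correct. There is nothing to compare against, and your outline would serve as a perfectly acceptable proof in this setting.
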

We call the structure $\str M$ from the second point the \emph{\Fraisse{} limit of $\mathcal C$}. 

\medskip

\Fraisse{}'s theorem gives a correspondence between amalgamation classes and homogeneous structures. Besides the age of $(\mathbb Q, <)$, which is the class of all finite linear orders, there are many more known homogeneous structures and amalgamation classes. A prominent example is the random graph (often called the Rado graph), which we understand as a structure with one binary relation $E$ whose age is the class of all finite graphs and which is actually universal for all countable graphs. Or the generic triangle-free graph, which is the \Fraisse{} limit of all finite triangle-free graphs and again is universal for all countable triangle-free graphs. An example from a very different area is Hall's universal group~\cite{Hall1959} (a nice exposition is in Siniora's PhD thesis~\cite{Siniora2}). Hall's universal group is universal for all countable locally finite groups and it is a countable homogeneous group, which means that every isomorphism between finite subgroups can be extended to an automorphism of the whole group.

Take the random graph $\str R$\footnote{For a proper review of the history of the random graph see Peter Cameron's blog post History of the Random Graph~\cite{CameronBlog}.}. From homogeneity and universality it follows that it satisfies the \emph{extension property} which says that for every two disjoint finite sets of vertices $U, V\subset \str R$ there is a vertex $x\in \str R$ such that $x$ is connected by an edge to all vertices in $U$ and no vertex in $V$: There clearly exists a finite graph $\str G$ whose vertex set can be partitioned into disjoint sets $U'\cup V'\cup \{x'\}$ such that $\str G[U'\cup V']\cong \str R[U\cup V]$ ($\str G[X]$ means the subgraph of $\str G$ induced on $X$) and $x'$ is connected to all members of $U'$ and no member of $V'$. Hence, by universality, we can assume that $\str G\subseteq \str R$. But by homogeneity one can extend the natural partial isomorphism sending $U\cup V$ to $U'\cup V'$ to an automorphism $g$ of $\str R$ and then just let $x = g(x')$.

This extension property for $\str R$ is an analogue of the extension property for $(\mathbb Q, <)$. And it turns out that it defines $\str R$ --- in model theory $\str R$ is usually axiomatized by having the extension property. By a back-and-forth argument one can show that every two countable graphs having this property are isomorphic. The extension property thus implies homogeneity and universality in the same way as it does for $(\mathbb Q, <)$.

A suitable variant of the extension property can be defined for every homogeneous countable $L$-structure $\str M$. And when $L$ is for example a finite relational language, a back-and-forth argument can be utilized to prove that every countable structure in the same language with the extension property is isomorphic to $\str M$.\footnote{Theories with the property that they have only one countable model up to isomorphism are called $\omega$-categorical and are very important and interesting in the model theory context. This observation means that the theory of every homogeneous structure in a finite relational language is $\omega$-categorical.}

Another example of a homogeneous structure is the Urysohn space, which is a homogeneous separable metric space universal for all countable separable metric spaces. It was constructed by Urysohn in~1924~\cite{Urysohn1927}.

The Urysohn space $\mathbb U$ is constructed as the completion (in the metric space sense) of the rational Urysohn space $\mathbb U_{\mathbb Q}$, which is a homogeneous countable metric space with rational distances which is universal for all finite metric spaces with rational distances. $\mathbb U_{\mathbb Q}$ is constructed by a procedure in principle not very different from what \Fraisse{} used to prove Theorem~\ref{thm:fraisse}. Hence Urysohn was ahead of \Fraisse{} by roughly 30 years and the whole theory should perhaps be called Urysohn--\Fraisse{} theory instead of just \Fraisse{} theory.

So far we have seen a couple of examples of homogeneous structures, which are homogeneous for ``good reasons''. On the other hand, $(\mathbb N, <, (U^i)_{i\in \mathbb N})$, the structure consisting of natural numbers with the standard order plus infinitely many unary relations such that $U^i_{\mathbb N} = \{i\}$ (each vertex gets its own unary relation), is also homogeneous, but for ``stupid reasons'', namely the complete lack of isomorphisms between substructures.

\subsection{Classification results}
Homogeneous structures are being studied from several different perspectives. In this thesis we promote the combinatorial one. Another possible perspective is one of group theory: The automorphism groups of homogeneous structures are very rich (unless, of course, the structure is for example $(\mathbb N, <, (U^i)_{i\in \mathbb N})$). There are many results and notions connected to automorphism groups of homogeneous structures and a survey by Cameron~\cite{Cameron1999} serves as a very good starting reference. In model theory, homogeneous structures are studied for example from the stability point of view, see~\cite{Macpherson2011} for details. And last but not least, the automorphism group can be equipped with a natural topology --- the \emph{pointwise-convergence topology} --- and then studied from the point of view of topological dynamics. This will be touched upon a little more in Section~\ref{sec:kpt}.

But the initial direction after the \Fraisse{} theorem was on classification. In this section, we briefly and partially overview some classification results. We start with a theorem of Lachlan and Woodrow on the classification of all countably infinite homogeneous (undirected) graphs.

\begin{theorem}[Classification of countably infinite homogeneous graphs~\cite{Lachlan1980}]\label{thm:lachlanwoodrow}
Let $\str G$ be a countably infinite homogeneous undirected graph. Then $\str G$ or $\overbar{\str G}$ (the complement of $\str G$) is one of the following:
\begin{enumerate}
\item The random graph $\str R$ (i.e. the \Fraisse{} limit of the class of all finite graphs);
\item the \emph{generic} (that is, universal and homogeneous) $K_n$-free graph for some finite clique $K_n$, which is the \Fraisse{} limit of the class of all finite $K_n$-free graphs; or
\item the disjoint union of complete graphs of the same size (either an infinite union of $K_n$'s for some $n<\infty$, or a finite or infinite union of $K_\omega$'s).
\end{enumerate}
\end{theorem}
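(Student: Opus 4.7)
The plan is to use \Fraisse{}'s theorem in reverse: since $\str G$ is homogeneous and countably infinite, $\str G$ is the \Fraisse{} limit of its age $\mathcal C = \Age(\str G)$, so it suffices to classify the amalgamation classes of finite graphs containing infinitely many isomorphism types. The analysis will proceed via a primitive vs.\ imprimitive dichotomy for $\Aut(\str G) \acts G$.

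First I would introduce the equivalence relation $\sim$ on $G$ defined by $x \sim y$ iff $x$ and $y$ have the same neighbours in $G \setminus \{x,y\}$ and are either both adjacent or both non-adjacent. By homogeneity, $\sim$ is $\Aut(\str G)$-invariant. In the imprimitive case where some $\sim$-class has size $\geq 2$, all classes have the same size (by vertex-transitivity, which is immediate from homogeneity applied to single vertices). Within one class, the definition of $\sim$ together with homogeneity forces the induced subgraph to be either a clique or an independent set; replacing $\str G$ by $\overline{\str G}$ if necessary, assume it is a clique. Between two distinct classes there can be no edges, for otherwise picking two $\sim$-equivalent vertices in one class and a vertex in the other would contradict the definition of $\sim$. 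This yields case~(3), a disjoint union of cliques of equal size.

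In the primitive case, where every $\sim$-class is a singleton, I would show that $\mathcal C$ is either all finite graphs or all $K_n$-free graphs for some $n\geq 3$. By Ramsey's theorem, $\mathcal C$ cannot simultaneously omit some clique $K_n$ and some independent set $\overline{K_m}$, since then all graphs in $\mathcal C$ would have size bounded by the Ramsey number, contradicting infinitude of $\mathcal C$. By passing to the complement, assume all independent sets appear in $\mathcal C$. If no clique is forbidden, an amalgamation argument shows $\mathcal C$ contains every finite graph (inductively extend a finite graph $\str H$ by adding one vertex at a time with any prescribed neighbourhood, using primitivity together with the presence of arbitrarily large independent sets to provide the needed one-point extensions), yielding the random graph, case~(1). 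Otherwise, if $K_n$ is the smallest forbidden clique, a parallel amalgamation argument shows $\mathcal C = \Forb(K_n)$, the age of the generic $K_n$-free (Henson) graph, case~(2).

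The main obstacle is executing the amalgamation arguments rigorously in the primitive case: one must show that the mere failure of the nontrivial $\sim$-equivalence, combined with the strong amalgamation property, rules out any hidden constraints beyond the single forbidden clique, so that $\mathcal C$ is exactly $\Forb(K_n)$ rather than some proper subclass. Equivalently, one must realise every finite $K_n$-free one-point extension by repeated amalgamation inside $\mathcal C$; the presence of arbitrarily large independent sets together with primitivity supplies the needed flexibility, but verifying this in full detail is the technical heart of the Lachlan--Woodrow argument.
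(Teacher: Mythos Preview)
The paper does not contain a proof of this theorem. Theorem~\ref{thm:lachlanwoodrow} is stated without proof in the background section on homogeneous structures, cited to Lachlan and Woodrow, and is used only to illustrate the classification programme before moving on to other examples. There is therefore no proof in the paper to compare your proposal against.

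For what it is worth, your outline follows the broad shape of the actual Lachlan--Woodrow argument (the primitive/imprimitive split, Ramsey to exclude bounding both cliques and anticliques, then amalgamation to fill out the age). One small remark on the imprimitive case: your claim that there are no edges between distinct $\sim$-classes does not follow from the definition of $\sim$ alone; you need homogeneity. If $x\sim y$ are adjacent twins and $z$ is adjacent to $x$ but $z\not\sim x$, then the ordered edge $(x,y)$ is isomorphic to the ordered edge $(x,z)$, so homogeneity gives an automorphism sending $y$ to $z$, forcing $z\sim x$ after all. As you yourself acknowledge, the genuine difficulty lies in the primitive case, where one must show that no constraints beyond the single forbidden clique can survive; your sketch correctly locates this as the technical core but does not resolve it, which is fair for a result of this depth.
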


This theorem implies that there are only countably many countable homogeneous graphs, which is in contrast with an earlier result of Henson~\cite{Henson1972} who found $2^{\aleph_0}$ non-isomorphic countable homogeneous directed graphs. They are analogues of the $K_n$-free graphs, but while forbidding $K_n$ and $K_m$ is the same as forbidding $K_{\min(m, n)}$, Henson is forbidding tournaments and one can construct infinite sets of pairwise ``incomparable'' tournaments. Cherlin~\cite{Cherlin1998}, more than 20 years later, gave a full classification of countably infinite homogeneous directed graphs. The proof takes more than 170 pages and even the list itself is too long and complicated for our small historical overview; it contains for example the much older classification of homogeneous partial orders~\cite{Schmerl1979}. And recently, Cherlin~\cite{Cherlin2011b,Cherlin2013} offered a list of metrically homogeneous graphs, that is, countable graphs which become homogeneous metric spaces if one considers the path metric. They were the topic of the author's Bachelor thesis~\cite{Konecny2018bc} and we will briefly touch upon them in Section~\ref{sec:methom}.

\section{Ramsey theory}
Surveying the rich history of the Ramsey theory could easily be a topic for more than one thesis, but not a very good topic as there already are several good references. For this chapter's brief sketch of some of the most important results of Ramsey theory, Ne\v set\v ril's chapter~\cite[Ch. 25]{Graham1995} and Pr\"omel's book~\cite{PromelBook} were the main references. Some of the more recent development in the structural Ramsey theory was surveyed by, for example, Bodirsky~\cite{Bodirsky2015}, Nguyen Van Th\'e~\cite{NVT14} and Solecki~\cite{Solecki2013}.

In~1930, Ramsey published a paper where he proves the following theorem (which we state in today's language, by $[n]$ we mean the set $\{0, 1, \ldots, n-1\}$, and, for a set $A$, by ${A\choose p}$ we mean the set of all $p$-element subsets of $A$):
\begin{theorem}[Ramsey's theorem~\cite{Ramsey1930}]
For every triple of natural numbers $n,p,k$ with $n\geq 0$, $p > 0$ and $k\geq 1$ there is $N$ such that the following holds:

For every colouring $c\colon  {[N]\choose p} \rightarrow [k]$ there is an $n$-element subset $H\in {[N]\choose n}$ such that $c\restriction_{H\choose p}$ is constant.

\end{theorem}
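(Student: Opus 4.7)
My plan is to prove Ramsey's theorem by induction on $p$. For the base case $p = 1$, the statement reduces immediately to the pigeonhole principle: taking $N = (n-1)k + 1$, any colouring $c\colon [N] \to [k]$ must place at least $n$ elements in some common colour class, which forms the desired monochromatic set.

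For the inductive step, I will assume the theorem holds for $p - 1$ with all values of $n$ and $k$. The core construction produces a descending chain $[N] = S_0 \supseteq S_1 \supseteq \cdots \supseteq S_m$, distinguished points $x_i \in S_{i-1}$, and colours $c_i \in [k]$, defined iteratively as follows. Having already chosen $x_1, \ldots, x_{i-1}$ and $S_{i-1}$, I pick any $x_i \in S_{i-1}$ and consider the induced $k$-colouring of ${S_{i-1} \setminus \{x_i\}\choose p-1}$ given by $T \mapsto c(T \cup \{x_i\})$. By the inductive hypothesis, provided $|S_{i-1}| - 1$ is sufficiently large, there is a large subset $S_i \subseteq S_{i-1} \setminus \{x_i\}$ on which this induced colouring is constant, with some value $c_i$.

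I will iterate this construction for $m := (n-1)k + 1$ steps, which requires starting from a sufficiently large $N$. Applying pigeonhole to the sequence $c_1, \ldots, c_m$ yields indices $i_1 < i_2 < \cdots < i_n$ sharing a common colour $c^\star$. I then claim $H := \{x_{i_1}, \ldots, x_{i_n}\}$ is the desired monochromatic set. Indeed, for any $p$-subset $\{x_{i_{j_1}}, \ldots, x_{i_{j_p}}\} \subseteq H$ with $j_1 < \cdots < j_p$, the nesting of the $S_i$'s ensures $x_{i_{j_2}}, \ldots, x_{i_{j_p}} \in S_{i_{j_1}}$, so the defining property of $c_{i_{j_1}}$ applied to the $(p-1)$-subset $\{x_{i_{j_2}}, \ldots, x_{i_{j_p}}\}$ gives that the $p$-subset has colour $c_{i_{j_1}} = c^\star$.

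The main technical obstacle is the bookkeeping of the bound on $N$: the size needed for $S_{i-1}$ to support the next step is itself an instance of the $(p-1)$-dimensional Ramsey number applied to a target size which must itself suffice for all subsequent steps. Hence $N$ must be defined by unwinding the inductive bound $m$ times, yielding the familiar tower-type estimate. Since the statement only asserts existence of some finite $N$, this does not pose a conceptual difficulty, only a need for care with the order of quantifiers in the recursion.
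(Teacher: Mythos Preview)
Your argument is the standard inductive proof of Ramsey's theorem and it is correct. The base case $p=1$ is pigeonhole, and the inductive step --- repeatedly fixing a point $x_i$, applying the $(p-1)$-dimensional statement to the induced colouring $T\mapsto c(T\cup\{x_i\})$ to get a large monochromatic $S_i$, and then using pigeonhole on the resulting colour sequence $c_1,\ldots,c_m$ --- is carried out correctly. Your verification that any $p$-subset of $H$ gets colour $c_{i_{j_1}}=c^\star$ via the nesting $x_{i_{j_2}},\ldots,x_{i_{j_p}}\in S_{i_{j_1}}$ is exactly the point, and your remark about the recursive bookkeeping for $N$ is accurate.

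Note, however, that the paper does not supply its own proof of this theorem: it is stated there as a classical cited result (Ramsey~1930) and used as historical and technical background. So there is no ``paper's proof'' to compare against; what you have written is precisely the textbook argument one would expect.
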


\subsection{Category theory}\label{subsec:category}
Historically, the structural Ramsey theory was based on category theory~\cite{Leeb}. Although it is no longer the most common way, it will later be useful to define the Ramsey property for categories. In order to do it, recall that a \emph{category} $\mathfrak C$ is a tuple $(\mathrm{ob}(\mathfrak C), \mathrm{hom}(\mathfrak C))$, where $\mathrm{ob}(\mathfrak C)$ is a class of $\emph{objects}$ and $\mathrm{hom}(\mathfrak C)$ is a class of \emph{morphisms} (\emph{arrows}, \emph{maps}) between the objects. If $f$ is a morphism from object $A$ to object $B$ we write $f\colon A\rightarrow B$ and we write $\mathrm{hom}(A,B)$ for the class of all morphisms $A\rightarrow B$. Finally, for every triple of objects $A,B,C$ there is an associative binary operation $\circ\colon \mathrm{hom}(B,C)\times\mathrm{hom}(A,B) \rightarrow \mathrm{hom}(A,C)$ (the \emph{composition}), and we require that for every object there is an identity morphism (with respect to composition).

The name $\mathrm{hom}$ is rather unfortunate, note that is has nothing to do with homomorphisms as defined at the beginning of this chapter, or more precisely, one can consider categories where $\mathrm{hom}(A,B)$ is the set of all homomorphisms $A\to B$, but it is just a particular example of a category.

Let $\mathfrak C$ be a category. We say that an object $C$ is a \emph{Ramsey witness} for objects $A$ and $B$ and $k\in \mathbb N$ colours if for every colouring $c\colon \mathrm{hom}(A, C)\rightarrow \{0,\ldots,k-1\}$ there is a morphism $f\colon  B\rightarrow C$ and a colour $0\leq i < k$ such that for every $g\in\mathrm{hom}(A, B)$ it holds that $c(f\circ g) = i$.

Denoted schematically, for every $k$-colouring of $\mathrm{hom}(A, C)$ as in the following diagram (where we for simplicity let $k=2$ and depict one colour by dashed arrows and the other by full arrows)
\begin{equation*}
\begin{tikzcd}
A \arrow[rr, shift left=.5ex] \arrow[rr, shift right=.5ex]
\arrow[ddrr, dashed, shift right=.7ex] \arrow[ddrr] \arrow[ddrr, dashed, shift left=.7ex]
& & B \arrow[dd, shift left=.5ex] \arrow[dd, shift right=.5ex]\\ \\
& & C
\end{tikzcd}
\end{equation*}
there is an arrow $f\colon B\rightarrow C$, such that the composition $f\circ \mathrm{hom}(A,B)$ is mono\-chromatic:
\begin{equation*}
\begin{tikzcd}
A \arrow[rr, shift left=.5ex] \arrow[rr, shift right=.5ex]
\arrow[ddrr, dashed, shift right=.7ex] \arrow[ddrr, dashed, shift left=.7ex]
& & B \arrow[dd, shift left=0.5ex]\\ \\
& & C
\end{tikzcd}
\end{equation*}

If $C$ is a \emph{Ramsey witness} for $A$, $B$ and $k$, we denote it as $$C\longrightarrow(B)^A_k,$$ which is called the Erd\"os--Rado partition arrow. If a category has the property that for every $A$, $B$ and $k$ there is $C$ such that $C\longrightarrow(B)^A_k$, we say that the category has the \emph{Ramsey property}.

Notice now that the Ramsey theorem says that the category $\mathfrak{LO}$ of linear orders where maps are monotone injections has the Ramsey property.

\begin{remark}
The amalgamation property is also a categorical property: A category has the amalgamation property if for every triple of objects $A$, $B_1$ and $B_2$ with morphisms $\alpha_1\colon  A\rightarrow B_1$ and $\alpha_2\colon  A\rightarrow B_2$ there is an object $C$ and morphisms $\beta_1\colon  B_1\rightarrow C$ and $\beta_2\colon  B_2\rightarrow C$ such that the following diagram commutes:
\begin{equation*}
\begin{tikzcd}
A \arrow[rr, "\alpha_1"] \arrow[dd, "\alpha_2"]
& & B_1 \arrow[dd, "\beta_1"] \\ \\
B_2 \arrow[rr, "\beta_2"]
& & C
\end{tikzcd}
\end{equation*}
Notice that it is a weak version of a pushout as $C$ need not have any universal property.
\end{remark}

This categorical point of view will be useful later as it is an easy way to transfer the amalgamation and Ramsey properties between isomorphic categories.

\subsection{Some more Ramsey-type results}
Ramsey's paper was concerned with a problem of deciding whether satisfaction of certain formulas (the $\exists\forall$-formulas) is decidable. In 1935, Erd\"os and Szekeres initiated the combinatorial applications of the Ramsey theorem by proving the following result:
\begin{theorem}[Erd\"os--Szekeres~\cite{ErdosSzekeres}]
For every positive integer $m\geq 3$ there is a positive integer $N$ such that in any set of $N$ points in the Euclidean plane, no three of which are collinear, there are $m$ points which form the vertex set of a convex $m$-gon.
\end{theorem}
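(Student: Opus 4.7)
The plan is to reduce the statement to Ramsey's theorem applied to $p=4$ and $k=2$ colours, combined with two classical geometric facts. The first (due to Esther Klein) is the base case: among any $5$ points in general position in the Euclidean plane, some $4$ of them form a convex quadrilateral. The second is a characterisation of convex position: a finite set $P$ of points in general position forms the vertex set of a convex polygon if and only if every $4$-element subset of $P$ does so.

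First I would prove Klein's lemma by a case analysis on the convex hull of the $5$ given points. If the hull has $4$ or $5$ vertices we are already done; otherwise the hull is a triangle $abc$ and the remaining two points $d,e$ lie inside it. The line through $d$ and $e$ separates the plane into two half-planes, and by pigeonhole two of $a,b,c$, say $a$ and $b$, lie in the same half-plane; then $a,b,d,e$ is a convex quadrilateral. For the second fact the forward direction is immediate (subsets of convex sets are convex), and the converse follows from Carath\'eodory's theorem: if $P$ is not in convex position then some $p\in P$ lies in the convex hull of $P\setminus\{p\}$, hence in the triangle spanned by three other points of $P$, producing a non-convex $4$-subset.

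For the main argument, given $m\geq 3$, set $n=\max(m,5)$ and apply Ramsey's theorem with this $n$, with $p=4$ and $k=2$ to obtain a number $N$. Take any $N$ points in general position in the plane, and colour each $4$-element subset by $0$ if it is in convex position and $1$ otherwise. Ramsey's theorem yields an $n$-element subset $H$ on which the colouring is constant. Applying Klein's lemma to any $5$ points of $H$ (which is possible because $|H|=n\geq 5$) shows that the monochromatic colour cannot be $1$, so every $4$-subset of $H$ is in convex position. By the second lemma, $H$ itself is in convex position, and any $m\leq n$ of its points form the vertices of a convex $m$-gon.

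I do not expect any serious obstacle: the geometric lemmas are elementary, and the extraction of the Ramsey bound is a direct application of the theorem just stated. The only subtlety worth being careful about is to take $n\geq 5$ even when $m=3$ or $m=4$, so that Klein's lemma can be invoked inside the monochromatic set; otherwise the argument goes through cleanly.
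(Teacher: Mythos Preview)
Your proposal is correct and follows exactly the approach the paper sketches: the paper's ``proof'' is the single sentence ``This theorem can be proved using the Ramsey theorem for quadruples of points coloured based on whether they all lie on the boundary of their convex hull,'' and you have faithfully unpacked precisely this argument, supplying the two supporting geometric lemmas (Klein's observation for five points, and the Carath\'eodory-based characterisation of convex position via $4$-subsets) that make the Ramsey application go through.
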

This theorem can be proved using the Ramsey theorem for quadruples of points coloured based on whether they all lie on the boundary of their convex hull.

Long before Ramsey, Schur, using the following Ramsey-type result about the natural numbers, proved that Fermat's Last Theorem is false modulo large primes:
\begin{lemma}[Schur~\cite{Schur1917}]
For every positive integer $k$ there exists a positive integer $N$ such that for every colouring $c\colon \{1, \ldots,2N\} \rightarrow [k]$ there are $1\leq x < y \leq N$ such that $c(x) = c(y) = c(x+y)$.
\end{lemma}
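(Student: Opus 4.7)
The plan is to derive Schur's lemma by an application of the (finite) Ramsey theorem to edge-colourings of complete graphs; strictness of the inequality $x<y$ will force us to pass from monochromatic triangles to monochromatic $K_4$'s.

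Fix $k$ and let $N$ be large enough that every $k$-colouring of the edges of $K_{N+1}$ admits a monochromatic $K_4$; such $N$ exists by the finite Ramsey theorem applied to $p=2$, $n=4$. Given a colouring $c\colon\{1,\ldots,2N\}\to[k]$, I would define an induced edge-colouring $\chi$ of the complete graph on vertex set $\{0,1,\ldots,N\}$ by
\[
\chi(\{i,j\}) \;=\; c(|i-j|) \qquad (i\neq j),
\]
which is well-defined since $1\le|i-j|\le N\le 2N$. By the choice of $N$, there exist $a<b<c<d$ in $\{0,\ldots,N\}$ spanning a $\chi$-monochromatic $K_4$, say of colour $t$. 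Unravelling the definition of $\chi$, all six differences
\[
b-a,\; c-b,\; d-c,\; c-a,\; d-b,\; d-a
\]
are sent to $t$ by the original colouring $c$.

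Now I split into two cases. If the three consecutive gaps $b-a$, $c-b$, $d-c$ are not all equal, then some two of them differ; without loss of generality $b-a\neq c-b$. Letting $x=\min(b-a,c-b)$ and $y=\max(b-a,c-b)$, we have $1\le x<y\le N$ and $x+y=c-a\le N$, while
\[
c(x)=c(y)=c(x+y)=t.
\]
Otherwise $b-a=c-b=d-c=t_0$ for some $t_0\ge 1$, whence $c-a=2t_0$ and $d-a=3t_0\le N$ all receive colour $t$. Setting $x=t_0$ and $y=2t_0$ yields $1\le x<y\le N$, $x+y=3t_0\le N$, and again $c(x)=c(y)=c(x+y)=t$. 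Either way the required monochromatic triple is produced.

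The only subtle point is the strictness $x<y$: had I only invoked a monochromatic triangle $i<j<k$, I would obtain $x=j-i$ and $y=k-j$ with $c(x)=c(y)=c(x+y)$ but possibly $x=y$. This is precisely what passing to a monochromatic $K_4$ resolves, since with four vertices we have enough distinct differences to choose $x\ne y$ in one case and to produce the chain $t_0,2t_0,3t_0$ of same-coloured values in the other.
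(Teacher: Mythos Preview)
Your proof is correct and follows the same core idea as the paper's one-line sketch: colour the edge $\{i,j\}$ by $c(|i-j|)$ and apply Ramsey's theorem. The paper merely says ``looking for a monochromatic triple,'' which yields $x=j-i$, $y=k-j$, $x+y=k-i$ of the same colour from a monochromatic triangle $i<j<k$, but---as you correctly point out---this does not guarantee $x\neq y$. Your passage to a monochromatic $K_4$ is a clean way to force the strict inequality $x<y$ demanded by the statement; the case split (either two adjacent gaps differ, or all three gaps are equal and you use $t_0,2t_0,3t_0$) is sound, and the ``without loss of generality'' is justified since if the three consecutive gaps are not all equal then some \emph{adjacent} pair of them must differ. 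So your argument is in fact more careful than the paper's sketch on this point, while remaining the same approach in spirit.
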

It follows easily from Ramsey's theorem by colouring pairs $x,y$ by $c(|x-y|)$ and looking for a monochromatic triple. Schur also conjectured the following statement which was in 1927 proved by van der Waerden.
\begin{theorem}[van der Waerden~\cite{vanderWaerden1927}]
For every pair of positive integers $k$ and $r$ there exists a positive integer $N$ such that for every colouring $c\colon \{1,\ldots, N\} \rightarrow [k]$ there exist positive integers $a$ and $d$ such that $A = \{a+id\mid 0\leq i<r\} \subseteq \{1,\ldots, N\}$ and $c\restriction_A$ is constant.
\end{theorem}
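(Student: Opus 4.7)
The plan is to prove van der Waerden's theorem by induction on $r$, the length of the arithmetic progression. The cases $r=1$ and $r=2$ are trivial: for $r=2$ any colouring of $\{1,\ldots,k+1\}$ gives two numbers of the same colour by pigeonhole, and they form a length-$2$ AP. So I would fix $r\geq 3$ and an arbitrary number of colours $k$, assume the statement holds with length $r-1$ for \emph{every} number of colours, and show it holds with length $r$ and $k$ colours.

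The main technical device is the notion of \emph{colour-focused} arithmetic progressions. Given a $k$-colouring $c$ of an interval, call a family of APs $P_1,\ldots,P_m$, each of length $r-1$, \emph{colour-focused at a point $f$} if each $P_i$ is monochromatic, the $P_i$ have pairwise distinct colours, and the point obtained by extending $P_i$ by one more step of its own common difference is equal to $f$ for every $i$. The key observation is that if one ever produces $k$ colour-focused progressions inside an interval that also contains the focus $f$, then the colour of $f$ must coincide with the colour of some $P_i$, and $P_i\cup\{f\}$ is then a monochromatic AP of length $r$, which is what we want.

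The core of the argument is an auxiliary lemma, proved by a secondary induction on $m$: for every $1\leq m\leq k$ there exists $N(m)$ such that every $k$-colouring of an interval of length $N(m)$ either contains a monochromatic AP of length $r$, or contains $m$ colour-focused length-$(r-1)$ APs (with their focus also inside the interval). The base case $m=1$ is immediate from the inductive hypothesis on $r$ applied to $k$ colours. For the step from $m$ to $m+1$, I would take an interval long enough to be viewed as a sequence of $W(r-1,k^{2N(m)})$ consecutive blocks of length $2N(m)$, colour each block by its entire length-$2N(m)$ colouring pattern (so we have at most $k^{2N(m)}$ ``block colours''), and apply the inductive hypothesis on $r$ for length $r-1$ and $k^{2N(m)}$ colours. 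This yields $r-1$ identically-patterned blocks in arithmetic progression. Each such block contains the same $m$ colour-focused progressions by the inductive hypothesis on $m$, and combining these $m$ intra-block progressions with the inter-block AP (which provides an additional independent direction focused at the next block position) produces $m+1$ progressions colour-focused at a single point in the interval.

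Finally, applying the auxiliary lemma with $m=k$ forces a monochromatic length-$r$ AP by the pigeonhole observation above, completing the inductive step. The main obstacle is the bookkeeping in the step $m\to m+1$ of the auxiliary lemma: one has to verify that the new family of $m+1$ progressions really is colour-focused at a single point by carefully combining the intra-block differences with the inter-block difference, and that the focus still lies inside the enlarged interval. Once the parameters are chosen generously enough, this is routine and all other ingredients reduce to two invocations of the outer induction hypothesis plus pigeonhole.
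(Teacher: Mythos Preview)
The paper does not actually prove van der Waerden's theorem: it is stated as a classical result with a citation to~\cite{vanderWaerden1927}, and the only further remark is that the original proof proceeds by a double induction giving non-primitive-recursive bounds, later improved by Shelah. There is therefore nothing to compare against at the level of argument.

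Your proposal is a correct outline of the standard colour-focusing proof, which is essentially van der Waerden's original double induction recast in modern language: the outer induction on $r$ and the inner induction on the number $m$ of colour-focused progressions together constitute exactly the ``double induction'' the paper alludes to. The bookkeeping you flag (that the $m+1$ progressions really share a common focus, and that the focus lies in the interval) is indeed the only delicate point, and your parameter choice of blocks of length $2N(m)$ is the usual way to ensure the focus stays inside. So your sketch is sound and matches the classical approach the paper cites.
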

The original proof of van der Waerden's theorem goes by double induction and does not give a primitive recursive upper bound. This was an open problem for a long time until Shelah~\cite{ShelahvdW} found such a bound.

There are many more important and interesting results (for example the celebrated Szemer\'edi theorem~\cite{Szemeredi1969}, the Hales--Jewett~\cite{Hales1963} or the Graham--Rothschild theorems~\cite{Graham1971}), but we now concentrate on the structural Ramsey theory which this thesis is a contribution to.

\subsection{Structural Ramsey theory}
Having defined what the Ramsey property for a category is, a natural question for a combinatorialist would be: ``What about graphs?''

The answer is that it is an ambiguous question. What are the morphisms? If one looks at graphs with the non-induced-subgraph morphisms (that is, equivalence classes of monomorphisms modulo permutation of vertices), then this category has the Ramsey property directly by the Ramsey theorem as it is enough to be able to find arbitrarily large monochromatic complete graphs. If one takes the category $\mathrm{Gra}$ of finite graphs with embeddings, then the answer is negative.

\begin{prop}\label{prop:rigidity}
The category $\mathrm{Gra}$ of finite graphs with embeddings does not have the Ramsey property.
\end{prop}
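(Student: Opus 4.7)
The plan is to exhibit a small pair of graphs for which no Ramsey witness can exist in $\mathrm{Gra}$. I would take $A = B = K_2$, a single edge on two vertices $a_1, a_2$. The crucial feature of $K_2$ is its non-trivial swap automorphism, so $|\mathrm{hom}(A,B)| = 2$: the identity embedding and the swap. For any candidate $C$, embeddings of $A$ into $C$ are naturally identified with ordered pairs $(u,v)$ of adjacent vertices of $C$; and for a fixed embedding $f \colon B \to C$ sending $a_i \mapsto v_i$, the set $f \circ \mathrm{hom}(A,B)$ consists precisely of the two embeddings corresponding to the ordered pairs $(v_1, v_2)$ and $(v_2, v_1)$.

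With this setup I would define a $2$-colouring $c \colon \mathrm{hom}(A,C) \to \{0,1\}$ as follows: fix any linear order $<$ on the vertex set of $C$ and send the embedding corresponding to $(u,v)$ to $0$ if $u < v$ and to $1$ if $v < u$. By construction the two orderings of every edge of $C$ receive opposite colours, so $f \circ \mathrm{hom}(A,B)$ is never monochromatic, irrespective of the choice of $C$ and $f$. Since this defeats the partition arrow $C \longrightarrow (B)^A_2$ for every candidate $C$, the category $\mathrm{Gra}$ fails to have the Ramsey property.

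I do not anticipate any real obstacle; the only point worth double-checking is that the swap is indeed a morphism of $\mathrm{Gra}$, which is immediate because it is a bijective self-embedding of $K_2$. The argument generalises: whenever an object $B$ in a category of embeddings admits a non-trivial automorphism, linearly ordering the vertices of an arbitrary ambient $C$ suffices to refute any attempted Ramsey statement. This is the standard ``rigidity obstruction'' and is precisely the reason one passes to expansions by a linear order when searching for Ramsey classes later in the thesis.
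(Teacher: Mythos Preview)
Your proposal is correct and follows essentially the same approach as the paper: take $A=B=K_2$, exploit the non-trivial swap automorphism, and colour the two embeddings of each edge of $C$ with opposite colours so that no copy of $B$ can be monochromatic. The only difference is cosmetic --- you make the colouring explicit via a linear order on $C$, whereas the paper simply asserts the existence of such a colouring --- but the argument is the same.
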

\begin{proof}
It is enough to take $\str{A} = \str{B}$ the graph consisting of two vertices connected by an edge. It has a non-trivial automorphism (one that switches the vertices), hence there are two embeddings of $\str{A}$ into $\str{B}$. Suppose that there is $\str{C}$ such that $\str{C}\longrightarrow (\str{B})^{\str{A}}_2$ and let $c\colon  {\str C\choose \str A} \rightarrow [2]$ be such that for every edge of $\str C$ it will colour one of the embeddings by colour~$0$ and the other by colour $1$. Then, clearly, there is no embedding of $\str B$ into $\str C$ which is $c$-monochromatic.
\end{proof}
Note that one can repeat the same argument for any category which contains an object with a non-trivial automorphism. Thus a category needs to be \emph{rigid} (no non-trivial automorphisms of any object) in order to have the Ramsey property.

\begin{remark}
If instead of embeddings one considers the category of graphs with the ``induced subgraph'' morphisms (i.e. equivalence classes of embeddings modulo automorphisms), then the category still doesn't have the Ramsey property. It has the $\str A$-Ramsey property (for every $k$ and $\str B$ there is a $\str C$ with $\str C\longrightarrow (\str B)^\str{A}_k$) precisely for complete graphs and independent sets, see~\cite[Ch. 25, Sec. 5]{Graham1995}.
\end{remark}

In the structural Ramsey theory, one is always working with categories of (finite) structures equipped with embeddings. Hence it is appropriate to say that a class $\mathcal C$ of finite $L$-structures \emph{has the Ramsey property}, or \emph{is Ramsey}, if the category with objects from $\mathcal C$ and embeddings as morphisms has the Ramsey property.

In~1977 Ne\v set\v ril and R\"odl and independently in~1978 Abramson and Harrington proved the following:
\begin{theorem}[Ne\v set\v ril--R\"odl~\cite{Nevsetvril1977,Nevsetvril1977b}, Abramson--Harrington~\cite{Abramson1978}]\label{thm:nrgraph}
The class of all linearly ordered finite graphs is Ramsey.
\end{theorem}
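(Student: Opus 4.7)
The plan is to use the partite construction method of Ne\v set\v ril and R\"odl, which reduces the theorem to a ``partite'' version that is directly provable from the classical Ramsey theorem. Throughout, I work in the category of linearly ordered graphs with order-preserving embeddings. Given linearly ordered graphs $\str A \subseteq \str B$ and a number of colours $k$, the task is to produce a linearly ordered graph $\str C$ with $\str C \longrightarrow (\str B)^{\str A}_k$.

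First, I would establish the Partite Lemma for linearly ordered graphs. Consider the category of $a$-partite linearly ordered graphs whose parts $V_1 < V_2 < \cdots < V_a$ are convex blocks of the linear order, and whose morphisms are embeddings respecting both the partition and the order. Let $\str A^*$ be a \emph{transversal} copy of $\str A$ (one vertex in each part) and $\str B^*$ an $a$-partite structure containing $\str A^*$. The lemma claims the existence of an $a$-partite linearly ordered graph $\str C^*$ such that $\str C^* \longrightarrow (\str B^*)^{\str A^*}_k$ in this partite category. The proof is a direct application of the Ramsey theorem: blow up each part of $\str B^*$ into a sufficiently large set (with size dictated by Ramsey's theorem applied to $a$-tuples with $k$ colours), choose edges according to the pattern of $\str B^*$, and use Ramsey's theorem inside the blown-up parts to extract a monochromatic transversal system.

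Second, I would run the partite construction itself. Enumerate the copies of $\str A$ in $\str B$ as $\alpha_1, \alpha_2, \ldots, \alpha_t$. I build a chain of $|B|$-partite linearly ordered graphs $\str P_0, \str P_1, \ldots, \str P_t$, called \emph{pictures}, where the parts of each $\str P_i$ correspond bijectively to vertices of $\str B$. The initial picture $\str P_0$ is the ``thin'' picture obtained by placing a long sequence of copies of $\str B$ side by side, each copy being a transversal. At step $i$, I apply the Partite Lemma with $\str A^*$ set to (the partite version of) $\alpha_i(\str A)$ and $\str B^*$ set to $\str P_{i-1}$ to produce $\str P_i$ such that any $k$-colouring of the $\str A$-subcopies in $\str P_i$ which are partite-isomorphic to $\alpha_i$ induces a monochromatic transversal copy of $\str P_{i-1}$. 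By iterating, $\str P_t$ has the property that any $k$-colouring of its transversal copies of $\str A$ contains a transversal copy of $\str B$ on which all $t$ types of copies of $\str A$ are monochromatic --- and this gives a fully monochromatic copy of $\str B$. The desired $\str C$ is obtained from $\str P_t$ by forgetting the partition and keeping only the linear order (which is already compatible with the partition by construction).

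The main obstacle is purely organizational rather than conceptual: ensuring that at each stage of the partite construction the linear order is refined consistently (so that the inductive partite embeddings remain order-preserving), and that in the forgetting step no copy of $\str A$ in $\str C$ gets overlooked. Specifically, one must argue that every (unordered) copy of $\str A$ in the underlying graph of $\str P_t$ is automatically transversal --- this follows from the fact that within each part of $\str P_i$ there are no edges, a property preserved throughout the construction, hence any copy of $\str A$ in $\str P_t$ hits each part at most once, and by cardinality is transversal. Once this bookkeeping is handled, the Ramsey property for linearly ordered graphs follows.
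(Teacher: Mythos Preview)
The paper does not prove this theorem; it cites it and remarks that it is a direct consequence of the more general Ne\v set\v ril--R\"odl theorem (Theorem~\ref{thm:nr}). Your proposal instead sketches the original partite-construction proof, which is entirely appropriate, but there is a genuine gap in the final step.

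Your claim that ``every copy of $\str A$ in the underlying graph of $\str P_t$ is automatically transversal'' is false whenever $\str A$ is not a complete graph. The justification you give --- that there are no edges within parts --- only shows that two \emph{adjacent} vertices of a copy of $\str A$ must lie in different parts; two non-adjacent vertices of $\str A$ can perfectly well land in the same part. (Take $\str A$ to be two non-adjacent ordered vertices: any two vertices in the same part of $\str P_t$ form such a copy.) The cardinality remark does not help either, since the pictures are $|B|$-partite while $\str A$ has only $|A|\le |B|$ vertices.

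The correct final argument is different and simpler. You do not need every copy of $\str A$ in $\str P_t$ to be transversal. Given an arbitrary colouring of all ordered copies of $\str A$ in $\str P_t$, restrict it to the transversal copies; the partite construction then produces a \emph{transversal} copy of $\str B$ monochromatic on its transversal $\str A$-subcopies. Since this copy of $\str B$ has exactly one vertex in each part, \emph{every} ordered copy of $\str A$ inside it is transversal, and hence the copy of $\str B$ is monochromatic for the original colouring.

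A second, smaller issue: the Partite Lemma is not ``a direct application of the Ramsey theorem'' in the way you describe. The standard proof uses the Hales--Jewett theorem (applied to the alphabet indexed by the part-sizes of $\str B^*$), and the passage from Ramsey's theorem alone to the partite statement is not as immediate as you suggest.
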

Here the language is $L=\{E, \leq\}$ and the embeddings are order-preserving.

The techniques of Ne\v set\v ril and R\"odl actually prove much more. Let $L$ be a language. An $L$-structure $\str C$ is \emph{reducible} if there are $L$-structures $\str A, \str B_1, \str B_2$ different from $\str C$ such that $\str C$ is the free amalgam of $\str B_1$ and $\str B_2$ over $\str A$. Otherwise $\str C$ is \emph{irreducible}.

In the class of graphs the irreducible structures are precisely cliques. If $L$ is a relational language then irreducibility means that for every pair of vertices there is a relation and a tuple in that relation containing both vertices (i.e. the Gaifman graph is a clique).

Now we state what is known as the Ne\v set\v ril--R\"odl theorem:
\begin{theorem}[Ne\v set\v ril--R\"odl~\cite{Nevsetvril1977,Nevsetvril1977b}]\label{thm:nr}
Let $L$ be a relational language and let $\mathcal F$ be a collection of irreducible finite $L$-structures. Define $\mathcal F^\leq$ to be the collection of all linear orderings of structures from $\mathcal F$. Then the class of all finite linearly ordered $L$-structures $\str M$ such that there is no $\str F\in \mathcal F^\leq$ with an embedding $\str F\rightarrow \str M$ is Ramsey.
\end{theorem}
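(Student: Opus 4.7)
My plan is to prove Theorem~\ref{thm:nr} by the \emph{partite construction} of Nešetřil and Rödl, done by induction on $|B|$. The argument has three ingredients: a base case establishing Ramsey for the class of \emph{all} finite linearly ordered $L$-structures (the case $\mathcal F = \emptyset$), a \emph{partite lemma}, and an iterated free-amalgamation argument that passes from partite Ramsey witnesses to Ramsey witnesses inside $\Forb(\mathcal F^\leq)$.

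For the base case, fix $\str A \hookrightarrow \str B$ and $k$ colours. I would take a sufficiently long linear order $([N], <)$ equipped with all possible relational interpretations, colour $|A|$-subsets by the isomorphism type of the induced ordered $L$-substructure they carry, and iterate the classical Ramsey theorem once per relation symbol to obtain a monochromatic copy of $\str B$. The linear order is essential here: it rigidifies the category so that the obstruction of Proposition~\ref{prop:rigidity} disappears.

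The inductive step is the heart of the argument. Call an $L^{\leq}$-structure $\str P$ with vertex set partitioned into linearly ordered classes $X_1 < X_2 < \cdots < X_b$ (where $b = |B|$) a \emph{$\str B$-partite system} if every copy of $\str B$ in $\str P$ is transversal, sending the $i$-th vertex of $\str B$ to a vertex of $X_i$. The \emph{partite lemma} states that for every $\str B$-partite system $\str P_0$ there exists a $\str B$-partite system $\str P$ such that, for every $k$-colouring of transversal copies of $\str A$ in $\str P$, some transversal copy of $\str P_0$ inside $\str P$ is monochromatic on its $\str A$-copies; this is proved by applying the base case to the structure induced on a representative family of parts. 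The main construction then iterates: starting from $\str C_0$ large enough to contain a copy of $\str B$, list all copies $\str B_1, \ldots, \str B_N$ of $\str B$ in the current structure and successively build $\str C_i$ as the free amalgam of $\str C_{i-1}$ with a partite Ramsey witness glued along $\str B_i$. A standard backwards-induction pigeonhole on this enumeration, processing copies $\str B_N, \str B_{N-1}, \ldots$ in reverse, shows that $\str C_N \to (\str B)^{\str A}_k$.

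The main obstacle — and the precise point at which irreducibility of $\mathcal F$ enters — is to verify that the iterated free amalgamation stays inside $\Forb(\mathcal F^\leq)$. Suppose $\str F \in \mathcal F^\leq$ embedded into a free amalgam $\str D_1 \cup_{\str D_0} \str D_2$, and write the image as $V_1 \cup V_2$ with $V_j \subseteq D_j$. If $V_1 \setminus D_0$ and $V_2 \setminus D_0$ were both non-empty, irreducibility of $\str F$ would force some relation-tuple of $\str F$ to have vertices in both, contradicting the defining property of a free amalgam (every relation-tuple lies entirely in $\str D_1$ or entirely in $\str D_2$). Hence the embedding factors through one factor, and $\str C_N \in \Forb(\mathcal F^\leq)$ follows by induction on $i$. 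The remaining technical work is the careful statement and proof of the partite lemma (matching the partite order on classes with the linear order on vertices) and the verification that the ordered free amalgamation preserves both the linear order and the partite structure in the inductive step.
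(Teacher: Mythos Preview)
The paper does not prove Theorem~\ref{thm:nr}; it is stated as a classical result of Ne\v set\v ril and R\"odl and cited to~\cite{Nevsetvril1977,Nevsetvril1977b}, so there is no in-paper proof to compare against. Your outline does follow the architecture of the original argument (partite lemma plus partite construction, with irreducibility of the forbidden structures used exactly where you say, to keep the iterated free amalgams in $\Forb(\mathcal F^\leq)$).

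That said, two steps in your sketch are not quite right as written. First, the partite lemma is not proved by ``applying the base case to the structure induced on a representative family of parts''; in the Ne\v set\v ril--R\"odl framework it is proved via the Hales--Jewett theorem (one encodes transversal copies of $\str A$ in a partite system as combinatorial lines and invokes Hales--Jewett to get a monochromatic subspace), and this is where the real combinatorial strength enters. Second, your base-case description (``colour $|A|$-subsets by the isomorphism type \ldots\ and iterate the classical Ramsey theorem once per relation symbol'') does not straightforwardly yield Ramsey for all ordered $L$-structures; the usual route is again the partite construction itself, with the classical Ramsey theorem providing only the ``picture zero'' input. So the induction is not on $|B|$ in the way you suggest: the partite construction is a single self-contained machine, fed by Hales--Jewett (for the partite lemma) and by the ordinary Ramsey theorem (for the amalgamation bookkeeping), rather than an induction that bootstraps from smaller $\str B$.
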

Theorem~\ref{thm:nrgraph} is a direct consequence of Theorem~\ref{thm:nr}. Theorem~\ref{thm:nr} also implies that the class of ordered $K_n$-free graphs is a Ramsey class or in general that every relational free amalgamation class is Ramsey when linearly ordered. It is known (cf. Proposition~\ref{prop:ordernecessary}) that every Ramsey class \emph{fixes} an order. Thus Theorem~\ref{thm:nr} is tight in the sense that adding the order is necessary.

The following observation of Ne\v set\v ril from~1989 gives (under a mild assumption) a strong necessary condition for Ramsey classes and connects the Ramsey theory with the theory of homogeneous structures:
\begin{theorem}[Ne\v set\v ril~\cite{Nevsetvril1989a,Nevsetril2005}] \label{thm:ramseyamalg}
Let $\mathcal C$ be a Ramsey class of finite structures with the joint embedding property. Then $\mathcal C$ has the amalgamation property.
\end{theorem}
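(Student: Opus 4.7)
The plan is to derive AP from the Ramsey property together with JEP by a coloring argument that detects ``which side'' a copy of $\str A$ sits on inside a large structure.

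First, I would fix a span $\str B_1\xleftarrow{\alpha_1}\str A\xrightarrow{\alpha_2}\str B_2$ in $\mathcal C$ and apply JEP to obtain $\str D\in\mathcal C$ together with embeddings $\gamma_i\colon\str B_i\to\str D$. If $\gamma_1\circ\alpha_1=\gamma_2\circ\alpha_2$, then $\str D$ is itself an amalgam and we are done; so assume these two embeddings $\str A\to\str D$ are distinct. Next I would invoke the Ramsey property for $\str A$ and $\str D$ with two colors to obtain $\str C\in\mathcal C$ satisfying $\str C\longrightarrow(\str D)^{\str A}_2$.

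The crucial ingredient is the coloring $\chi\colon{\str C\choose\str A}\to\{1,2\}$ defined by $\chi(f)=1$ if there exists an embedding $g\colon\str B_1\to\str C$ with $g\circ\alpha_1=f$, and $\chi(f)=2$ otherwise. By the choice of $\str C$, some embedding $h\colon\str D\to\str C$ makes $\chi$ constant on all compositions $h\circ e$ for $e\in{\str D\choose\str A}$. Applied to $e_1=\gamma_1\circ\alpha_1$, the value $\chi(h\circ e_1)$ is automatically $1$, since $h\circ\gamma_1$ is an embedding $\str B_1\to\str C$ witnessing it; consequently $\chi(h\circ\gamma_2\circ\alpha_2)=1$ as well, which supplies an embedding $g\colon\str B_1\to\str C$ with $g\circ\alpha_1=h\circ\gamma_2\circ\alpha_2$. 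Setting $\beta_1=g$ and $\beta_2=h\circ\gamma_2$ then gives an amalgamation of $\str B_1$ and $\str B_2$ over $\str A$ realized inside $\str C$.

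The main subtlety I anticipate lies in the deliberate asymmetry of $\chi$: the coloring is phrased entirely in terms of $\str B_1$ and seemingly ignores $\str B_2$, yet the conclusion that the image of $\alpha_2$ can be extended inside $\str C$ to a copy of $\str B_1$ compatible with $\alpha_1$ is precisely the statement $\chi=1$. The Ramsey monochromaticity then does the work automatically, being forced into the ``colour $1$'' class by the trivially-coloured embedding coming from $\gamma_1$, so no separate case analysis based on which colour occurs is needed.
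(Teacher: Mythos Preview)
Your proof is correct and follows essentially the same approach as the paper: use JEP to place $\str B_1$ and $\str B_2$ into a common structure $\str D$, apply the Ramsey property with the colouring ``does this copy of $\str A$ extend to $\str B_1$ via $\alpha_1$?'', and read off the amalgam from a monochromatic copy of $\str D$. The only cosmetic difference is that you argue directly (the monochromatic colour is forced to be $1$ by the $\gamma_1\alpha_1$-copy, so the $\gamma_2\alpha_2$-copy also extends), whereas the paper phrases the same step as a proof by contradiction.
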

\begin{proof}
We need to show that for every $\str A,\str B_1,\str B_2\in \mathcal C$ and embeddings $\alpha_1\colon  \str A\rightarrow \str B_1$ and $\alpha_2\colon  \str A\rightarrow \str B_2$ there is $\str C\in \mathcal C$ and embeddings $\beta_1\colon  \str B_1\rightarrow \str C$ and $\beta_2\colon  \str B_2\rightarrow \str C$ such that $\beta_1\circ \alpha_1 = \beta_2\circ \alpha_2$.

Let $\str B$ be a joint embedding of $\str B_1$ and $\str B_2$ and take $\str C\in \mathcal C$ such that $\str C \longrightarrow (\str B)^\str A_2$. We will prove that $\str C$ is the amalgam we are looking for.

Assume the contrary which means that there is no embedding $\alpha\colon  \str A\rightarrow \str C$ with the property that there are embeddings $\beta_1\colon  \str B_1\rightarrow \str C$ and $\beta_2\colon  \str B_2\rightarrow \str C$ such that $\beta_i\circ\alpha_i = \alpha$ for $i\in\{1,2\}$. Hence, for every $\alpha\colon  \str A\rightarrow \str C$ there is at most one such embedding $\beta_i\colon  \str B_i\rightarrow \str C$. Define the colouring $c\colon  {\str C\choose \str A} \rightarrow \{0, 1\}$ by letting
$$c(\alpha) = 
 \begin{cases} 
   0 & \text{if there is } \beta_1 \colon  \str B_1\rightarrow \str C\text{ such that }\alpha = \beta_1\circ\alpha_1 \\
   1       & \text{otherwise }.
  \end{cases}$$

For an illustration, see Figure~\ref{fig:ramseyamalg}.

\begin{figure}
\centering
\includegraphics{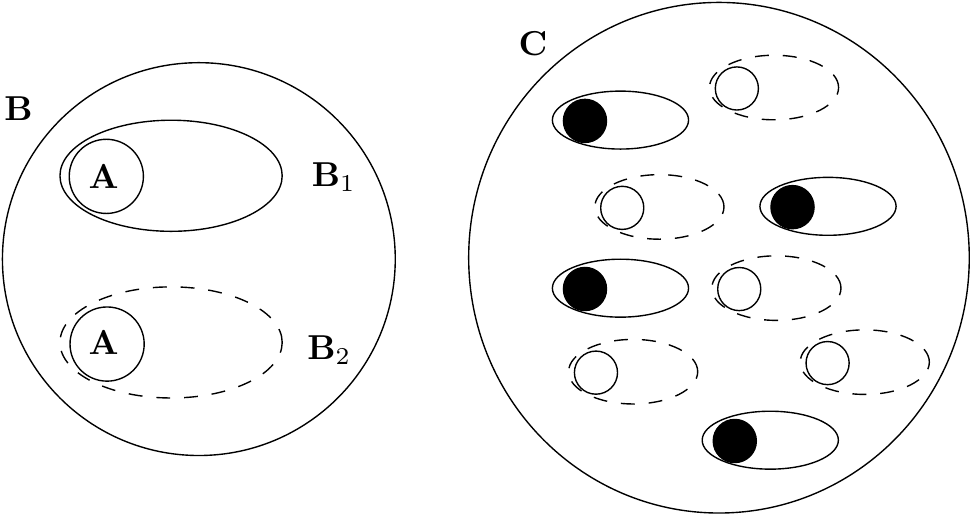}
\caption{Illustration of the proof of Theorem~\ref{thm:ramseyamalg}. Copies of $\str A$ from $\str B_1$ are coloured black, copies of $\str A$ from $\str B_2$ are coloured white.}
\label{fig:ramseyamalg}
\end{figure}

But then, as $\str C \longrightarrow (\str B)^\str A_2$, there is an embedding $\beta \colon  \str B \rightarrow \str C$ such that $c\restriction_{\beta(\str B)}$ is constant. But there are at least two embeddings of $\str A$ into $\beta$ --- one is given by $\alpha_1$ and the other is given by $\alpha_2$. And $\alpha_1$ can be extended to an embedding of $\str B_1$, while $\alpha_2$ can be extended to an embedding of $\str B_2$, hence they got different colours, which is a contradiction.
\end{proof}

Theorem~\ref{thm:ramseyamalg} gives rise to the question whether every amalgamation class is a Ramsey class, to which the answer is negative, as for example the (unordered) graphs form an amalgamation class which is not Ramsey. A follow-up question might be whether every amalgamation class, when enriched by all possible linear orders (for each structure $\str M$ in the original class there will be $n!$ structures in the new, ordered class), is Ramsey. And the answer is, again, no:
\begin{prop}
Let $\str M$ be the disjoint union of two infinite cliques $K_\omega$. $\str M$ is on the Lachlan--Woodrow list (Theorem~\ref{thm:lachlanwoodrow}) hence is homogeneous. Let $\mathcal C = \Age(\str M)$ be the class of all finite graphs such that they are either a clique or the disjoint union of two cliques. And let $\mathcal C^\leq$ be the class of all possible orderings of members of $\mathcal C$. Then $\mathcal C^\leq$ has the amalgamation property, but does not have the Ramsey property.
\end{prop}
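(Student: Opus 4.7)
The plan has two parts: verifying the amalgamation property of $\mathcal C^{\leq}$ by a short case analysis, and then exhibiting a two-colouring that obstructs the Ramsey property.

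\textbf{Amalgamation.} First I would observe that $\mathcal C = \Age(\str M)$ is exactly the class of finite graphs that are disjoint unions of at most two cliques. Given $\str A, \str B_1, \str B_2 \in \mathcal C^{\leq}$ with embeddings $\alpha_i\colon \str A \to \str B_i$, I would construct the underlying graph amalgam by case analysis on the number of components of $\str A$. If $\str A$ has two components then each $\str B_i$ must also have exactly two components with $\alpha_i$ matching components bijectively (since embeddings preserve non-edges), so we amalgamate component-wise to obtain two cliques. If $\str A$ is contained in a single clique (including the empty case), then each $\alpha_i(\str A)$ lies inside one clique-component of $\str B_i$; merge those two components into one clique, and place the remaining (at most two) clique-components of $\str B_1, \str B_2$ together as the second clique. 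In both cases the result lies in $\mathcal C$. Since the orders of $\str B_1$ and $\str B_2$ agree on $\alpha_1(\str A)=\alpha_2(\str A)$, any linear extension of their union on the amalgam produces an element of $\mathcal C^{\leq}$, giving strong amalgamation.

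\textbf{Failure of the Ramsey property.} Take $\str A$ to be the ordered graph on a single vertex and $\str B$ the ordered graph on two non-adjacent vertices $v_1 < v_2$; both lie in $\mathcal C^{\leq}$. For any $\str C\in \mathcal C^{\leq}$ admitting an embedding of $\str B$, the presence of a non-edge forces $\str C$ to be a disjoint union of exactly two cliques. Colour each embedding $\str A \to \str C$ (equivalently, each vertex of $\str C$) by the index of the clique-component it occupies. Any embedding $\str B \to \str C$ sends $v_1$ and $v_2$ to distinct components, so the two embeddings of $\str A$ extending to it receive different colours. Hence no $\str C \in \mathcal C^{\leq}$ satisfies $\str C \longrightarrow (\str B)^{\str A}_2$.

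The only real care is in packaging the amalgamation case analysis so that the result stays within ``at most two cliques''; there is no genuine obstacle, since the Ramsey failure is essentially forced by the fact that $\mathcal C$ has only two types of vertices (one per clique-component), a distinction already detectable by a single non-edge in $\str B$. This is a structural Ramsey-theoretic analogue of Proposition~\ref{prop:rigidity}: the existence of a definable two-colouring of vertices by component, preserved by the automorphism group of $\str M$, blocks the Ramsey property just as non-rigid automorphisms did there.
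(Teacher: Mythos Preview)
Your proposal is correct and matches the paper's proof closely. The non-Ramsey argument is identical to the paper's; for amalgamation the paper simply remarks that the order and graph structures can be amalgamated independently (both having strong amalgamation), whereas you spell out the graph amalgamation by a component case analysis, which amounts to the same thing.
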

\begin{proof}
The amalgamation property is easy: since the order and the graph structure are independent, we can use the amalgamation procedure for the order and the graph structure independently using the fact that both structures have the strong amalgamation property.

Now let $\str A$ be a vertex and $\str B$ be a pair of vertices not connected by an edge. Any $\str C \in \mathcal C^\leq$ which contains a copy of $\str B$ must consist of two cliques. Then one can colour the vertices of one of the cliques red and the vertices of the other clique blue and there will be no monochromatic non-edge in this colouring.
\end{proof}
Such a situation happens often and the following section introduces a way to deal with it.

\subsection{Expansions}
Notice that our colouring was based on the fact that the edge relation in the structure $\str M$ is actually an equivalence relation with two equivalence classes. If one could, for example, expand the language by a unary relation and distinguish the equivalence classes by putting the unary relation on all vertices from one of them, then such a class equipped with an order would be Ramsey (the edge relation would then be redundant and could be formally removed and the Ramseyness of such a class would follow from the Ne\v set\v ril--R\"odl theorem).

In order to formalize this observation and state another variant of the question whether all classes are Ramsey, we need to give a model-theoretic definition.

\begin{definition}[Expansion and reduct]\label{defn:expansion}
Let $L$ be a language and let $L^+$ be another language such that $L\subseteq L^+$ (i.e. $L^+$ contains all symbols that $L$ contains and they have the same arities). Then we call $L^+$ an \emph{expansion} of $L$ and we call $L$ a \emph{reduct} of $L^+$.

Let $\str M$ be an $L$-structure and let $\str M^+$ be an $L^+$-structure such that $\str M^+\restriction_L = \str M$ (by this we mean that $\str M$ and $\str M^+$ have the same sets of vertices and the interpretations of symbols from $L$ are exactly the same in both structures). Then we call $\str M^+$ an \emph{expansion} of $\str M$ and we call $\str M$ a \emph{reduct} of $\str M^+$.

If $\mathcal C$ is a class of finite $L$-structures, we say that $\mathcal C^+$, a class of finite $L^+$-structures, is its \emph{expansion} if for every $\str A\in \mathcal C$ there is $\str A^+\in \mathcal C^+$ which is its expansion and for every $\str A^+\in \mathcal C^+$ there is $\str A\in \mathcal C$ which is its reduct.
\end{definition}
\begin{remark}
In model theory, reduct and expansion often mean something more general, but for our purposes this definition is sufficient.
\end{remark}

Historically, expansions are often called \emph{lifts} in the Ramsey-theoretic context~\cite{Kun2007,Hubicka2009b} and reducts are called \emph{shadows}. We say that a class has a \emph{Ramsey expansion} if it has an expansion which is Ramsey.

So far we have only been adding all linear orders, which is clearly a special expansion (and corresponds to adding to the \Fraisse{} limit the dense linear order with no endpoints which is independent from the rest of the relations). But we also have seen that sometimes it isn't enough. 

To sum up, we know that every Ramsey class (with the joint embedding property) has the amalgamation property. Amalgamation classes (of finite structures in a countable language) correspond to homogeneous structures, their \Fraisse{} limits. And as we have seen, by adding some more structure on top of a homogeneous structure and looking at the age (or in general expanding the class), one can get a Ramsey class.

\medskip

In 2005, Ne\v set\v ril~\cite{Nevsetril2005} started the \emph{classification programme of Ramsey classes} --- the counterpart of the Lachlan--Cherlin classification programme of homogeneous structures. Its goal is to classify all possible Ramsey classes, a goal quite ambitious, but in some cases achievable; the classification programme of homogeneous structures offers lists of possible Ramsey classes, or rather base classes for expansions.

In this thesis we contribute to Ne\v set\v ril's programme by extracting the similarities of several known Ramsey classes and introducing a more abstract version which is their common generalisation.

Having read this far, the reader has probably already asked themselves: ``Does every amalgamation class have a Ramsey expansion?''

The answer to this question is positive, but by cheating: One can add infinitely many unary predicates and let each vertex have its own predicate. Then every structure has at most one embedding to any other and the Ramsey question becomes trivial. Two different ways of avoiding this cheat have been offered:
\begin{question}[Bodirsky--Pinsker--Tsankov~\cite{Bodirsky2011a}]
Does every amalgamation class in a finite language have a Ramsey expansion in a finite language?
\end{question}
This question still remains open. The other possible fix is motivated by topological dynamics (which we touch very briefly in Section~\ref{sec:kpt}). An amalgamation class $\mathcal C$ of finite $L$-structures is said to be \emph{$\omega$-categorical} if for every $n$ there are only finitely many non-isomorphic structures in $\mathcal C$ on $n$ vertices.\footnote{We have already mentioned what it means to be $\omega$-categorical for countable structures. One can prove that a countable structure is $\omega$-categorical if and only if its age is.}

Let $\mathcal C$ be a class of $L$-structures and let $\mathcal C^+$ be its expansion. We say that $\mathcal C^+$ is a \emph{precompact} expansion of $\mathcal C$ if for every $\str A\in \mathcal C$ there are only finitely many non-isomorphic $\str A^+\in\mathcal C^+$ which are expansions of $\str A$. Thus, precompactness is a relative version of $\omega$-categoricity.

\begin{question}[Melleray--Nguyen Van Th\'e--Tsankov~\cite{Melleray2015}]
Does every $\omega$-categorical amalgamation class have a \textit{precompact} Ramsey expansion?
\end{question}
This question has recently been answered negatively by Evans, Hubi\v cka and Ne\v set\v ril~\cite{Evans2}.

\section{The KPT correspondence}\label{sec:kpt}
In 2005, Kechris, Pestov and Todor\v cevi\'c published their famous paper which connected the field of structural Ramsey theory with the seemingly unrelated field of topological dynamics.

While topological dynamics and Ramsey theory are now interconnected, the necessary backgrounds needed to understand the fields are still very different. This section is here mostly for the historical context and hence the reader is sometimes assumed to know some concepts from topology and group theory. However, with the exception of Definition~\ref{defn:expproperty}, no contents of this section are necessary for understanding the rest of the thesis. For a solid overview of the KPT correspondence the survey by Nguyen Van Th\' e~\cite{NVT14} is a good reference.

\medskip

Let $\str M$ be a structure. By $\Aut(\str M)$ we denote the automorphism group of $\str M$. This group can be viewed as a \emph{topological group} when endowed with the \emph{pointwise convergence topology} (the natural choice in this setting), where by a group being topological we mean that both the operation and the inverse are continuous with respect to the given topology.

Recall that we say that $\str M$ is \emph{rigid} if $\Aut(\str M)$ is trivial, i.e. consist only of the identity.

For a topological group $G$ a \emph{$G$-flow} is a continuous action of $G$ on a topological space $X$, often denoted as $G\acts X$. We say that a $G$-flow is \emph{compact} if the space $X$ is compact.

\begin{definition}[Extremely amenable group]\label{defn:extremelyamenable}
Left $G$ be a topological group. We say that $G$ is \emph{extremely amenable} if every compact $G$-flow has a fixed point (i.e. $x\in X$ such that $g\cdot x = x$ for every $g\in G$).
\end{definition}
\begin{remark}
One might ask what an amenable group is. A (locally compact) topological group is amenable if it admits a finitely additive left-invariant probability measure on its subsets. For extremely amenable groups this measure can be Dirac.\footnote{Several paragraphs ago we said that Evans, Hubi\v cka and Ne\v set\v ril~\cite{Evans2} found an $\omega$-categorical class with no precompact Ramsey expansion, one of the so-called \emph{Hrushovski constructions}. It turns out that the difficulty for this class is already in finding an amenable expansion (an expansion whose \Fraisse{} limit's automorphism group is amenable). They suggested yet another question, namely whether every $\omega$-categorical structure with an amenable automorphism group has a precompact Ramsey expansion. This question has also been asked by Ivanov~\cite{Ivanov2015}.}

Yet another equivalent definition of extreme amenability is that the \emph{universal minimal flow} (which we did not define here) is a singleton. Computing universal minimal flows is of interest to people in topological dynamics and it turns out that the results of the structural Ramsey theory are an essential ingredient.
\end{remark}

Now we can state the theorem of Kechris, Pestov and Todor\v cevi\'c (preprint was published in 2003):
\begin{theorem}[KPT correspondence~\cite{Kechris2005}]
Let $\str M$ be a countable homogeneous structure. Then the following are equivalent:
\begin{enumerate}
\item $\Aut(\str M)$ is extremely amenable; and
\item $\Age(\str M)$ has the Ramsey property.
\end{enumerate}
\end{theorem}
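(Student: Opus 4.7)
The plan is to prove both implications via the compact Hausdorff space of $k$-colorings of embeddings of $\str A$ into $\str M$, viewed as an $\Aut(\str M)$-flow.

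For $(1) \Rightarrow (2)$, fix $\str A, \str B \in \Age(\str M)$ and $k \in \mathbb N$. The first step is an \emph{infinitary} Ramsey statement: every coloring $c_0 \colon \binom{\str M}{\str A} \to \{0, \ldots, k-1\}$ admits an embedding $\beta \colon \str B \to \str M$ on which $c_0$ is constant. To produce such a $\beta$, equip $X = \{0, \ldots, k-1\}^{\binom{\str M}{\str A}}$ with the product topology, so that $X$ is compact, and let $\Aut(\str M)$ act continuously by $(g \cdot c)(\alpha) = c(g^{-1} \circ \alpha)$. The orbit closure $Y = \overline{\Aut(\str M) \cdot c_0}$ is a compact $\Aut(\str M)$-subflow, so by extreme amenability it contains a fixed point $c^*$. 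Homogeneity of $\str M$ makes $\Aut(\str M)$ act transitively on $\binom{\str M}{\str A}$, hence the invariance of $c^*$ forces it to be the constant coloring with some value $i$. Because $c^* \in Y$, for any finite $F \subseteq \binom{\str M}{\str A}$ there is $g$ with $c_0(g^{-1} \circ \alpha) = i$ on $F$; choosing $F = \{\beta_0 \circ \alpha : \alpha \in \binom{\str B}{\str A}\}$ for a fixed $\beta_0 \colon \str B \to \str M$ yields the desired monochromatic $\beta = g^{-1} \circ \beta_0$. To pass to a finite arrow $\str C \to (\str B)^{\str A}_k$ inside $\Age(\str M)$, I would then run the standard Tychonoff compactness argument: if no finite $\str C \in \Age(\str M)$ worked, a limit of bad colorings along an exhaustion of $\str M$ would produce a bad coloring of $\binom{\str M}{\str A}$, contradicting the infinitary version.

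For $(2) \Rightarrow (1)$, assume $\Age(\str M)$ is Ramsey (so by Proposition~\ref{prop:rigidity} every structure in it is rigid) and let $\Aut(\str M) \acts Z$ be a compact flow; I want to produce a fixed point. Fix $x_0 \in Z$ and reduce, by compactness of $Z$, to showing that for every finite open cover $\{U_1, \ldots, U_k\}$ of $Z$ and every $\str B \in \Age(\str M)$ (playing the role of a finite test set of near-translations) there exist $g \in \Aut(\str M)$ and a color $i$ such that $h \cdot g \cdot x_0 \in U_i$ for all $h$ lying in a fixed copy of $\str B$; a net of such $g$'s will then cluster at a fixed point. To produce $g$, choose $\str A \in \Age(\str M)$ and a Ramsey witness $\str C \to (\str B)^{\str A}_k$ in $\Age(\str M)$, fix an embedding $\str C \hookrightarrow \str M$, and use the rigidity of $\str A$ to canonically associate to each embedding $\alpha \colon \str A \to \str C$ an automorphism $g_\alpha \in \Aut(\str M)$; color $\alpha$ by the index of any $U_i$ containing $g_\alpha \cdot x_0$. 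A Ramsey-monochromatic copy of $\str B$ inside $\str C$ then supplies the required $g$.

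The harder direction is $(2) \Rightarrow (1)$: one must arrange the assignment $\alpha \mapsto g_\alpha$ and the choice of test sets $\str B$ so that ``monochromatic in $\binom{\str C}{\str A}$'' genuinely forces ``approximately fixed'' in the flow, and the diagonal over refining covers and growing $\str B$'s must be organized as a coherent net inside $\overline{\Aut(\str M) \cdot x_0}$ so that cluster points are actual fixed points. The forward direction, by contrast, is essentially a single topological observation once the $\Aut(\str M)$-flow $X$ is identified and extreme amenability is invoked.
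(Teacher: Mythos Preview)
The paper does not give its own proof of the KPT correspondence; it is merely stated as a cited result from~\cite{Kechris2005} and then used. So there is no in-paper argument to compare your proposal against.

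As a standalone sketch your outline is the standard one. The $(1)\Rightarrow(2)$ direction is essentially complete: the coloring space $\{0,\ldots,k-1\}^{\binom{\str M}{\str A}}$ as a compact $\Aut(\str M)$-flow, a fixed point in the orbit closure forced to be constant by transitivity, and then the compactness reduction to a finite witness are exactly how this direction goes. Your $(2)\Rightarrow(1)$ sketch is vaguer, and you are right that the bookkeeping there is where the content lies. One point to tighten: the assignment $\alpha\mapsto g_\alpha$ should be made not from embeddings $\str A\to\str C$ but by fixing a base copy of $\str A$ in $\str M$ and using homogeneity to send it to each $\alpha$; rigidity then makes the coloring well-defined on embeddings. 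You should also be explicit that one works with a \emph{finite} open cover coming from a \emph{uniform} structure (or a metric, since $\Aut(\str M)$ is Polish and one can reduce to metrizable minimal flows), so that the net argument over refining covers and growing $\str B$'s actually converges to a genuine fixed point rather than just an approximate one. With those two points pinned down, your plan matches the original KPT proof.
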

We have defined Ramseyness as a property of an amalgamation class, but thanks to the KPT correspondence the Ramsey property is now witnessed directly by its \Fraisse{} limit (if it exists). This theorem of Kechris, Pestov and Todor\v cevi\'c ignited a new wave of interest in the structural Ramsey theory.

Nguyen Van Th\'e later~\cite{The2013} built on the ideas used in the proof of the KPT correspondence and introduced a way of computing the universal minimal flows using Ramsey expansions which are in certain sense \textit{minimal}:

\begin{definition}[Expansion property~\cite{The2013}]\label{defn:expproperty}
Let $\mathcal C$ be a class of finite structures and let $\mathcal C^+$ be its expansion. We say that $\mathcal C^+$ has the \emph{expansion property} (with respect to $\mathcal C$) if for every $\str B\in \mathcal C$ there is $\str C\in \mathcal C$ such that for every $\str B^+\in \mathcal C^+$ and for every $\str C^+\in \mathcal C^+$ such that $\str B^+$ is an expansion of $\str B$ and $\str C^+$ is an expansion of $\str C$ it holds that there is an embedding $\str B^+ \to \str C^+$.
\end{definition}
An expansion has the expansion property if for every small structure $\str B$ in the non-expanded class there is a large structure $\str C$ in the non-expanded class such that every expansion of $\str C$ contains every expansion of $\str B$. The expansion property is a generalization of the ordering property studied by Ne\v set\v ril and R\"odl in the 70's and 80's~\cite{Nesetril1975} and it turns out that it expresses well what an intuitively ``good'' expansion is.

Nguyen Van Th\'e also proves that under certain assumptions (for example finite relational languages satisfy those) Ramsey expansions with the expansion property correspond to the universal minimal flow. This means that, up to bi-definability, there is only one Ramsey expansion with the expansion property. And this is in some sense the best one. It is worth noting that Kechris, Pestov and Todor\v cevi\' c~\cite{Kechris2005} proved this for the special case when the expansion is all linear orders (i.e. the expansion property is the ordering property).

We conclude this section and the whole chapter with a sketch of an application of the KPT correspondence. For finite relational languages this proposition can be proved combinatorially (see~\cite[Proposition 2.22]{Bodirsky2015}) and in a stronger setting where the order will be definable:
\begin{prop}[Kechris--Pestov--Todor\v cevi\' c~\cite{Kechris2005}]\label{prop:ordernecessary}
Let $\str M$ be a Ramsey structure (that is, its age has the Ramsey property). Then $\Aut(\str M)$ fixes a linear order, which means that there exists a linear order $\preceq$ on the vertices of $\str M$ such that for every $g\in \Aut(\str M)$ and every $x,y\in M$ it holds that $x\preceq y$ if and only if $g(x)\preceq g(y)$.
\end{prop}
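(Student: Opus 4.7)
The plan is to deduce this as an essentially immediate corollary of the KPT correspondence. By hypothesis $\Age(\str M)$ has the Ramsey property, so the automorphism group $G=\Aut(\str M)$, equipped with the pointwise-convergence topology, is extremely amenable. It therefore suffices to exhibit the set $LO(M)$ of all linear orders on $M$ as a nonempty compact $G$-flow, for then Definition~\ref{defn:extremelyamenable} supplies a $G$-fixed point, which by construction is precisely an $\Aut(\str M)$-invariant linear order.

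First I would identify $LO(M)$ with a subset of $\{0,1\}^{M\times M}$ via characteristic functions, giving it the subspace topology inherited from the Tychonoff product topology. The axioms defining a linear order (irreflexivity, transitivity, totality) are each expressed by a conjunction of clopen conditions involving only two or three coordinates at a time, so $LO(M)$ is closed in a compact space, hence itself compact. It is nonempty by the well-ordering principle. The natural $G$-action sending $\preceq$ to the order $g\cdot{\preceq}$ defined by $u\,(g\cdot{\preceq})\,v \iff g^{-1}(u)\preceq g^{-1}(v)$ is continuous: a subbasic open set in $LO(M)$ fixes the value of the order on one pair $(u,v)$, and its preimage under $g\mapsto g\cdot{\preceq}$ is determined by the values of $g^{-1}$ on the two points $u,v$, which is a basic open set in $G$.

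Applying extreme amenability of $G$ to this flow yields some $\preceq\,\in LO(M)$ satisfying $g\cdot{\preceq}\,=\,\preceq$ for every $g\in G$. Unwinding the definition of the action, this says $g^{-1}(u)\preceq g^{-1}(v) \iff u\preceq v$ for all $u,v\in M$, and substituting $u=g(x),\,v=g(y)$ yields the stated conclusion $x\preceq y \iff g(x)\preceq g(y)$.

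I do not anticipate any genuine obstacle here: once the KPT theorem is in hand, the whole content of the argument is the routine verification that $LO(M)$ is a compact $G$-flow. The one point that deserves care is continuity of the action in the pointwise-convergence topology, but this is immediate because both the product topology on $\{0,1\}^{M\times M}$ and the topology on $G$ are generated by cylinders pinning down finitely many coordinates. The combinatorial route alluded to in the excerpt would instead argue directly, defeating the Ramsey property by a rigidity-style colouring (in the spirit of Proposition~\ref{prop:rigidity}) whenever no invariant order exists; but when the KPT correspondence is already available the topological route above is considerably shorter and cleaner.
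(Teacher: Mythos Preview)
Your proposal is correct and follows essentially the same argument as the paper's sketch: invoke the KPT correspondence to conclude that $\Aut(\str M)$ is extremely amenable, then apply extreme amenability to the compact $G$-flow $LO(M)\subseteq\{0,1\}^{M\times M}$ with the natural action to obtain a fixed linear order. You supply more detail than the paper on compactness of $LO(M)$ and continuity of the action, but the strategy is identical.
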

\begin{proof}[Sketch of proof]
Let $LO(M)$ be the space of all linear orders on $M$ viewed as a subspace of $\{0,1\}^{M\times M}$. Clearly $LO(M)$ is compact and $\Aut(\str M)$ acts continuously on it by its standard action: For $L\in LO(M)$ and $g\in \Aut(\str M)$ we define $g\cdot L$ by $(x,y)\in g\cdot L$ if and only if $(g^{-1}(x), g^{-1}(y))\in L$. Therefore, as $\str M$ is Ramsey, $\Aut(\str M)$ is extremely amenable and thus this action has a fixed point, which is an order $L$ such that $g\cdot L = L$ for every automorphism $g$.
\end{proof}

\section{The Hubi\v cka--Ne\v set\v ril theorem}
A homomorphism $f\colon \str A\rightarrow \str B$ is a \emph{homomorphism-embedding} if for every irreducible $\str C\subseteq \str A$ it holds that $f\restriction_C$ is an embedding $\str C\rightarrow \str B$.

\begin{definition}[Completion]\label{defn:completion}
Let $L$ be a language, let $\str C,\str C'$ be $L$-structures. We say that $\str C'$ is a \emph{(strong) completion} of $\str C$ if there is an injective homo\-morphism-embedding $f\colon \str C\rightarrow \str C'$.
\end{definition}
All completions in this thesis will be strong, therefore we will sometimes omit the adjective.
 
\begin{definition}[Locally finite subclass~\cite{Hubicka2016}]\label{def:locallyfinite}
Let $L$ be a language, let $\mathcal R$ be a class of finite structures and let $\mathcal K\subseteq \mathcal R$ be a subclass of $\mathcal R$. We say that $\mathcal K$ is a \emph{locally finite subclass of $\mathcal R$} if for every $\str C_0 \in \mathcal R$ there exists an integer $n=n(\str C_0)$ such that for every $L$-structure $\str C$ there exists $\str C' \in \mathcal K$ which is a completion of $\str C$ provided that:
\begin{enumerate}
\item there exists a homomorphism-embedding from $\str C$ to $\str C_0$;
\item every irreducible substructure of $\str C$ is in $\mathcal K$; and
\item for every substructure $\str S\subseteq \str C$ such that $\str S$ has at most $n$ vertices there exists $\str S'\in \mathcal K$ which is a completion of $\str S$.
\end{enumerate}
\end{definition}

Now we can state the main result of~\cite{Hubicka2016}.
\begin{theorem}[Hubi\v cka--Ne\v set\v ril~\cite{Hubicka2016}, Theorem~2.2]\label{thm:hn}
Let $L$ be a language and let $\mathcal R$ be a class of finite irreducible $L$-structures which has the Ramsey property. Let $\mathcal K\subseteq \mathcal R$ be a locally finite subclass of $\mathcal R$ which has the strong amalgamation property and is hereditary (if $\str B\in \mathcal K$ and $\str A\subseteq \str B$, then $\str A\in \mathcal K$). Then $\mathcal K$ is Ramsey.
\end{theorem}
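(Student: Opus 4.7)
The plan is to use the local finiteness hypothesis as a bridge from $\mathcal R$ to $\mathcal K$: an $L$-structure that sits by a homomorphism-embedding over some $\str C_0\in\mathcal R$ and whose irreducible and $n(\str C_0)$-sized substructures are sufficiently well-behaved can be completed into $\mathcal K$. Hence, if I can manufacture such a structure that is also a Ramsey witness inside $\mathcal R$, its $\mathcal K$-completion will automatically be a Ramsey witness inside $\mathcal K$.

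Fix $\str A,\str B\in\mathcal K$ and $k\geq 1$. Using the Ramsey property of $\mathcal R$, choose $\str D\in\mathcal R$ with $\str D\longrightarrow(\str B)^{\str A}_k$, and let $n=n(\str D)$ be the integer supplied by Definition~\ref{def:locallyfinite}. The technical core of the proof is then to build an $L$-structure $\str P\in\mathcal R$ satisfying all four of: (i) a homomorphism-embedding $\str P\to\str D$; (ii) $\str P\longrightarrow(\str B)^{\str A}_k$; (iii) every irreducible substructure of $\str P$ lies in $\mathcal K$; and (iv) every substructure of $\str P$ of size at most $n$ admits a completion in $\mathcal K$. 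To obtain $\str P$ I would run a partite-style construction in the spirit of Ne\v set\v ril--R\"odl: process the copies of $\str A$ in $\str D$ one at a time, at each stage amalgamating a carefully selected finite piece built from copies of $\str B$ using strong amalgamation in $\mathcal K$, and then invoking the Ramsey property of $\mathcal R$ for a slightly larger auxiliary target to absorb the newly appearing copies of $\str A$. Hereditariness and strong amalgamation of $\mathcal K$ keep (iii) and (iv) valid throughout; the fresh Ramsey arrow preserves (ii); and (i) follows by design because each stage refines a structure already equipped with a homomorphism-embedding to $\str D$.

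With $\str P$ in hand, apply the local finiteness hypothesis with $\str C_0=\str D$ to $\str P$, obtaining $\str C\in\mathcal K$ together with a homomorphism-embedding $f\colon\str P\to\str C$. Since $\str A,\str B\in\mathcal R$ are irreducible, $f$ restricts to a genuine embedding on every copy of $\str A$ and of $\str B$ inside $\str P$. Given an arbitrary colouring $c\colon{\str C\choose\str A}\to\{0,\ldots,k-1\}$, the pullback $g\mapsto c(f\circ g)$ is a colouring of ${\str P\choose\str A}$; by (ii) there is an embedding $h\colon\str B\to\str P$ on which this pullback is constant, and then $f\circ h$ is a monochromatic embedding of $\str B$ into $\str C$ with respect to $c$. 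Hence $\str C\longrightarrow(\str B)^{\str A}_k$, as required.

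The main obstacle is undoubtedly the construction of $\str P$ in the middle paragraph: (ii), (iii) and (iv) pull in different directions, because naively iterating Ramsey in $\mathcal R$ can create substructures outside $\mathcal K$, while naively replacing parts of $\str D$ with $\mathcal K$-amalgams can destroy the Ramsey arrow. The inductive partite construction --- processing copies of $\str A$ one by one, and interleaving strong amalgamation inside $\mathcal K$ with repeated applications of the Ramsey property of $\mathcal R$ to larger and larger auxiliary targets --- is precisely the device that reconciles these competing demands, and it is where all the real bookkeeping lives.
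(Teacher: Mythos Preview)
This theorem is not proved in the paper --- it is quoted from \cite{Hubicka2016} (their Theorem~2.2) and used as a black box throughout the thesis. So there is no in-paper proof to compare your attempt against.

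Your high-level strategy does match the actual proof in \cite{Hubicka2016}: manufacture an $L$-structure $\str P$ with a homomorphism-embedding to a Ramsey witness $\str C_0\in\mathcal R$, arranged so that the hypotheses of Definition~\ref{def:locallyfinite} hold, then complete into $\mathcal K$; your final pullback paragraph is correct (using that $\str A,\str B$ are irreducible and that strong completions are injective). Two points of correction in the middle, though. First, $\str P$ is not required to lie in $\mathcal R$ --- it is just an $L$-structure, typically very far from irreducible. Second, your description of how $\str P$ is built is off in its mechanics: the Ramsey property of $\mathcal R$ is invoked \emph{once}, to obtain $\str C_0$, not repeatedly at each stage, and the per-copy-of-$\str A$ iteration does not proceed by strong amalgamation in $\mathcal K$ but via the \emph{partite lemma}, a Hales--Jewett-type statement for $\str C_0$-partite structures. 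The partite organisation is precisely what delivers your (iii) and (iv): the building blocks are copies of $\str B$, irreducible substructures of $\str P$ end up inside single such copies, and hereditariness of $\mathcal K$ finishes (iii); condition (iv) is handled by a similar analysis. You are right that this is where the real work lives, but the engine is combinatorial (Hales--Jewett) rather than amalgamation-based.
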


When one works with a relational language $L$, the class $\mathcal R$ is usually the class of all ordered finite $L$-structures, which is Ramsey by the Ne\v set\v ril--R\"odl theorem. For languages involving both relations and functions, we need an analogue of the Ne\v set\v ril--R\"odl theorem proved recently by Evans, Hubi\v cka and Ne\v set\v ril.
\begin{theorem}[Evans--Hubi\v cka--Ne\v set\v ril~\cite{Evans3}]\label{thm:NRclosures}
Let $L$ be a language and let $\mathcal K$ be a free amalgamation class of $L$-structures. Then $\overrightarrow{\mathcal K}$, the class of all linearly ordered structures from $\mathcal K$, is a Ramsey class.
\end{theorem}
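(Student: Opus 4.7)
The plan is to apply the Hubi\v cka--Ne\v set\v ril theorem (Theorem~\ref{thm:hn}). Since the Ne\v set\v ril--R\"odl theorem (Theorem~\ref{thm:nr}) supplies Ramseyness only for purely relational classes, I would first pass from $L$ to a relational expansion $L^*$ obtained by replacing each function symbol $\func{}{}\in L_{\mathcal F}$ of arity $k$ by a relation symbol $\relfunc{}{}$ of arity $k+1$, whose intended interpretation is the graph of $\func{}{}$. Take $\mathcal R$ to be the class of all linearly ordered finite irreducible $L^*$-structures; this is Ramsey by Theorem~\ref{thm:nr}. Every $\str{A}\in\overrightarrow{\mathcal K}$ can be viewed inside $\mathcal R$ by replacing each partial function $\func{A}{}$ with its graph $\relfunc{A}{} = \{(\bar x, y) : \bar x \in \dom(\func{A}{}),\ y = \func{A}{}(\bar x)\}$ and leaving the other relations untouched.

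Next I would verify the hypotheses of Theorem~\ref{thm:hn} for $\overrightarrow{\mathcal K}\subseteq \mathcal R$. Heredity is inherited from $\mathcal K$ together with the trivial heredity of orderings. Strong amalgamation for $\overrightarrow{\mathcal K}$ follows from the free amalgamation property of $\mathcal K$ combined with strong amalgamation for linear orders: carry out the free amalgam of $\str B_1$ and $\str B_2$ over $\str A$ in $\mathcal K$ and interleave the vertices of $B_1\setminus\alpha_1(A)$ and $B_2\setminus\alpha_2(A)$ in any way extending the orders on $\str B_1$ and $\str B_2$.

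The heart of the argument is local finiteness. Given $\str C_0\in\mathcal R$, I need to produce an integer $n=n(\str C_0)$ such that every $L^*$-structure $\str C$ admitting a homomorphism-embedding to $\str C_0$, whose irreducible substructures already lie in $\overrightarrow{\mathcal K}$, and whose substructures of size at most $n$ each admit a completion in $\overrightarrow{\mathcal K}$, itself admits such a completion. Choosing $n$ of order $|\str C_0|$, I would construct the completion iteratively by taking free amalgams of the irreducible substructures of $\str C$ in the order indicated by the homomorphism-embedding into $\str C_0$, at each step extending the linear order compatibly. The free amalgamation hypothesis is precisely what makes this work: gluing creates no unforeseen relational tuples or function values, so forbidden configurations cannot appear outside a single summand, and the bound $n$ ensures any local obstruction is already detected on a small witness.

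The main obstacle I anticipate is bookkeeping for the functional part: function symbols in $\mathcal K$ generate \emph{closures}, since $\func{}{}(\bar x)$ forces a specific vertex to exist, and the $L^*$-encoding must remain faithful (every $\relfunc{}{}$-tuple in the completion must correspond to a genuine functional value, with no two outputs on the same input). The irreducibility constraint built into $\mathcal R$ together with the freeness of amalgamation bounds how far such closures can propagate across the partite/amalgamation construction, but the precise choice of $n(\str C_0)$ must take into account the maximum closure size of substructures of $\str C_0$ relative to $\mathcal K$, which is where the technical work sits.
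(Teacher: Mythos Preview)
This theorem is not proved in the present paper; it is quoted from~\cite{Evans3} and used as a black box (as the base class $\mathcal R$ in applications of Theorem~\ref{thm:hn}). So there is no ``paper's own proof'' to compare against here, but it is still worth commenting on your reduction strategy, because it runs into a genuine obstacle.

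Your encoding of function symbols by their graph relations does \emph{not} yield an isomorphism of categories between $\overrightarrow{\mathcal K}$ and its relational image. The paper's definition of embedding for languages with functions (condition~(b) at the start of Chapter~\ref{ch:background}) requires that $\bar x\in\dom(\func{A}{})$ if and only if $f(\bar x)\in\dom(\func{B}{})$. Under your encoding this becomes ``there is $y\in A$ with $\relfunc{A}{}(\bar x,y)$ iff there is $z\in B$ with $\relfunc{B}{}(f(\bar x),z)$'', which is \emph{not} what an $L^*$-embedding gives you: an $L^*$-embedding only says that $\relfunc{B}{}(f(\bar x),f(y))$ fails for every $y\in A$, leaving open the possibility that $\func{B}{}(f(\bar x))$ lands outside $f(A)$. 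Hence there are strictly more $L^*$-embeddings than $L$-embeddings between encoded structures, and Ramseyness of the encoded class does not transfer back.

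One can try to repair this by also encoding the domain of each $\func{}{}$ as a predicate $D^F$; then embeddings do match up, but now the completion step in your local-finiteness argument must enforce ``$D^F(\bar x)\Rightarrow \exists y\,\relfunc{}{}(\bar x,y)$'', which in general forces one to \emph{add vertices} and then close under all function symbols again. Controlling the size of these iterated closures is precisely the content of the Evans--Hubi\v cka--Ne\v set\v ril argument, and your sketch (``take free amalgams of irreducible pieces, choose $n$ of order $|\str C_0|$'') does not address it. The actual proof in~\cite{Evans3} does not reduce to the relational Ne\v set\v ril--R\"odl theorem; it reworks the partite construction itself so that the partite pieces are closed under the functions, which is where the closure bounds are established. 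What you have identified as ``where the technical work sits'' is essentially the entire theorem.
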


\section{EPPA}\label{sec:eppa}
There is another combinatorial property with a direct analogue of Theorem~\ref{thm:hn} called EPPA (the extension property for partial automorphisms). We only give the minimum necessary background, for a broader overview of EPPA consult Siniora's PhD thesis~\cite{Siniora2}.

\begin{definition}[EPPA]\label{defn:eppa}
Let $L$ be a language and let $\mathcal C$ be a class of finite $L$-structures. We say that $\mathcal C$ has the \emph{extension property for partial automorphisms} (\emph{EPPA}) if for every $\str A\in \mathcal C$ there exists $\str B\in \mathcal C$ such that $\str A\subseteq \str B$ and for every \emph{partial automorphism} $f\colon  \str A\rightarrow \str A$ (that is, an isomorphism of substructures of $\str A$) there exists an automorphism $g$ of $\str B$ such that $f\subseteq g$. We call such $\str B$ an \emph{EPPA-witness for $\str A$}.
\end{definition}
EPPA is sometimes also called the \emph{Hrushovski property}, because Hrushovski was the first to prove that the class of all finite graphs has EPPA~\cite{hrushovski1992}. Since then the quest of finding classes with EPPA continued with a series of papers including~\cite{Herwig1995,herwig2000,hodkinson2003,solecki2005,vershik2008,Conant2015,Aranda2017,Hubicka2018metricEPPA,eppatwographs,Hubicka2018EPPA,HubickaSemigeneric,Konecny2019a}.

A distant analogue of the Hubi\v cka--Ne\v set\v ril theorem for EPPA is the Herwig--Lascar theorem which has recently been strengthened by Hubi\v cka, Ne\v set\v ril and the author~\cite{Hubicka2018EPPA}. Conveniently, we also gave a formulation very similar to Theorem~\ref{thm:hn} which makes it easier to apply both of them in the same paper.

Let $\str C$ and $\str C'$ be structures such that $\str C'$ is a completion of $\str C$. We say that $\str C'$ is an \emph{automorphism-preserving} completion of $\str C$ if the corresponding map is automorphism-preserving. We say that a locally finite subclass $\mathcal K\subseteq \mathcal R$ (see Definition~\ref{def:locallyfinite}) is \emph{automorphism-preserving} if the completions $\str C'$ can be chosen to be automorphism-preserving.

We can now state (a weaker form) of the main theorem from~\cite{Hubicka2018EPPA}.
\begin{theorem}[\cite{Hubicka2018EPPA}]\label{thm:herwiglascar}
Let $L$ be a finite language consisting of relations and unary functions and let $\mathcal E$ be a class of finite $L$-structures which has EPPA. Let $\mathcal K$ be a hereditary locally finite automorphism-preserving subclass of $\mathcal E$ which consists of irreducible structures and has the strong amalgamation property. Then $\mathcal K$ has EPPA.
\end{theorem}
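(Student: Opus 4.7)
The plan is to adapt the proof of Theorem~\ref{thm:hn} to the EPPA setting, using EPPA of $\mathcal E$ in place of the Ramsey property and then applying the automorphism-preserving local finiteness of $\mathcal K$ inside $\mathcal E$ to transfer the witness back into $\mathcal K$. Given $\str A \in \mathcal K$, the goal is to construct an EPPA-witness $\str B \in \mathcal K$.

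First I would prepare an auxiliary structure $\str A^* \in \mathcal K$ containing $\str A$ using the strong amalgamation property of $\mathcal K$: iterate strong amalgams of copies of $\str A$ along its partial automorphisms so that any configuration obtainable by composing partial automorphisms of $\str A$ has already been realized inside $\str A^*$. I then apply EPPA of $\mathcal E$ to $\str A^*$ to obtain $\str E \in \mathcal E$ with $\str A^* \subseteq \str E$ such that every partial automorphism of $\str A^*$ --- and in particular every partial automorphism of $\str A$ --- extends to an automorphism of $\str E$. I take $\str C_0 := \str E$ as the bound in $\mathcal R = \mathcal E$ for the local-finiteness definition, so the identity map serves as a hom-embedding $\str E \to \str C_0$; let $n = n(\str C_0)$.

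The central verification step is to check the remaining hypotheses of automorphism-preserving local finiteness for $\str E$: every irreducible substructure of $\str E$ lies in $\mathcal K$, and every substructure of $\str E$ of size at most $n$ admits a completion in $\mathcal K$. For the first I would argue that, after sufficiently enlarging $\str A^*$ in advance, each irreducible substructure of $\str E$ is an $\Aut(\str E)$-translate of an irreducible substructure of $\str A^* \in \mathcal K$, and is therefore itself in $\mathcal K$ by heredity together with the fact that $\mathcal K$ consists of irreducible structures. For the second I would observe that every size-$\leq n$ substructure of $\str E$ is, after applying a suitable automorphism of $\str E$, contained in $\str A^*$, and thus has a completion in $\mathcal K$ coming from $\str A^*$ itself (or from a strong amalgam inside $\mathcal K$ when additional vertices of $\str A^*$ need to be added).

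Applying automorphism-preserving local finiteness then yields $\str B \in \mathcal K$ together with a homomorphism-embedding $f\colon \str E \to \str B$ and a group homomorphism $h\colon \Aut(\str E) \to \Aut(\str B)$ with $f \circ \alpha \circ f^{-1} \subseteq h(\alpha)$ for every $\alpha \in \Aut(\str E)$. Because irreducibility of $\str A$ forces $f\restriction_{\str A}$ to be an actual embedding, $f(\str A) \cong \str A$ inside $\str B$; and because every partial automorphism $p$ of $\str A$ has some extension $\alpha \in \Aut(\str E)$, the image $h(\alpha)$ is an automorphism of $\str B$ that extends the corresponding partial automorphism of $f(\str A)$. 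Hence $\str B$ is the required EPPA-witness. The main obstacle is the circular aspect of the preparatory step: the bound $n$ depends on $\str C_0 = \str E$, which depends on the EPPA-witness, which in turn depends on $\str A^*$, whose required richness depends on $n$. Resolving this requires an iterative (or fixed-point) use of strong amalgamation to ensure $\str A^*$ absorbs all relevant small substructures; handling unary functions adds a further subtlety because substructures must be function-closed, so both the EPPA construction in $\mathcal E$ and the completion in $\mathcal K$ must respect these closures while preserving the automorphism action.
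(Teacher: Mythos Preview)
This theorem is not proved in the thesis; it is quoted from~\cite{Hubicka2018EPPA} as a black box (the thesis even calls it ``a weaker form'' of the main result there). So there is no proof in the paper to compare against, but your proposal has a genuine gap worth naming.

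The circularity you flag is fatal, not merely technical. Your two central claims---that every irreducible substructure of $\str E$ is an $\Aut(\str E)$-translate of something inside $\str A^*$, and that every size-$\le n$ substructure of $\str E$ can be moved into $\str A^*$ by an automorphism---are simply false for a generic EPPA-witness. Take $\mathcal E$ to be all finite $L$-structures and $\mathcal K$ to be metric spaces: a black-box EPPA-witness in $\mathcal E$ for a metric space may well contain non-metric triangles, and no enlargement of the input prevents this, because the EPPA construction in $\mathcal E$ is oblivious to $\mathcal K$. Your proposed ``iterative or fixed-point'' fix does not converge: each pass to a larger $\str A^*$ produces a new, larger witness $\str E$ with potentially new bad substructures and a new, larger $n$. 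What the proof in~\cite{Hubicka2018EPPA} actually uses is more than bare EPPA for $\mathcal E$: the explicit construction behind Theorem~\ref{thm:eppanr} yields a witness $\str B$ for $\str A$ equipped with a homomorphism-embedding $\str B\to\str A$. This forces every irreducible substructure of $\str B$ to embed in $\str A\in\mathcal K$ (giving condition~2 of Definition~\ref{def:locallyfinite} for free) and lets one take $\str C_0=\str A$, so $n=n(\str A)$ depends only on $\str A$ and the circularity disappears. Once that is in place, your final paragraph---applying automorphism-preserving local finiteness and reading off the extensions via the group homomorphism $h$---is correct and is exactly how the argument finishes.
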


Again, as for the Ramsey property, Theorem~\ref{thm:herwiglascar} has a form of implication for which we need a base class:

\begin{theorem}[\cite{Hubicka2018EPPA}]\label{thm:eppanr}
Let $L$ be a finite language consisting of relations and unary functions. Then the class of all finite $L$-structures has EPPA.
\end{theorem}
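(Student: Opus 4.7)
My plan is to reduce Theorem~\ref{thm:eppanr} to the purely relational case of EPPA, which is Hrushovski's theorem for graphs~\cite{hrushovski1992} as extended by Herwig~\cite{Herwig1995} to all finite relational languages. That result already settles the case $L_{\mathcal{F}} = \emptyset$, so the substance of the proof lies in accommodating the unary function symbols on top of an existing relational EPPA-witness.

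First I would encode each unary function $F \in L_{\mathcal{F}}$ by its graph: introduce a fresh binary relation $R_F$ with $R_F(x,y) \iff F^{\mathbf{A}}(x) = y$, together with a fresh unary relation $D_F$ marking $\dom(F^{\mathbf{A}})$. Any finite $L$-structure $\mathbf{A}$ then becomes a finite purely relational $L^*$-structure $\mathbf{A}^*$, and every partial automorphism of $\mathbf{A}$ pulls back to a partial automorphism of $\mathbf{A}^*$. Applying Herwig's theorem yields an EPPA-witness $\mathbf{B}^*$ for $\mathbf{A}^*$ in the class of all finite $L^*$-structures, in which every partial automorphism of $\mathbf{A}^*$ extends to an automorphism of $\mathbf{B}^*$.

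The hard part will be that $\mathbf{B}^*$ need not be \emph{functional}: a vertex may acquire several $R_F$-successors, or lie in $D_F$ with no $R_F$-successor at all, so $\mathbf{B}^*$ does not interpret directly as an $L$-structure. To repair this I would invoke a Herwig--Lascar-style strengthening of EPPA in which the non-functional configurations (for each $F$, the patterns $R_F(x,y)\wedge R_F(x,z)$ with $y\neq z$, and $D_F(x)$ with no outgoing $R_F$-edge) are forbidden substructures; since $\mathbf{A}^*$ itself avoids them, such a strengthened witness exists in the restricted class and interprets as a valid $L$-structure. A cleaner and perhaps more direct alternative is to construct $\mathbf{B}$ by a wreath-product / orbit construction: choose a finite permutation group $G$ generated by globally defined extensions of all partial automorphisms of $\mathbf{A}$, form the orbit of $A$ under a suitable copy of $G$, and transport the interpretations of the functions and relations of $\mathbf{A}$ coherently along the orbit, so that functionality is preserved by design. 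In either realisation the crux is the same --- maintaining functionality simultaneously with the extendability of every partial automorphism. Once $\mathbf{B}$ is produced, checking that it is an $L$-structure containing $\mathbf{A}$ and witnessing EPPA reduces to a routine orbit calculation.
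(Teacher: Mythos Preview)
The paper does not prove Theorem~\ref{thm:eppanr}; it is quoted verbatim from~\cite{Hubicka2018EPPA} and used as a black box (the ``base class'' input to Theorem~\ref{thm:herwiglascar}). So there is no proof in the paper to compare your proposal against.

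That said, your proposal is not a complete proof, and the gap is exactly where you flag it. Encoding a partial unary function $F$ by its graph $R_F$ and then appealing to a Herwig--Lascar style statement to forbid non-functional configurations does not go through directly: the pattern ``$R_F(x,y)\wedge R_F(x,z)$ with $y\neq z$'' is an \emph{embedding} constraint, not a forbidden-homomorphism constraint (a homomorphism may identify $y$ and $z$, so forbidding homomorphic images of the three-point gadget kills every $R_F$-edge), and the pattern ``$D_F(x)$ with no $R_F$-successor'' is a negative condition that cannot be expressed as a forbidden homomorphism at all. One can sometimes rescue the first issue by introducing an explicit inequality relation, but you then have to argue that the witness interprets it as true inequality, which is additional work. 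Your alternative ``wreath-product / orbit'' sketch is too vague to evaluate; in particular, choosing global permutations extending the partial automorphisms and then ``transporting the functions along the orbit'' is precisely where functionality and the embedding of $\str A$ tend to clash.

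For context, the proof in~\cite{Hubicka2018EPPA} does not reduce to the relational case. It gives a direct combinatorial construction: vertices of the witness are pairs consisting of a vertex of $\str A$ together with a suitable \emph{valuation} (a function recording, roughly, choices of images under the unary functions), and the $L$-structure on these pairs is defined so that functionality holds by design while automorphisms act by permuting valuations. If you want to carry your approach through, you would in effect be reinventing this valuation machinery inside the ``wreath-product'' paragraph.
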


\chapter{The shortest path completion}\label{ch:shortestpath}
In this section we study the basic properties of the shortest path completion. They are then used implicitly in the rest of the thesis. The ideas used in this chapter are often generalisations of~\cite{Hubicka2017sauerconnant}; \cite{Hubicka2016}, \cite{Nevsetvril2007}, \cite{solecki2005} or~\cite{vershik2008} also proceed similarly.

\begin{definition}\label{defn:graphs}
Fix a \pocs{} $\ESemig$. An \emph{$\Semig$-edge-labelled graph} $\str{G}$ is a pair $(G,d)$ where $G$ is the \emph{vertex
set} and $d$ is a partial function from $G\choose 2$ to $M$. For simplicity, we shall write $d(u,v)$ instead of $d(\{u,v\})$ and keep in mind that the function $d$ is symmetric and undefined for $u=v$. A pair of vertices $u,v$ on which $d(u,v)$ is defined is called an \emph{edge} of $\str{G}$. We also call $d(u,v)$ the \emph{length of the edge} $u,v$.

If $\Semig$ is clear from context, we may write simply \emph{edge-labelled graph}. We denote by $\mathcal G^\Semig$ the class of all finite $\Semig$-edge-labelled graphs.
\end{definition}
An edge-labelled graph can also be seen as a relational structure. Hence the standard notions of homomorphism, embedding, and isomorphism extend naturally to edge-labelled graphs. (This is very important for this thesis!) We find it convenient
to use notation that resembles the standard notation of metric spaces.

An ($\Semig$-edge-labelled) graph $\str{G}$ is \emph{complete} if every pair of vertices
forms an edge; a complete $\Semig$-edge-labelled graph $\str{G}$ is called an \emph{$\Semig$-metric space} if it is an $\Semig$-metric space as defined in Definition~\ref{def:Mmetric}.
An $\Semig$-edge-labelled graph $\str{G}=(G,d)$ is \emph{$\Semig$-metric} (or \emph{metric} if $\Semig$ is clear from the context) if there exists an $\Semig$-metric
space $\str{M}=(G,d')$ such that $d(u,v)=d'(u,v)$ for every edge $u,v$ of $\str{G}$. Such a metric space $\str{M}$ is also called a \emph{(strong) $\Semig$-metric completion} of
$\str{G}$. An $\Semig$-edge-labelled graph which is not $\Semig$-metric is called \emph{non-$\Semig$-metric}.

We shall adopt the standard graph-theoretic notions, such as the notion of a cycle: An $\Semig$-edge-labelled graph $\str{G}$ is a ($\Semig$-edge-labelled) cycle if its edges form a cycle. 

Given an $\Semig$-edge-labelled graph $\str{G}$, the \emph{walk} from $u$ to $v$ is any sequence of vertices $u=v_1,v_2,\ldots, v_n=v\in \str{G}$
such that $v_i,v_{i+1}$ form an edge for every $1\leq i<n$. If the sequence contains no repeated vertices, it is a \emph{path}.
The $\Semig$-length (or simply a length) of a walk $\str{W}$ is defined and denoted as $$\|\str{W}\| = d(v_1, v_2)\oplus d(v_2,v_3)\oplus \cdots\oplus d(v_{n-1},v_n).$$

We say that $\str{G}$ is \emph{connected} if there exists a path from $u$ to $v$ for every choice of $u\neq v\in G$.

To avoid unnecessary notational complications, we shall sometimes treat a walk or a cycle as a (cyclic) sequence of elements of $\Semig$ which represent the lengths of the edges of the path/cycle $\str K$. In this case, we will say that $\str{K} = (c_1, c_2, \ldots, c_k)$, where $c_1, \ldots, c_k\in \Semig$ and the edges with lengths $c_i,c_{i+1}$ are adjacent (if $\str K$ is a cycle, we further want $c_1,c_k$ adjacent).

In~2007 Ne\v set\v ril proved that the class of all linearly ordered metric spaces has the Ramsey property~\cite{Nevsetvril2007}, while Solecki~\cite{solecki2005} and Vershik~\cite{vershik2008} independently proved that the class of all metric spaces has EPPA. All these results were based on the fact that a finite $\mathbb R^{>0}$-edge-labelled graph is metric if and only if it contains no non-metric cycle as a (non-induced) subgraph and that one can complete a metric edge-labelled graph to a metric space by letting the distance between every two vertices be the length of the shortest path connecting them.

As mentioned in Example~\ref{example1}, Sauer~\cite{Sauer2013b} studied the sets $S$ for which $\oplus_S$ is defined. He proved that $\oplus_S$ is associative (and hence $(S, \oplus, \leq)$ is a \pocs) if and only if the class of all finite metric spaces with distances from $S$ is an amalgamation class. Hubi\v cka and Ne\v set\v ril~\cite{Hubicka2016} gave Ramsey expansions of all such $S$-metric spaces (some of their results had been obtained before by Nguyen Van Th\'e~\cite{The2010}), again using the shortest path completion (where now the lengths of paths are measured using the $\oplus_S$ operation). Their techniques also directly imply the same results for classes of $S$-metric spaces where one further forbids ``short odd cycles'', that is, triangles with distances $a,b,c$ such that $a+b+c$ is odd and ``small enough''.

Conant~\cite{Conant2015} studied EPPA in the context of generalised metric spaces which are $\Semig$-metric spaces where $\Semig$ is a linearly ordered commutative monoid. Hubi\v cka, Ne\v set\v ril and the author~\cite{Hubicka2017sauerconnant} later found Ramsey expansions for all such spaces.

Braunfeld~\cite{Sam}, motivated by his classification of generalised permutation structures~\cite{Sam2}, found Ramsey expansions of $\Lambda$-ultrametric spaces for a distributive lattice $\Lambda = (\Lambda, \vee, \wedge, 0)$, which are essentially (see Remark~\ref{rem:sam}) $\Semig$-valued metric spaces, where $\Semig = (\Lambda\setminus\{0\}, \vee, \mleq)$ for $\mleq$ the standard lattice order.

Finally, Aranda, Bradley-Williams, Hubi{\v c}ka, Karamanlis, Kompatscher, Paw\-liuk and the author~\cite{Aranda2017, Aranda2017a, Aranda2017c} studied Ramsey expansion of metric spaces from Cherlin's catalogue of metrically homogeneous graphs~\cite{Cherlin2011b,Cherlin2013}. These are metric spaces with distances from $\{1, \ldots, \delta\}$ with $\delta\geq 3$, such that one can further forbid several families of cycles (such as cycles of short odd perimeter or cycles of long perimeter). A key ingredient was finding an explicit procedure to complete $[\delta]$-edge-labelled graphs. This was done by introducing a binary commutative operation $\oplus$ on $\{1, \ldots, \delta\}$ and an order of $\{1, \ldots, \delta\}$ and then in stages according to the order complete \emph{forks} (that is, triples of vertices $u,v,w$ such that $uv$ and $vw$ are edges and $uw$ is not an edge) using the $\oplus$ operation. An exposition can be found in the bachelor thesis of the author~\cite{Konecny2018bc}, see also Section~\ref{sec:methom}.

All the mentioned results, some rather directly, some not at all, satisfy some form of triangle inequality and admit some form of the shortest path completion.

Let $\str G$ be an $\Semig$-edge-labelled graph, $u,v\in G$ vertices of $\str G$ and $\mathcal W$ be a finite family of walks in $\str G$ from $u$ to $v$. Then we define $\inf(\mathcal W) = \inf_{\str W\in \mathcal W}\|\str W\|$.
\begin{definition}[Shortest path completion]\label{defn:shortestpath}
Let $\ESemig$ be a \pocs{} and $\str{G}=(G,d)$ be a finite connected $\Semig$-edge-labelled graph. Denote by $\mathcal P(u,v)$ the (finite) family of all paths in $\str G$ from $u$ to $v$. Assume that $\inf(\mathcal P(u,v))$ is defined for every $u\neq v$.

Define $d'(u,v)=\inf(\mathcal P(u,v))$ for every $u\neq v$. Then we call the complete $\Semig$-metric graph $\str{A}=(G,d')$ the \emph{shortest path completion} of $\str{G}$.
\end{definition}

Note that the shortest path completion assumes the existence of all infima in encounters. The motivation for the following definition is to be able to ensure that this assumption is satisfied. We also take care of the distributivity of $\inf$ and $\oplus$ which will be needed later when proving the strong amalgamation property.

For sets of distances $A,B$, denote by $A\oplus B$ the set $\{a\oplus b:a\in A, b\in B\}$. If $\str A$ is an $\Semig$-edge-labelled graph, $u,v,w$ are its distinct vertices, $\mathcal W$ is a family of walks from $u$ to $v$ and $\mathcal W'$ is a family of walks from $v$ to $w$, then by $\mathcal W\oplus \mathcal W'$ we mean the family of all walks from $u$ to $w$ which one can get by concatenating a walk from $\mathcal W$ with a walk from $\mathcal W'$ by the vertex $v$.



\begin{definition}[Disobedient cycles]\label{defn:disobedient}
Let $\ESemig$ be a \pocs{} and $\mathcal F$ be a family of $\Semig$-edge-labelled cycles. We say that $\mathcal F$ \emph{contains all disobedient cycles} if the following holds for every $\Semig$-metric space $\str A\in \MF$:

Let $u,v,w\in A$ be vertices of $\str A$, let $\mathcal W$ be a finite family of walks from $u$ to $v$ in $\str A$ and let $\mathcal W'$ be a finite family of walks from $v$ to $w$ such that every walk in $\mathcal W$ and $\mathcal W'$ has at least two edges. Then it holds that
\begin{enumerate}
\item $\inf(\mathcal W)$ and $\inf(\mathcal W')$ are defined; and
\item $\inf(\mathcal W \oplus \mathcal W') = \inf(\mathcal W)\oplus \inf(\mathcal W')$.
\end{enumerate}
\end{definition}

We want to show that the class of all finite $\Semig$-metric spaces which omit homomorphic images of members of $\mathcal F$ has the strong amalgamation property. For it we need to ensure that the amalgamation process and the shortest path completion do not introduce any cycles from $\mathcal F$. We ensure it by putting some more conditions on $\mathcal F$.
\begin{definition}[Omissible family]\label{defn:omissible}
Let $\ESemig$ be a \pocs{} and $\mathcal F$ be a family of $\Semig$-edge-labelled cycles. We say that $\mathcal F$ is \emph{omissible} if for every cycle $(a_1, a_2, \ldots, a_k) \in \mathcal F$ the following conditions hold:
\begin{enumerate}
\item For every $1\leq i\leq k$ it holds that $a_i \mlt \bigoplus_{j\neq i} a_j$ (that is, $\mathcal F$ contains no \emph{geodesic} or non-$\Semig$-metric cycles). \label{defn:omissible:metric}
\item For every $1\leq i\leq k$ and every family of paths $\mathcal P$ such that $\inf(\mathcal P) = a_i$ and no two members of $\mathcal P$ together form a non-metric cycle or a cycle from $\mathcal F$ there is $\str P\in \mathcal P$ such that the cycle $\str C'$ is in $\mathcal F$, where $\str C'$ is obtained from $\str C$ by replacing $a_i$ by the path $\str P$. We say that $\mathcal F$ is \emph{closed under inverse steps of the shortest path completion} or \emph{upwards closed}. \label{defn:omissible:upwards}
\item $\mathcal F$ is \emph{downwards closed}, that is, the following two conditions are satisfied: \label{defn:omissible:downwards}
\begin{enumerate}
\item For every $1\leq i < j\leq k$ such that $2\leq j-i\leq k-3$ it holds that one of the cycles $(a_i,\ldots,a_j)$ and $(a_1,\ldots,a_{i-1},a_{j+1},\ldots,a_k)$ is in $\mathcal F$ or is non-$\Semig$-metric (two edges of different lengths are also a non-$\Semig$-metric cycle); and
\item for every $i,j$ such that $1\leq i < j\leq  k$, $2\leq j-i\leq k-3$ and an arbitrary distance $c\in \Semig$ it holds that one of the cycles $(c, a_i, a_{i+1}, \ldots, a_j)$ and $(a_1, \ldots, a_{i-1}, c, a_{j+1}, \ldots, a_k)$ is in $\mathcal F$ or is non-$\Semig$-metric (if $c$ is too small or large).
\end{enumerate}
\end{enumerate}
\end{definition}
\begin{example}\label{example:shortodd}
For $\Semig = (\mathbb Z^{\geq 1}, +, \leq)$ and an arbitrary positive integer $p$, every family $\mathcal F_p$ consisting of all metric cycles of odd perimeter smaller than $p$ is omissible. Indeed, the perimeter being odd ensures that $\mathcal F_p$ contains no geodesic cycles. Because $\Semig$ is linearly ordered, upwards closedness amounts to checking that if one replaces the edge of length $a$ by a path of length $a$, the cycle has still short odd perimeter and for downwards closedness it is enough to realise that if one cuts an odd cycle in two (by, possibly, an edge of length $0$ which is interpreted as gluing two vertices together) one of them will have odd perimeter and either one will be non-metric, or both will have perimeter less than $p$.
\end{example}
The notion of omissible family of cycles is a generalisation of Example~\ref{example:shortodd}. Cherlin and Shi~\cite{Cherlin1996} proved that metric spaces without short odd cycles have the strong amalgamation property; Ramsey property and EPPA follow in the same way as they do when one does not forbid any short odd cycles.

The main result of this section is the following theorem, but the auxiliary lemmas will be used later as well.
\begin{theorem}\label{thm:shortestpath}
Let $\ESemig$ be a partially ordered commutative semigroup and let $\mathcal F$ be an omissible family of $\Semig$-edge-labelled cycles containing all disobedient ones. Let $\str{G}$ be a finite \textbf{connected} $\Semig$-edge-labelled graph such that it contains no homomorphic image of any member of $\mathcal F$.
\begin{enumerate}
\item \label{thm:shortestpath:defined} Suppose that $\str G$ is metric and let $\str G' = (G, d')$ be its shortest path completion. Then $\str G'$ is well defined and it is a completion of $\str G$ in $\mathcal M_\Semig \cap \Forb(\mathcal F)$ (in the sense of Definition~\ref{defn:completion}).
\item \label{thm:shortestpath:aut} If $\str G$ is metric and the shortest path completion $\str G'=(G,d')$ is defined then $\Aut(\str G) = \Aut(\str G')$.
\item \label{thm:shortestpath:amalg} $\MF$ is a strong amalgamation class.
\end{enumerate} 
\end{theorem}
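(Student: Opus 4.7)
The plan is to prove the three parts in order, with each leaning on the previous ones.

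For part~\ref{thm:shortestpath:defined} I would first check that the shortest path completion is well defined. Since $\str G$ is finite the family $\mathcal P(u,v)$ of paths from $u$ to $v$ is finite; the infimum over paths of length at least two exists by the hypothesis that $\mathcal F$ contains all $\Semig$-disobedient cycles (Definition~\ref{defn:disobedient}), and adjoining the length-one path, when $(u,v)$ is an edge, only contributes the value $d(u,v)$, which by metricity of $\str G$ is already a lower bound on all other path lengths. This simultaneously shows $d'(u,v) = d(u,v)$ on edges, so $\str G'$ is indeed a completion. To get the triangle inequality $d'(u,v) \mleq d'(u,w) \oplus d'(w,v)$ I would consider walks from $u$ to $v$ obtained by concatenating a path from $\mathcal P(u,w)$ with a path from $\mathcal P(w,v)$; every such walk prunes to a path of no greater length (using $a \mleq a \oplus b$ in $\Semig$), and the disobedient-cycles condition distributes the infimum across the $w$-concatenation. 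Finally, to check $\str G' \in \Forb(\mathcal F)$, I would use upwards closedness (Definition~\ref{defn:omissible}, point~\ref{defn:omissible:upwards}): any hypothetical homomorphism $\str F \to \str G'$ with $\str F \in \mathcal F$ could, edge by edge, have each new edge expanded into a path in $\str G$ realizing the relevant infimum, producing a homomorphic image of some member of $\mathcal F$ inside $\str G$, contradicting $\str G \in \Forb(\mathcal F)$.

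Part~\ref{thm:shortestpath:aut} would follow immediately. The inclusion $\Aut(\str G) \subseteq \Aut(\str G')$ holds because automorphisms permute paths and preserve their $\Semig$-length, hence preserve infima. Conversely, since $d' = d$ on every edge of $\str G$, any automorphism of $\str G'$ restricts to a label-preserving bijection on the edges of $\str G$, and is thus an automorphism of $\str G$.

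For part~\ref{thm:shortestpath:amalg} I would construct the strong amalgam by first forming the glued edge-labelled graph $\str B$ on vertex set $B_1 \cup_A B_2$, whose edges are the edges of $\str B_1$ together with those of $\str B_2$ (they agree on $\str A$ by assumption), and then applying the shortest path completion from Part~\ref{thm:shortestpath:defined}. Two things remain to be verified: first, that $\str B$ is $\Semig$-metric and lies in $\Forb(\mathcal F)$, so that Part~\ref{thm:shortestpath:defined} applies; and second, that the completion does not shrink distances inside $B_1$ or $B_2$. The second point reduces to observing that any detour path between two vertices of $B_i$ that leaves and re-enters $\str B_i$ does so at two vertices $a, a' \in A$; replacing the $\str B_{3-i}$-segment between $a$ and $a'$ by the single $\str A$-edge $(a,a')$ produces a walk inside $\str B_i$ of length no greater than the original (by metricity of $\str B_{3-i}$ and monotonicity of $\oplus$), so the infimum is realised inside $\str B_i$ and coincides with $d_{\str B_i}(u,v)$.

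The main obstacle is the first point: ruling out every homomorphic image of $\mathcal F$ (and every non-$\Semig$-metric cycle) in $\str B$. The plan is to assume such an offending cycle exists and repeatedly apply downwards closedness (Definition~\ref{defn:omissible}, point~\ref{defn:omissible:downwards}) using $\str A$-distances as chords: whenever the image visits a vertex of $A$ twice, the corresponding $\str A$-edge is already present in both $\str B_1$ and $\str B_2$, so cutting the cycle by that chord yields two sub-cycles, and downwards closedness guarantees that one of them again sits in $\mathcal F$ (with the non-metric alternative ruled out because the relevant $\str A$-distance does satisfy the triangle inequality along the arc it replaces, by metricity of the $\str B_i$ containing that arc). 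Iterating, one eventually arrives at a sub-cycle whose edges all lie inside a single $\str B_i$, giving a homomorphic image of a member of $\mathcal F$ inside $\str B_i$ and contradicting $\str B_i \in \MF$. The same chord-and-iterate scheme, using metricity of each $\str B_i$ in place of $\Forb(\mathcal F)$ membership, rules out non-$\Semig$-metric cycles in $\str B$.
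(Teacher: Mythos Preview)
Parts~\ref{thm:shortestpath:defined} and~\ref{thm:shortestpath:amalg} follow the paper's route and the overall strategy is sound, but two points need correction.

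In part~\ref{thm:shortestpath:aut}, your argument for $\Aut(\str G')\subseteq\Aut(\str G)$ does not work: an automorphism of $\str G'$ preserves the values of $d'$, but has no reason to carry edges of $\str G$ to edges of $\str G$, since a non-edge of $\str G$ may receive the same $d'$-value as some edge. Indeed the reverse inclusion is false in general. With $\Semig=(\mathbb Z^{>0},+,\leq)$ and $\mathcal F=\emptyset$, take $\str G$ on $\{a,b,c,d\}$ with edges $d(a,b)=d(b,c)=d(a,d)=1$ and $d(a,c)=2$; then $\str G'$ has $d'(b,d)=2$, $d'(c,d)=3$, and the permutation $(a\,b)(c\,d)$ lies in $\Aut(\str G')\setminus\Aut(\str G)$ (it sends the edge $ac$ to the non-edge $bd$). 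The paper in fact only proves $\Aut(\str G)\subseteq\Aut(\str G')$, which is precisely the ``automorphism-preserving'' statement needed for Theorem~\ref{thm:herwiglascar}.

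In part~\ref{thm:shortestpath:amalg}, your second verification is redundant: each $\str B_i$ is a complete $\Semig$-edge-labelled graph, so part~\ref{thm:shortestpath:defined} already gives $d'=d_{\str B_i}$ on $B_i$. More substantively, your iteration has a gap. Downwards closedness only guarantees that one of the two sub-cycles is in $\mathcal F$ \emph{or} is non-$\Semig$-metric, and your justification for excluding the non-metric alternative (that the chord $c$ satisfies the triangle inequality along the arc it replaces) bounds $c$ against one arc but says nothing about the surviving cycle $(c,\beta)$ when $\beta$ still straddles both $\str B_1$ and $\str B_2$. Running separate iterations for $\mathcal F$-cycles and non-metric cycles does not help, since an $\mathcal F$-cycle may shrink to a non-metric one mid-iteration. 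The paper handles this by setting $\mathcal F'=\mathcal F\cup\{\text{non-$\Semig$-metric cycles}\}$, observing that $\mathcal F'$ is itself downwards closed, and running a single minimality argument for $\mathcal F'$; your scheme is repaired the same way by simply allowing the shrinking cycle to be of either type throughout.
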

The first part says that the shortest path completion is ``complete'' (can complete everything which can be completed at all), the second part says that it is ``canonical'' (this will be useful in Section~\ref{sec:eppaproof}) and the third part suggests that we are interested in ``reasonable'' classes.

In order to prove Theorem~\ref{thm:shortestpath}, we first prove several auxiliary results. All the proofs are straightforward.
\begin{lemma}\label{lem:shortestpath:1}
Let $\Semig$, $\mathcal F$ and $\str G$ be as in Theorem~\ref{thm:shortestpath}.
\begin{enumerate}
\item \label{cor:suaer1:-1} Let $X\subseteq \Semig$ be a set of distances and $a\in \Semig$ be a distance such that $\inf X$ is defined and $\inf X \nmgeq a$. Then there is $x\in X$ with $x\nmgeq a$.

\item \label{cor:suaer1:0} If $\str{G}$ is $\Semig$-metric, then the shortest path completion is defined for $\str{G}$ (that is, for every pair of vertices $u,v\in G$ the infimum of the $\Semig$-lengths of any family of paths from $u$ to $v$ is defined).
\end{enumerate}
\end{lemma}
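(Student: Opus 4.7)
The argument for the first statement is a short order-theoretic observation. I would argue by contradiction: if every $x\in X$ satisfied $x\mgeq a$, then $a$ would be a lower bound of $X$, and by the defining property of $\inf X$ as the greatest lower bound we would get $\inf X \mgeq a$, contradicting the hypothesis $\inf X\nmgeq a$. Hence some $x\in X$ must satisfy $x\nmgeq a$.

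For the second statement, the plan is to reduce to the hypothesis that $\mathcal F$ contains all disobedient cycles, which guarantees that in any $\Semig$-metric space lying in $\MF$, every finite family of walks of length at least two admits an infimum. Since $\str G$ is $\Semig$-metric it has some completion in $\mathcal M_\Semig$; first I would promote this to a completion $\str M\in \MF$, assigning distances to the non-edges of $\str G$ in a way that is consistent with the triangle inequality and exploiting the omissibility of $\mathcal F$ (the absence of geodesic cycles together with downward closedness) to avoid creating any cycle of $\mathcal F$. With such an $\str M\in\MF$ in hand, fix a pair $u,v\in G$ and a finite family $\mathcal Q$ of paths from $u$ to $v$ in $\str G$; every such path is a walk in $\str M$ of the same $\Semig$-length. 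For the sub-family of members of $\mathcal Q$ of length at least two the disobedient cycles condition applied in $\str M$ yields the infimum directly. If the edge $uv$ belongs to $\str G$ and appears in $\mathcal Q$ as a length-one path, then $d(u,v)=d_{\str M}(u,v)$, and the triangle inequality in $\str M$ shows that every other length in $\mathcal Q$ is $\mgeq d(u,v)$; since this bound is attained, $d(u,v)$ is the infimum of the whole family, and the general case follows by combining the two.

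The delicate step I expect is the production of the completion $\str M\in \MF$: once it is in place the rest is a routine translation between paths in $\str G$ and walks in $\str M$, followed by a direct application of the disobedient cycles hypothesis and of the triangle inequality. Carrying out the completion carefully will require using all the clauses of Definition~\ref{defn:omissible}, in particular the absence of geodesic cycles and downward closedness, to ensure that the added edges do not introduce any cycle from $\mathcal F$.
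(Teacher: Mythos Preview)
Your proof of part~\ref{cor:suaer1:-1} is correct and essentially identical to the paper's.

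For part~\ref{cor:suaer1:0}, your route differs from the paper's and carries a circularity risk. You want to first produce a completion $\str M\in\MF$ of $\str G$ and then invoke the disobedient-cycles hypothesis inside $\str M$. But the existence of such an $\str M$ is exactly what Theorem~\ref{thm:shortestpath}\ref{thm:shortestpath:defined} provides, and the proof of that statement \emph{uses} the present lemma to show that the shortest path completion is well-defined. Your ``delicate step'' therefore threatens to assume what you are proving. The sketch you give---choosing distances on non-edges ``consistent with the triangle inequality'' while using omissibility to dodge $\mathcal F$---does not explain how to pick those distances without already controlling infima of path-lengths, and you offer no genuinely independent construction of $\str M\in\MF$.

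The paper sidesteps this by never building a full completion. It argues by contradiction directly on $\str G$: take a \emph{minimal} family $\mathcal P$ of paths from $u$ to $v$ with undefined infimum. Since $\str G$ omits $\mathcal F$ and $\mathcal F$ contains the disobedient cycles, this minimal $\mathcal P$ must contain a length-one path, namely the edge $uv$. By minimality $i=\inf(\mathcal P\setminus\{uv\})$ is defined, and since $\inf(\mathcal P)$ is not, $i$ and $d(u,v)$ are incomparable; in particular $i\nmgeq d(u,v)$. Part~\ref{cor:suaer1:-1} then yields some $\str P\in\mathcal P\setminus\{uv\}$ with $\|\str P\|\nmgeq d(u,v)$, so $\str P$ together with the edge $uv$ is a non-$\Semig$-metric cycle in $\str G$, contradicting the hypothesis that $\str G$ is $\Semig$-metric.
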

\begin{proof}\leavevmode
\paragraph{\ref{cor:suaer1:-1}.} Suppose that this is not true, that is, $x\mgeq a$ for every $x\in X$. Then $a\mleq \inf X$ which is a contradiction.
\medskip

\paragraph{\ref{cor:suaer1:0}.} Suppose that the shortest path completion of $\str{G}$ is not defined. That means that there is a pair of vertices $u,v\in G$ and a collection of paths $\mathcal P$ from $u$ to $v$ such that $\inf_{P\in \mathcal P} \|P\|$ is not defined. Take a minimal such $\mathcal P$. As $\str{G}$ omits all homomorphic images from $\mathcal F$ which contains all disobedient cycles, it means that one of the paths has length one, hence it is the edge $uv$. From minimality of $\mathcal P$ it follows that $i = \inf_{\str P\in \mathcal P \setminus \{uv\}} \|\str P\|$ is defined, but this in particular means that $i$ and $d(u,v)$ are incomparable. And from part~\ref{cor:suaer1:-1} we get a path $\str P\in \mathcal P$ such that $\|\str P\|\nmgeq d(u,v)$, a contradiction with $\str G$ being $\Semig$-metric.
\end{proof}

All parts of the following lemma will be used not only in the proof of Theorem~\ref{thm:shortestpath}, but also implicitly in the rest of the thesis.
\begin{lemma}\label{lem:shortestpath:2}
Let $\Semig$, $\mathcal F$ and $\str G$ be as in Theorem~\ref{thm:shortestpath}.
\begin{enumerate}
\item \label{cor:suaer1:1} If defined, the shortest path completion $\str{G}'=(G, d')$ of $\str G$ is an $\Semig$-metric space. 

\item \label{cor:suaer1:1.5} A cycle $\str C = (a_1,\ldots,a_k)$ such that $\str C\notin \mathcal F$ is non-$\Semig$-metric if and only if for some $i$ it holds that $a_i\nmleq \bigoplus_{j\neq i} a_j$.

\item \label{cor:suaer1:3} $\str{G}$ contains a homomorphic image of a non-$\Semig$-metric cycle if and only if it contains a non-$\Semig$-metric cycle as a (non-induced) subgraph.

\item \label{cor:suaer1:2} $\str{G}$ is $\Semig$-metric if and only if it contains no homomorphic image of a non-$\Semig$-metric cycle $\str{C}$.

\item \label{cor:suaer1:2.5} If $\str{G}$ is $\Semig$-metric and $\str{G}' = (G, d')$ is its shortest path completion, then $d'(u,v) = d(u,v)$ whenever $d(u,v)$ is defined (i.e. $\str{G}'$ is a completion of $\str G$ in the sense of Definition~\ref{defn:completion}).
\end{enumerate}
\end{lemma}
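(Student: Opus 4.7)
My plan is to prove the five parts in the order (\ref{cor:suaer1:1.5}), (\ref{cor:suaer1:3}), (\ref{cor:suaer1:1}), (\ref{cor:suaer1:2.5}), (\ref{cor:suaer1:2}), with part~(\ref{cor:suaer1:1}) carrying the real weight. For part~(\ref{cor:suaer1:1.5}), the ``only if'' direction is a direct iteration of the triangle inequality along $\str{C}$ in any hypothetical $\Semig$-metric completion of $\str{C}$, combined with monotonicity (condition~\ref{def:POS:monot} of a \pocs{}): this forces $a_i \mleq \bigoplus_{j\neq i} a_j$ for every $i$. For the converse, assuming all those inequalities hold and $\str{C}\notin\mathcal{F}$, I would construct a completion by the shortest path procedure applied to $\str{C}$ itself: between non-adjacent $v_i,v_j$ there are only the two arcs as paths, so the required infima reduce to pairwise choices controlled by the assumed inequalities, while conditions~(\ref{defn:omissible:metric}) and~(\ref{defn:omissible:upwards}) of omissibility ensure that no member of $\mathcal{F}$ is created.

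For part~(\ref{cor:suaer1:3}), the backward direction is trivial. For the forward direction, let $f\colon \str{C}\to\str{G}$ be a homomorphism from a non-$\Semig$-metric cycle $\str{C}=(a_1,\ldots,a_k)$; note that $\str{C}\notin\mathcal{F}$, since otherwise $\str{G}$ would contain a homomorphic image of a member of $\mathcal{F}$, contradicting $\str{G}\in\Forb(\mathcal{F})$. A homomorphism of $\Semig$-edge-labelled graphs cannot identify two adjacent vertices (the edge label is defined only on distinct pairs), so if $f$ is not injective it identifies two non-adjacent $v_a,v_b$, splitting $\str{C}$ into shorter cycles $\str{C}_1,\str{C}_2$. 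Part~(\ref{cor:suaer1:1.5}) applied to $\str{C}$ supplies an index $i$ with $a_i\nmleq \bigoplus_{j\neq i}a_j$; this $a_i$ lies in one of $\str{C}_1,\str{C}_2$, say $\str{C}_1$, and since the sum of the remaining edges of $\str{C}_1$ is $\mleq$ the sum of all remaining edges of $\str{C}$ (by $a\mleq a\oplus b$), $a_i$ still violates the triangle inequality inside $\str{C}_1$, so $\str{C}_1$ is non-$\Semig$-metric. Induction on $k$ (with $k=3$ forcing injectivity of $f$) produces a non-$\Semig$-metric subgraph cycle in $\str{G}$.

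The core is part~(\ref{cor:suaer1:1}). Given distinct $u,v,w\in G$ and paths $\str{P}_1\in\mathcal{P}(u,v)$, $\str{P}_2\in\mathcal{P}(v,w)$, their concatenation is a walk from $u$ to $w$ of $\Semig$-length $\|\str{P}_1\|\oplus\|\str{P}_2\|$; shortcutting repeated vertices --- legal because the \pocs{} inequality $a\mleq a\oplus b$ lets us drop edges without increasing length in $\mleq$ --- yields an honest path in $\mathcal{P}(u,w)$ of length at most this value, so $d'(u,w)\mleq \|\str{P}_1\|\oplus\|\str{P}_2\|$ uniformly. To upgrade this to $d'(u,w)\mleq d'(u,v)\oplus d'(v,w)$ I would invoke that $\mathcal{F}$ contains all disobedient cycles, which yields the distributivity $\inf(\mathcal{P}(u,v)\oplus\mathcal{P}(v,w))=\inf\mathcal{P}(u,v)\oplus\inf\mathcal{P}(v,w)$ for the multi-edge portion, while single-edge paths are absorbed by the triangle inequalities already present in the metric graph $\str{G}$. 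The main obstacle is that the disobedient-cycles hypothesis is formally phrased for $\Semig$-metric \emph{spaces} in $\MF$, so one must transfer the distributivity to our metric graph $\str{G}\in\Forb(\mathcal{F})$, either by passing through a hypothetical completion or by an intrinsic argument on $\str{G}$.

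Finally, part~(\ref{cor:suaer1:2.5}) is a quick check: $d'(u,v)\mleq d(u,v)$ because $uv$ itself is a path in $\mathcal{P}(u,v)$, and for the reverse inequality any $\str{P}\in\mathcal{P}(u,v)$ together with the edge $uv$ forms a cycle $\str{K}\subseteq\str{G}$ with $\str{K}\notin\mathcal{F}$ (since $\str{G}\in\Forb(\mathcal{F})$) and $\str{K}$ $\Semig$-metric (as a subgraph of a metric graph), so part~(\ref{cor:suaer1:1.5}) forces $d(u,v)\mleq \|\str{P}\|$; taking the infimum finishes. Part~(\ref{cor:suaer1:2}) then combines~(\ref{cor:suaer1:3}) with the facts that every subgraph of a metric graph is metric (forward direction) and that, by Lemma~\ref{lem:shortestpath:1}(\ref{cor:suaer1:0}) together with part~(\ref{cor:suaer1:1}), the shortest path completion produces an actual $\Semig$-metric completion of $\str{G}$ (backward direction).
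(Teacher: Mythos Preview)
Your proposal is essentially correct and follows the same ideas as the paper, with a different ordering of the parts and one alternative argument. For part~(\ref{cor:suaer1:2.5}) the paper proves a stronger optimality claim --- that every completion $\str{G}''$ of $\str{G}$ satisfies $d''(u,v)\mleq d'(u,v)$ --- whereas you argue more directly via the single cycle $\str{K}=\str{P}\cup\{uv\}$ and part~(\ref{cor:suaer1:1.5}). Both work; yours is shorter, the paper's establishes $d'$ as the $\mleq$-maximal completion. Your observation in part~(\ref{cor:suaer1:1}) that Definition~\ref{defn:disobedient} is formally phrased for metric \emph{spaces} rather than for $\str{G}$ is sharp; the paper applies the condition to $\str{G}$ without comment.

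There is one genuine circularity in your backward direction of part~(\ref{cor:suaer1:2}). You invoke Lemma~\ref{lem:shortestpath:1}(\ref{cor:suaer1:0}) and, implicitly, part~(\ref{cor:suaer1:2.5}) to conclude that the shortest path completion is an $\Semig$-metric completion of $\str{G}$; but both of these are stated under the hypothesis that $\str{G}$ is $\Semig$-metric, which is exactly what you are trying to prove. The fix is easy --- the \emph{proofs} of Lemma~\ref{lem:shortestpath:1}(\ref{cor:suaer1:0}) and of your own part~(\ref{cor:suaer1:2.5}) only actually use that $\str{G}$ contains no non-$\Semig$-metric cycle as a subgraph, which is precisely your working hypothesis after~(\ref{cor:suaer1:3}) --- but you should say so explicitly rather than cite the statements as black boxes. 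The paper sidesteps this by arguing the contrapositive directly: if $\str{G}$ is non-$\Semig$-metric, then either $d'$ is defined and some edge has $d(u,v)\nmleq d'(u,v)$, or $d'$ is undefined; in either case Lemma~\ref{lem:shortestpath:1}(\ref{cor:suaer1:-1}) (or the argument behind Lemma~\ref{lem:shortestpath:1}(\ref{cor:suaer1:0})) produces a single path $\str{P}$ with $\|\str{P}\|\nmgeq d(u,v)$, hence a non-$\Semig$-metric cycle in $\str{G}$.

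A smaller point: by proving~(\ref{cor:suaer1:1.5}) before~(\ref{cor:suaer1:1}) you lose the shortcut the paper takes in the ``if'' direction of~(\ref{cor:suaer1:1.5}), where part~(\ref{cor:suaer1:1}) immediately certifies the shortest path completion of $\str{C}$ as an $\Semig$-metric space. Your self-contained argument is fine in outline, but be aware that the two arc lengths need not have an infimum in a general poset just from $a_i\mleq\bigoplus_{j\neq i}a_j$; you still need the disobedient-cycles hypothesis (or an embedding into a metric space) at that point.
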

\begin{proof}\leavevmode
\paragraph{\ref{cor:suaer1:1}.}
We need to verify that $d'$ satisfies the triangle inequality. Take any three vertices $u,v,w \in \str{G}$. Combine the paths
from $\mathcal P(u,w)$ and $\mathcal P(w,v)$ in every possible way to get a family of walks $\mathcal W$ from $u$ to $v$ in $\str{G}$. But then $\inf(\mathcal W) = \inf(\mathcal P(u,w)\oplus \mathcal P(w,v)) = d'(u,w)\oplus d'(w,v)$, because $\mathcal F$ contains all disobedient cycles. Clearly $\inf(\mathcal P(u, v)) \mleq \inf(\mathcal W)$ as one can get a path from a walk by removing the ``loops'', thus we get $d'(u,v) = \inf(\mathcal P(u, v)) \mleq \inf(\mathcal W)$.

\medskip
\paragraph{\ref{cor:suaer1:1.5}.} Clearly such a cycle is non-$\Semig$-metric. On the other hand, if for every $i$ it holds that $a_i\mleq \bigoplus_{j\neq i}a_j$, then the shortest path completion (which is defined as $\str C\notin\mathcal F$) is a completion of $\str C$ in the sense of Definition~\ref{defn:completion}; it is an $\Semig$-metric space by part~\ref{cor:suaer1:1} and it preserves the edges of $\str C$ by the assumption on $\str C$.

\medskip
\paragraph{\ref{cor:suaer1:3}.} One implication is trivial. The other follows from $\oplus$ being monotone with respect to $\mleq$: Denote the cycle $\str C = (a_1, \ldots, a_n)$ with $a_1 \nmleq \bigoplus_{i=2}^n a_i$. Then it is enough to take the minimal subcycle of the homomorphic image of $\str C$ containing the edge $a_1$.

\medskip
\paragraph{\ref{cor:suaer1:2}.} If $\str G$ contains a homomorphic image of a non-$\Semig$-metric cycle then by part~\ref{cor:suaer1:3} it contains a non-$\Semig$-metric cycle as a non-induced substructure. But a completion of $\str G$ is in particular a completion of that cycle, which contradicts it being non-$\Semig$-metric.

We claim that if $\str G$ is non-$\Semig$-metric, then there are vertices $u,v\in G$ such that $d(u, v) \nmleq \inf(\mathcal P(u,v))$ where $\mathcal P(u,v)$ is the family of all paths from $u$ to $v$. By part~\ref{cor:suaer1:-1} of Lemma~\ref{lem:shortestpath:1} the statement then follows.

If $\str G$ is non-$\Semig$-metric and $\str G' = (G, d')$, its shortest path completion, is defined, from non-metricity we get vertices $u\neq v\in G$ with $d'(u,v) \mlt d(u,v)$. But that means
that the infimum of the $\Semig$-lengths of $\mathcal P(u,v)$ is not greater than or equal to $d(u,v)$. Otherwise $\str G'$ is undefined and we proceed as in the proof of part~\ref{cor:suaer1:0} of Lemma~\ref{lem:shortestpath:1}.

\medskip
\paragraph{\ref{cor:suaer1:2.5}.}
Clearly $d'(u,v)\mleq d(u,v)$, as the edge $u,v$ is a path between $u$ and $v$. So it suffices to show $d'(u,v)\mgeq d(u,v)$.
We show a stronger claim: if $\str{G}''=(G, d'')$ is a completion of $\str{G}$ then $d''(u,v)\mleq d'(u,v)$ for every $u\neq v\in G$.

Suppose, for a contradiction, that there are vertices $u\neq v\in G$ such that for some completion $\str{G}''$ of $\str{G}$ it holds that $d''(u,v)\nmleq d'(u,v)$. By definition of $d'$, there is a family of paths $\mathcal P(u,v)$ in $\str{G}$ with $d'(u,v)=\inf(\mathcal P(u,v))$. But these paths are also present in $\str{G}''$ and hence $\str{G}''$ contains a non-$\Semig$-metric cycle by part~\ref{cor:suaer1:-1} of Lemma~\ref{lem:shortestpath:1}, which is a contradiction.
\end{proof}

Now we are ready to prove Theorem~\ref{thm:shortestpath}.
\begin{proof}[Proof of Theorem~\ref{thm:shortestpath}]\leavevmode
\paragraph{\ref{thm:shortestpath:defined}.} The fact that $\str G'$ is well-defined follows from Lemma~\ref{lem:shortestpath:1}; it is a metric completion of $\str G$ by Lemma~\ref{lem:shortestpath:2}. It remains to check that $\str G'\in \Forb(\mathcal F)$.

Suppose for a contradiction that there is $\str C\in\mathcal F$ and a homomorphism $f\colon\str C\rightarrow \str G'$. For every edge $uv\in \str C$ there is a family of paths $\mathcal P_{uv}$ in $\str G$ such that $\inf(\mathcal P_{uv}) = d'(f(u),f(v)) = d_{\str C}(u,v)$. Because $\mathcal F$ is closed under inverse steps of the shortest path completion, there is $\str P$ in $\mathcal P_{uv}$ such that the cycle which one gets from $\str C$ by exchanging the edge $uv$ by the path $\str P$ is in $\mathcal F$. Doing this for every edge $uv\in \str C$, we get $\str C'\in \mathcal F$ with a homomorphism to $\str G$, which is a contradiction.

\medskip
\paragraph{\ref{thm:shortestpath:aut}.}
Let $\alpha$ be an automorphism of $\str G$. We need to prove that for every $u,v\in G$ it holds that $d'(u,v) = d'(\alpha(u), \alpha(v))$. By definition there is a family $\mathcal P(u,v)$ of paths from $u$ to $v$ such that $\inf(\mathcal P(u,v)) = d'(u,v)$ for every pair of vertices $u,v$. Because $\alpha$ is automorphism, there also has to be a family of paths from $\alpha(u)$ to $\alpha(v)$ with the same infimum and vice versa, hence indeed $d'(u,v) = d'(\alpha(u), \alpha(v))$.

\medskip
\paragraph{\ref{thm:shortestpath:amalg}.}
Now it is easy to see that part~\ref{cor:suaer1:2} of Lemma~\ref{lem:shortestpath:2} also holds for $\str G$ which is not connected, because it can be turned into a connected one by adding new edges connecting individual components without introducing any new cycles.

We will observe that for every $\str{A},\str{B}_1,\str{B}_2\in \mathcal M_\Semig\cap \Forb(\mathcal F)$
the free amalgamation of $\str{B}_1$ and $\str{B}_2$ over $\str{A}$ contains no
embedding of any non-$\Semig$-metric cycle and no embedding of any cycle from $\mathcal F$. Then we can take the amalgam to be the shortest path completion of the free amalgam (possibly after connecting the components if $\str{A} = \emptyset$).

First note that if we let $\mathcal F'$ be the union of $\mathcal F$ and the family of all non-$\Semig$-metric cycles, then $\mathcal F'$ is still downwards closed (cf. Definition~\ref{defn:omissible}). Suppose for a contradiction that there is a cycle $\str F\in \mathcal F'$ with a homomorphism $f$ to the free amalgam of $\str B_1$ and $\str B_2$ over $\str A$ and among all such situations take one where $\str F$ has the smallest number of vertices.

In the range of $f$ there must be vertices from both $B_1\setminus A$ and $B_2\setminus A$. And because cycles are 2-connected, there are at least two vertices $u\neq v\in F$ with $f(u), f(v)\in A$. Either $f(u)=f(v)$, but then from downwards closedness of $\mathcal F'$ we get a contradiction with minimality of $|F|$. Otherwise $f(u)\neq f(v)$ and because $\str A$ is complete, the distance $d_{\str A}(f(u), f(v))$ is defined. Denote by $\str F'$ and $\str F''$ the two cycles one gets from $\str F$ by adding the edge $uv$ of length $d_{\str A}(f(u), f(v))$. From downwards closedness of $\mathcal F'$ it follows that one of them is in $\mathcal F'$, so suppose without loss of generality that $\str F'\in \mathcal F'$. Clearly $f\restriction_{F'}$ is a homomorphism from $\str F'$ to the amalgam, which is a contradiction with minimality of $|F|$.
\end{proof}

\section{Henson constraints}\label{sec:henson}
In his catalogue of metrically homogeneous graphs~\cite{Cherlin2011b}, Cherlin allows to forbid arbitrarily large metric spaces containing only distances $1$ and $\delta$ (when certain conditions are satisfied) and he calls them \emph{Henson constraints}. It motivates the following paragraphs.

\begin{definition}
Let $\ESemig$ be a \pocs{}. We say that $a\in \Semig$ is \emph{reducible} if there are $b,c\in \Semig$ such that either $a = b\oplus c$, or $a=\inf(b,c)$ and $a\neq b,c$. Otherwise we call $a$ \emph{irreducible}.

Let further $\mathcal F$ be an $\Semig$-omissible family containing all $\Semig$-disobedient cycles. We say that $a\in \Semig$ is \emph{irreducible with respect to $\mathcal F$} if whenever $\str A$ is an $\Semig$-edge-labelled $\Semig$-metric graph from $\Forb(\mathcal F)$, $\str A'$ is its shortest path completion and $u,v\in A$ are vertices such that $d_{\str A'}(u,v) = a$ then $d_{\str A}(u, v) = a$. Otherwise $a$ is \emph{reducible with respect to $\mathcal F$}.
\end{definition}
In other words, $a\in\Semig$ is irreducible with respect to $\mathcal F$ if the shortest path completion of $\Semig$-metric graphs from $\Forb(\mathcal F)$ never introduces distance $a$.
\begin{example}
If $\Semig$ is a monoid with neutral element 0, then all elements are reducible (as $a=a\oplus 0$). For $\Semig = (\{a,b,c,M\},\oplus,\mleq)$ where $x\oplus y = M$ for every $x,y\in \Semig$ and $x\mleq M$ for every $x\in \Semig$ the irreducible elements are $a$, $b$ and $c$.

For a non-trivial example of elements irreducible with respect to $\mathcal F$ see Section~\ref{sec:methom} where in the magic semigroup $\magicsemig$, unless $M = \delta$ or $C=2\delta+2$, the irreducible elements with respect to the union of $K_1$-, $K_2$- and $\magicsemig$-metric $C$-cycles are precisely $1$ and $\delta$.
\end{example}

Clearly if an element is irreducible, then it is irreducible with respect to any family $\mathcal F$. Given a \pocs{} $\Semig$ and an $\Semig$-omissible family $\mathcal F$ containing all $\Semig$-disobedient cycles we say that $\mathcal H$ is a \emph{family of Henson constraints} if $\mathcal H$ consists of $\Semig$-metric spaces only using distances irreducible with respect to $\mathcal F$.

In proving Theorem~\ref{thm:shortestpath}, we in fact proved the following stronger result.
\begin{observation}\label{obs:hensoncompletion}
Let $\Semig$, $\mathcal F$ and $\str G$ be as in Theorem~\ref{thm:shortestpath} and let $\mathcal H$ be a family of Henson constraints. If $\str G$ does not contain any member of $\mathcal H$ then neither does its shortest path completion. In particular, $\MF\cap \Forb(\mathcal H)$ is a strong amalgamation class.
\end{observation}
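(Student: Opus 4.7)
The plan is to exploit the fact that irreducibility with respect to $\mathcal F$ is a per-edge property: any edge produced by the shortest path completion whose label happens to be irreducible with respect to $\mathcal F$ must have been present, with the same label, already in the input. Since each $\str H \in \mathcal H$ is by assumption an $\Semig$-metric space all of whose edge-labels are irreducible with respect to $\mathcal F$, any embedding of $\str H$ into the shortest path completion $\str G'$ should lift back to an embedding into $\str G$ itself.

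First I would assume, for contradiction, an embedding $f \colon \str H \to \str G'$. For every pair of distinct vertices $u, v$ of $\str H$ the distance $d_{\str H}(u,v)$ is defined, is irreducible with respect to $\mathcal F$, and equals $d_{\str G'}(f(u), f(v))$. By part~\ref{cor:suaer1:0} of Lemma~\ref{lem:shortestpath:1}, $\str G'$ really is the shortest path completion of the $\Semig$-metric graph $\str G \in \Forb(\mathcal F)$, so the defining property of irreducibility with respect to $\mathcal F$, applied to the pair $f(u), f(v)$, yields $d_{\str G}(f(u), f(v)) = d_{\str H}(u,v)$. Doing this for every edge of $\str H$ shows that $f$ already embeds $\str H$ into $\str G$, contradicting $\str G \in \Forb(\mathcal H)$.

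For the ``in particular'' clause I would reuse the strong-amalgamation construction from the proof of part~\ref{thm:shortestpath:amalg} of Theorem~\ref{thm:shortestpath} unchanged: given $\str A, \str B_1, \str B_2 \in \MF \cap \Forb(\mathcal H)$, form the free amalgam $\str D$, invoke the argument of that proof to see that $\str D$ contains no homomorphic image of a cycle in $\mathcal F$ nor of a non-$\Semig$-metric cycle, and take its shortest path completion $\str D'$ as the amalgam. The only additional step is to check $\str D \in \Forb(\mathcal H)$: each $\str H \in \mathcal H$ is a complete $\Semig$-metric space with no distance on the diagonal, so every homomorphism $\str H \to \str D$ is automatically injective, and its image cannot contain vertices from both $B_1 \setminus A$ and $B_2 \setminus A$, since such cross-pairs carry no edge in the free amalgam. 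Hence any such homomorphism lands entirely inside $\str B_1$ or $\str B_2$, contradicting $\str B_i \in \Forb(\mathcal H)$. Finally, the first part of the observation, applied to $\str D$, upgrades $\str D \in \Forb(\mathcal H)$ to $\str D' \in \Forb(\mathcal H)$, while $\str D' \in \MF$ holds by part~\ref{thm:shortestpath:defined} of Theorem~\ref{thm:shortestpath}.

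The argument is essentially mechanical; there is no single hard step. The only subtlety worth flagging is that the definition of irreducibility with respect to $\mathcal F$ is tailored precisely so that the first part is a one-line transfer, and that the completeness of Henson constraints (together with the absence of diagonal labels) is what makes them interact cleanly with the free amalgam in the second part.
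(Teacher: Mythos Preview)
Your proof is correct and follows exactly the approach the paper intends: the paper does not give a separate argument but simply notes that ``in proving Theorem~\ref{thm:shortestpath}, we in fact proved the following stronger result,'' and your write-up is a faithful unpacking of that claim, using the defining property of irreducibility with respect to $\mathcal F$ for the first part and the free-amalgam argument from part~\ref{thm:shortestpath:amalg} for the second.
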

Note that since $\mathcal H$ consists of irreducible structures, $\Forb(\mathcal H)$ is the class of all finite structures which do not embed any member of $\mathcal H$.

\begin{remark}
We think that adding Henson constraints explicitly into all the statements would cause unnecessary notational complications without much gain, but still we think that it is useful to remark that such a stronger statement follows from our proofs.
\end{remark}

\chapter{Locally finite description of the unordered classes}\label{ch:orderless}
For Theorem~\ref{thm:hn}, one needs to be able to describe structures with a completion to the given class by a bounded number of forbidden substructures. However, as we have observed in Section~\ref{sec:outline}, this is not always true for  $\mathcal M_\Semig\cap \Forb(\mathcal F)$.

First we make a small detour into a little more advanced model theory topic, the elimination of imaginaries. It very well describes the fact that some semigroups have arbitrarily large non-metric cycles and suggests a way of dealing with it (which we indeed take in Section~\ref{sec:lstar}). However, the notions introduced here will not be used in the proofs, only in their motivation. We only give the minimum necessary definitions, for more see~\cite{Hodges1993}.

\begin{definition}[Definable equivalences]
Let $\str A$ be an $L$-structure and $R\subseteq A^n$ be a relation. We say that $R$ is a \emph{definable} if there is a first-order formula $\varphi(\bar{x})$ (in language $L$) such that for every $\bar{x}\in A^n$ it holds that $\bar{x}\in R$ if and only if $\varphi(\bar{x})$ holds. A \emph{definable equivalence} is a definable relation which is an equivalence. A \emph{definable function} is a definable relation $f\subseteq A^{k+\ell}$ such that for every $\bar{x}\in A^k$ there is exactly one $\bar{y}\in A^\ell$ such that $\bar{x}^\frown\bar{y}\in f$ (where $\bar{x}^\frown\bar{y}$ is the concatenation of $\bar{x}$ and $\bar{y}$). 
\end{definition}

\begin{definition}[Elimination of imaginaries]
Let $\str A$ be an $L$-structure. Every equivalence class of every definable equivalence relation on $A^n$ is an \emph{imaginary element}. We say that $\str A$ \emph{eliminates imaginaries} if for every definable equivalence relation $E$ on $A^n$ there is a definable\footnote{Note that, in model theory, these definitions are applied on infinite structures. In order to avoid some pathologies, when talking about formulas and structures in the following definitions, we should actually talk about classes of structures and require the formulas to be fixed for the whole class.} function $f\colon A^n\to A^k$ such that for every $\bar{u},\bar{v}\in A^n$ it holds that
$$\bar{u}E\bar{v} \Leftrightarrow f(\bar{u}) = f(\bar{v}).$$
\end{definition}

\begin{example}\label{ex:imaginaries}
Consider an arbitrary expansion of the ordered natural numbers $\mathbb N$. It eliminates imaginaries: Indeed, let $E\subseteq \mathbb N^{2n}$ be an arbitrary definable equivalence. To each tuple $\bar x\in \mathbb N^n$ we can definably assign the lexicographically smallest tuple $\bar y\in \mathbb N^n$ such that $\bar x E \bar y$.

On the other hand, the $\Semig$-valued metric spaces for $\Semig = (\{1,2\}, \max, \leq)$, which we have seen as an example of arbitrary large non-metric cycles, do not eliminate imaginaries due to the ``being in distance 1'' definable equivalence. A possible way to fix this is to add a new vertex for each ``ball of diameter 1'' and link every ``original vertex'' to its ``ball vertex'' by an explicitly added unary function.
\end{example}

The goal of this chapter is to generalise the construction from Example~\ref{ex:imaginaries} and expand the classes $\mathcal M_\Semig\cap \Forb(\mathcal F)$ to obtain classes (isomorphic with $\mathcal M_\Semig\cap \Forb(\mathcal F)$ as categories) which have a locally finite description.

In order to do that, we first (in Sections~\ref{sec:blocks} and~\ref{sec:blockequivalences}) study the \emph{block structure} of partially ordered semigroups. Then (in Section~\ref{sec:lstar}) we define the $L_\Semig^\star$-expansion where we add new vertices as explicit representatives of each \emph{block equivalence class} and link the original vertices to them by unary functions, thereby eliminating imaginaries. In Section~\ref{sec:nonimportant} we introduce an approximation of maxima of blocks (because true maxima might not exist for infinite blocks) which then allow us to select from each non-$\Semig$-metric cycle a bounded number of \emph{important} edges and then in Section~\ref{sec:locallyfinitecompletion} we prove that indeed the $L_\Semig^\star$-expansions have a locally finite description.

In Chapter~\ref{ch:withorder} we further use the results of this chapter to obtain Ramsey expansions of $\mathcal M_\Semig\cap \Forb(\mathcal F)$. Besides the Ramsey property, the results of this chapter by themselves suffice to prove EPPA (Section~\ref{sec:eppaproof}).

\section{Blocks and block lattice}\label{sec:blocks}
Let $\ESemig$ be a \pocs. For a positive integer $n$ and $a\in \Semig$ we define $n\times a$ as 
$$n\times a= \underbrace{a\oplus a \oplus \cdots \oplus a}_\text{$n$ times}.$$
We say that $\Semig$ is \emph{archimedean} if for every $a,b\in \Semig$ there is a positive integer $n$ such that $n\times a \mgeq b$.

\begin{example}
There are many examples of both archimedean and non-\-archi\-medean partially ordered commutative semigroups which everyone encounters in their everyday life. All of $(\mathbb Z^{>0}, +, \leq)$, $(\mathbb R^{>0}, +, \leq)$ or $(\mathbb R^{>1}, \cdot, \leq)$ are archimedean. On the other hand the multiplicative semigroup $(\mathbb Z^{>0}, \cdot, \mid)$, where by $\mid$ we mean the ``is a divisor of'' relation, is non-archimedean because, for example, no power of $2$ is divisible by $3$.

Note that the monoid $(\mathbb N, +, \leq, 0)$, when treated as a \pocs, is also non-archimedean, because $n\times 0 = 0$ for every $n$.
\end{example}

As we will show later, the definable equivalences on $\Semig$-metric spaces are related to archimedean subsemigroups of $\Semig$.
The following is a generalisation of a definition by Sauer~\cite{Sauer2013}.
\begin{definition}\label{defn:block}
Let $\ESemig$ be a \pocs. A \emph{block} $\Block$ of $\Semig$ is either a subset of $\Semig$ such that it induces a maximal archimedean subsemigroup of $\Semig$ or a special block $\str 0$ (corresponding to the empty set).
\end{definition}
The introduction of the block $\str 0$ will be useful later. Note that $\str 0$ is added even if $\Semig$ contains a neutral element which one would intuitively expect to represent the identity (it does not). The basic properties of blocks can be summarized as follows.
\begin{lemma}
\label{lem:blocks}
Given a partially ordered commutative semigroup $\ESemig$ it holds that:
\begin{enumerate}
\item\label{lem:blocks:unique} For every $a\in \Semig$ there exists a unique block $\Block(a)$ containing $a$. 
\item\label{lem:blocks:sameblock} Let $a,b\in \Semig$. Then $a,b$ are in the same block if and only if there exist $m, n$ such that $m\times a \mgeq b$ and $n\times b \mgeq a$.
\end{enumerate}
\end{lemma}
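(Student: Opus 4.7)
The plan is to prove both parts simultaneously by showing that the binary relation
\[
a\sim b \quad \text{defined by} \quad \exists\, m,n\in\mathbb N^{\geq 1}\colon m\times a\mgeq b \text{ and } n\times b\mgeq a
\]
is an equivalence relation on $M$ whose equivalence classes are exactly the non-trivial blocks of $\Semig$. This will give part \ref{lem:blocks:sameblock} by definition of $\sim$, and part \ref{lem:blocks:unique} by establishing a bijection between blocks (other than $\str 0$) and $\sim$-classes.

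First I would verify the basic monotonicity lemma: if $x\mgeq y$ then $n\times x\mgeq n\times y$ for every $n\geq 1$. This is a short induction on $n$ using clause \ref{def:POS:monot} of Definition \ref{def:POS}. Armed with this, reflexivity and symmetry of $\sim$ are immediate, and transitivity follows by multiplying the witnesses: if $m_1\times a\mgeq b$ and $m_2\times b\mgeq c$, then
\[
(m_1m_2)\times a \;=\; m_2\times(m_1\times a) \;\mgeq\; m_2\times b \;\mgeq\; c,
\]
and symmetrically for the other direction.

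Next I would show that each class $[a]_\sim$ is a maximal archimedean subsemigroup of $\Semig$. Closure under $\oplus$: if $b,c\in[a]_\sim$ with $m_b\times a\mgeq b$ and $m_c\times a\mgeq c$, then $(m_b+m_c)\times a \mgeq b\oplus c$; and conversely $n_b\times(b\oplus c)\mgeq n_b\times b\mgeq a$ using $b\mleq b\oplus c$ (clause 3 of Definition \ref{def:POS}) together with the monotonicity lemma. Being archimedean within $[a]_\sim$ is essentially the same computation: for $b,c\in[a]_\sim$, pick $n_b,m_c$ with $n_b\times b\mgeq a$ and $m_c\times a\mgeq c$, whence $(n_b m_c)\times b\mgeq c$. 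Maximality: if some archimedean subsemigroup $S\supsetneq [a]_\sim$ contains an element $c\notin[a]_\sim$, then by archimedeanness of $S$ (applied to the pair $a,c\in S$) there exist $k,k'$ with $k\times a\mgeq c$ and $k'\times c\mgeq a$, contradicting $c\notin[a]_\sim$.

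Finally, for uniqueness in part \ref{lem:blocks:unique}, I would observe that any archimedean subsemigroup $B$ containing $a$ satisfies $B\subseteq[a]_\sim$: for every $c\in B$, archimedeanness of $B$ produces the required $m,n$ showing $c\sim a$. If $B$ is in addition maximal, then $B=[a]_\sim$ since $[a]_\sim$ is itself archimedean. This gives both existence and uniqueness of $\Block(a)$, and then part \ref{lem:blocks:sameblock} is just the definition of $\sim$. I do not expect any genuine obstacle; the only point requiring a little care is the preliminary monotonicity lemma for $n\times(\cdot)$, which everything else silently relies on.
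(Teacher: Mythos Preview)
Your proof is correct and follows essentially the same approach as the paper. The paper defines $\Block(a)$ exactly as your equivalence class $[a]_\sim$, then says ``it is easy to check'' that it is an archimedean subsemigroup and that maximality and uniqueness follow; you have simply filled in those details carefully, including the monotonicity lemma that the paper leaves implicit.
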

\begin{proof}
Let $$\Block(a) = \left\{b \in \Semig \mid (\exists n)(n\times a \mgeq b) \land (\exists n)(n\times b \mgeq a)\right\}.$$ It is easy to check that $(\Block(a),\oplus,\mleq)$ is an archimedean subsemigroup of $\Semig$ containing $a$. Maximality and uniqueness follow from the fact that no $b\in \Semig\setminus \Block(a)$ can be in the same archimedean subsemigroup as $a$. Parts~\ref{lem:blocks:unique} and~\ref{lem:blocks:sameblock} now follow.
\end{proof}
Given a partially ordered commutative semigroup $\ESemig$ and $a\in \Semig$, we will always denote by $\Block(a)$ the unique block of $\Semig$ containing $a$ given by part~\ref{lem:blocks:unique} of Lemma~\ref{lem:blocks}.

\begin{lemma}\label{lem:blockorder}
Let $\ESemig$ be a partially ordered commutative semigroup and $\Block$ be a block of $\Semig$. Whenever $a,c\in \Block$ and $b\in \Semig$ such that $a\mlt b \mlt c$, then $b\in \Block$.
\end{lemma}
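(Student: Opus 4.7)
The plan is to apply the characterization of membership in the same block given by part~\ref{lem:blocks:sameblock} of Lemma~\ref{lem:blocks}: to show $b \in \Block = \Block(a)$, it suffices to exhibit positive integers $m, n$ such that $m \times a \mgeq b$ and $n \times b \mgeq a$.

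For the first inequality, I would use that $a$ and $c$ lie in the same block, so Lemma~\ref{lem:blocks}\ref{lem:blocks:sameblock} gives an $m$ with $m \times a \mgeq c$; combined with the hypothesis $b \mlt c$ and transitivity, this yields $m \times a \mgeq b$. For the second, the hypothesis $a \mlt b$ already gives $a \mleq b$, and since $\Semig$ satisfies $b \mleq b \oplus x$ for any $x$ (property~3 of Definition~\ref{def:POS}), in particular $a \mleq b \mleq 1 \times b$, so $n = 1$ works.

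Thus $b$ satisfies both conditions of the block characterization relative to $a$, so $b \in \Block(a) = \Block$. There is no real obstacle here; the only thing to double-check is that the asymmetric pair of inequalities in Lemma~\ref{lem:blocks}\ref{lem:blocks:sameblock} is precisely what is needed, and that the ``trivial'' direction $n \times b \mgeq a$ really follows just from $a \mleq b$ via property~3 of a \pocs{} without invoking any archimedeanity of $\Semig$ itself.
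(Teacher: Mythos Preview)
Your proposal is correct and follows essentially the same approach as the paper's proof: both use part~\ref{lem:blocks:sameblock} of Lemma~\ref{lem:blocks}, obtaining $m\times a\mgeq c\mgeq b$ from $a,c$ being in the same block, with the other direction $1\times b = b\mgeq a$ being immediate from the hypothesis. Your invocation of property~3 of Definition~\ref{def:POS} is harmless but unnecessary, since $1\times b$ is simply $b$ by definition.
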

\begin{proof}
Take arbitrary $a,c\in\Block$ and $b\in \Semig$ with $a\mlt b \mlt c$. As $a,c\in\Block$, there is $n$ such that $n\times a\mgeq c$. But then also $n\times a\mgeq b$ and hence by part~\ref{lem:blocks:sameblock} of Lemma~\ref{lem:blocks} $a$ and $b$ are in the same block.
\end{proof}
If $\mleq$ is a linear order, Lemma~\ref{lem:blockorder} means that blocks form intervals. And this motivates the following definition:

\begin{definition}\label{defn:blockorder}
Let $\ESemig$ be a partially ordered commutative semigroup. By the same symbol $\mleq$ we denote the order on blocks of $\Semig$ putting $\Block \mleq \Block'$ if for every $a\in \Block$ there is $b\in\Block'$ such that $a\mleq b$.
\end{definition}
Note also that $\str 0\mleq \Block$ for every block $\Block$. By Lemma~\ref{lem:blockorder} $\mleq$ is a partial order of blocks of $\Semig$. Note that an equivalent definition could just ask for a pair $a\in \Block, b\in \Block'$ with $a\mleq b$.

Recall that a structure $(A, \vee, \wedge)$ is a \emph{lattice} if $\vee$ (\emph{join}) and $\wedge$ (\emph{meet}) are binary operations on $A$ satisfying the following equations.

\begin{align*}
a\wedge a &= a 									&	 a\vee a &= a\\
a\wedge b &= b\wedge a 							&	 a\vee b &= b\vee a\\
a\wedge (b\wedge c) &= (a\wedge b) \wedge c 	&	 a\vee (b\vee c) &= (a\vee b) \vee c\\
\\
a\wedge(a\vee b) &= a 							&	 a\vee(a\wedge b) &= a.
\end{align*}

Recall also that if $(B,\leq)$ is a partial order, then for $a,b\in B$ we call $c\in B$ their infimum and write $c=\inf(a,b)$ if $c\leq a, b$ and for every $x\in B$ such that $x\leq a,b$ it holds that $x\leq c$ and analogously we call $d\in B$ their supremum ($d=\sup(a,b)$) if $d\geq a,b$ and for every $x\geq a,b$ it holds that $x\geq d$.

It is a well-known fact that lattices and partial orders where all infima and suprema are defined are in 1-to-1 correspondence (in one direction just let $\vee$ be the supremum and $\wedge$ the infimum, in the other direction put $a\leq b$ if $a\wedge b=a$). Also from the existence of infima of pairs we get the existence of infima of any finite sets, the same for suprema.

In Definition~\ref{defn:blockorder} we introduced the order $\mleq$ of blocks, which is inherited from the order $\mleq$ of the elements of $\Semig$. It is natural to study the infima and suprema in the block order. For suprema it is quite straightforward. Let $\Block_1, \Block_2$ be blocks of $\Semig$ and take arbitrary $a\in \Block_1$ and $b\in \Block_2$. Then $\Block(a\oplus b)$ is the supremum of $\Block_1, \Block_2$: Indeed, clearly $\Block(a\oplus b)\mgeq \Block_1, \Block_2$. And if $\Block'\mgeq \Block_1, \Block_2$, then in particular there are $x_a, x_b\in \Block'$ with $x_a\mgeq a$ and $x_b\mgeq b$, hence $x_a\oplus x_b\mgeq a\oplus b$, thus $\Block'\mgeq \Block(a\oplus b)$ and we are done. We denote the join of blocks by $\vee$.

On the other hand, not all infima of blocks need to be defined. However, in our applications we will need the block order to be a lattice (in fact, a distributive lattice) and thus we choose to denote the block infima by the $\meet$ symbol nonetheless. From the definition of $\mleq$ it follows that a block $\Block\neq \str 0$ is the meet of blocks $\Block_1,\Block_2\neq \str 0$ if and only if $\Block\mleq \Block_1, \Block_2$ and for all $a\in \Block_1, b\in \Block_2$ and every $x\in \Semig$ such that $x\mleq a,b$ there is $c\in \Block$ such that $c\mgeq x$.

\begin{example}[Running example --- introduction]
In order to obtain a Ramsey expansion, we are going to need to work with several expansions of $\mathcal M_\Semig\cap \Forb(\mathcal F)$. While all of them are adaptations of rather standard concepts (eliminating imaginaries and convex ordering), we aim for this text to be accessible to broader audience. Therefore we develop these notions and classes without referring to the model-theoretical concepts and we also find it helpful to illustrate them on an example.

Let $\ExSemig = (\mathbb N^3, +, \leq)$ be the \pocs{} whose vertices are triples of natural numbers (that is, non-negative integers), the addition is coordinate-wise and $(a,b,c)\leq (u,v,w)$ if and only if $a\leq u$, $b\leq v$ and $c\leq w$. In other words, $\ExSemig$ is the third power of the standard monoid of natural numbers.

$\ExSemig$ has nine blocks: $\str 0$ and eight ``standard'' blocks, each corresponding to a subset $I\subseteq \{1,2,3\}$ by $$\Block_I = \{(x_1, x_2, x_3)\in \mathbb N^3; x_i\neq 0\Leftrightarrow i\in I\}.$$ The block lattice is distributive and is isomorphic to the subset lattice of a three-element set with a new identity added (to accommodate $\str 0$).

Note that in the $\ExSemig$-metric spaces there can be distinct vertices in distance $(0,0,0)$.
\end{example}

\begin{definition}\label{defn:meetreducible}
Let $\ESemig$ be a \pocs{}. We say that a block $\Block$ is \emph{meet-reducible} if there are blocks $\Block_1, \Block_2$ such that $\Block=\Block_1\meet\Block_2$ and $\Block\notin \{\Block_1, \Block_2\}$. Otherwise $\Block$ is \emph{meet-irreducible}.
\end{definition}

Let $R_\Semig$ be the set of all non-maximal meet-reducible blocks of $\Semig$ and $I_\Semig$ the set of all non-maximal meet-irreducible blocks of $\Semig$.

\begin{example}[Running example --- $I_\ExSemig$ and $R_\ExSemig$]
$I_\ExSemig = \{\str 0, \Block_{\{1,2\}}, \Block_{\{1,3\}}, \Block_{\{2,3\}}\}$, 
$R_\ExSemig = \{\Block_{\emptyset}, \Block_{\{1\}}, \Block_{\{2\}}, \Block_{\{3\}}\}$. $\Block_{\{1,2,3\}}$ is the maximum block of $\ExSemig$.
\end{example}

Later it will turn out to be useful to only represent meet-irreducible blocks. The following lemma shows that it is in some sense enough.
\begin{lemma}\label{lem:meetirreducible}
Let $\ESemig$ be a \pocs{} with finitely many blocks where the meet of every pair of blocks is defined. Let $\Block$ be a block of $\Semig$. Then there is a set $B\subseteq I_\Semig$ such that $\Block$ is the meet of $B$.
\end{lemma}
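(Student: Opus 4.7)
The plan is to proceed by induction on the number $n$ of blocks strictly above $\Block$ in the block order $\mleq$ (which is well-defined since $\Semig$ has only finitely many blocks). The base case is $n=0$, where $\Block$ is the maximum block of $\Semig$; here I would take $B=\emptyset$ and appeal to the convention that the meet of the empty set in a bounded meet-semilattice is the top element. (Since the lemma assumes all pairwise meets exist and there are only finitely many blocks, this convention is consistent with the block order's lattice structure derived from the earlier discussion.)

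For the inductive step, suppose $\Block$ is not the maximum and every block with fewer blocks strictly above it can already be written as such a meet. I would split into two cases. If $\Block$ is meet-irreducible, then $\Block$ is a non-maximal meet-irreducible block, so $\Block\in I_\Semig$, and taking $B=\{\Block\}$ finishes the case. If $\Block$ is meet-reducible, Definition~\ref{defn:meetreducible} yields blocks $\Block_1,\Block_2$ with $\Block=\Block_1\meet\Block_2$ and $\Block\notin\{\Block_1,\Block_2\}$; since $\Block\mleq \Block_i$ always holds for a meet and the inequality is strict whenever $\Block\ne \Block_i$, each $\Block_i$ lies strictly above $\Block$. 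Hence the number of blocks strictly above $\Block_i$ is at most $n-1$, so the inductive hypothesis applies to both $\Block_1$ and $\Block_2$, giving sets $B_1,B_2\subseteq I_\Semig$ with $\Block_i=\meet B_i$.

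Finally I would conclude by associativity of the meet operation (which follows from $\meet$ being the greatest lower bound in a partial order whenever it is defined): $\Block=\Block_1\meet\Block_2=(\meet B_1)\meet(\meet B_2)=\meet(B_1\cup B_2)$, and $B_1\cup B_2\subseteq I_\Semig$ as required. The only non-routine point is the base case handling of the maximum block via the empty meet convention; everything else is a direct unwinding of the definitions, which is why the lemma is stated and used here rather than proved in a more intricate way.
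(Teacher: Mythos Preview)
Your proof is correct and takes essentially the same approach as the paper: the paper phrases it as a minimal-counterexample argument with respect to a linear extension of the block order, while you induct on the number of blocks strictly above $\Block$, but both boil down to the same recursion on $\Block_1,\Block_2$ when $\Block=\Block_1\meet\Block_2$ is meet-reducible, together with the empty-meet convention for the maximum block.
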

\begin{proof}
If $\Block\in I_\Semig$, then $B=\{\Block\}$ is a suitable choice, if $\Block$ is the maximum block, then take $B=\emptyset$. Enumerate all blocks as $\Block_1, \ldots, \Block_b$ such that there are no $i<j$ with $\Block_i\mleq \Block_j$ and let $i$ be the smallest integer such that the statement does not hold for $\Block_i$.

It follows that $\Block_i\in R_\Semig$, hence there are two blocks $\Block_j, \Block_k$ different from $\Block_i$ such that $\Block_j\meet \Block_k = \Block_i$. In particular, $\Block_i\mleq \Block_j, \Block_k$, hence $j,k<i$. Thus, by minimality of $i$, we have $B_j, B_k\subseteq I_\Semig$ with $\bigwedge B_j = \Block_j$ and $\bigwedge B_k = \Block_k$. It follows that $\Block = \bigwedge B$ for $B=B_j\cup B_k$, a contradiction.
\end{proof}

\section{Block equivalences and types}\label{sec:blockequivalences}
\begin{definition}
Let $\ESemig$ be a \pocs{}, $\str{A}$ be an $\Semig$-metric space and $\Block$ be a block of $\Semig$. 

\begin{enumerate}
\item A \emph{block equivalence $\sim_{\Block}$}
on vertices of $\str{A}$ is given by $u\sim_{\Block} v$ whenever there exists $a\in \Block$ such that $d(u,v)\mleq a$.

\item A {\em ball of diameter $\Block$} in $\str{A}$ is any equivalence class of $\sim_\Block$ in $\str{A}$.
\end{enumerate}
\end{definition}

To verify that for every block $\Block$ the relation $\sim_{\Block}$ is indeed an equivalence relation it suffices to check transitivity.  Given a triangle with distances $a,b,c$, if there exist $a'\in \Block$ and $b'\in \Block$ such that $a\preceq a'$ and $b\preceq b'$, it also holds that $c\preceq a\oplus b\preceq a'\oplus b'\in \Block$. Note that $u\sim_{\str 0} v$ if and only if $u=v$. It will be convenient later in Chapter~\ref{ch:withorder} to have the identity represented by a block.

For a finite set of blocks $B$ we denote the meet of $B$ as $\bigwedge B = B_1 \meet \cdots \meet B_k$ if it exists.

\begin{observation}\label{obs:ballintersection}
Let $\ESemig$ be a \pocs{} and $\str{A}$ be an $\Semig$-metric space. Then for every two blocks $\Block_1, \Block_2\neq \str 0$ such that their meet is defined and different from $\str 0$, every ball $B_1$ of diameter $\Block_1$ and every ball $B_2$ of diameter $\Block_2$ it holds that $B_1\cap B_2$ is either empty or it is a ball of diameter $\Block_1\meet\Block_2$.
\end{observation}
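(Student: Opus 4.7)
The plan is a direct unwinding of the definitions. Assume $B_1 \cap B_2 \neq \emptyset$, pick $u \in B_1 \cap B_2$, and let $\Block = \Block_1 \meet \Block_2$. I want to show that the $\sim_{\Block}$-class of $u$ in $\str{A}$ coincides with $B_1 \cap B_2$, which will simultaneously show that $B_1 \cap B_2$ is a ball of diameter $\Block$.

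For the first inclusion, take any $v \in B_1 \cap B_2$. Then $u \sim_{\Block_1} v$ and $u \sim_{\Block_2} v$, so there exist $a_1 \in \Block_1$ and $a_2 \in \Block_2$ with $d(u,v) \mleq a_1$ and $d(u,v) \mleq a_2$. By the characterization of meets of blocks given right before Definition~\ref{defn:meetreducible} (for any $a \in \Block_1$, $b \in \Block_2$ and any $x \in \Semig$ with $x \mleq a, b$ there exists $c \in \Block$ with $c \mgeq x$), applied to $x = d(u,v)$, we obtain $c \in \Block$ with $d(u,v) \mleq c$. Hence $u \sim_{\Block} v$.

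For the reverse inclusion, suppose $v \sim_{\Block} u$, i.e.\ there exists $c \in \Block$ with $d(u,v) \mleq c$. Since $\Block \mleq \Block_1$ (by Definition~\ref{defn:blockorder}), there is $b_1 \in \Block_1$ with $c \mleq b_1$, so $d(u,v) \mleq b_1$ and thus $u \sim_{\Block_1} v$, giving $v \in B_1$. The symmetric argument using $\Block \mleq \Block_2$ yields $v \in B_2$. Therefore $B_1 \cap B_2$ is exactly $\{v \in A : v \sim_\Block u\}$, which is the ball of diameter $\Block$ containing $u$.

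The proof is essentially bookkeeping; the only substantive ingredient is the characterization of the block meet in terms of $\Semig$-level comparisons, which is where the hypothesis $\Block_1 \meet \Block_2 \neq \str 0$ implicitly matters (the condition we invoke is exactly the defining property of a non-$\str 0$ meet stated a few paragraphs above). No obstacle of any depth arises.
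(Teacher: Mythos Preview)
Your proof is correct and follows essentially the same approach as the paper's own proof: both directions are handled identically, using the characterization of a non-$\str 0$ block meet for the inclusion $B_1\cap B_2 \subseteq [u]_{\sim_\Block}$ and the inequality $\Block\mleq\Block_i$ (via Definition~\ref{defn:blockorder}) for the reverse inclusion.
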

\begin{proof}
Denote $B = B_1\cap B_2$ and suppose that $B$ is nonempty. We need to prove that $B$ is a ball of diameter $\Block_1\meet\Block_2$.

Let $u,v\in A$ be vertices of $\str A$ such that $u\sim_{\Block_1} v$ and $u\sim_{\Block_2} v$. Denote $a=d_{\str A}(u,v)$. There are $b_1\in \Block_1$ and $b_2\in \Block_2$ such that $a\mleq b_1,b_2$, but then by the definition of meet there is $c\in \Block_1\meet\Block_2$ such that $c\mgeq a$ and hence $u\sim_{\Block_1\meet\Block_2} v$. This means that for every $u,v\in B$ it holds that $u\sim_{\Block_1\meet\Block_2} v$.

On the other hand if there are $u\in B$ and $v\in A$ such that $u\sim_{\Block_1\meet\Block_2} v$, then in particular $u\sim_{\Block_i} v$ for $i\in \{1,2\}$, so $v\in B_1\cap B_2$.
\end{proof}

\medskip
The ultimate goal of this chapter is to introduce an expansion of $\MF$ which satisfies the conditions of Theorems~\ref{thm:hn} and~\ref{thm:herwiglascar} and we want to do that by introducing new vertices for each ball of meet-irreducible diameter. The following example shows that there can be multiple types of pairs of balls of the same diameter and that therefore we will have to distinguish them explicitly in order for our expansion to have the amalgamation property:
\begin{example}[Types of pairs of blocks]\label{ex:blocktypes}
Let $S=\{1,3,4,6,7\}$. One can verify that $\oplus_S$ (see Example~\ref{example1}) is well-defined and associative. $\Semig = (S,\oplus_S,\leq)$ has three blocks: $\str 0$, $\{1\}$ and the rest. For every pair of distinct balls of diameter $1$ in an $\Semig$-metric space it holds that either all distances between them are from $\{3,4\}$ or they are from $\{6,7\}$.
\end{example}

Let $\Block$ be a non-maximal block of $\Semig$ and let $\ell\in \Semig$ be a distance. Generalising Example~\ref{ex:blocktypes}, we define $t(\Block,\ell)\subseteq \Semig$ by $a\in t(\Block, \ell)$ if and only if there is $\str A\in \MF$ and $u,u',v,v',w\in A$ such that the following conditions hold:
\begin{enumerate}
\item $u\sim_{\Block} u'$ and $v\sim_{\Block} v'$;
\item $d_\str{A}(u,v) = a$; and
\item $d_\str{A}(u',v') = \ell$.
\end{enumerate}
In other words, $t(\Block, \ell)$ is the set of all distances which can appear between balls of diameter $\Block$ if the distance $\ell$ appears between them.

\begin{example}[Running example --- $t(\Block, \Block',\ell)$]
$$t(\Block_{\{1\}},(5, 7, 0)) = \{((x,y,0) : x\in \mathbb N, y\geq 1\}.$$
Here, $y$ has to be at least 1, because otherwise we would obtain a non-metric cycle with longest edge $(5,7,0)$. On the other hand,
$$t(\Block_{\{1\}},(1, 1, 1)) = \{((x,y,z) : x\in \mathbb N, y\geq 1, z\geq 1\}.$$
\end{example}

The following easy observation says that the ``types'' $t(\Block, \ell)$ behave as expected
\begin{observation}\label{obs:blocktypes}
The following are true for every block $\Block$ and every pair of distances $\ell,\ell'\in \Semig$:
\begin{enumerate}
\item $\ell \in t(\Block, \ell)$;
\item either $t(\Block,\ell)\cap t(\Block, \ell') = \emptyset$, or $t(\Block, \ell)=t(\Block, \ell')$;
\item if $\Block'\mgeq \Block$ then $t(\Block',\ell)\supseteq t(\Block,\ell)$.
\end{enumerate}
\end{observation}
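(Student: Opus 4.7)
}
The first and third statements are essentially definition-chasing. For part~(1), note that if $\ell$ appears as a distance in any $\str A\in\MF$ (otherwise $t(\Block,\ell)=\emptyset$ and the observation is vacuous for $\ell$), I may take a pair $u,v\in A$ with $d_\str A(u,v)=\ell$ and set $u'=u$, $v'=v$; the relations $u\sim_\Block u'$ and $v\sim_\Block v'$ hold by reflexivity of $\sim_\Block$, so the same space witnesses $\ell\in t(\Block,\ell)$. For part~(3), I would observe directly from the definition of $\mleq$ on blocks that $\Block\mleq \Block'$ implies $\sim_\Block\,\subseteq\,\sim_{\Block'}$: if $d(u,u')\mleq a\in\Block$, pick $a'\in\Block'$ with $a\mleq a'$, whence $d(u,u')\mleq a'$. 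Thus any witness for $a\in t(\Block,\ell)$ is automatically a witness for $a\in t(\Block',\ell)$.

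The only content is in part~(2), for which I plan a two-stage strong amalgamation. Suppose $a\in t(\Block,\ell)\cap t(\Block,\ell')$, witnessed by $(u_1,u_1',v_1,v_1')$ in $\str A_1\in\MF$ (with the $\ell$-pair $u_1',v_1'$) and by $(u_2,u_2',v_2,v_2')$ in $\str A_2\in\MF$ (with the $\ell'$-pair $u_2',v_2'$). To prove $t(\Block,\ell)\subseteq t(\Block,\ell')$, take $b\in t(\Block,\ell)$ with witness $(u_3,u_3',v_3,v_3')$ in $\str A_3\in\MF$. By part~\ref{thm:shortestpath:amalg} of Theorem~\ref{thm:shortestpath}, $\MF$ has the strong amalgamation property, so I may first form the strong amalgam $\str C_1\in\MF$ of $\str A_1$ and $\str A_2$ identifying $u_1=u_2$ and $v_1=v_2$ (both pairs carry distance $a$), and then the strong amalgam $\str C_2\in\MF$ of $\str C_1$ and $\str A_3$ identifying $u_1'=u_3'$ and $v_1'=v_3'$ (both carry distance $\ell$). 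In $\str C_2$, transitivity of $\sim_\Block$ yields $u_3\sim_\Block u_1'\sim_\Block u_2'$ and $v_3\sim_\Block v_1'\sim_\Block v_2'$, while $d_{\str C_2}(u_3,v_3)=b$ and $d_{\str C_2}(u_2',v_2')=\ell'$; this shows $b\in t(\Block,\ell')$. The converse inclusion is symmetric.

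The only step I would need to be a bit careful with is the verification that strong amalgamation really keeps the relevant vertices distinct (so that, e.g., $u_1'$ and $u_2'$ do not collapse in $\str C_1$) and that all of the edges bearing the prescribed distances are preserved; both follow immediately from strong amalgamation meaning no identifications outside the common part and from amalgamation giving an honest substructure containing each factor. I do not anticipate any substantial obstacle beyond arranging the two amalgams in the right order so that the $\Block$-equivalences can be chained transitively.
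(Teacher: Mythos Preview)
Your proposal is correct and supplies what the paper omits: the paper calls this an ``easy observation'' and gives no proof. Your treatment of (1) and (3) is the expected definition-chasing, and your two-stage strong amalgamation for (2) is the natural way to establish that the relation ``$a \in t(\Block, \ell)$'' is transitive (whence the sets $t(\Block,\cdot)$ partition the realised distances). A marginally leaner route for (2) is to first record the built-in symmetry $a \in t(\Block,\ell) \Leftrightarrow \ell \in t(\Block,a)$, so that part~(2) reduces to transitivity of this relation, which needs only a \emph{single} amalgamation over the common edge; your version simply applies transitivity twice in one go. Your appeal to Theorem~\ref{thm:shortestpath} for strong amalgamation is legitimate, as that result precedes the observation and its hypotheses on $\mathcal F$ are the paper's standing assumptions.

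One small wording issue in (1): you say that if $\ell$ appears in no $\str A \in \MF$ then ``$t(\Block,\ell)=\emptyset$ and the observation is vacuous for $\ell$''. But $t(\Block,\ell)=\emptyset$ would make the assertion $\ell \in t(\Block,\ell)$ \emph{false}, not vacuous. In the paper's setting the two-vertex space carrying the single edge of length $\ell$ lies in $\MF$, so you should simply take $u=u'$, $v=v'$ there and drop the parenthetical.
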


Denote by $T(\Semig)$ the set of all possible $t(\Block,\ell)$, that is,
$$T(\Semig) = \{t(\Block,\ell) : \ell\in \Semig, \Block\text{ is a block of }\Semig\}.$$

\section{Explicit representation of balls}\label{sec:lstar}
The existence of block equivalences is an obstacle for local finiteness because every cycle such that one edge says $u\nsim_\Block v$, but the rest of the cycle says $u\sim_\Block v$ is clearly non-$\Semig$-metric. In order to deal with this, we explicitly represent the balls of all meet-irreducible diameters by new vertices and link every \emph{original vertex} to its corresponding \emph{ball vertices} by unary functions, thereby eliminating the block imaginaries.

For the rest of the section, fix a \pocs{} $\ESemig$ with \textbf{finitely many blocks} where the meet of every pair of blocks is defined and different from $\str 0$ (unless, of course, one of them is $\str 0$). Note that this means that $\str 0\in I_\Semig$

Recall that we interpret an $\Semig$-metric space as a relational structure $\str{A}$ in the language $L_\Semig$ with (possibly infinitely many) binary relations $\rel{}{s}$, $s\in \Semig$. Now we are going to add explicit representatives for balls which will later make it possible to ensure that each non-$\Semig$-metric cycle has a ``non-$\Semig$-metric substructure'' of bounded size, which is necessary for Theorems~\ref{thm:hn} and~\ref{thm:herwiglascar}. This construction is by now standard in the structural Ramsey theory (cf.~\cite{Hubicka2016} or~\cite{Sam}), however, unlike Braunfeld~\cite{Sam}, for our expansion it is enough to use unary functions thanks to Lemma~\ref{lem:meetirreducible} and Observation~\ref{obs:ballintersection}.

\begin{definition}\label{defn:mstar}
Denote by $L^\star_\Semig$ the expansion of $L_\Semig$ adding
\begin{enumerate}
\item unary functions $\func{}{\Block}$ for every $\Block\in I_{\Semig}\setminus \{\str{0}\}$;
\item unary functions $\func{}{\Block,\Block'}$ for every pair of blocks $\Block, \Block'\in I_\Semig\setminus \{\str{0}\}$ such that $\Block\mlt \Block'$; and
\item $2i$-ary relations $\rel{}{i,t}$ for every $t\in T(\Semig)$ and for every $1\leq i\leq \vert I_\Semig\setminus\{\str 0\}\vert$.
\end{enumerate}
We have to add relations $\rel{}{i,t}$ of higher arities because balls of meet-reducible diameter will be represented by sets of balls of meet-irreducible diameters (cf. Lemma~\ref{lem:meetirreducible}).

For a given metric space $\str{A}\in \mathcal M_\Semig$, denote by $L^\star(\str{A})$ the $L^\star_\Semig$-structure $\str{A}^\star$ created by the following procedure:
\begin{enumerate}
\item Start with $\str A^\star$ being an exact copy of $\str A$.
\item For every $\Block\in I_\Semig\setminus \{\str{0}\}$ enumerate balls of diameter $\Block$ in $\str{A}$ as $E^1_\Block,  \ldots, E^{n_\Block}_\Block$.
\item For every $\Block\in I_\Semig\setminus \{\str{0}\}$ and $1\leq i\leq n_\Block$ add a new vertex $v^i_\Block$ to $\str{A}^\star$. We call these vertices \emph{ball vertices} (in contrast to the \emph{original vertices}).
\item For every $\Block\in I_\Semig\setminus \{\str{0}\}$,  $1\leq i\leq n_\Block$ and $v\in E^i_\Block$ put $\nbfunc{\str{A}^\star}{\Block}(v)=v^i_\Block$.
\item For every pair of blocks $\Block, \Block'\in I_\Semig\setminus \{\str{0}\}$ such that $\Block\mlt \Block'$ and every $1\leq i\leq n_\Block$ put $\nbfunc{\str{A}^\star}{\Block,\Block'}(v^i_\Block)=\nbfunc{\str{A}^\star}{\Block'}(v)$  where $v$ is some vertex of $E^i_\Block$.
\item For every non-maximal and non-$\str 0$ block $\Block$ of $\Semig$, every tuple $\Block_1,\ldots,\Block_k$ of distinct blocks from $I_\Semig\setminus \{\str{0}\}$ such that $\Block = \bigwedge_i \Block_i$ and for every pair $\bar x\neq \bar y$ of $k$-tuples of ball vertices such that the following is satisfied:
\begin{enumerate}
\item $x_i$ and $y_i$ are ball vertices for block $\Block_i$ for every $1\leq i\leq k$; and
\item there are original vertices $u,v$ such that for every $1\leq i\leq k$ it holds that $u$ lies in the ball corresponding to $x_i$ and $v$ lies in the ball corresponding to $y_i$,
\end{enumerate}
we put $(\bar x, \bar y)\in \rel{}{k,t(\Block,d_\str{A}(u,v))}$ (by Observation~\ref{obs:blocktypes} this does not depend on the particular choice of $u$ and $v$).
\end{enumerate}
Denote by $\mathcal M^\star_\Semig$ the class of all $L^\star(\str{A})$, $\str A\in \mathcal M_\Semig$ and by $\mathcal H^\star_\Semig$ the smallest hereditary superclass of $\mathcal M^\star_\Semig$ (see Remark~\ref{rem:hereditary}).
\end{definition}
Note that the assumption that $\Semig$ has finitely many blocks was necessary to ensure that $L^\star(\str A)$ has finitely many vertices if $\str A$ has. Also note that the original vertices could be understood as ball vertices for $\str 0$ (and indeed, Braunfeld does exactly this and it makes it possible to deal with lattices where $\str 0$ is meet-reducible~\cite{Sam}), but in our case they need to carry the distances and thus, for clarity, we treat them completely separately.

\begin{example}[Running example --- $L^\star_\ExSemig$]\label{ex:lstar}
\begin{figure}
\centering
\includegraphics{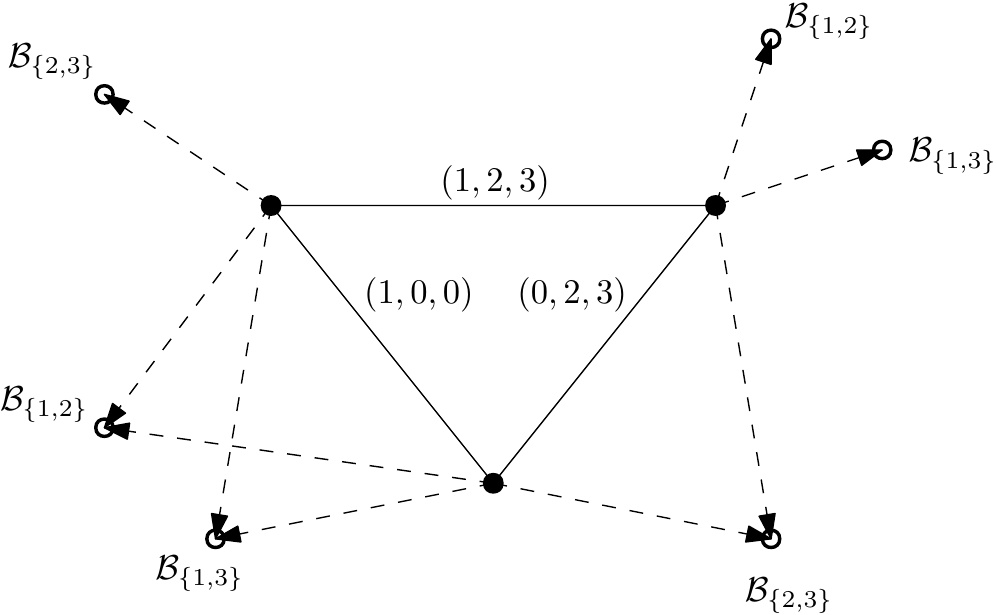}
\caption{The $L^\star_\ExSemig$-expansion, see Example~\ref{ex:lstar}}
\label{fig:ex:lstar}
\end{figure}
In Figure~\ref{fig:ex:lstar}, the $L^\star_\ExSemig$ expansion of the triangle with distances $(1,0,0)$, $(0,2,3)$ and $(1,2,3)$ is depicted. The $\func{}{\Block}$ functions are depicted by dashed arrows and their labels are moved to the ball vertices. There are no $\func{}{\Block, \Block'}$ functions in $L^\star_\ExSemig$. We did not draw the $\rel{}{i,t}$-relations.
\end{example}

Note that we will use $\Forb(\mathcal F)$ also for $L^\star_\Semig$-structures, the interpretation being that what is induced on the set of original vertices is from $\Forb(\mathcal F)$

\begin{prop}\label{prop:nonorderisomorphic}
The categories $\MF$ and $\Mstarf$ are isomorphic. In other words, $L^\star$ is a bijection between the structures from $\MF$ and $\Mstarf$ which preserves embeddings and their compositions.
\end{prop}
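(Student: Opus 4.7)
The claim is that the construction $L^\star$ of Definition~\ref{defn:mstar} is a functor realising an isomorphism of categories $\MF \to \Mstarf$. The strategy is to exhibit an explicit inverse $\Phi\colon\Mstarf\to\MF$, verify that $\Phi\circ L^\star$ and $L^\star\circ\Phi$ are the identity (on the nose, once one has fixed an enumeration of balls used in the definition of $L^\star$), and then promote this bijection on objects to a bijection on embeddings.

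First I would describe $\Phi$. Given $\str B\in\Mstarf$, the \emph{original vertices} of $\str B$ can be recognised intrinsically as those $v\in B$ which lie in the domain of $\func{B}{\Block}$ for every $\Block\in I_\Semig\setminus\{\str 0\}$ (equivalently, those incident to some $\rel{B}{s}$); call this set $B_0$. I set $\Phi(\str B)$ to be the $L_\Semig$-structure obtained by restricting $\str B$ to $B_0$ and forgetting the functions and higher-arity relations. Unwinding Definition~\ref{defn:mstar}, $\Phi(L^\star(\str A))$ is literally $\str A$, and conversely $L^\star(\Phi(\str B))$ equals $\str B$ because the ball vertices, the unary functions $\func{}{\Block}$ and $\func{}{\Block,\Block'}$, and the relations $\rel{}{k,t}$ of $\str B$ are all uniquely determined by the metric data on $B_0$: the partition of $B_0$ into balls of each meet-irreducible diameter determines the ball vertices and the $\func{}{\Block}$; the inclusions $\sim_\Block\,\subseteq\,\sim_{\Block'}$ (for $\Block\mlt\Block'$) force the values of $\func{}{\Block,\Block'}$; and the interpretations of the $\rel{}{k,t}$ are fixed by the defining clause of Definition~\ref{defn:mstar} together with Observation~\ref{obs:blocktypes}. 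This already gives bijectivity on objects.

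Next, for morphisms, every embedding $f\colon\str A\to\str B$ in $\MF$ extends uniquely to an $L^\star_\Semig$-embedding $f^\star\colon \str A^\star\to\str B^\star$: one sends original vertices via $f$, and since $f$ preserves distances it preserves the equivalences $\sim_\Block$ for each block $\Block$, so one may send the ball vertex $v^i_\Block\in A^\star$ representing the ball $E^i_\Block$ to the ball vertex of $\str B^\star$ representing the ball $f(E^i_\Block)$. The functional clauses of an embedding are immediate from the construction, and Observation~\ref{obs:blocktypes} together with the definition of $t(\Block,\ell)$ guarantees that the relations $\rel{}{k,t}$ are preserved and reflected, since the value of $t(\Block,\ell)$ depends only on the block $\Block$ and the distance $\ell$ that witnesses it. Composition and identities are preserved on the nose.

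Conversely, any $L^\star_\Semig$-embedding $g\colon\str A^\star\to\str B^\star$ must carry original vertices to original vertices: being an original vertex is characterised by lying in $\dom(\func{}{\Block})$ for every $\Block\in I_\Semig\setminus\{\str 0\}$, and embeddings preserve and reflect domains of functions. The restriction $g\restriction_{A}$ is therefore an $L_\Semig$-embedding $\str A\to\str B$ (i.e.\ a morphism of $\MF$), and by the object-level bijection together with the forcing described above, the ball vertices and relation values of $g$ are forced by $g\restriction_{A}$, so $g=(g\restriction_A)^\star$. The \textbf{main (and essentially only) subtle point} is verifying this preservation and reflection of the higher-arity relations $\rel{}{k,t}$: one needs to argue that two tuples of ball vertices stand in $\rel{}{k,t}$ precisely when the corresponding pair of balls admits representative original vertices $u,v$ with $d(u,v)\in t$, which is the content of Observation~\ref{obs:blocktypes}. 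Once this is checked, the two assignments $f\mapsto f^\star$ and $g\mapsto g\restriction_A$ are inverse bijections on $\mathrm{hom}$-sets, completing the proof.
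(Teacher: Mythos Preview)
Your approach is essentially the same as the paper's: the paper's proof simply states that $L^\star$, with inverse given by forgetting the extra structure and the ball vertices, is the desired isomorphism and that checking this is straightforward. You have filled in precisely those details, correctly identifying the inverse $\Phi$ and the mechanism by which embeddings extend uniquely to the ball vertices.

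One small correction: your parenthetical alternative characterisation of original vertices as ``those incident to some $\rel{B}{s}$'' fails for one-point metric spaces (where no distance relation is inhabited, yet the single vertex is original and there are still ball vertices). Your primary characterisation via the domains of the functions $\func{}{\Block}$ is the correct one and suffices, so this does not affect the argument.
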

\begin{proof}
It is straightforward to check that $L^\star$ (with its inverse, which forgets the extra structure and the ball vertices) gives such an isomorphism.
\end{proof}

Proposition~\ref{prop:nonorderisomorphic} implies that $\classstarf$ has the amalgamation property. However, the strong amalgamation property is not a property expressible only in the categorical language. And, in fact, in order to get the strong amalgamation property for $\classstarf$, we need to ensure that the shortest path completion is consistent with the block structure (that is, the shortest path completion does not glue any balls which are not explicitly glued in the free amalgam).
\begin{definition}[Meet synchronization]\label{defn:meetsync}
Let $\ESemig$ be a \pocs{} and let $\mathcal F$ be a family of $\Semig$-edge-labelled cycles containing all disobedient ones. We say that $\mathcal F$ \emph{synchronizes meets} if for every $\str A\in\MF$, every $u,v,w,x\in A$, every finite family $\mathcal P$ of paths in $\str A$ from $u$ to $v$ and every finite family $\mathcal P'$ of paths in $\str A$ from $w$ to $x$ the following holds:

Suppose that there is a bijection $f\colon \mathcal P\rightarrow \mathcal P'$ such that $\Block(\|\str P\|) = \Block(\|f(\str P)\|)$ for every $\str P\in \mathcal P$ and every $\str P\in\mathcal P\cup \mathcal P'$ has at least two edges. Then
$$\Block(\inf(\mathcal P)) = \Block(\inf(\mathcal P')).$$
\end{definition}

If all meets of blocks are defined, meet synchronization means that whenever we encounter a set of paths in $\MF$, then the block where their infimum lies only depends on the blocks where the lengths of the paths lie, that is, $$\Block(\inf(\mathcal P)) = \bigwedge\limits_{\str P\in\mathcal P}\Block(\|\str P\|).$$

\begin{remark}
We are not aware of any situations when $\mathcal F$ containing all disobedient cycles and being confined would not also ensure that it synchronizes meets.
\end{remark}

\begin{corollary}\label{cor:lstaramalgamation}
If $\mathcal F$ is an omissible family of $\Semig$-edge-labelled cycles containing all disobedient ones which synchronizes meets, then $\mathcal M^\star_\Semig\cap \Forb(\mathcal F)$, the subclass of $\mathcal M^\star_\Semig$ which omits homomorphic images of members of $\mathcal F$, is a strong amalgamation class.
\end{corollary}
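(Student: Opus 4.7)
The strategy is to reduce strong amalgamation in $\Mstarf$ to strong amalgamation in $\MF$, which is Theorem~\ref{thm:shortestpath}(\ref{thm:shortestpath:amalg}), by passing through the category isomorphism of Proposition~\ref{prop:nonorderisomorphic} and verifying that the underlying $\MF$-amalgam, once $L^\star$-expanded, is still a strong amalgam in the richer language. Given $\str A^\star,\str B_1^\star,\str B_2^\star \in \Mstarf$ with embeddings $\alpha_i^\star$, I first pass to the underlying spaces $\str A,\str B_1,\str B_2 \in \MF$ with embeddings $\alpha_i$, let $\str D$ be their free amalgam over $\str A$, and let $\str C$ be the shortest path completion of $\str D$, which by Theorem~\ref{thm:shortestpath}(\ref{thm:shortestpath:amalg}) is a strong amalgam in $\MF$ with inclusions $\beta_i\colon \str B_i\to\str C$. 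I then set $\str C^\star=L^\star(\str C)$, noting $\str C^\star\in\Forb(\mathcal F)$ because $\Forb(\mathcal F)$ tests only the original-vertex reduct, and I extend $\beta_i$ to $\beta_i^\star\colon \str B_i^\star\to \str C^\star$ by sending a ball vertex $v^j_\Block$ (representing the $\Block$-ball $E$ of $\str B_i$) to the ball vertex of $\str C^\star$ representing the unique $\Block$-ball of $\str C$ containing $\beta_i(E)$; this is well defined because by part~\ref{cor:suaer1:2.5} of Lemma~\ref{lem:shortestpath:2} the shortest path completion preserves edge distances, so $\beta_i(E)$ remains a single $\sim_\Block$-class in $\str C$.

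The verification that each $\beta_i^\star$ is an $L^\star_\Semig$-embedding and that $\beta_1^\star(B_1^\star)\cap\beta_2^\star(B_2^\star)=\beta_1^\star(\alpha_1^\star(A^\star))$ reduces to a single \emph{ball-preservation lemma}: for every meet-irreducible block $\Block\in I_\Semig\setminus\{\str 0\}$, two vertices $u,v\in D$ satisfy $u\sim_\Block v$ in $\str C$ if and only if either they both lie in some $\str B_i$ and already satisfy $u\sim_\Block v$ there, or there exists $a\in A$ with $\alpha_i(a)\sim_\Block u$ in $\str B_i$ and $\alpha_j(a)\sim_\Block v$ in $\str B_j$ (where $u\in B_i$, $v\in B_j$). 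Granted this lemma, injectivity of $\beta_i^\star$ on ball vertices and the strong-amalgamation condition follow by inspection, preservation of each relation $\rel{}{k,t}$ is automatic because the witness distances live inside $B_i$ and are unchanged in $\str C$, and Lemma~\ref{lem:meetirreducible} together with Observation~\ref{obs:ballintersection} extends the argument from meet-irreducible blocks to arbitrary ones. The easy direction of the lemma (existence of an $A$-mediator $\Rightarrow u\sim_\Block v$) comes from monotonicity of $\oplus$ and the preservation of edge distances.

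The main obstacle, and the only place where meet synchronization genuinely enters, is the converse direction when $u\in B_1\setminus\alpha_1(A)$ and $v\in B_2\setminus\alpha_2(A)$: every path in $\str D$ from $u$ to $v$ has at least two edges and must traverse $A$. Splitting any such path $\str P$ at an $A$-vertex $a$ and using the $\Semig$-metric inequality inside $\str B_1,\str B_2$ gives $\Block(\|\str P\|)\mgeq \Block(d_{\str B_1}(u,a))\vee\Block(d_{\str B_2}(a,v))$, with equality attained by the two-edge witnesses $u\to a\to v$; applying meet synchronization in $\str C$ to the family of paths from $u$ to $v$ lying in $\str D$ then yields
\[
\Block(d_{\str C}(u,v))=\bigwedge_{a\in A}\bigl(\Block(d_{\str B_1}(u,a))\vee\Block(d_{\str B_2}(a,v))\bigr).
\]
The final and subtlest step is to extract a single mediator $a\in A$ from the inequality $\Block(d_{\str C}(u,v))\mleq\Block$: using meet-irreducibility of $\Block$ together with the lattice structure on blocks set up at the start of Section~\ref{sec:lstar}, one shows that at least one of the joins $X_a:=\Block(d_{\str B_1}(u,a))\vee\Block(d_{\str B_2}(a,v))$ must itself be $\mleq\Block$, which forces both $\Block(d_{\str B_1}(u,a))\mleq\Block$ and $\Block(d_{\str B_2}(a,v))\mleq\Block$, giving the required mediator and completing the argument.
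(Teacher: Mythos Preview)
Your approach is essentially the paper's two-sentence sketch (``the $L^\star$ functor preserves distances, and meet synchronization ensures the shortest path completion does not merge balls that should stay apart'') carried out in full detail. You correctly identify the ball-preservation lemma as the crux and correctly obtain the formula $\Block(d_{\str C}(u,v))=\bigwedge_{a\in A} X_a$ from meet synchronization.

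There is, however, a gap in your final step. You assert that meet-irreducibility of $\Block$, together with the standing hypotheses of Section~\ref{sec:lstar} (finitely many blocks, all meets defined, $\str 0$ meet-irreducible), forces some $X_a\mleq\Block$ once $\bigwedge_a X_a\mleq\Block$. That is the statement that $\Block$ is meet-\emph{prime}, not merely meet-irreducible; in a general finite lattice the two notions differ (consider $M_3$). What is actually needed is distributivity of the block lattice, under which irreducible and prime coincide---this is precisely the argument the paper writes out later in the proof of Theorem~\ref{thm:samstrongamalg}. Distributivity is supplied by Lemma~\ref{lem:meets}(\ref{lem:meets:sync}), but that lemma additionally requires $\mathcal F$ to be confined, which is not among the hypotheses of the present corollary. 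The paper's own one-line proof glosses over the same point, so your gap is inherited from the sketch you are fleshing out; in every downstream use confinedness is in force, so the issue is harmless for the thesis as a whole, but as stated the last inference does not follow from the hypotheses you cite.
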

\begin{proof}
The $L^\star$ functor preserves distances and it is easy to check that since $\mathcal F$ synchronizes meets, the shortest path completion does not introduce any unnecessary $\sim_\Block$ relations if one chooses a distance from the largest block to connect the connected components of the amalgam (cf. proof of part~\ref{thm:shortestpath:amalg} of Theorem~\ref{thm:shortestpath}).
\end{proof}
\begin{remark}
The functions $\nbfunc{}{\Block,\Block'}$ were added to restrict what the substructures of a structure from $\mathcal M^\star_\Semig$ are. In particular, if a substructure contains a ball vertex for diameter $\Block$, it also needs to contain all ball vertices of larger diameters which represent the ``superballs''. This ensures that amalgamation behaves reasonably, saying that two vertices are in the same ball of diameter $\Block$, but in different balls of diameter $\Block'\mgt\Block$ has no reasonable interpretation in the metric space.
\end{remark}
\begin{remark}\label{rem:hereditary}
Unfortunately, Theorem~\ref{thm:hn} requires the class $\mathcal K$ to be hereditary. $\Hstarf$ differs from $\Mstarf$ by adding the structures which further contain some ball vertices to which no original vertex is linked. This hereditary class is in fact the reason for adding the $\rel{}{i,t}$ relations to ensure that it has the strong amalgamation property (for a \emph{confined} family $\mathcal F$), see Corollary~\ref{cor:hstarfamalg}.
\end{remark}

\subsection{Semigroups with infinitely many blocks}
We have already seen (and will see again) that one sometimes needs to assume that $\Semig$ has finitely many blocks. The following theorem says that one can, to some extent, assume it without loss of generality. In order to state it, we first need to give one more definition which will be useful later.

\begin{definition}[Confined families]\label{defn:slocallyfinite}
Let $\ESemig$ be a \pocs{}, let $\mathcal F$ be a family of $\Semig$-edge-labelled graphs. We say that $\mathcal F$ is \emph{confined} if for every finite $S\subseteq \Semig$ there are only finitely many $S$-edge-labelled graphs in $\mathcal F$.
\end{definition}

\begin{theorem}\label{thm:infinitelyblocks}
Let $\ESemig$ be a \pocs{} and $\mathcal F$ be a family of $\Semig$-edge-labelled cycles. Suppose that the following conditions hold:
\begin{enumerate}
\item $\mathcal F$ is $\Semig$-omissible;
\item $\mathcal F$ contains all $\Semig$-disobedient cycles;
\item $\mathcal F$ is confined;
\item all meets of blocks of $\Semig$ are defined and $\str 0$ is meet-irreducible; and
\item $\mathcal F$ synchronizes meets.
\end{enumerate}
Then for every \textbf{finite} $S\subseteq \Semig$ there is a countable \pocs{} $\Semig'\subseteq \Semig$ and a family of $\Semig'$-edge-labelled cycles $\mathcal F'$ such that $S\subseteq \Semig'$ and the following hold:
\begin{enumerate}
\item $\mathcal F'$ is $\Semig$-omissible;
\item $\mathcal F'$ contains all $\Semig'$-disobedient cycles;
\item $\mathcal F'$ is confined;
\item all meets of blocks of $\Semig'$ are defined and $\str 0$ is meet-irreducible;
\item $\mathcal F'$ synchronizes meets; and
\item $\Semig'$ has finitely many blocks.
\end{enumerate}
Furthermore it holds that $$\mathcal M_{\Semig}\cap\Forb(\mathcal F)\cap \mathcal G^S\subseteq\mathcal M_{\Semig'}\cap\Forb(\mathcal F')\subseteq \mathcal M_{\Semig}\cap\Forb(\mathcal F),$$
where $\mathcal G^S$ is the class of all finite $S$-edge-labelled graphs.
\end{theorem}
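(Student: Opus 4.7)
The plan is to realise $\Semig'$ as a countable sub-\pocs{} of $\Semig$ whose blocks correspond to a finite sublattice $\mathcal B$ of the block lattice of $\Semig$. First I would argue that the sublattice $\mathcal B$ generated under $\vee$ and $\meet$ by $\{\Block(s):s\in S\}\cup\{\str{0}\}$ is finite, using that the block lattice of $\Semig$ is distributive. Combining the identity $\inf(\mathcal P\oplus\mathcal P')=\inf(\mathcal P)\oplus\inf(\mathcal P')$ (from $\mathcal F$ containing all disobedient cycles) with meet synchronization, which gives $\Block(\inf(\mathcal P))=\bigwedge_{P\in\mathcal P}\Block(\|P\|)$, yields the blockwise identity $(\bigwedge_i a_i)\vee(\bigwedge_j b_j)=\bigwedge_{i,j}(a_i\vee b_j)$, which specialises to the distributive law $a\vee(b\meet c)=(a\vee b)\meet(a\vee c)$. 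A distributive lattice generated by finitely many elements is finite, hence so is $\mathcal B$.

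Next I would build $\Semig'$ by iterated closure. For each $\Block\in\mathcal B\setminus\{\str{0}\}$ pick a countable nonempty archimedean subsemigroup $\Semig'_\Block\subseteq \Block$ containing $S\cap\Block$, and then iteratively enforce $\oplus$-closure: whenever $a\in\Semig'_{\Block_1}$ and $b\in\Semig'_{\Block_2}$, add $a\oplus b$ to $\Semig'_{\Block_1\vee\Block_2}$ (legitimate because $\mathcal B$ is $\vee$-closed). After $\omega$ rounds, $\Semig':=\bigcup_{\Block}\Semig'_{\Block}$ is countable, contains $S$, is closed under $\oplus$, and its blocks are in bijection with $\mathcal B$. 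Meets in $\Semig'$ coincide with $\Semig$-meets restricted to $\mathcal B$, so all block meets in $\Semig'$ are defined and $\str{0}$ inherits meet-irreducibility from $\Semig$.

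Setting $\mathcal F':=\{F\in\mathcal F : F\text{ is labelled from }\Semig'\}$, each required property of $\mathcal F'$ transfers: confinedness is immediate from $\mathcal F'\subseteq\mathcal F$; $\Semig'$-omissibility and meet synchronization follow because $\oplus$, $\mleq$ and block-meets in $\Semig'$ agree with those in $\Semig$; and every $\Semig'$-disobedient cycle is automatically $\Semig$-disobedient and $\Semig'$-labelled, hence lies in $\mathcal F\cap\mathcal F'$. The two inclusions are then routine: a graph labelled from $S\subseteq\Semig'$ is $\Semig$-metric iff it is $\Semig'$-metric, and for $\Semig'$-labelled targets $\Forb(\mathcal F')=\Forb(\mathcal F)$, since any cycle in $\mathcal F\setminus\mathcal F'$ carries a non-$\Semig'$ label that prevents any homomorphism into such a target.

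The main obstacle is the finiteness of $\mathcal B$, i.e., distributivity of the block lattice. A subtle point is that meet synchronization and the infimum-distributivity are stated only for path families whose members have length $\geq 2$; however, every block $\Block(a)$ equals $\Block(\|P\|)$ for the two-edge path $P=(a,a)$ (realised in any triangle with two sides of length $a$ inside $\MF$), so the distributive identity propagates to all blocks in $\{\Block(s):s\in S\}$ and hence to the sublattice they generate.
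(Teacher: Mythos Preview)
Your overall strategy---identify a finite sublattice $\mathcal B$ of blocks, build a countable sub-\pocs{} supported on $\mathcal B$, and restrict $\mathcal F$ to it---is reasonable, and your argument that $\mathcal B$ is finite (via distributivity of the block lattice, derived from meet synchronisation together with the infimum--$\oplus$ distributivity coming from the disobedient-cycles condition) is correct; the paper proves exactly this as a separate lemma (Lemma~\ref{lem:meets}).

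The gap is in the construction of $\Semig'$ itself. You close only under $\oplus$, not under the element-level infima that the disobedient-cycles condition is about. Concretely: take $\str A\in\mathcal M_{\Semig'}\cap\Forb(\mathcal F')$ (which indeed lies in $\MF$) and a family $\mathcal W$ of walks of length $\geq 2$. You know $c:=\inf_\Semig(\mathcal W)$ exists, but there is no reason $c\in\Semig'$, and without that $\inf_{\Semig'}(\mathcal W)$ need not exist at all. Your sentence ``every $\Semig'$-disobedient cycle is automatically $\Semig$-disobedient'' conflates $\Semig'$-infima with $\Semig$-infima: the failure of a $\Semig'$-infimum to exist is not witnessed by any $\Semig$-disobedience, since in $\Semig$ the infimum \emph{does} exist. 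The same problem undermines your transfer of omissibility (the upwards-closure clause of Definition~\ref{defn:omissible} quantifies over families $\mathcal P$ with $\inf(\mathcal P)=a_i$, an infimum computed in $\Semig'$) and of meet synchronisation.

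The paper sidesteps this by defining $\Semig'$ semantically: it takes $S^+$ to be the set of all distances appearing in shortest-path completions of $\Semig$-metric $S$-edge-labelled graphs in $\Forb(\mathcal F)$. The crucial feature of this choice is that every $a\in S^+$ comes equipped with a witnessing family $\mathcal P_a$ of $S$-labelled paths with $a=\inf_\Semig(\mathcal P_a)$. Then whenever $\inf_\Semig(a,b)$ fails to lie in $S^+$ (or the $S^+$-infimum differs), one argues that every combined family $\mathcal P_a\cup\mathcal P_b$ (with common endpoints) must already contain a non-metric cycle or a member of $\mathcal F$---otherwise its shortest-path completion would place $\inf_\Semig(a,b)$ into $S^+$. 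This is precisely the mechanism that makes the disobedient-cycles and meet-synchronisation conditions descend to $\mathcal F'$. Your abstractly chosen block representatives carry no such witnessing path families, so this argument is unavailable to you. A repair would be to also close your $\Semig'$ under infima of finite sets of path-lengths that arise in $\Forb(\mathcal F)$ (these stay inside $\mathcal B$ by meet synchronisation, so finite-block-ness survives), but once you do that you have essentially rebuilt the paper's $S^+$.
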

Informally, Theorem~\ref{thm:infinitelyblocks} says that ``in a problem about finitely many finite structures from $\mathcal M_{\Semig}\cap\Forb(\mathcal F)$ one can assume that $\Semig$ has finitely many blocks''. From $\Semig'$ being countable it furthermore follows that $\mathcal M_{\Semig'}\cap\Forb(\mathcal F')$ is a \Fraisse{} class.
\begin{proof}
Let $\mathcal A$ be the class of all $\Semig$-metric $S$-edge-labelled graphs from $\Forb(\mathcal F)$ and define $\mathcal A^+$ to be the class containing the $\Semig$-shortest path completions of graphs from $\mathcal A$ (they exist by Theorem~\ref{thm:shortestpath}). Finally let $S^+$ be the set of all distances appearing in $\mathcal A^+$.

Clearly $S^+$ is closed on $\oplus$: If $a,b\in S^+$ then there are $S$-edge-labelled graphs $\str A,\str B\in \mathcal A$ and vertices $u,v\in A$, $x,y\in B$ such that $uv$ gets completed to $a$ and $xy$ gets completed to $b$. Then the free amalgamation of $\str A$ and $\str B$ identifying only $v$ and $x$ is also in $\mathcal A$ and in its shortest path completion $uy$ gets completed to $a\oplus b$, hence $a\oplus b\in S^+$.

Denote by $\Semig'$ the \pocs{} induced by $\Semig$ on $S^+$.

Let $a,b\in \Semig'$ and suppose that $\inf(a,b)$ does not exist in $\Semig'$ or is different in $\Semig'$ than in $\Semig$. If it does not exist even in $\Semig$, then $\mathcal F$ takes care of such a pair. Suppose that it exists in $\Semig$. By definition $a$ is in $S^+$ because there is a family $\mathcal P_a$ of $S$-edge-labelled paths such that $a=\inf(\mathcal P_a)$. By the same argument we get $\mathcal P_b$ for $b$. Because $\inf(a,b)$ does not exist in $\Semig'$ or is different in $\Semig'$ than in $\Semig$ we know that for every such family $\mathcal P_a$ and every such family $\mathcal P_b$ it holds that $\mathcal P_a\cup\mathcal P_b$ (where we assume that all the paths in $\mathcal P_a\cup \mathcal P_b$ share the same two endpoints) contains either a path of length one (hence a non-$\Semig$-metric cycle) or a cycle from $\mathcal F$.

Let $\mathcal F'$ be the subset of $\mathcal F$ consisting of all $S^+$-edge-labelled cycles from $\mathcal F$. Clearly $\mathcal F'$ is $\Semig'$-omissible and it is confined. It also holds that $\mathcal F'$ contains all disobedient cycles: We checked it for undefined infima, for non-distributivity it follows from $\mathcal F$ containing all disobedient cycles and the fact that when infimum in $\Semig'$ is different than in $\Semig$, it is also $\mathcal F'$-forbidden. The same argument can also be done for meet synchronization.

This implies that $$\mathcal M_{\Semig}\cap\Forb(\mathcal F)\cap \mathcal G^S\subseteq\mathcal M_{\Semig'}\cap\Forb(\mathcal F')\subseteq \mathcal M_{\Semig}\cap\Forb(\mathcal F).$$

Finally note that $\Semig'$ is countable (each distance from $\Semig'$ corresponds to a finite $S$-edge-labelled graph) and has only finitely many blocks: They correspond to the sublattice of the block lattice of $\Semig$ generated by blocks containing elements from $S$, because $\mathcal F$ synchronizes meets.
\end{proof}

\section {Important and unimportant summands}\label{sec:nonimportant}
This part is key for obtaining a locally finite
description of $\mathcal{M}_\Semig$ needed for Theorems~\ref{thm:hn} and~\ref{thm:herwiglascar}. We show that from every non-$\Semig$-metric cycle one can select a bounded number of important edges (bounded by a function of the set of distances used in the cycle) such that the cycle stays non-$\Semig$-metric after replacing the unimportant edges by arbitrary distances from the same blocks. The $L_\Semig^\star$-expansion is useful precisely because it can express ``$d(u,v)\in\Block'$ for some $\Block'\mleq \Block$''.

Given a \pocs{} $\ESemig$ and $S\subseteq \Semig$, we denote by $S^\oplus$ the set of all elements of $\Semig$ which can be
obtained as nonempty sums of values from $S$, i.e. the subsemigroup of $\Semig$ generated by $S$.

Blocks of a semigroup may be infinite and may not contain a maximal element which
would be useful in our arguments (these maximal elements are referred to as jump numbers in~\cite{Hubicka2016}).  For a fixed finite $S\subseteq \Semig$ we seek
a sufficient approximation $\mus(\Block,S)$ of $\max(\Block)$ given by the
following lemma. Note that if $\Semig$ is finite, this section could basically consist of the statement ``Let $\mus(\Block, S) = \max(\Block)$.'' The name $\mus$ means \emph{maximum useful} distance (in $\Block$ with
respect to $S$).
\begin{lemma}\label{lem:obstaclemus}
Let $\ESemig$ be a \pocs{} and let $S\subseteq \Semig$ be a finite subset of $\Semig$. 
 Then for every block $\Block$ of $\Semig$ there is a distance $\mus(\Block, S) \in \Block$ such that for every $\ell\in S$ and $e\in S^\oplus$ one of the following holds:
\begin{enumerate}
\item $e\oplus \mus(\Block, S) \mgeq \ell$, or
\item $e\oplus b\nmgeq \ell$ for every $b \in \Block$ (and thus also for every $b\in\Block'$, where $\Block'\mleq \Block$).
\end{enumerate}
Furthermore for every block $\Block$ such that $\Block\cap S^\oplus\neq \emptyset$ we can pick $\mus(\Block,S)\in S^\oplus$.
\end{lemma}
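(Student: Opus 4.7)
The plan is to solve the problem separately for each $\ell\in S$ and then combine the pointwise witnesses. Concretely, it will suffice to find, for each $\ell\in S$, some $\mus_\ell\in\Block$ such that $e\oplus\mus_\ell\mgeq\ell$ whenever there exists $b\in\Block$ with $e\oplus b\mgeq\ell$; then $\mus(\Block,S):=\bigoplus_{\ell\in S}\mus_\ell$ lies in $\Block$ (since $\Block$ is closed under $\oplus$ and the sum is finite), dominates every $\mus_\ell$ by the axiom $a\mleq a\oplus b$, and hence satisfies the lemma by the joint monotonicity of $\oplus$.

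For fixed $\ell$, the naive idea of taking a maximum of required witnesses fails because $\Block$ need not have a maximum and the set $E_\ell=\{e\in S^\oplus:\exists b\in\Block,\ e\oplus b\mgeq\ell\}$ is typically infinite. The key observation will be to pass to the multiset representation of $S^\oplus$: every $e\in S^\oplus$ arises from a nonzero tuple $\nu=(n_1,\ldots,n_k)\in\N_0^{|S|}$ via $e(\nu)=\bigoplus_i n_i\times s_i$, and if $\nu\leq\nu'$ coordinatewise then $e(\nu)\mleq e(\nu')$ by the axiom $a\mleq a\oplus b$. Thus $N_\ell=\{\nu:e(\nu)\in E_\ell\}$ is upward closed in $\N_0^{|S|}$, and by Dickson's lemma it has only finitely many coordinatewise-minimal elements $\nu_1,\ldots,\nu_{m_\ell}$. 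For each such $\nu_j$ I would pick a witness $b_j\in\Block$ with $e(\nu_j)\oplus b_j\mgeq\ell$ and set $\mus_\ell:=\bigoplus_j b_j\in\Block$. For arbitrary $e\in E_\ell$, any multiset representation of $e$ dominates some $\nu_j$, yielding $e\mgeq e(\nu_j)$ and $\mus_\ell\mgeq b_j$; joint monotonicity of $\oplus$ then gives $e\oplus\mus_\ell\mgeq e(\nu_j)\oplus b_j\mgeq\ell$, as required.

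The main obstacle is exactly the finiteness of the witness set for each $\ell$, which Dickson's lemma rescues; this is the step I would write out most carefully. The remaining cases are trivial: if $E_\ell=\emptyset$ take $\mus_\ell$ to be any element of $\Block$, and the degenerate case $\Block=\str 0$ is not relevant since the second clause of the lemma is then vacuous. For the ``furthermore'' clause, fix some $b_0\in\Block\cap S^\oplus$ and use the archimedeanity of $\Block$ to replace each $b_j$ above by a sufficiently large multiple $n_j\times b_0\mgeq b_j$; such a multiple lies in $\Block\cap S^\oplus$ because $S^\oplus$ is closed under $\oplus$, so the resulting $\mus_\ell$, and hence $\mus(\Block,S)$, then also lies in $S^\oplus$.
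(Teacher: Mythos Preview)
Your proof is correct and follows essentially the same strategy as the paper: reduce to a per-$\ell$ statement and combine via $\bigoplus_{\ell\in S}$, then use the multiset representation of $S^\oplus$ together with Dickson's lemma, and finally invoke archimedeanity for the ``furthermore'' clause. The only difference is cosmetic: the paper proves the per-$\ell$ claim by contradiction (building an infinite $\trianglelefteq$-antichain in $\N_0^{|S|}$), whereas you argue directly that the upward-closed set $N_\ell$ has finitely many minimal elements---two equivalent faces of the same well-quasi-order fact.
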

\begin{example}[Running example --- $\mus$]
Consider the semigroup $\ExSemig$ and let $S=\{(5,5,5), (7,1,0)\}$. Then a possible choice is $\mus(\Block_{\{1,2,3\}}, S) = (7,5,5)$, $\mus(\Block_{\{1,2\}}, S) = (7,5,0)$ and $\mus(\Block_{\{3\}},S) = (0,0,5)$.
\end{example}

We will prove Lemma~\ref{lem:obstaclemus} using the following claim:
\begin{claim}\label{claim:obstaclemus:d}
Let $\ESemig$ be a partially ordered commutative semigroup, $S\subseteq \Semig$ a finite subset of $\Semig$, $\Block$ an arbitrary block of $\Semig$ and $\ell\in S$ an arbitrary distance from $S$.
There is a distance $d(\Block, S, \ell) \in \Block$ such that for every $e\in S^\oplus$ one of the following holds:
\begin{enumerate}
\item $e\oplus d(\Block, S, \ell) \mgeq \ell$, or
\item $e\oplus b\nmgeq \ell$ for every $b \in \Block$.
\end{enumerate}
\end{claim}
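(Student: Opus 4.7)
The plan is to reduce the problem to a finite one by passing through the free commutative monoid. Define $E = \{e \in S^\oplus : \exists b \in \Block \text{ with } e \oplus b \mgeq \ell\}$, the set of partial sums from $S$ which can be ``boosted'' into dominating $\ell$ by a distance from $\Block$; the claim is equivalent to producing a single $d \in \Block$ with $e \oplus d \mgeq \ell$ for every $e \in E$. First I would observe that $E$ is upward-closed in $S^\oplus$ with respect to $\mleq$: if $b$ witnesses $e \in E$ and $e \mleq e' \in S^\oplus$, then monotonicity (Definition~\ref{def:POS}(\ref{def:POS:monot})) gives $e' \oplus b \mgeq e \oplus b \mgeq \ell$.

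Next I would lift $E$ to the free commutative monoid $\mathbb{N}^{\vert S\vert}$ via the surjective homomorphism $\varphi \colon \mathbb{N}^{\vert S\vert}\setminus\{0\} \to S^\oplus$ sending $(\alpha_s)_{s\in S}$ to $\bigoplus_{s \in S} \alpha_s \times s$. Properties (3) and (\ref{def:POS:monot}) of Definition~\ref{def:POS} imply that $\widehat E = \varphi^{-1}(E)$ is upward-closed in $\mathbb{N}^{\vert S\vert}$ under the coordinate-wise order. By Dickson's lemma, $\widehat E$ has only finitely many minimal elements $\alpha_1,\ldots,\alpha_k$. Setting $e_i = \varphi(\alpha_i)$ and choosing $b_i \in \Block$ with $e_i \oplus b_i \mgeq \ell$, define
\[
d(\Block,S,\ell) = b_1 \oplus b_2 \oplus \cdots \oplus b_k,
\]
which lies in $\Block$ because blocks are closed under $\oplus$.

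To verify this choice works, take any $e \in E$ and pick $\alpha \in \widehat E$ with $\varphi(\alpha) = e$; by minimality $\alpha \geq \alpha_i$ coordinate-wise for some $i$. Writing $\alpha = \alpha_i + \beta$ gives either $e = e_i$ (if $\beta = 0$) or $e = e_i \oplus \varphi(\beta)$, so in either case $e \mgeq e_i$ by property (3) of Definition~\ref{def:POS}. Combined with $d \mgeq b_i$ (again by repeated application of (3)), monotonicity yields $e \oplus d \mgeq e_i \oplus b_i \mgeq \ell$, as required. The one nontrivial ingredient is the application of Dickson's lemma: the quantifier in the claim ranges over the possibly infinite set $S^\oplus$, but upward-closedness in the well-partial-ordered free monoid $\mathbb{N}^{\vert S\vert}$ compresses the obligation to finitely many generators, whose individual witnesses can then be combined by $\oplus$ inside the archimedean block.
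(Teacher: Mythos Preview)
Your proof is correct and rests on the same core ingredient as the paper's---Dickson's lemma applied to the free commutative monoid $\mathbb{N}^{|S|}$---but you deploy it in a different and arguably cleaner way. The paper argues by contradiction: assuming no $d$ works, it builds an infinite sequence $(e_i)$ in $S^\oplus$ with no two terms $\mleq$-comparable in the right direction, then lifts to $\mathbb{N}^{|S|}$ and invokes Dickson's lemma (in its ``no infinite antichain'' form) to reach a contradiction. You instead use the equivalent formulation that every upward-closed subset of $\mathbb{N}^{|S|}$ has finitely many minimal elements, which lets you \emph{construct} $d(\Block,S,\ell)$ explicitly as the $\oplus$-sum of witnesses for those minima. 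Your route is more direct and yields a concrete description of the element, whereas the paper's argument is purely existential.

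One small omission: when $\widehat E$ is empty (equivalently $E=\emptyset$), the empty $\oplus$-sum is undefined in a semigroup, so you should separately note that in this case condition~(2) holds for every $e\in S^\oplus$ and any element of $\Block$ serves as $d(\Block,S,\ell)$.
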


\begin{proof}[Proof of Lemma~\ref{lem:obstaclemus} using Claim~\ref{claim:obstaclemus:d}]
We can set $\mus(\Block, S)$ to be an element of $\Block$ greater than or equal to $d(\Block, S, \ell)$ for every $\ell\in S$. As $S$ is finite, there is such an element (for example $\bigoplus_{\ell\in S}d(\Block, S, \ell)$ for nonempty $S$). To satisfy the furthermore part it is enough to use archimedeanity.
\end{proof}

\begin{proof}[Proof of Claim~\ref{claim:obstaclemus:d}]
Suppose that the statement is not true. Then for every $d\in \Block$ there is some $e\in S^\oplus$ and $b\in \Block$ such that $e\oplus d \nmgeq \ell$, but $e\oplus b \mgeq \ell$.

By induction we now define sequences $(d_i)_{i=0}^\infty$ and $(e_i)_{i=1}^\infty$. Let $d_0\in \Block$ be an arbitrary distance from $\Block$. 

Assume that both sequences are defined up to $i-1$. Take $e\in S^\oplus$ and $b\in \Block$ such that $e\oplus d_{i-1} \nmgeq \ell$, but $e\oplus b \mgeq \ell$ and set $d_i = d_{i-1}\oplus b$ and $e_i = e$. Note that $e_i\oplus d_i\mgeq \ell$ and $d_j \mleq d_i$ for every $j\leq i$. We shall prove by contradiction that there are no indices $i < j$ with $e_i\mleq e_j$.

Suppose that there are such indices $i<j$ with $e_i\mleq e_j$. By definition we have $e_i\oplus d_i \mgeq \ell$, but $d_{j-1}\mgeq d_i$, so $e_j\oplus d_{j-1}\mgeq e_i\oplus d_i \mgeq \ell$, which is a contradiction with the definition of $e_j$.

Now observe that every distance $a\in S^\oplus$ corresponds to a function $f_a\colon S \rightarrow \mathbb N$ and vice versa by $$a = \bigfoplus_{s\in S} f_a(s)\times s.$$ Say that $f_a \trianglelefteq f_b$ if and only if $f_a(s) \leq f_b(s)$ for every $s\in S$ (i.e. $\trianglelefteq$ is the component-wise order of the vectors $\mathbb N^{|S|}$). Then clearly by monotonicity of $\oplus$ whenever $f_a\trianglelefteq f_b$, then also $a\mleq b$.

Take the sequence $(f_{e_i})_{i=1}^\infty$. By Dickson's lemma~\cite{dickson} $\trianglelefteq$ is a well-quasi-order, hence there are indices $i<j$ with $f_{e_i} \trianglelefteq f_{e_j}$. But that implies $e_i\mleq e_j$, which is a contradiction and hence finishes the proof.
\end{proof}

\medskip

The following proposition (an easy consequence of Lemma~\ref{lem:obstaclemus}) is the main result of this section which will be used in proving local finiteness.
\begin{prop}
\label{prop:Sequivalence}
Let $\ESemig$ be a \pocs{} and let $S\subseteq \Semig$ be a finite subset of $\Semig$. There exists $n=n(S)$ such that for every $\ell\in S$ and every sequence $e_1, e_2, \ldots, e_k \in S$ with $\ell \nmleq e_1\oplus e_2 \oplus \cdots \oplus e_k$ there is a sequence $f_1, f_2, \ldots, f_m\in S$ satisfying the following properties:
\begin{enumerate}
\item $(f_i)$ is a subsequence of $(e_i)$;
\item $m \leq n$; and
\item if $(f_i) \subsetneq (e_i)$, $\Block$ is the join $\bigvee\{\Block(a);a\in(e_i)\setminus (f_i)\}$ and $b\in\Block$ an arbitrary distance, then $\ell\nmleq b\oplus f_1\oplus f_2 \oplus \cdots \oplus f_m$.
\end{enumerate}
\end{prop}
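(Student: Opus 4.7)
I would fix $\ell\in S$ and begin by rephrasing the desired condition via Lemma~\ref{lem:obstaclemus}. For any candidate subsequence $(f_i)=(e_i)_{i\in K}$ with complement $D=[k]\setminus K$, setting $\Block^* := \bigvee_{i\in D}\Block(e_i)$, the quantified condition required by the proposition (``$\ell\nmleq b\oplus f_1\oplus\cdots\oplus f_m$ for every $b\in\Block^*$'') is equivalent, by applying Lemma~\ref{lem:obstaclemus} to the block $\Block^*$ and the element $\bigoplus_{i\in K}e_i\in S^\oplus$, to the single inequality
\[
\mus(\Block^*,S)\oplus \bigoplus_{i\in K}e_i \nmgeq \ell.
\]
The task thus reduces to producing some $K\subseteq[k]$ with $|K|$ bounded in terms of $S$ only, for which this single inequality holds (the condition being vacuous when $K=[k]$).

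Next I would run a greedy deletion starting from $K=[k]$, $D=\emptyset$: whenever some $j\in K$ can be moved to $D$ without violating the reformulated inequality, move it. When the process stops, every remaining $e_j$ must satisfy $\Block(e_j)\nmleq \Block^*$; otherwise $\Block^*\vee \Block(e_j)=\Block^*$, so $\mus(\Block^*,S)$ would not change upon deleting $e_j$, and by monotonicity of $\oplus$ the inequality would automatically persist, contradicting termination. Consequently the distinct blocks appearing among the kept $e_j$'s all meet $S$, and there are at most $|S|$ of them. It remains to bound, for each such block $\Block$, the number of $j\in K$ with $\Block(e_j)=\Block$. Here I would use archimedeanity of $\Block$: since $S\cap \Block$ is finite and $\mus(\Block,S)\in \Block$, for each $s\in S\cap\Block$ there is a least integer $N(s,\Block)$ with $N(s,\Block)\times s \mgeq \mus(\Block,S)$, and I would show that at most $N(s,\Block)$ copies of $s$ can remain in $K$. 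This yields the bound $n(S):=\sum_{\Block}\sum_{s\in S\cap\Block}N(s,\Block)$, which depends only on $S$.

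The main obstacle I foresee is justifying this last claim: the greedy removes indices one at a time, whereas dropping the internal sum of a block $\Block$ below $\mus(\Block,S)$ typically requires removing a whole cluster at once. During such a bulk removal the join $\Block^*$ jumps to $\Block^*\vee\Block$ and the relevant threshold shifts from $\mus(\Block^*,S)$ to $\mus(\Block^*\vee\Block,S)$, so a naive monotonicity argument does not suffice. I would therefore replace the one-at-a-time greedy by a block-by-block procedure, processing blocks from largest to smallest in the block order $\mleq$: in each round I pick the next unprocessed block $\Block$, compute the contribution $\bigoplus_{i\in K\cap I_\Block}e_i\in \Block$, and invoke Lemma~\ref{lem:obstaclemus} applied to $\Block^*\vee\Block$ to decide in one shot either to delete the entire $I_\Block\cap K$ (if the resulting inequality still holds) or to retain at most $N(s,\Block)$ copies of each $s\in S\cap\Block$, arguing by archimedeanity that such a retained subfamily already dominates $\mus(\Block,S)$, so that no further deletion within $\Block$ is possible. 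Verifying that later rounds cannot invalidate the earlier decisions is the last technical point, and I expect the chosen order (processing largest blocks first) to guarantee this, because enlarging $\Block^*$ by a block no larger than those already included leaves $\mus(\Block^*,S)$ unchanged at the relevant stages.
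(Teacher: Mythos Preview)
Your reformulation via $\mus(\Block^*,S)$ is correct and useful, and your per-block bound $n(S)=\sum_{s\in S}N(s,\Block(s))$ coincides with the paper's. The overall strategy (cap the contribution from each block at $\mus$) is the same as the paper's, but your packaging --- a deletion process that maintains the reformulated inequality as an invariant --- is more elaborate than needed, and the worry you flag about block ordering is in fact a non-issue.

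The paper does not try to maintain an invariant during construction. It makes a single forward pass over $e_1,\ldots,e_k$, keeping a running sum $c_j$ for each block $\Block_j$; it retains $e_i$ if and only if $c_{\Block(e_i)}\nmgeq\mus(\Block(e_i),S)$ at the moment $e_i$ is processed. This immediately gives $m\le\sum_s n_s$ (at most $n_s$ copies of each $s$ are retained), and the construction is order-agnostic with respect to blocks. The third property is then proved \emph{afterward} by a short induction over the blocks $\Block_1,\ldots,\Block_p$ (in any order): writing $b=\bigoplus_i b_i$ with $b_i\in\Block_i\cap S^\oplus$, one shows $\ell\nmleq(\bigoplus_j f_j)\oplus b_1\oplus\cdots\oplus b_i$ for increasing $i$, using at step $i$ that the retained sum from $\Block_i$ exceeds $\mus(\Block_i,S)$ whenever something was deleted there, together with Lemma~\ref{lem:obstaclemus} applied with $e=(\bigoplus_{f_j\notin\Block_i}f_j)\oplus b_1\oplus\cdots\oplus b_{i-1}\in S^\oplus$.

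Your block-by-block deletion can be made to work too, but the ``last technical point'' is exactly this induction: once the retained subfamily in $\Block$ has sum $\mgeq\mus(\Block,S)$, one shows directly (using Lemma~\ref{lem:obstaclemus} and the fact that every $b\in\Block^*\vee\Block$ is $\mleq b_1\oplus b_2$ for some $b_1\in\Block^*$, $b_2\in\Block$) that the invariant passes to $\Block^*\vee\Block$. This argument does not depend on the order in which blocks are processed, so the ``largest first'' restriction and the claim that $\mus(\Block^*,S)$ stays unchanged are red herrings; you can drop them and the proof goes through.
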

We will call the distances $(f_i)$ \emph{important}.
\begin{proof}
Let $\Block_1, \Block_2, \ldots, \Block_p$ be the blocks of $\Semig$ represented in $S$ by some distance (that is, for each $\Block_i$ there is $a_i\in S$ with $a_i\in \Block_i$). As $S$ is finite, there are only finitely many such blocks. For each $s\in S$ define $n_s$ to be the smallest integer such that $n_s\times s \mgeq \mus(\Block(s), S)$. Put $$n = n(S) = \sum_{s\in S} n_s.$$

To simplify the argument, add a new element $0$ to $\Semig$, which is neutral with respect to addition.

Let $\ell, e_1, e_2, \ldots, e_k \in S$ be given with $\ell \nmleq e_1\oplus\cdots\oplus e_k$. Now we shall construct the sequence $(f_i)$ satisfying the properties from the statement. For each block $\Block_i$ create a variable $c_i$ which is initially set to $0$. Now go through all $e_i$ one by one and do the following:
\begin{enumerate}
\item Let $\Block_j$ be the block containing $e_i$.
\item If $c_j \mgeq \mus(\Block_j, S)$, go to the next $e_i$.
\item Otherwise put $e_i$ into the $(f_i)$ sequence and increment $c_j \leftarrow c_j\oplus e_i$.
\end{enumerate}
Now we check that $(f_i)$ satisfies all properties from the statement. The first two are trivial. It is enough to check the third for $b\in S^\oplus$ thanks to archimedeanity. Let $I$ be a set such that $i\in I$ if and only if there is $a\in (e_j)\setminus (f_j)$ such that $a\in \Block_i$. Clearly one can write $b$ as $b=\bigoplus_{i=1}^p b_i$ such that $b_i=0$ if $i\notin I$ and $b_i\in \Block_i\cap S^\oplus$ otherwise. Define 
\begin{align*}
b_i'&=\bigoplus_{j<i}b_j,\\
c_i &= \bigoplus_{f_j\notin \Block_i}f_j,\\
c_i' &= \bigoplus_{f_j\in \Block_i}f_j.
\end{align*}
In particular, $b_1'=0$.

By induction we will prove that $\ell\nmleq c_i\oplus c_i'\oplus b_i'$. This holds for $i=1$. Suppose now that $\ell\nmleq (c_i\oplus b_i')\oplus c_i'$. We know that $c_i\oplus b_i'\in S^\oplus$. Either $i\notin I$ and we are done, or $i\in I$, but then $c_i'\mgeq \mus(\Block_i, S)$ and thus $\ell\nmleq (c_i\oplus b_i')\oplus (c_i'\oplus b_i)$ which is what we wanted.
\end{proof}


\section{Completing structures with ball vertices}\label{sec:locallyfinitecompletion}
As we have already mentioned, the class $\mathcal M^\star_\Semig$ is not hereditary which is a problem for Theorems~\ref{thm:hn} and~\ref{thm:herwiglascar}. We defined $\mathcal H^\star_\Semig$ as the smallest hereditary superclass of $\mathcal M^\star_\Semig$ which corresponds to adding structures which contain some ball vertices to which no original vertex is linked. For our applications, we need the class $\mathcal H^\star_\Semig$ and we need it to have the strong amalgamation property. (The reader is encouraged to check Appendix~\ref{ap:classes} should they need a list of all the classes we are working with.)

One possible way to show the strong amalgamation property is to prove that free amalgams have strong completions. Coincidentally, a very similar statement needs to be proved for Theorems~\ref{thm:hn} and~\ref{thm:herwiglascar}. In order to avoid doing the same thing three times, we describe a more abstract class $\GstarS$ of (unordered) $L^\star_\Semig$-structures and prove that it has completions in $\classstarf$.

Since in free amalgams of structures from $\Hstarf$ as well as in structures which Theorems~\ref{thm:hn} and~\ref{thm:herwiglascar} ask us to complete it holds that every irreducible substructure is in $\Hstarf$, we start with a straightforward observation about such structures.

\begin{observation}\label{obs:gstar:irreducible}
Let $\str A$ be an $L^\star_\Semig$-structure such that every irreducible substructure of $\str A$ is in $\Hstarf$. This in particular means that the following hold in $\str A$:
\begin{enumerate}
\item All the distance relations $\rel{A}{a},a\in\Semig$ are symmetric and irreflexive and every pair of vertices is in at most one distance relation;
\item each vertex is either \emph{original} (that is, has all the functions $\func{A}{\Block}$ defined, none of the functions $\func{A}{\Block,\Block'}$ defined and is not in the range of any function) or it is a \emph{ball vertex} (that is, it is not in any distance relation, the functions in whose domain the vertex is are precisely all the $\func{A}{\Block, \Block'}$ for some block $\Block$ and it is only in the range of functions $\func{A}{\Block}$ and $\func{A}{\Block'', \Block}$);
\item the functions $\func{A}{\Block}$ and $\func{A}{\Block, \Block'}$ are \emph{consistent}, in particular, for every original vertex $v\in A$, and every pair of blocks $\Block,\Block'\in I_\Semig\setminus\{\str 0\}$ with $\Block\mlt \Block'$ it holds that $\func{A}{\Block'}(v) = \func{A}{\Block, \Block'}(\func{A}{\Block}(v))$;
\item\label{gstars:cond:closure} if $u,v\in A$ have distance $a$ then $\func{A}{\Block}(u) = \func{A}{\Block}(u)$ if and only if $\Block\mgeq \Block(a)$;
\item the relations $\rel{A}{i,t}$ ($t\in T(\Semig)$) are defined only on correct tuples of ball vertices, and they are consistent with each other, with the distances of the corresponding original vertices and with other ball vertices.
\end{enumerate}
\end{observation}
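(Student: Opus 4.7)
The plan is to verify each of the five clauses by contraposition: assuming a clause fails, I would exhibit a small \emph{irreducible} substructure of $\str A$ witnessing the failure, and then check, using Definition~\ref{defn:mstar}, that this substructure cannot arise as an induced substructure of any $L^\star(\str B)$ with $\str B\in \Mstar$. Since $\Hstarf$ is the smallest hereditary superclass of $\Mstar\cap\Forb(\mathcal F)$, its members are induced substructures of elements of $\Mstar$, and so no such irreducible violation can lie in $\Hstarf$, contradicting the hypothesis on $\str A$.

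For clause (1), a single vertex carrying a loop in some $\rel{}{a}$ is a one-element irreducible substructure with no realization in any $L^\star(\str B)$, since distance relations in $L^\star(\str B)$ are irreflexive; similarly, a pair of vertices witnessing asymmetry of $\rel{}{a}$, or lying in two different distance relations, is a two-element irreducible substructure not realizable in $\Mstar$. For clause (2), each single vertex is irreducible, and Definition~\ref{defn:mstar} shows that in $L^\star(\str B)$ every vertex has exactly one of the two prescribed profiles (an original vertex has every $\func{}{\Block}$ defined, no $\func{}{\Block,\Block'}$ defined, and is not in the range of any function; a ball vertex for block $\Block$ has exactly the $\func{}{\Block,\Block'}$ outgoing and is in the range of some $\func{}{\Block}$ and of the $\func{}{\Block'',\Block}$); any deviation prevents the single-vertex substructure from lying in $\Hstarf$.

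For clauses (3) and (4), I would consider, for each original vertex $v$, the substructure consisting of $v$ together with all ball vertices $\func{A}{\Block}(v)$; it is connected through the unary functions and therefore irreducible. In $L^\star(\str B)$ the step defining $\func{}{\Block,\Block'}$ forces $\func{}{\Block'}(v) = \func{}{\Block,\Block'}(\func{}{\Block}(v))$ for every $\Block\mlt \Block'$ in $I_\Semig\setminus\{\str 0\}$, which gives (3). For (4), I would take the irreducible substructure on the pair $\{u,v\}$ together with their function-images; by construction of the balls via $\sim_\Block$, two original vertices share the image under $\func{}{\Block}$ in $L^\star(\str B)$ precisely when their distance $a$ satisfies $\Block(a)\mleq \Block$, which is the required equivalence.

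Finally, for clause (5), I would take the irreducible substructure consisting of two original vertices $u,v$ together with the ball vertices in any tuple of $\rel{}{i,t}$ that involves them; by Definition~\ref{defn:mstar} this relation is placed in $L^\star(\str B)$ only on tuples of ball vertices for a common meet decomposition $\Block=\bigwedge\Block_i$, with $t=t(\Block,d_\str{B}(u,v))$ given by the distance between the original representatives, so any incoherence (wrong arity, wrong blocks, incompatible type $t$, or inconsistency between distinct $\rel{}{i,t}$) again yields a small irreducible substructure not realizable in $\Mstar$. The only real obstacle is the bookkeeping of ensuring each alleged violation genuinely produces an irreducible substructure — that is, that the witnessing vertices are connected via the relations and functions of $L^\star_\Semig$ — so that the hypothesis on irreducible substructures can be applied.
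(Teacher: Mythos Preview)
The paper does not prove this observation; it is introduced as ``straightforward'' and simply read off from Definition~\ref{defn:mstar}. Your contrapositive strategy --- for each clause, exhibit a small irreducible witness of the violation and check that it cannot occur as a substructure of any $L^\star(\str B)$ --- is exactly how one would spell it out, and the argument is correct in substance.

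One phrasing point is worth tightening. In clause~(2) you write ``each single vertex is irreducible'', but in a language with partial functions a singleton $\{v\}$ is a substructure of $\str A$ only when no function is defined on $v$; for an original vertex this fails. What you actually need (and what you correctly invoke in clauses~(3) and~(4)) is the substructure \emph{generated} by $v$, namely its closure under the functions $\func{}{\Block}$ and $\func{}{\Block,\Block'}$. This closure is indeed irreducible: in any free-amalgam decomposition, the factor containing $v$ is closed under functions and therefore already contains the whole closure, forcing the decomposition to be trivial. Likewise, checking the ``not in the range of any function'' part of clause~(2) requires looking at the closure of a putative preimage $w$ with $\func{}{}(w)=v$, not just at $v$. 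These are bookkeeping refinements rather than gaps; your overall plan matches what the paper intends.
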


The relations $\rel{}{i,t}$ are difficult to visualize. When trying to complete structures satisfying conditions of Observation~\ref{obs:gstar:irreducible}, a good intuition is to imagine that for some pairs of original vertices we have a prescribed set of possible distances:

\begin{definition}[Block-distance]
Let $\str A$ be an $L^\star_\Semig$-structure such that every irreducible substructure of $\str A$ is in $\Hstarf$ and let $u\neq v$ be vertices of $\str A$. Let $\Block$ be the smallest block of $\Semig$ such that there is a set $B\subseteq I_\Semig\setminus\{\str 0\}$ with $\Block = \bigwedge B$ and for every $\Block'\in B$ it holds that $\func{A}{\Block'}(u) = \func{A}{\Block'}(v)$. ($\Block$ is well-defined by Lemma~\ref{lem:meetirreducible}; note that $B$ might be empty in which case $\Block$ is the largest block of $\Semig$.)

If there is $t\in T(\Semig)$ such that there is a tuple $(\Block_1,\ldots,\Block_k)\subseteq I_\Semig\setminus\{\str 0\}$ and two tuples of ball vertices $\bar x, \bar y$ such that the following holds:
\begin{enumerate}
\item $x_i$ and $y_i$ are both ball vertices for $\Block_i$ for every $1\leq i\leq k$;
\item $\func{\str{A}}{\Block_i}(u)=x_i$ and $\func{\str{A}}{\Block_i}(v)=y_i$ for every $1\leq i\leq k$; and
\item $(\bar x,\bar y)\in \rel{}{k,t}$,
\end{enumerate}
then we put $t'$ to be such $t$ which is inclusion-minimal (it exists by Observations~\ref{obs:blocktypes} and~\ref{obs:gstar:irreducible}), otherwise we put $t'=\Semig$.

We define the \emph{block-distance} of $u$ and $v$, denoted as $t(u,v)$ to be $t'\cap \Block$. By Observation~\ref{obs:gstar:irreducible} it holds that $t(u,v)$ is nonempty.
\end{definition}
Note that $t(u,v)$ is precisely the set of possible distances for $u$ and $v$ determined by the ball vertices.

\medskip

The whole point of the $L^\star_\Semig$ expansion is to find bounded-size witnesses for non-metric cycles. The following definition describes what non-metric cycles look like in $L^\star_\Semig$-structures.

\begin{definition}[(Non-$\Semig$-metric) $\star$-cycles]\label{defn:starcycle}
Let $\ESemig$ be a \pocs{} with finitely many blocks where all meets of non-$\str 0$ blocks are defined and non-$\str 0$ and let $\mathcal F$ be an $\Semig$-omissible family of $\Semig$-edge-labelled cycles containing all disobedient ones which synchronizes meets.

Let $\str K$ be an $L^\star_\Semig$-structure such that $\str K\in \Forb(\mathcal F)$, every irreducible substructure of $\str K$ is in $\Hstarf$ and for every ball vertex there is an original vertex pointing at it by a function. We say that $\str K$ is a \emph{strong $\star$-cycle} if its original vertices can be enumerated as $v_1,\ldots,v_k$ such that the only defined distances are between some of the pairs $v_i,v_{i+1}$ (where we identify $v_1=v_{k+1}$).

We say that an $L^\star_\Semig$-structure whose every irreducible substructure is in $\Hstarf$ is a (non-strong) $\star$-cycle if it can be obtained from a strong $\star$-cycle by removing some vertices, but only those which are in no distance and $\rel{}{i,t}$ relations.

A strong $\star$-cycle is \emph{non-$\Semig$-metric} if it is not possible to choose the missing distances $d(v_i,v_{i+1})$ from $t(v_i,v_{i+1})$ such that the resulting cycle on the original vertices is $\Semig$-metric. A $\star$-cycle is non-$\Semig$-metric if it comes from a strong non-$\Semig$-metric $\star$-cycle.
\end{definition}

Let $\ESemig$ and $\mathcal F$ be as in Definition~\ref{defn:starcycle} and let $\str A$ be an $L^\star_\Semig$-structure such that every irreducible substructure of $\str A$ is in $\Hstarf$. We say that a ball vertex of $\str A$ is an \emph{orphan} if there is no original vertex pointing to it.

\begin{lemma}\label{lem:noorphans}
Let $\ESemig$ and $\mathcal F$ be as in Definition~\ref{defn:starcycle} and let $\str A$ be an $L^\star_\Semig$-structure such that every irreducible substructure of $\str A$ is from $\Hstarf$. There is an $L^\star_\Semig$-structure $\str B$ satisfying the following:
\begin{enumerate}
\item $\str A\subseteq \str B$ and the inclusion is automorphism-preserving.
\item Every irreducible substructure of $\str B$ is from $\Hstarf$.
\item $\str B$ contains no orphans.
\item $\str A$ contains a homomorphic image of a non-$\Semig$-metric $\star$-cycle if and only if $\str B$ contains a homomorphic image of a strong non-$\Semig$-metric $\star$-cycle.
\item $\str B$ has a completion in $\Hstarf$ if and only if $\str A$ does.
\item If $\str K\subseteq \str B$ has no completion in $\Hstarf$ then the substructure induced by $\str A$ on $A\cap K$ has no completion in $\Hstarf$ either.
\end{enumerate}
\end{lemma}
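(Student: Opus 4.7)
The plan is to construct $\str B$ by adding, for every orphan of $\str A$, the smallest augmentation which makes it non-orphan while creating no new distances. Concretely, for each orphan $v \in A$, which is necessarily a ball vertex of some $\Block(v) \in I_\Semig \setminus \{\str 0\}$, I would introduce one fresh original vertex $u_v$ and, for every $\Block' \in I_\Semig \setminus \{\str 0\}$ with $\Block' \not\mgeq \Block(v)$, a fresh ball vertex $w^{\Block'}_{u_v}$. The functions on $u_v$ would then be defined by $\func{B}{\Block(v)}(u_v) = v$, $\func{B}{\Block'}(u_v) = \func{A}{\Block(v),\Block'}(v)$ for $\Block' \mgt \Block(v)$, and $\func{B}{\Block'}(u_v) = w^{\Block'}_{u_v}$ otherwise; the transition functions $\func{B}{\Block',\Block''}$ are extended to the new ball vertices by the same recursive recipe so that the diagrams commute. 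No distance relations and no $\rel{}{k,t}$-tuples beyond those forced on the new ball vertices by functions are introduced.

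Properties~1--3 follow immediately from the construction. Every $\alpha \in \Aut(\str A)$ permutes the orphans and lifts canonically to $\str B$ by $u_v \mapsto u_{\alpha(v)}$ and $w^{\Block'}_{u_v} \mapsto w^{\Block'}_{u_{\alpha(v)}}$, witnessing that the inclusion is automorphism-preserving; no orphans remain by design; and every irreducible substructure of $\str B$ either sits entirely inside $\str A$ or is the function-closure of a single $u_v$ together with some existing ball vertices, which in both cases lies in $\Hstarf$ by a direct check of Observation~\ref{obs:gstar:irreducible}. Property~4 is also immediate: since no new distances and no new $\rel{}{k,t}$-tuples touch any $u_v$, the distance-carrying part of any strong non-$\Semig$-metric $\star$-cycle in $\str B$ lies within $\str A$, and conversely any non-$\Semig$-metric $\star$-cycle in $\str A$ (possibly with orphan ball vertices) becomes a strong one in $\str B$ by adjoining the $u_v$'s that now point at those orphans.

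Properties~5 and~6 are the heart of the lemma. The ``only if'' direction of~5 is trivial by heredity of $\Hstarf$. For the ``if'' direction, given a completion $\str A^+ \in \Hstarf$ of $\str A$, I would extend it to a completion $\str B^+$ of $\str B$ by assigning, to every yet-undefined pair $\{x,y\}$ (necessarily with at least one endpoint among the new original vertices), a distance drawn from the block
$$\Block^{**}_{x,y} := \bigwedge \{\Block^* \in I_\Semig \setminus \{\str 0\} : \func{B}{\Block^*}(x) = \func{B}{\Block^*}(y)\},$$
with the meet interpreted as the maximum block of $\Semig$ whenever the indexing set is empty. Observation~\ref{obs:blocktypes} together with the meet-synchronisation of $\mathcal F$ guarantees that every representative of $\Block^{**}_{x,y}$ is compatible with all $\rel{}{k,t}$-constraints forced by the shared balls, and choosing the representative maximal via $\mus$ from Lemma~\ref{lem:obstaclemus} (after passing to a finite subsemigroup by Theorem~\ref{thm:infinitelyblocks}) ensures by $\Semig$-omissibility of $\mathcal F$ that neither a non-$\Semig$-metric cycle nor any member of $\mathcal F$ is created. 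Property~6 follows by applying the same construction to $\str A \cap K$ in place of $\str A$: the extended completion of $\str A \cap K$ restricts to a completion of $\str K$, so any failure of completion inside $\str K$ must already be a failure of completion inside $\str A \cap K$.

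The technical obstacle is verifying the simultaneous consistency of the distance choices in the ``if'' direction of~5. The key observation is that every cycle passing through some $u_v$ uses exactly two edges incident to $u_v$, both of which are chosen in comparable blocks dictated by the shared ball structure; combined with the monotonicity of $\oplus$ and Lemma~\ref{lem:shortestpath:2}, the triangle inequalities around $u_v$ are automatically satisfied, and the downward and upward closure of $\mathcal F$ together with meet-synchronisation rule out any forbidden cycle crossing a new vertex.
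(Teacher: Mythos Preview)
Your construction of $\str B$ matches the paper's exactly, and your treatment of properties 1--3 is fine. The differences and gaps lie in properties 4 and 5.

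For property~4, you assert that the non-$\Semig$-metricity of the strong $\star$-cycle you obtain in $\str B$ is immediate, but it is not. When $\str A$ is (a homomorphic image of) a non-$\Semig$-metric $\star$-cycle, adjoining the $u_v$'s does produce a strong $\star$-cycle, but the original-vertex cycle is now different: the inserted $u_v$'s are new original vertices with their own block-distances $t(u_v,\,\cdot\,)$, and you must argue that no choice of distances from these sets yields a metric cycle. The paper does this by contradiction, showing that any completion of $\str B$ in $\Hstarf$ would, after blowing up balls via strong amalgamation of $\MF$, yield a completion of the ambient strong $\star$-cycle $\str A'$ from which $\str A$ arose. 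Your ``adjoining the $u_v$'s'' claim skips this step entirely.

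For property~5, you take a genuinely different route: the paper embeds the completion $\str A^+$ into $L^\star(\str H)$ for some $\str H\in\MF$, then uses strong amalgamation of $\MF$ (Theorem~\ref{thm:shortestpath}) together with meet synchronisation to enlarge $\str H$ so that every ball has many fresh sub-balls; the new $u_v$'s and $w^{\Block'}_{u_v}$'s are then simply located inside $L^\star(\str H')$. This leverages already-proved machinery and avoids any hands-on distance assignment. Your approach of directly choosing $d(u_v,x)$ via $\mus$ in the block $\Block^{**}_{u_v,x}$ is plausible in spirit, but your verification is incomplete: the ``key observation'' that two edges at $u_v$ lie in comparable blocks is false in general (for $x,y$ with $\func{}{\Block'}(x)\neq\func{}{\Block'}(y)$ at some $\Block'\mgeq\Block_0$ the blocks $\Block^{**}_{u_v,x}$ and $\Block^{**}_{u_v,y}$ need not be comparable), and you never actually check the three triangle inequalities simultaneously for all triangles, let alone $\mathcal F$-avoidance. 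Moreover, your appeal to Theorem~\ref{thm:infinitelyblocks} assumes $\mathcal F$ is confined, which is not among the hypotheses of Definition~\ref{defn:starcycle}; it is also unnecessary, since the ambient section already assumes $\Semig$ has finitely many blocks. To make your approach rigorous you would essentially need to redo the work of Proposition~\ref{prop:starfcompletion}; the paper's amalgamation shortcut is considerably cleaner.
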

\begin{proof}
Start with $\str B = \str A$ and then for each orphan which represents a ball of diameter $\Block$ extend $\str B$ by an original vertex $o$ and by ball vertices $b_{\Block'}$ for every $\Block'\in I_\Semig$ such that $\Block'\nmgeq\Block$. Put $\func{B}{\Block}(o)=b$, $\func{B}{\Block'}(o)=b_{\Block'}$ for smaller blocks and analogously define all the other functions for $o$ and $b_{\Block'}$, there is a unique way of doing that while ensuring that the closure of $o$ is in $\Hstarf$.

Clearly $\str A\subseteq \str B$ and the inclusion is automorphism-preserving, $\str B$ also contains no orphans and every irreducible substructure of $\str B$ is from $\Hstarf$. If $\str B$ contains a (homomorphic image of a) strong non-$\Semig$-metric $\star$-cycle then forgetting the newly added vertices gives a (homomorphic image of a) non-$\Semig$-metric $\star$-cycle. If $\str B$ has a completion in $\Hstarf$ then it is also a completion of $\str A$.

Next we prove that if $\str A$ contains a homomorphic image of a non-$\Semig$-metric $\star$-cycle then $\str B$ contains a homomorphic image of a strong non-$\Semig$-metric $\star$-cycle. It suffices to prove this for the case when $\str A$ is a non-$\Semig$-metric $\star$-cycle. Then we know that there is a strong non-$\Semig$-metric $\star$-cycle $\str A'$ such that $\str A\subseteq \str A'$ and no verties of $A'\setminus A$ are in any distance or $\rel{}{i,t}$ relations. Clearly, $\str B$ is a strong $\star$-cycle. For a contradiction suppose that $\str B$ has a completion in $\Hstarf$. But then by considering the $\Semig$-metric space $\str H$ on the original vertices of the completion, one can use the strong amalgamation property of $\MF$, Theorem~\ref{thm:shortestpath} and the fact that $\mathcal F$ synchronizes meets to add new vertices to $\str H$ and obtain $\str H'$ such that every ball of every diameter of $\str H'$ has many sub-balls of every smaller diameter (including $\str 0$). It then follows that $L^\star(\str H')$ is a completion of $\str A'$, a contradiction.

Now suppose that $\str A$ has a completion $\str A'$ in $\Hstarf$. By the definition of $\Hstarf$ there is $\str H \in \MF$ such that $\str A' \subseteq L^\star(\str H)$. As in the previous paragraph, using the strong amalgamation property of $\MF$, Theorem~\ref{thm:shortestpath} and the fact that $\mathcal F$ synchronizes meets, we can add vertices to $\str H$ and obtain $\str H'$ such that $\str H\subseteq \str H'$ and every ball of every diameter of $\str H'$ has many sub-balls of every smaller diameter. It follows that $L^\star(\str H')$ is a completion of $\str B$.

To prove the last claim, note that we created $\str B$ from $\str A$ by adding new vertices in order to remove orphans. Thus, in particular, all these decisions were purely local. Therefore, if we plug into this lemma the structure induced by $\str A$ on $A\cap K$ then we get precisely $\str K$ (or its super-structure) and thus the last claim follows from the previous points.
\end{proof}

Lemma~\ref{lem:noorphans} will be useful several times, because it is convenient not to have to worry about orphans. We start with a simple corollary of Lemma~\ref{lem:noorphans} which justifies calling the $\star$-cycles from Definition~\ref{defn:starcycle} (non-)$\Semig$-metric.

\begin{corollary}
Non-$\Semig$-metric $\star$-cycles have no completion in $\Hstarf$.
\end{corollary}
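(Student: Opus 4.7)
The plan is to argue by contradiction. Suppose $\str C$ is a non-$\Semig$-metric $\star$-cycle which admits a completion $f\colon\str C\to\str D$ with $\str D\in\Hstarf$; unfolding the definition of $\Hstarf$, there exists $\str H\in\MF$ with $\str D\subseteq L^\star(\str H)$, so $f$ may be viewed as an injective homomorphism-embedding $\str C\to L^\star(\str H)$.

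First I would reduce from non-strong to strong $\star$-cycles. By Definition~\ref{defn:starcycle}, a non-strong $\star$-cycle $\str C$ arises from a strong one $\str C'$ by deleting vertices incident to no distance and no $\rel{}{i,t}$-relation; such vertices are ball vertices unconstrained by the rest of $\str C'$. Lemma~\ref{lem:noorphans} together with the strong amalgamation property of $\Mstarf$ (Corollary~\ref{cor:lstaramalgamation}) and the richness of $L^\star(\str H)$ then lets us extend any completion of $\str C$ in $\Hstarf$ to a completion of $\str C'$ in $\Hstarf$. So it suffices to treat strong non-$\Semig$-metric $\star$-cycles.

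Assume now $\str C$ is strong with original vertices cyclically enumerated as $v_1,\ldots,v_k$. For each pair $(v_i,v_{i+1})$ whose distance is undefined in $\str C$ set $d_i:=d_{\str H}(f(v_i),f(v_{i+1}))$. The heart of the proof is to verify $d_i\in t(v_i,v_{i+1})$. Since $v_i$ and $v_{i+1}$ are original in $\str C$, by Observation~\ref{obs:gstar:irreducible} all unary functions $\func{\str C}{\Block'}$ are defined on them. Because $f$ is an injective homomorphism-embedding, equalities and (by injectivity) inequalities of these function values are preserved both ways between $\str C$ and $L^\star(\str H)$; combined with condition~\ref{gstars:cond:closure} of Observation~\ref{obs:gstar:irreducible} applied inside $L^\star(\str H)$, this yields
\[
\func{\str C}{\Block'}(v_i)=\func{\str C}{\Block'}(v_{i+1}) \iff \Block'\mgeq\Block(d_i)
\]
for every $\Block'\in I_\Semig\setminus\{\str 0\}$. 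By Lemma~\ref{lem:meetirreducible} the smallest meet of irreducibles pinned down by these equalities is $\Block(d_i)$ itself, so the block $\Block$ appearing in the definition of $t(v_i,v_{i+1})$ equals $\Block(d_i)$ and therefore $d_i\in\Block$. An identical transport argument for the relation $\rel{}{k,t(\Block(d_i),d_i)}$, which holds on the relevant ball-vertex tuple in $L^\star(\str H)$, forces the minimal type $t'$ in the block-distance definition to contain $d_i$, so $d_i\in t'\cap\Block=t(v_i,v_{i+1})$.

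Filling in each undefined edge of $\str C$ by the corresponding $d_i$ produces a complete distance labelling of the cycle $v_1,\ldots,v_k$ which is realised inside the $\Semig$-metric space $\str H$ and therefore is $\Semig$-metric; this contradicts the non-$\Semig$-metricity of $\str C$. The main obstacle is the middle step: one has to argue carefully, using injectivity of $f$, the substructure relation $\str D\subseteq L^\star(\str H)$, and the fact that $f$ embeds irreducible substructures, that both the function pattern and the $\rel{}{i,t}$-relations transport rigidly enough between $\str C$ and $L^\star(\str H)$ to simultaneously pin down the block $\Block(d_i)$ and the type $t'$ determining $t(v_i,v_{i+1})$.
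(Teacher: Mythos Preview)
Your approach matches the paper's: reduce non-strong to strong via Lemma~\ref{lem:noorphans}, then handle the strong case directly. The paper dispatches the strong case in three words (``by definition''), since a completion in $\Hstarf$ would supply distances $d_i$ for the missing edges, these must lie in $t(v_i,v_{i+1})$, and the definition of a non-$\Semig$-metric strong $\star$-cycle is precisely that no such choice yields a metric cycle. You unpack exactly this.

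One point of care: your argument for $d_i\in t'$ is phrased in the wrong direction. You start from the relation $\rel{}{k,t(\Block(d_i),d_i)}$ holding in $L^\star(\str H)$ and try to transport it back to $\str C$, but a homomorphism-embedding does not reflect relations, and the relevant ball-vertex tuple need not span an irreducible substructure of $\str C$ when no $\rel{}{k,t}$ relation is present there. The correct direction is forward: if $t'\neq\Semig$, there is a tuple $(\bar x,\bar y)\in\rel{\str C}{k,t'}$; this maps into $\rel{L^\star(\str H)}{k,t'}$, and consistency of $L^\star(\str H)$ (Observation~\ref{obs:gstar:irreducible}) forces $d_i\in t'$. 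If $t'=\Semig$ the inclusion is trivial. Your conclusion is correct, but the transport runs $\str C\to L^\star(\str H)$, not the reverse.
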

\begin{proof}
By definition this holds for strong non-$\Semig$-metric $\star$-cycles, the rest follows by Lemma~\ref{lem:noorphans}, because it produces strong $\star$-cycles from non-strong ones.
\end{proof}


Next we define the class $\GstarS$ of structures which we will want to complete to $\classstarf$.
\begin{definition}[The class $\GstarS$]\label{defn:gstars}
Let $\ESemig$ be a \pocs{} with finitely many blocks where all meets of non-$\str 0$ blocks are defined and non-$\str 0$ and let $\mathcal F$ be a confined $\Semig$-omissible family of $\Semig$-edge-labelled cycles containing all disobedient ones which synchronizes meets.

$\GstarS$ is defined as the subclass of all finite $L^\star_\Semig$-structures such that every $\str A\in \GstarS$ satisfies the following:
\begin{enumerate}
\item Every irreducible substructure of $\str A$ is in $\Hstarf$,
\item\label{gstars:cond:prelast} $\str A\in \Forb(\mathcal F)$; and
\item\label{gstars:cond:nonmetric} $\str A$ contains no homomorphic images of non-$\Semig$-metric $\star$-cycles.
\end{enumerate}
\end{definition}
Note that such $\str A$ contains a homomorphic image of a non-$\Semig$-metric $\star$-cycle if an only if it contains a monomorphic image of a non-$\Semig$-metric $\star$-cycle.


Now we are ready to justify the $\rel{}{i,t}$ relations in the expansion.
\begin{example}[Running example --- necessity of $\rel{}{i,t}$ relations]\label{ex:TODOLABEL}
Recall the \pocs{} $\ExSemig$ and assume that we have an analogue of $\mathcal H^\star_\ExSemig$ without the $\rel{}{i,t}$ relations. Consider the free amalgam of the edges of lengths $(1,2,3)$ and $(1,1,1)$ over a vertex such that we also glue some ball vertices so that the two non-glued vertices need to be $\sim_{\Block_{\{1\}}}$-equivalent. Such a free amalgam has no metric completion.
\end{example}

However, with the $\rel{}{i,t}$ relations such a thing never happens:
\begin{lemma}\label{lem:freeingstars}
Free amalgams of structures from $\Hstarf$ are in $\GstarS$.
\end{lemma}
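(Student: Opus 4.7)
Let $\str C$ be the free amalgam of $\str B_1, \str B_2 \in \Hstarf$ over $\str A \in \Hstarf$ with the canonical embeddings $\beta_1, \beta_2$. I will verify the three conditions of Definition~\ref{defn:gstars} in turn. Condition~1 is immediate: an irreducible substructure $\str D \subseteq \str C$ cannot split nontrivially between $\beta_1(B_1)\setminus\beta_1(A)$ and $\beta_2(B_2)\setminus\beta_2(A)$, because a free amalgam introduces no relation tuple or function value bridging the two sides outside the copy of $\str A$; hence $\str D$ embeds into one of $\str B_1, \str B_2$, and heredity of $\Hstarf$ finishes the job.

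For condition~2, I will essentially replay the minimality argument of part~\ref{thm:shortestpath:amalg} of Theorem~\ref{thm:shortestpath}. Assume for contradiction that there is $\str F\in\mathcal F$ with a homomorphism $f\colon\str F\to\str C$ and choose $|F|$ minimal. Since $\str F$ is a cycle and $\str B_1,\str B_2\in\Forb(\mathcal F)$, the image of $f$ must meet both sides, and 2-connectivity of cycles yields two vertices $u\neq v\in F$ with $f(u),f(v)\in A$. The completeness of $\str A$ gives a distance $d_{\str A}(f(u),f(v))$; inserting this edge splits $\str F$ into two shorter cycles, one of which, by downward closedness of $\mathcal F\cup\{\text{non-}\Semig\text{-metric cycles}\}$, is again in $\mathcal F$ (using that $\str B_1,\str B_2$ are $\Semig$-metric, so non-$\Semig$-metric cycles cannot appear) and maps homomorphically into $\str C$, contradicting minimality.

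For condition~3, which is the real obstacle, I proceed analogously but at the level of $\star$-cycles. Suppose $\str C$ contains a homomorphic image of a non-$\Semig$-metric $\star$-cycle. By Lemma~\ref{lem:noorphans} applied to the image (orphans only make life easier and the lemma preserves completability) I may assume there is a strong non-$\Semig$-metric $\star$-cycle $\str K$ with original vertices $v_1,\ldots,v_k$ and a homomorphism $g\colon\str K\to\str C$, chosen with $k$ minimal. Every distance edge of $\str K$ maps to a pair carrying a distance in $\str C$, so it lies in one of $\beta_1(\str B_1),\beta_2(\str B_2)$; the ball vertices of $\str K$ and the functional links from the $v_i$ likewise map into one side unless they factor through the shared copy of $\str A$. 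If every distance edge uses the same side $j$, then the whole $\star$-cycle sits inside $\str B_j\in\Hstarf\subseteq\GstarS$, contradicting Definition~\ref{defn:gstars}(\ref{gstars:cond:nonmetric}) for $\str B_j$.

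Otherwise pick a maximal arc $v_i,v_{i+1},\ldots,v_j$ along $\str K$ whose distance edges all map to one side, say $\beta_1(\str B_1)$, with $g(v_i),g(v_j)\in\beta_1(A)=\beta_2(A)$. The key step is to form a shorter $\star$-cycle $\str K'$ by replacing this arc with a single distance edge $v_iv_j$ of length $d_{\str A}(g(v_i),g(v_j))$. To see $\str K'$ is still a valid non-$\Semig$-metric $\star$-cycle, I check two things. First, consistency of the inserted distance with the ball data of $\str K$: the block-distance $t(v_i,v_j)$ in $\str K$ is determined by which functions $\func{K}{\Block}$ identify $v_i,v_j$ and by the $\rel{K}{i,t}$-relations among their ball vertices; since $g$ is a homomorphism and $\str A$ is closed in $\str C$, the distance $d_{\str A}(g(v_i),g(v_j))$ lies in $t(g(v_i),g(v_j))$, which by Observation~\ref{obs:blocktypes} is contained in $t(v_i,v_j)$. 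Second, non-$\Semig$-metricity of $\str K'$: if $\str K'$ had a valid completion of its missing distances from the prescribed block-distances, combining with the already existing distances along the removed arc inside $\str B_1$ would give a completion of $\str K$, contradicting non-$\Semig$-metricity. Hence $\str K'$ is a strictly smaller non-$\Semig$-metric $\star$-cycle in $\str C$, contradicting minimality of $\str K$ and completing the proof. The most delicate point, and where I expect to spend the most care, is verifying that the inserted distance lies in $t(v_i,v_j)$ and that the resulting ``shortcut'' structure is actually a $\star$-cycle as defined in Definition~\ref{defn:starcycle}; this uses both that $\mathcal F$ synchronizes meets (for block consistency) and Observation~\ref{obs:gstar:irreducible}(\ref{gstars:cond:closure}) (for function consistency).
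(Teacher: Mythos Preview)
Your handling of conditions~1 and~2 is correct and matches the paper. The overall strategy for condition~3 --- pass to strong $\star$-cycles via Lemma~\ref{lem:noorphans}, take a minimal counterexample, and shortcut --- is also the paper's. But your particular shortcut has a genuine gap.

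You claim ``If every distance edge uses the same side $j$, then the whole $\star$-cycle sits inside $\str B_j$''. This is false: an original vertex $v_i$ whose two neighbouring edges in $\str K$ are both non-distance edges can have $g(v_i)\in B_2\setminus A$ even when all distance edges lie in $B_1$. More seriously, you assume the endpoints of your maximal arc satisfy $g(v_i),g(v_j)\in A$, but crossings between the two sides need not pass through original vertices of $A$ at all. Two consecutive original vertices $v_p,v_{p+1}$ with $g(v_p)\in B_1\setminus A$ and $g(v_{p+1})\in B_2\setminus A$ are perfectly possible provided $v_pv_{p+1}$ is a non-distance edge; they are linked only through shared ball vertices in $A$. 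So the arc you describe need not exist, and your reduction stalls before you reach the delicate verification you flag at the end.

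The paper's reduction avoids $A$ entirely. After enlarging $\str B_1,\str B_2$ to $L^\star(\str G_1),L^\star(\str G_2)$ (so the original vertices on each side form a complete $\Semig$-metric space), one cuts at any pair $v_i,v_j$ that are \emph{non-adjacent in the cycle} but have $g(v_i),g(v_j)$ on the same side, inserting the distance $d_{\str B_1}(g(v_i),g(v_j))$ already present in that side. This splits $\str K$ into two strictly smaller $\star$-cycles sharing that edge; if both were $\Semig$-metric, their completions could be amalgamated over the shared edge (Theorem~\ref{thm:shortestpath}) to complete $\str K$, a contradiction. In a minimal counterexample no such cut exists, so each side carries at most two original vertices and $k\le 4$; the paper then dispatches the remaining configurations by direct inspection, which is where the $\rel{}{i,t}$ relations do their work (cf.\ Example~\ref{ex:TODOLABEL}).
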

\begin{proof}
Suppose that we have a free amalgam of $\str B_1$ and $\str B_2$ over $\str A$. By definition of $\Hstarf$ there are $\str G_1,\str G_2\in \MF$ such that $\str B_1\subseteq L^\star(\str G_1)$ and $\str B_2\subseteq L^\star(\str G_2)$. If we show that the free amalgam of $L^\star(\str G_1)$ and $L^\star(\str G_2)$ over $\str A$ is in $\GstarS$ then also the original amalgam is in $\GstarS$, because $\GstarS$ is hereditary. Thus we can, in particular, assume that $\str B_1$ and $\str B_2$ contain no orphans.

Clearly, free amalgams of structures from $\Hstarf$ are from $\Forb(\mathcal F)$ and all their irreducible substructures are from $\Hstarf$. It remains to prove that they contain no non-$\Semig$-metric $\star$-cycles, which amounts to technical checking that our expansion indeed does what it should. By the assumption that $\str B_1$ and $\str B_2$ contain no orphans, we can get a strong non-$\Semig$-metric $\star$-cycle from every non-$\Semig$-metric $\star$-cycle which the free amalgam potentially contains (by Lemma~\ref{lem:noorphans}). Then we can assume, for a contradiction, that this lemma is not true and take the smallest counterexample. It follows that the strong non-$\Semig$-metric $\star$-cycle has at most 4 original vertices and it is straightforward to check all the cases and arrive at a contradiction.
\end{proof}

Besides $\Hstarf$ having the strong amalgamation property, we also want the obstacles for completions to $\Hstarf$ to be of bounded size. Towards proving this (and also towards completing structures from $\GstarS$) we will use the following technical lemma:
\begin{lemma}\label{lem:addpaths}
Let $\Semig$ and $\mathcal F$ be as in Definition~\ref{defn:gstars}. Let $S$ be a finite subset of $\Semig$ and let $T$ be a finite subset of $T(\Semig)$. Put
$$T' = \{t\cap \Block : t\in T,\Block\text{ is a block of }\Semig\}\setminus \{\emptyset\}.$$
Assume that for every $\Block$ of $\Semig$ and every $t\in T'$ it holds that $S\cap \Block\neq \emptyset$ and $S\cap t\neq \emptyset$. Then for every $t\in T'$ such that $t=t(\Block,\ell)\cap \Block'$ there are distances $a(t)\in \Block\cap S^\oplus$ and $b(t)\in t\cap S^\oplus$ such that the following holds:

Let $\str A$ be a strong $\star$-cycle from $\Forb(\mathcal F)$ such that it contains only distances from $S$ and relations $\rel{}{i,t}$ for $t\in T$ and enumerate the original vertices of $\str A$ as $v_1,\ldots, v_k$ as in Definition~\ref{defn:starcycle}. Let $\str B$ be an $\Semig$-edge-labelled cycle created by starting with what $\str A$ induces on its original vertices and then connecting each non-edge $v_i,v_{i+1}$ by a path with distances $a(t(v_i,v_{i+1})), b(t(v_i,v_{i+1})), a(t(v_i,v_{i+1}))$. Then $\str B\in \Forb(\mathcal F)$ and $\str B$ is non-$\Semig$-metric if and only if $\str A$ is.
\end{lemma}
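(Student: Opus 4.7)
The plan is to specify $a(t)$ and $b(t)$ so that the length-$3$ path $a(t),b(t),a(t)$ in $\str B$ faithfully simulates the ``phantom'' edge $v_iv_{i+1}$ of $\str A$ whose block-distance is $t_i = t(v_i,v_{i+1})$, and then verify the two conclusions. For $t = t(\Block,\ell) \cap \Block' \in T'$, the assumption $S \cap t \neq \emptyset$ lets me pick $b(t) \in S \cap t \subseteq t \cap S^\oplus$. Using $S \cap \Block \neq \emptyset$, archimedeanity, and the ``furthermore'' clause of Lemma~\ref{lem:obstaclemus}, I would pick $a(t) \in \Block \cap S^\oplus$ with $a(t) \mgeq \mus(\Block,\, S \cup \{b(s) : s \in T'\})$ (any sufficiently large multiple of an element of $S \cap \Block$ works). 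Extending a witness for $b(t) \in t(\Block,\ell)$ by strong amalgamation in $\MF$ should give $2a(t) \oplus b(t) \in t(\Block,\ell)$; combined with $\Block \mleq \Block'$ and Lemma~\ref{lem:blockorder} this yields $\widetilde d(t) := 2a(t) \oplus b(t) \in t$.

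To show $\str B \in \Forb(\mathcal F)$, I would assume for contradiction a homomorphism $h\colon \str C \to \str B$ with $\str C \in \mathcal F$, and use point~\ref{defn:omissible:downwards} of Definition~\ref{defn:omissible} iteratively to merge any two consecutive edges of $\str C$ that are mapped into the same replacement path. This should leave an $\mathcal F$-cycle mapping into a graph built only from the original edges of $\str A$ and single edges of length $\widetilde d(t_i) \in t_i$. That graph embeds homomorphically into $\str A$ itself (the $\widetilde d(t_i)$-edges are consistent with the block-distance constraints $t_i$), contradicting $\str A \in \Forb(\mathcal F)$.

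For the metric equivalence, the key observation is that $\str B$ is $\Semig$-metric iff the ``contracted'' cycle $\widetilde{\str A}$, obtained from $\str B$ by replacing each path $a(t_i),b(t_i),a(t_i)$ by a single edge of length $\widetilde d(t_i)$, is $\Semig$-metric: the $a$-edge cycle inequalities hold trivially because $a(t_i) \mleq a(t_i) \oplus b(t_i)$ appears among the other edges of the cycle, and the $b$-edge inequality follows from the contracted-edge inequality by monotonicity of $\oplus$. Since $\widetilde d(t_i) \in t_i$, metricity of $\widetilde{\str A}$ already exhibits $\str A$ as a metric $\star$-cycle. Conversely, if $\str A$ is a metric $\star$-cycle via some filling $d'_i \in t_i$, then applying the defining property of $\mus$ (Lemma~\ref{lem:obstaclemus}) block-by-block, in the spirit of Proposition~\ref{prop:Sequivalence}, should show that the cycle inequalities for original edges survive the replacement $d'_i \leadsto \widetilde d(t_i)$, making $\widetilde{\str A}$ metric.

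The hard part will be this last step: carefully separating each cycle inequality into its block-components and invoking Lemma~\ref{lem:obstaclemus} to argue that $a(t_i) \mgeq \mus(\Block_i, \cdot)$ captures all the ``effective freedom'' that choices $d'_i \in t_i$ can provide for the original-edge inequalities, so that the specific canonical filling $\widetilde d(t_i)$ works whenever any filling does.
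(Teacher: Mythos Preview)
Your outline has the right overall shape, but there are two concrete gaps, and both stem from choices that differ from the paper's.

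\textbf{The $\Forb(\mathcal F)$ argument does not close.} Downwards closedness lets you split a cycle from $\mathcal F$ by a chord, but after iterating you are left with an $\mathcal F$- (or non-metric) cycle mapping into the \emph{contracted} graph $\widetilde{\str A}$, which has the extra edges $\widetilde d(t_i)$ that are simply absent from $\str A$; so there is no homomorphism into $\str A$ to contradict. Your argument never touches confinedness of $\mathcal F$, which is one of the standing hypotheses, and indeed confinedness is exactly what is needed here. The paper fixes this by choosing $a(t)=q\times\mus(\Block,S')$ with $q$ exceeding the size of every $S'$-edge-labelled member of $\mathcal F$, and then uses \emph{upwards} closedness (point~\ref{defn:omissible:upwards}): any $\mathcal F$-cycle hitting an $a(t)$-edge can be blown up into one using $q$ edges of length $\mus(\Block,S')$, giving a too-long $S'$-edge-labelled cycle in $\mathcal F$, a contradiction.

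\textbf{The ``key observation'' $\str B$ metric $\Leftrightarrow$ $\widetilde{\str A}$ metric fails in the $\Rightarrow$ direction.} From metricity of $\str B$ you only get $b(t_j)\mleq 2a(t_j)\oplus(\text{rest of }\widetilde{\str A})$, which does not yield $\widetilde d(t_j)=2a(t_j)\oplus b(t_j)\mleq(\text{rest of }\widetilde{\str A})$. The paper therefore does not pass through $\widetilde{\str A}$ for this direction: it takes the shortest path completion $\str B'$ and argues that $L^\star(\str B')$ is a completion of $\str A$ outright. For the converse (``$\str B$ non-metric $\Rightarrow$ $\str A$ non-metric''), the paper's case analysis on the failing edge needs $b(t)$ to be large (namely $\mgeq\mus(\Block',S)$, or the maximum of $t$), not an arbitrary element of $S\cap t$: when an \emph{original} edge fails, this largeness lets Lemma~\ref{lem:obstaclemus} transfer the failure to every choice of fillers in $t_i$; when a $b(t)$ edge fails, the paper instead uses the definition of $t(\Block,\ell)$ to produce metric triangles $e_i,b(t_i),a_i'$ linking any filler $e_i$ to $b(t_i)$, so that a completion of $\str A$ would force $\str B$ to be metric too. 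Your ``hard part'' sketch only addresses the inequalities where an original edge is longest, and with an arbitrary small $b(t)$ the other case has no handle.
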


In other words, Lemma~\ref{lem:addpaths} says that given $\str A$, one can glue on every non-edge $uv$ the path with distances $a(t(u,v)), b(t(u,v)), a(t(u,v))$ and in this way get a direct witness of $t(u,v)$ in the graph (which ensures that in every possible completion, $d(u,v)$ will be from $t(u,v)$). We will use this later in this section but first we show how it implies the following observation which is in fact the reason why we introduced the $L^\star_\Semig$-expansions and the $\mus$'s:

\begin{observation}\label{obs:indeedlocallyfinite}
Let $\Semig$ and $\mathcal F$ be as in Definition~\ref{defn:gstars}, let $S$ be a finite subset of $\Semig$ and let $T$ be a finite subset of $T(\Semig)$. There exists $n=n(S,T)$ such that the following holds: Let $\str A$ be an $L^\star_\Semig$-structure such that $\str A$ only uses distances from $S$ and only uses relations $\rel{}{i,t}$ for $t\in T$, $\str A\in \Forb(\mathcal F)$ and every irreducible substructure of $\str A$ is in $\Hstarf$. If $\str A$ contains a non-$\Semig$-metric $\star$-cycle then it contains a non-$\Semig$-metric $\star$-cycle with at most $n$ vertices.
\end{observation}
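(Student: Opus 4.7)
The plan is to reduce to the setting of Proposition~\ref{prop:Sequivalence}. First, by Lemma~\ref{lem:noorphans} I may without loss of generality assume that $\str A$ contains a \emph{strong} non-$\Semig$-metric $\star$-cycle $\str K$: if the lemma replaces $\str A$ by a super-structure $\str B$, the last clause of Lemma~\ref{lem:noorphans} lets me pull a bounded witness in $\str B$ back to a bounded witness in $\str A$, so it is enough to bound the size of a non-$\Semig$-metric $\star$-cycle inside $\str B$. Enumerate the original vertices of $\str K$ as $v_1,\ldots,v_k$ in the cyclic order given by Definition~\ref{defn:starcycle}.

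Second, apply Lemma~\ref{lem:addpaths} to $\str K$. This produces an $\Semig$-edge-labelled cycle $\str C \in \Forb(\mathcal F)$ whose edge-lengths come from the finite set $S' = S \cup \{a(t),b(t) : t\in T'\}$, where $T'$ is determined by $T$ as in the lemma; moreover $\str C$ is non-$\Semig$-metric because $\str K$ is. Apply Proposition~\ref{prop:Sequivalence} to $S'$: this gives an integer $n' = n(S')$ depending only on $S'$ and hence on $S$ and $T$, together with a subsequence $(f_1,\ldots,f_m)$ of at most $m\leq n'$ \emph{important} edges of $\str C$ such that replacing every omitted edge by an arbitrary distance in the block $\Block^\ast = \bigvee_{j}\Block(e_j)$ (the join over the omitted edges $e_j$) keeps the cycle non-$\Semig$-metric.

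Third, let $U\subseteq \{v_1,\ldots,v_k\}$ consist of every original vertex that is an endpoint in $\str C$ of an important edge, plus the endpoints $v_i,v_{i+1}$ of every path triple inserted by Lemma~\ref{lem:addpaths} at least one of whose three edges is important. Let $\str K'$ be the substructure of $\str K$ induced on $U$ together with all ball vertices in the functional closure of $U$; because $|U|\leq 2m$ and each original vertex adds at most $|I_\Semig|$ ball vertices in its closure, $|K'|$ is bounded by $n = 2n(S')(1+|I_\Semig|)$, which depends only on $S$ and $T$. Since the cyclic enumeration of $U$ is inherited from $v_1,\ldots,v_k$ and all distances of $\str K'$ were already between consecutive pairs of this sub-enumeration, $\str K'$ is a $\star$-cycle in the sense of Definition~\ref{defn:starcycle}.

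It remains to verify that $\str K'$ is non-$\Semig$-metric, which is the main obstacle. Apply Lemma~\ref{lem:addpaths} to $\str K'$ to obtain an $\Semig$-edge-labelled cycle $\str C'$. Each edge of $\str C'$ is either (i) an important edge of $\str C$ preserved into $\str K'$, or (ii) belongs to a fresh path triple $a(t'),b(t'),a(t')$ inserted for some consecutive pair $u,v$ of $U$ whose connecting segment in $\str K$ was contracted away. Because the ball structure of $\str K'$ is inherited unchanged from $\str K$ and $\mathcal F$ synchronizes meets, the block-distance $t'=t_{\str K'}(u,v)$ lies inside the block of the total length of the removed segment in $\str C$, so the total length of the new triple lies in a block $\preceq \Block^\ast$. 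The third clause of Proposition~\ref{prop:Sequivalence} (applied, triple by triple, to distances $b\in\Block^\ast$) now shows that $\str C'$ is still non-$\Semig$-metric, whence $\str K'$ is non-$\Semig$-metric. The hard part is precisely this block-tracking step: one must use meet synchronisation to ensure that contracting an unimportant segment can be re-realised by a path triple whose contribution lies in the permitted block, so that Proposition~\ref{prop:Sequivalence} applies uniformly.
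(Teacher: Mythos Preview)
Your approach is essentially the same as the paper's: reduce to strong $\star$-cycles via Lemma~\ref{lem:noorphans}, turn the $\star$-cycle into an honest $\Semig$-edge-labelled cycle via Lemma~\ref{lem:addpaths}, then invoke Proposition~\ref{prop:Sequivalence} to extract a bounded-size witness. The paper's proof is terser --- it simply asserts that the important edges ``correspond to a substructure of $\str K$ of bounded size which has no completion in $\Hstarf$'' --- whereas you attempt to spell out this correspondence and the block-tracking explicitly. That extra detail is useful, but two points deserve care.

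First, a genuine gap: Lemma~\ref{lem:addpaths} has a hypothesis (that $S\cap\Block\neq\emptyset$ and $S\cap t\neq\emptyset$ for every relevant $\Block$ and $t$) which you never verify. The paper handles this by first enlarging $S$ by finitely many distances; you should do the same before invoking the lemma.

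Second, your block-tracking step is the right idea but is loosely argued. Proposition~\ref{prop:Sequivalence} separates the ``long'' edge $\ell$ from the sequence $(e_i)$, and the important $(f_i)$ are a subsequence of the latter; you should make sure the endpoints relevant to $\ell$ (either the original edge, or the triple containing the offending $b(t)$) are explicitly included in $U$. Also, clause~3 of Proposition~\ref{prop:Sequivalence} lets you replace the omitted edges by a \emph{single} distance $b$ in the join block $\Block^\ast$; you are instead replacing several contracted segments by several triples, so you need to argue that their total $\oplus$-sum lies in (a block $\mleq$) $\Block^\ast$, not just each piece individually. This follows from monotonicity of $\oplus$ and the block join, but it should be stated. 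The paper sidesteps these details by asserting the conclusion directly; your more explicit route is fine once these points are tightened.
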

\begin{proof}
Thanks to Lemma~\ref{lem:noorphans} it is enough to prove this for strong $\star$-cycles.

Extend $S$ by (finitely many) distances so that $S$ and $T$ satisfy the condition of Lemma~\ref{lem:addpaths}. Suppose that there is a strong non-$\Semig$-metric $\star$-cycle $\str K$ in $\str A$. By Lemma~\ref{lem:addpaths} we get a non-$\Semig$-metric cycle $\str B$ on a superset of its original vertices which uses only distances from $S$ and the (finitely many) distances $a(t)$ and $b(t)$. Hence, by Proposition~\ref{prop:Sequivalence}, we get a subset of the distances of the cycle of bounded size (by a function of $S$ and $T$) which corresponds to a substructure of $\str K$ in $\str A$ of bounded size which has no completion in $\Hstarf$.
\end{proof}

We now prove Lemma~\ref{lem:addpaths}.
\begin{proof}[Proof of Lemma~\ref{lem:addpaths}]
For $t = t(\Block, \ell)\cap \Block' \in T'$ define $b(t)$ to be either some element of $t\cap S^\oplus$ larger than or equal to $\mus(\Block', S)$ or the largest element of $t$ (which is necessarily in $S^\oplus$). Put $S' = S\cup \{b(t) : t\in T'\}$ and let $q$ be the maximum number of vertices of any $S'$-edge-labelled member of $\mathcal F$ (this is finite as $\mathcal F$ is confined) or $1$ if this is zero. Now we can define $a(t) = q\times\mus(\Block, S')$. Let $\str B$ be as in the statement.

It is easy to see that $\str B\in \Forb(\mathcal F)$, because $\str A$ was, and by closedness on inverse steps of shortest path completion we can exchange any edge $a(t)$ by a path of $q$ edges of length $\mus(\Block, S')$, thereby contradicting the choice of $q$ if $\str B$ contained a member of $\mathcal F$.

Assume first that $\str B$ is $\Semig$-metric. Let $\str B'$ be its $\Semig$-shortest path completion. By the choice of $a(t)$ and $b(t)$ it follows that $L^\star(\str B')$ is a completion of $\str A$.

If $\str B$ is non-$\Semig$-metric then one of two possibilities can happen. Either the longest edge is an original one. Then we can ignore the $a(t)$ distances and by the definition of $b(t)$ and by Lemma~\ref{lem:obstaclemus} we get that $\str A$ is non-$\Semig$-metric as well. Otherwise the longest edge is an added one. It cannot be any of the $a(t)$'s, because we always add two of them. So it is some $b(t)$. We show that from our choice of $a(t)$'s and $b(t)$'s it follows that $\str A$ has no completion in $\Hstarf$, hence is non-$\Semig$-metric.

Assume for a contradiction that there is a completion $\str A'$ of $\str A$ in $\Hstarf$. Let $e_1,\ldots,e_{m}$ be the distances to which $\str A'$ completed the non-edges $v_iv_{i+1}$ of $\str A$ and let $t_1,\ldots,t_{m+1}$ be the block-distances of pairs which $\str A'$ completed to $e_1,\ldots,e_{m}$ in this order.

Observe that for every $1\leq i\leq m$ such that $t_i = t(\Block, \ell)\cap \Block'$ there is a distance $a_i'\in\Block$ such that the triangle $e_i,b(t_i),a_i'$ is metric (indeed, this follows from the definition of $t(\Block, \ell)$ and the fact that $e_i,b(t_i)\in t_i$). This means that the $\Semig$-edge-labelled graph which we obtain from what $\str A'$ induces on its original vertices by adding a path with distances $a_i',b(t_i),a_i'$ on every edge $e_i$ is also $\Semig$-metric (and from $\Forb(\mathcal F)$). However, by the definition of $a(t)$ and by Lemma~\ref{lem:obstaclemus}, it follows that $\str B$ is also $\Semig$-metric which is a contradiction.
\end{proof}

We now show how to complete structures from $\GstarS$ to $\Mstarf$. For the rest of this section fix $\Semig$ and $\mathcal F$ as in Definition~\ref{defn:gstars}.

\begin{prop}
\label{prop:starfcompletion}
Let $S$ and $T$ be as in Lemma~\ref{lem:addpaths}. Let $\str A$ be a structure from $\GstarS$ using only distances from $S$ and only relations $\rel{}{i,t}$ for $t\in T$. Then there is $\str A'\in\classstarf$ which is an automorphism-preserving completion of $\str A$.
\end{prop}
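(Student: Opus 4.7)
The strategy is to reduce the problem to the shortest path completion of Theorem~\ref{thm:shortestpath} by materialising the ball-vertex information carried by $\str A$ as actual $\Semig$-edge-labelled paths between the original vertices. First apply Lemma~\ref{lem:noorphans} to replace $\str A$ by an orphan-free superstructure $\str B$ via an automorphism-preserving inclusion; every irreducible substructure of $\str B$ remains in $\Hstarf$, and a completion of $\str B$ in $\classstarf$ yields one of $\str A$. Because $\str A\in\GstarS$, the extension $\str B$ still contains no homomorphic image of any non-$\Semig$-metric $\star$-cycle.

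If necessary extend $S$ and $T$ so that they satisfy the hypotheses of Lemma~\ref{lem:addpaths}, and let $a(t), b(t)$ be the distances it supplies. Build an $\Semig$-edge-labelled graph $\str G$ as follows: its vertex set consists of the original vertices of $\str B$ together with, for every pair $u,v$ of original vertices without an explicit distance in $\str B$, two fresh vertices forming a path of length three between $u$ and $v$ with consecutive edge distances $a(t(u,v)), b(t(u,v)), a(t(u,v))$, where $t(u,v)$ is the block-distance prescribed by the ball vertices of $\str B$.

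The key claim is that $\str G\in\MF$, i.e.\ $\str G$ is $\Semig$-metric and avoids homomorphic images of members of $\mathcal F$. Any hypothetical witness to a violation uses only finitely many of the added paths and localises to a strong $\star$-substructure $\str K\subseteq \str B$ whose original vertices are the original vertices touched by the witness and whose ball vertices certify the relevant block-distances. Applying Lemma~\ref{lem:addpaths} to $\str K$ produces an $\Semig$-edge-labelled cycle coinciding with the witness, shows that it lies in $\Forb(\mathcal F)$, and shows that it is non-$\Semig$-metric if and only if $\str K$ is; either conclusion contradicts $\str B\in\GstarS$.

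Finally apply Theorem~\ref{thm:shortestpath} to obtain the shortest-path completion $\str G'\in\MF$, and let $\str H\in\MF$ be its restriction to the original vertices of $\str B$. The chosen $a(t), b(t)$ together with meet synchronisation of $\mathcal F$ and the $\mus$-approximations of Lemma~\ref{lem:obstaclemus} guarantee that for each pair $u,v$ of original vertices one has $d_{\str H}(u,v)\in t(u,v)$, and therefore the $\sim_\Block$-classes of $\str H$ agree with the ball-vertex partitions carried by $\str B$: no new collapses occur, and the orphan-free property of $\str B$ ensures that no extra balls are required. Hence we can canonically identify the ball vertices of $\str B$ with those of $L^\star(\str H)$, obtaining $\str A'\in\classstarf$ that is a strong completion of $\str B$, and hence of $\str A$. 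Automorphism preservation is inherited from the canonical nature of every step: Lemma~\ref{lem:noorphans} delivers it for the first step, the construction of $\str G$ depends only on the isomorphism type of $\str B$, Theorem~\ref{thm:shortestpath}(\ref{thm:shortestpath:aut}) handles the shortest-path step, and the $L^\star$-reinterpretation is functorial. The main technical obstacle will be the verification that the ball-vertex data of $\str B$ and of $L^\star(\str H)$ coincide under the identification, i.e.\ that meet synchronisation and the $\mus$-bounds genuinely prevent the shortest-path completion from introducing any unplanned block-equivalences among the original vertices.
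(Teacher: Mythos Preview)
Your proof is correct and follows essentially the same approach as the paper: remove orphans via Lemma~\ref{lem:noorphans}, materialise the block-distance data by gluing the $a(t),b(t),a(t)$ paths of Lemma~\ref{lem:addpaths} onto non-edges, apply the shortest path completion from Theorem~\ref{thm:shortestpath}, and then read off the $L^\star$-expansion. The only cosmetic difference is that the paper does not bother restricting $\str G'$ to the original vertices before applying $L^\star$ (a completion is allowed to have extra vertices), whereas you pass through $\str H$ and explicitly match up the ball vertices; both variants work for the same reasons.
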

\begin{proof}
By Lemma~\ref{lem:noorphans} we can assume that $\str A$ has no orphans. We will create an $\Semig$-metric space $\str G'$ such that we can then put $\str A' = L^\star(\str G')$.

Start with $\str G$ being the $\Semig$-edge-labelled graph induced by $\str A$ on the set of its original vertices. Then, for every non-edge $u,v$ of $\str A$, connect $u$ and $v$ by a path $a(t(u,v)), b(t(u,v)), a(t(u,v))$ as in Lemma~\ref{lem:addpaths}. This does not create any non-$\Semig$-metric cycles in $\str G$ nor any cycles from $\mathcal F$ and clearly preserves automorphisms. Moreover, it now holds that all information from the ball vertices of $\str A$ is now present in $\str G$. Namely, whenever $t(u,v)=t(\Block, \ell)\cap\Block'$ then there are vertices $u',v'\in G$ such that $u\sim_\Block u'$, $v\sim_\Block v'$ and $d_\str{G}(u',v') \in t(u,v)$, which implies that in every completion of $\str G$ the distance of $u$ and $v$ will be from $t(u,v)$. Note that $\str G$ is connected, because for vertices $u,v$ from different connected components of $\str A$, $t(u,v)$ is the largest block of $\Semig$.

Now $\str{G}'$ can be constructed as the shortest path completion of $\str{G}$. By part~\ref{thm:shortestpath:aut} of Theorem~\ref{thm:shortestpath} it follows that $\Aut(\str G) = \Aut(\str G')$.

Finally we put $\str A' = L^\star(\str G')$. Clearly $\str A'\in \classstarf$. We need to prove that $\str A'$ is a completion of $\str A$ and that it is automorphism-preserving. We know that $\str G'$ is precisely the $\Semig$-metric space induced by $\str A'$ on the original vertices and that it is an automorphism-preserving completion of $\str G$. Thus we only need to prove that the ball vertices and block-distances in $\str A'$ and $\str A$ correspond to each other, which follows from the fact that we added witnessing paths for all of those.
\end{proof}
\begin{corollary}\label{cor:hstarfamalg}
The class $\Hstarf$ has the strong amalgamation property (see Definition~\ref{defn:mstar}).
\end{corollary}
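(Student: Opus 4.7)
The plan is to chain together two results already established in the excerpt: Lemma~\ref{lem:freeingstars}, which says that free amalgams of structures in $\Hstarf$ land in $\GstarS$, and Proposition~\ref{prop:starfcompletion}, which completes such structures (when they use only finitely many distances and types) to $\classstarf \subseteq \Hstarf$. Since we are amalgamating finite structures, the finiteness hypothesis of Proposition~\ref{prop:starfcompletion} is automatic.

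In detail, suppose $\str A, \str B_1, \str B_2 \in \Hstarf$ together with embeddings $\alpha_i \colon \str A \to \str B_i$ for $i \in \{1,2\}$ are given. Let $\str C$ be the free amalgam of $\str B_1$ and $\str B_2$ over $\str A$ (with respect to $\alpha_1, \alpha_2$), and let $\beta_i \colon \str B_i \to \str C$ be the canonical embeddings. By construction $\str C$ is a strong amalgam (no vertices outside the copies of $\str A$ get identified). By Lemma~\ref{lem:freeingstars} we have $\str C \in \GstarS$. Since $\str C$ is finite, only finitely many distances from $\Semig$ and finitely many types from $T(\Semig)$ appear in $\str C$; extending this finite set if necessary to satisfy the hypotheses of Lemma~\ref{lem:addpaths}, Proposition~\ref{prop:starfcompletion} produces an automorphism-preserving completion $\str C' \in \classstarf \subseteq \Hstarf$ of $\str C$ via an injective homomorphism-embedding $f \colon \str C \to \str C'$.

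Then $f \circ \beta_1$ and $f \circ \beta_2$ are embeddings of $\str B_1, \str B_2$ into $\str C'$, and they satisfy $f \circ \beta_1 \circ \alpha_1 = f \circ \beta_2 \circ \alpha_2$. Because $f$ is injective and $\str C$ was already a strong amalgam, the composition remains strong: if $(f \circ \beta_1)(x_1) = (f \circ \beta_2)(x_2)$ then $\beta_1(x_1) = \beta_2(x_2)$, which forces $x_1 \in \alpha_1(\str A)$ and $x_2 \in \alpha_2(\str A)$. Hence $\str C'$ is a strong amalgam of $\str B_1$ and $\str B_2$ over $\str A$ inside $\Hstarf$.

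There is no real obstacle here beyond checking that the two ingredients fit together cleanly; the substantive work (handling orphan ball vertices, ensuring that the shortest path completion respects the type structure encoded by the $\rel{}{i,t}$ relations, and bounding non-$\Semig$-metric $\star$-cycles) has already been done in Lemmas~\ref{lem:noorphans}, \ref{lem:freeingstars}, \ref{lem:addpaths} and Proposition~\ref{prop:starfcompletion}. The only mild care required is verifying that strongness of the amalgam survives taking a completion, which is immediate from injectivity of the homomorphism-embedding $f$.
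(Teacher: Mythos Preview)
Your proof is correct and follows the same approach as the paper: invoke Lemma~\ref{lem:freeingstars} to get the free amalgam into $\GstarS$, then Proposition~\ref{prop:starfcompletion} to complete it into $\classstarf\subseteq\Hstarf$. The paper's proof is a two-line sketch of exactly this; you have simply spelled out the details (finiteness of the distance/type sets, preservation of strongness under an injective completion).
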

\begin{proof}
By Lemma~\ref{lem:freeingstars}, free amalgams of structures from $\Hstarf$ are in $\GstarS$, strong amalgamation now follows by Proposition~\ref{prop:starfcompletion}.
\end{proof}

\begin{corollary}\label{cor:locallyfinite}
The class $\Hstarf$ is a locally finite automorphism-preserving subclass of the class of all finite $L^\star_\Semig$-structures.
\end{corollary}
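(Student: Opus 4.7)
The plan is to deduce local finiteness and automorphism-preservation directly from Proposition~\ref{prop:starfcompletion} by showing that any $L^\star_\Semig$-structure satisfying the three hypotheses of Definition~\ref{def:locallyfinite} (for an appropriate constant $n$) already belongs to the class $\GstarS$. Given any $\str C_0$ in the ambient class of all finite $L^\star_\Semig$-structures, first I would extract the finite set $S$ of distances appearing in $\str C_0$ and the finite set $T\subseteq T(\Semig)$ of types realized by some $\rel{C_0}{i,t}$; after enlarging $S$ if necessary, the pair $(S,T)$ satisfies the hypotheses of Lemma~\ref{lem:addpaths}. Let $B_{\mathcal F}$ be the maximum number of vertices of any $\mathcal F$-cycle with labels in $S$ (finite since $\mathcal F$ is confined) and let $n(S,T)$ be the constant furnished by Observation~\ref{obs:indeedlocallyfinite}; set $n(\str C_0) := \max(B_{\mathcal F},\,n(S,T))$.

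Given a structure $\str C$ satisfying the three conditions of Definition~\ref{def:locallyfinite} with this $n$, the homomorphism-embedding $\str C \to \str C_0$ forces all distances of $\str C$ to lie in $S$ and all used $\rel{}{i,t}$-relations to have $t\in T$. I would then verify $\str C \in \GstarS$ via three checks: its irreducible substructures lie in $\Hstarf$ by hypothesis; if some $\str F \in \mathcal F$ mapped homomorphically into $\str C$, the substructure on its image (of size at most $B_{\mathcal F} \leq n$) would by condition~3 admit a completion in $\Hstarf \subseteq \Forb(\mathcal F)$, and precomposing with the homomorphism from $\str F$ would then contradict $\Forb(\mathcal F)$; and by Observation~\ref{obs:indeedlocallyfinite} the existence of any non-$\Semig$-metric $\star$-cycle substructure in $\str C$ would produce one of at most $n(S,T) \leq n$ vertices, whose completion in $\Hstarf$ is excluded by the corollary to Lemma~\ref{lem:noorphans}. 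Once $\str C \in \GstarS$ is established, Proposition~\ref{prop:starfcompletion} furnishes the desired automorphism-preserving completion in $\Mstarf \subseteq \Hstarf$, which is precisely what local finiteness and the automorphism-preserving refinement demand.

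The one technical point requiring care is the passage in the last check between homomorphic images and substructures of non-$\Semig$-metric $\star$-cycles: Observation~\ref{obs:indeedlocallyfinite} is phrased for substructures, while the definition of $\GstarS$ forbids homomorphic images. This is reconciled by the remark immediately following Definition~\ref{defn:gstars}, which gives the equivalence of the two notions under precisely the assumptions on $\str C$ that have just been verified (irreducible substructures in $\Hstarf$ together with $\str C \in \Forb(\mathcal F)$), combined with the reduction to strong $\star$-cycles afforded by Lemma~\ref{lem:noorphans}. Apart from this bookkeeping, the argument is a direct combination of Observation~\ref{obs:indeedlocallyfinite} and Proposition~\ref{prop:starfcompletion}.
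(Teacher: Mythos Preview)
Your proposal is correct and follows essentially the same route as the paper's own proof: extract $S$ and $T$ from $\str C_0$, enlarge $S$ to meet the hypotheses of Lemma~\ref{lem:addpaths}, set $n=\max(q,n(S,T))$ with $q$ the bound from confinedness of $\mathcal F$ and $n(S,T)$ from Observation~\ref{obs:indeedlocallyfinite}, verify that any $\str C$ satisfying the three conditions of Definition~\ref{def:locallyfinite} lies in $\GstarS$, and conclude via Proposition~\ref{prop:starfcompletion}. Your write-up is in fact more careful than the paper's, which simply asserts that $\str C\in\GstarS$ without spelling out the three checks or the homomorphic-image-versus-substructure reconciliation you flag in the last paragraph.
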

\begin{proof}
We verify the axioms (see Definition~\ref{def:locallyfinite}). Let $S$ be the set of all distances occurring in $\str C_0$ and let $T$ be a finite subset of $T(\Semig)$ such that the all nonempty $\rel{C_0}{i,t}$ have $t\in T$. If necessary, expand $S$ by finitely many distances to satisfy the conditions of Lemma~\ref{lem:addpaths}. Let $q$ be the maximum number of vertices of an $S$-edge-labelled graph from $\mathcal F$ and put $n=\max(q,n(S,T))$, where $n(S,T)$ is given by Observation~\ref{obs:indeedlocallyfinite}. When given such $\str C$ as in Definition~\ref{def:locallyfinite} it follows that it belongs to $\GstarS$ and has an automorphism-preserving completion in $\Hstarf$ (in fact, even in $\classstarf$).
\end{proof}

\subsection{Henson constraints}
Note that every nonempty $t(\Block,\ell)\cap\Block'$ contains a distance which is reducible with respect to $\mathcal F$ (see Section~\ref{sec:henson}), hence we can pick all $a(t)$'s and $b(t)$'s in Lemma~\ref{lem:addpaths} to be reducible with respect to $\mathcal F$. This means that in this whole section, we can further restrict all considered classes to omit a family $\mathcal H$ of Henson constraints (cf. Observation~\ref{obs:hensoncompletion}). In particular, we get the following corollary.
\begin{corollary}\label{cor:hensonstar}
The class $\Hstarf\cap\Forb(\mathcal H)$ is a locally finite automor\-phism-preserving subclass of the class of all finite $L^\star_\Semig$-structures and has the strong amalgamation property. Moreover, every member of $\Hstarf\cap\Forb(\mathcal H)$ has an automorphism-preserving completion in $\classstarf\cap\Forb(\mathcal H)$.
\end{corollary}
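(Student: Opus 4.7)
The plan is to observe that all the ingredients of the proofs of Corollaries~\ref{cor:hstarfamalg} and~\ref{cor:locallyfinite} can be carried out while respecting the family $\mathcal H$ of Henson constraints, using the same strategy as in Observation~\ref{obs:hensoncompletion}. The critical point, noted in the paragraph preceding the corollary, is that every nonempty set of the form $t(\Block,\ell)\cap \Block'$ contains a distance that is reducible with respect to $\mathcal F$; this is what allows us to choose the witnessing distances $a(t)$ and $b(t)$ from Lemma~\ref{lem:addpaths} to lie outside the irreducible distances used by $\mathcal H$.

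First I would verify the strong amalgamation property for $\Hstarf\cap\Forb(\mathcal H)$. Given $\str{A},\str{B}_1,\str{B}_2\in \Hstarf\cap\Forb(\mathcal H)$, their free amalgam lies in $\GstarS$ by Lemma~\ref{lem:freeingstars}. I would then re-run the proof of Proposition~\ref{prop:starfcompletion} on this free amalgam, but when glueing the paths $a(t(u,v)),\allowbreak b(t(u,v)),\allowbreak a(t(u,v))$ on non-edges, I would pick $a(t)$ and $b(t)$ to be reducible with respect to $\mathcal F$ (possible by the observation above). The resulting $\Semig$-edge-labelled graph $\str{G}$ still belongs to $\Forb(\mathcal F)\cap\Forb(\mathcal H)$, since no Henson constraint can be created on the added distances (they are reducible) and no new constraint can appear on distances of $\str{A},\str{B}_1,\str{B}_2$ because those are already in $\Forb(\mathcal H)$. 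Applying Observation~\ref{obs:hensoncompletion}, the shortest path completion $\str{G}'$ stays in $\mathcal{M}_\Semig\cap\Forb(\mathcal F)\cap\Forb(\mathcal H)$, so $L^\star(\str{G}')$ is the desired strong amalgam in $\Mstar\cap\Forb(\mathcal F)\cap\Forb(\mathcal H)\subseteq \classstarf\cap\Forb(\mathcal H)$.

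Next I would establish local finiteness. I would verify Definition~\ref{def:locallyfinite} with the same constant $n=\max(q,n(S,T))$ as in the proof of Corollary~\ref{cor:locallyfinite}, relative to the ambient class of all finite $L^\star_\Semig$-structures. Given $\str{C}_0$ and a structure $\str{C}$ satisfying the three axioms, all irreducible substructures of $\str{C}$ belong to $\Hstarf\cap\Forb(\mathcal H)$ and $\str{C}$ admits a homomorphism-embedding to $\str{C}_0$. The same bounded-witness argument (Observation~\ref{obs:indeedlocallyfinite} combined with Proposition~\ref{prop:Sequivalence}) shows that $\str{C}\in\GstarS$. Then re-running the proof of Proposition~\ref{prop:starfcompletion} with reducible choices of $a(t),b(t)$ as above produces the automorphism-preserving completion in $\classstarf\cap\Forb(\mathcal H)$. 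The ``moreover'' clause of the corollary is obtained automatically from this construction since every member of $\Hstarf\cap\Forb(\mathcal H)$ trivially satisfies the hypotheses when we set $\str{C}_0=\str{C}$.

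The only mildly delicate point is checking that forcing reducibility of $a(t)$ and $b(t)$ does not conflict with the constraints that they must witness the block-distance $t$ and produce a graph in $\Forb(\mathcal F)$; this is exactly the sentence preceding the corollary, and it is what makes everything go through. All other arguments (metricity, no cycles from $\mathcal F$, automorphism-preservation, the role of $\rel{}{i,t}$ relations, handling of orphans via Lemma~\ref{lem:noorphans}) transfer verbatim from Section~\ref{sec:locallyfinitecompletion}, as $\Forb(\mathcal H)$ is a hereditary class closed under the specific shortest path completions we perform.
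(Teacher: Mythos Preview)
Your proposal is correct and follows essentially the same approach as the paper. The paper's own justification is the paragraph immediately preceding the corollary: it notes that every nonempty $t(\Block,\ell)\cap\Block'$ contains a distance reducible with respect to $\mathcal F$, so one can choose the $a(t)$'s and $b(t)$'s in Lemma~\ref{lem:addpaths} to be reducible, and then ``in this whole section, we can further restrict all considered classes to omit a family $\mathcal H$ of Henson constraints (cf.\ Observation~\ref{obs:hensoncompletion}).'' Your write-up spells out in detail exactly how this restriction propagates through Proposition~\ref{prop:starfcompletion}, Corollary~\ref{cor:hstarfamalg}, and Corollary~\ref{cor:locallyfinite}, which is precisely what the paper leaves implicit.
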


\section{EPPA}\label{sec:eppaproof}

Now we are ready to prove Theorem~\ref{thm:eppa}.
\begin{proof}[Proof of Theorem~\ref{thm:eppa}]
Using Theorem~\ref{thm:infinitelyblocks} we can assume that $\Semig$ has finitely many blocks. We want to apply Theorem~\ref{thm:herwiglascar} for $\mathcal E$ being the class of all $L^\star_\Semig$-structures (which has EPPA by Theorem~\ref{thm:eppanr}) and for $\mathcal K = \Hstarf$. However, $\Hstarf$ does not consist of irreducible structures only. To fix this, we can assume that $\Hstarf$ contains a binary relation $E$ such that $E$ is a complete graph on the set of all vertices of every structure. It is easy to verify that all the results of the previous section still hold with this $E$ relation. Now we get, using Corollary~\ref{cor:locallyfinite}, that $\Hstarf$ has EPPA. Moreover, since $\classstarf\subseteq \Hstarf$ and every structure in $\Hstarf$ has an automorphism-preserving completion in $\classstarf$ (by Proposition~\ref{prop:starfcompletion}), we get that $\classstarf$ has EPPA. Given $\str A\in \MF$, we know that there is $L^\star(\str B)\in \classstarf$ which is an EPPA-witness for $L^\star(\str A)$. It follows that all the more so (by looking at what is induced on original vertices) $\str B\in \MF$ is an EPPA-witness for $\str A$ which is what we wanted.
\end{proof}

\begin{remark}
Theorem~\ref{thm:herwiglascar} in fact promises \emph{coherent EPPA}, which is a strengthening of EPPA by Siniora and Solecki~\cite{Siniora, Siniora2} with stronger group-theo\-retical consequences. We did not want to define it in Section~\ref{sec:eppa}, but it follows that in Theorem~\ref{thm:eppa} we could in fact promise coherent EPPA.
\end{remark}
\begin{remark}
Using Corollary~\ref{cor:hensonstar} we can prove EPPA also for the class $\MF\cap\Forb(\mathcal H)$ where $\mathcal H$ is a family of Henson constraints.
\end{remark}
\chapter{Convex order and the Ramsey property}\label{ch:withorder}
In this chapter we prove Theorem~\ref{thm:main}. We adapt Braunfeld's definition of convex ordering of $\Lambda$-ultrametric spaces for all semigroup-valued metric spaces. Then we show how to transfer this order between the $L_\Semig$-structures and $L^\star_\Semig$-structures, which enables us to use the machinery of Chapter~\ref{ch:orderless}.

Unless stated otherwise, in the whole chapter we fix a \pocs{} $\ESemig$ with \textbf{finitely many blocks} where the meet of every non-$\str 0$ pair of blocks is defined and non-$\str 0$.

\section{Convex ordering}\label{sec:convord}
To obtain a Ramsey class we need to define a notion of ordering for classes $\mathcal M_\Semig$. The convex ordering of a metric space was first used by Nguyen Van Th\'e~\cite{NVT2009,The2010}. In the case of ultrametric spaces it is possible to order vertices in such a way that every ball is a linear interval. Braunfeld~\cite{Sam} generalised the concept of convex ordering to $\Lambda$-ultrametric spaces where this is no longer possible. We proceed analogously.

\begin{definition}
\label{defn:Js}
Let $L^+_\Semig$ be the expansion of the language $L_\Semig$ which adds a binary relation $\leq^\Block$ for each $\Block \in I_\Semig$.

Given an $\Semig$-metric space $\str{A}=(A,(\rel{A}{s})_{s\in \Semig})$, its {\em convexly ordered expansion} is an \mbox{$L^+_\Semig$} expansion of $\str{A}$ such that for every $\Block\in I_\Semig$ the relation $\leq^\Block$ is a partial order satisfying the following
\begin{enumerate}
\item One of $u \leq^\Block v$ and $v \leq^\Block u$ is defined if and only if $u\not\sim_\Block v$ and for every $\Block'\mgt \Block$ it holds that $u\sim_{\Block'} v$; and
\item for every $w \in A$ such that $u\sim_\Block w$ we have $w\leq^\Block v$ if and only if $u\leq^\Block v$.
\end{enumerate}
We will denote by $\overrightarrow{\mathcal M}_\Semig$ the class of all convexly ordered $\Semig$-metric spaces.
\end{definition}

As we shall see, the orders $\leq^\Block$ are a concise description of orders of balls of diameter $\Block$. In particular, if $\Semig$ is archimedean then we only added $\leq^{\str 0}$ --- a linear order of vertices of $\Semig$.

\begin{example}[Running example --- $\overrightarrow{\mathcal M}_\ExSemig$]
In $\ExSemig$ for $\Block_{\{1,2\}}$, there is only one larger block, namely the largest one. Hence $u\leq^{\Block_{\{1,2\}}} v$ or $v\leq^{\Block_{\{1,2\}}} u$ is defined if and only if $u\nsim_{\Block_{\{1,2\}}} v$, similarly for $\Block_{\{1,3\}}$ and $\Block_{\{2,3\}}$. On the other hand, $\Block_\emptyset$ is the smallest block, hence $u\leq^{\Block_\emptyset} v$ or $v\leq^{\Block_\emptyset} u$ if and only $d(u,v) = (0,0,0)$.
\end{example}

In order to prove the strong amalgamation property for the convexly ordered metric spaces, we need to strengthen our assumptions of $\Semig$ and $\mathcal F$. In particular, we need the block lattice to be distributive and we need for every pair $\Block$, $\Block'$ of blocks of $\Semig$ and every $a\in \Block$, $b\in \Block'$ which ``we encounter in our metric spaces'' that $\Block(\inf(a,b)) = \Block\meet\Block'$ (see Definition~\ref{defn:meetsync}).

A lattice is \emph{distributive} if it satisfies the following two equations
\begin{align*}
a\wedge(b\vee c) &= (a\wedge b)\vee(a\wedge c)\\
a\vee(b\wedge c) &= (a\vee b)\wedge(a\vee c).
\end{align*}
Note that it is enough to check whether a lattice satisfies one of the equations, the other can then be derived syntactically.

Braunfeld~\cite{Sam} proved that, for a distributive lattice $\Lambda=(\Lambda, \vee, \wedge, 0)$ where $0$ is meet-irreducible, the class of convexly ordered $\Lambda$-ultrametric spaces has the strong amalgamation property (see Remark~\ref{rem:sam}). For completeness, we include the proof adapted to the setting of semigroup-valued metric spaces. Note that each block of $\Lambda$ other than $\str 0$ contains precisely one distance from $\Lambda$ and the block lattice without $\str 0$ is isomorphic to $\Lambda$.
\begin{theorem}[Braunfeld~\cite{Sam}]\label{thm:samstrongamalg}
Let $\Lambda$ be a distributive lattice. The class $\overrightarrow{\mathcal M}_\Lambda$ has the strong amalgamation property.
\end{theorem}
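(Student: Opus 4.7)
The plan is to build the strong amalgam in two stages: first an unordered $\Lambda$-ultrametric strong amalgam $\str C^-$, then a canonical extension of each convex order $\leq^\Block$ to $\str C^-$. For the first stage, Theorem~\ref{thm:shortestpath} applies with $\Semig=\Lambda$ and $\mathcal F=\emptyset$: in a distributive lattice all infima exist, so every cycle is $\Lambda$-obedient and meets are trivially synchronized. The theorem supplies the underlying strong amalgam $\str C^-$, whose cross-distances are given by the join-shortest-path formula $d_{\str C^-}(u,v)=\bigwedge_{a\in A}\bigl(d_{\str B_1}(u,a)\vee d_{\str B_2}(a,v)\bigr)$ for $u\in B_1$, $v\in B_2$. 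A direct corollary of this formula is the following \emph{anchor property}: for every block $\Block'$ of $\Lambda$, if a single $\sim_{\Block'}$-class $X$ of $\str C^-$ meets both $B_1\setminus A$ and $B_2\setminus A$, then $X\cap A\neq\emptyset$.

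For the second stage, fix $\Block\in I_\Lambda$. By Definition~\ref{defn:Js} the order $\leq^\Block$ is a linear order on the $\Block$-classes contained in each \emph{$\Block$-super-class} $E$, i.e.\ each maximal set on which every coarser equivalence $\sim_{\Block'}$ with $\Block'\succ\Block$ holds. Applying the anchor property to $\Block^\ast=\bigwedge\{\Block'\in I_\Lambda:\Block'\succ\Block\}$ (which strictly dominates $\Block$ by meet-irreducibility of $\Block$), I conclude that every super-class $E$ is either contained in $B_i$ for some $i\in\{1,2\}$ or it intersects $A$. In the first case I inherit $\leq^\Block$ from $\str B_i$. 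In the second, the $\Block$-classes of $E$ split into $A$-classes (meeting $A$), $B_1$-classes (inside $B_1\setminus A$), and $B_2$-classes (inside $B_2\setminus A$); the $A$-classes are linearly ordered by $\leq^\Block_{\str A}$, and in every open gap of this order (together with the initial and terminal segments) I place first the $B_1$-classes falling in that gap according to $\leq^\Block_{\str B_1}$, then the $B_2$-classes falling in that gap according to $\leq^\Block_{\str B_2}$.

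Let $\str C$ be the resulting $L^+_\Lambda$-structure. I would then verify: (i)~the assignment of $B_i$-classes to gaps is well-defined; (ii)~each $\leq^\Block$ is a linear order on the appropriate $\Block$-classes; (iii)~the canonical maps $\str B_i\hookrightarrow\str C$ are $L^+_\Lambda$-embeddings; (iv)~the amalgam is strong (inherited from $\str C^-$). Items (ii)--(iv) are routine, and the constructions for different blocks are independent because the domains of the relations $\leq^\Block$ are pairwise disjoint. The main obstacle is (i): for a $B_1$-class $Y\subseteq B_1\setminus A$ I take its gap to be the pair $(X,X')$ consisting of the immediate $A$-class predecessor and successor of $Y$ in $\leq^\Block_{\str B_1}$ (either may be absent). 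Since the restriction of $\leq^\Block_{\str B_1}$ to the $A$-classes of $E$ coincides with $\leq^\Block_{\str A}$ (because $\str A\hookrightarrow\str B_1$ preserves $\leq^\Block$), $X$ and $X'$ are also consecutive among $A$-classes in $\leq^\Block_{\str A}$, so they determine a bona fide gap of the $A$-order; the same argument handles $B_2$-classes, and no two $B_1$-classes or $B_2$-classes can be assigned conflictingly because their relative order inside each gap is fixed by $\leq^\Block_{\str B_i}$.
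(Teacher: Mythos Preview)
Your overall strategy is sound, but the anchor property as you state it --- for \emph{every} block $\Block'$ --- is false; it holds only for meet-irreducible $\Block'$. From $\bigwedge_{a}\bigl(d(u,a)\vee d(a,v)\bigr)\mleq\lambda'$ and distributivity one gets $\lambda'=\bigwedge_{a}\bigl(\lambda'\vee d(u,a)\vee d(a,v)\bigr)$, and it is precisely meet-irreducibility of $\lambda'$ that lets you extract a single $a$ with $d(u,a)\vee d(a,v)\mleq\lambda'$. Your $\Block^\ast=\Block^+$ need not be meet-irreducible, and the anchor property can genuinely fail for it. Take $\Lambda$ to be the Boolean lattice on atoms $a,b,c$ with a new bottom $0$ adjoined below its old bottom $e$ (so $0$ is meet-irreducible while $e=a\wedge b$ is not). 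Let $A=\{a_1,a_2\}$ with $d(a_1,a_2)=a\vee b$, and put $d(u,a_1)=d(v,a_1)=a$, $d(u,a_2)=d(v,a_2)=b$. Then $d(u,v)=a\wedge b=e$, so $u\sim_{\{e\}}v$, yet neither $a_1$ nor $a_2$ is $\{e\}$-equivalent to $u$. Since $\{e\}=\str 0^+$, this is exactly a super-class meeting both sides but not $A$, a case your analysis omits.

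The gap is easy to patch: in that missing case the anchor property for the meet-irreducible $\Block$ itself (which you use implicitly to justify the three-way split) still forces every $\Block$-class of $E$ to lie entirely in one $B_i\setminus A$, and since $E\cap A=\emptyset$ there are no $\leq^\Block$-constraints from the free amalgam between the two sides, so you may simply put all $B_1$-classes before all $B_2$-classes. The paper's proof avoids the detour through $\Block^+$ altogether: it argues directly that the only obstruction to extending $\leq^\Block_{\str C_0}$ --- a chain $u<^\Block w<^\Block v$ with $u\sim_\Block v$ in the completion --- cannot occur, invoking the anchor property only for the meet-irreducible block $\Block$.
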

\begin{proof}
By Theorem~\ref{thm:shortestpath} we know that the unordered reduct $\mathcal M_\Lambda$ has the strong amalgamation property. Hence it suffices to be able to complete the partial orders. The key step of the proof is noticing that if $\str C_0$ is the free amalgam of $\str B_1$ and $\str B_2$ over $\str A$ then for a fixed block $\Block\in I_\Lambda$ the relation $\leq^\Block_{\str C_0}$ can be extended to a linear order, that is, there are no vertices $u\in B_1\setminus A$, $v\in B_2\setminus A$ and $w\in A$ with, say, $u\leq^\Block_{\str C_0} w \leq^\Block_{\str C_0} v$ such that in the $\Lambda$-shortest path completion of $\str C_0$ we have $u\sim_\Block v$. For a contradiction, assume the existence of such $u,v,w$.

$\Block$ being meet-irreducible and $u\sim_\Block v$ in the shortest path completion of $\str C_0$ means that there are vertices $w_1,\ldots,\allowbreak w_k\in A$ such that $$\Block\mgeq\bigwedge\{\Block(d_{\str C_0}(u, w_i))\vee \Block(d_{\str C_0}(w_i, v));1\leq i\leq k\}.$$

Because $\Lambda$ (and thus also the block lattice which is isomorphic to $\Lambda$) is distributive, there is $i$ such that $$\Block\mgeq \Block(d_{\str C_0}(u, w_i))\vee \Block(d_{\str C_0}(w_i, v))$$ (indeed, if $\Xi$ is a distributive lattice, $a\in \Xi$ is meet-irreducible and there are $b_1,\ldots,b_k\in \Xi$ such that $a\geq \bigwedge b_i$, this means that $a = a\vee\bigwedge b_i$, or $a = \bigwedge (a\vee b_i)$ and from meet-irreducibility it follows that there is $i$ with $a\geq a\vee b_i$) and this means $u\sim_\Block w_i$ in $\str B_1$ and $v\sim_\Block w_i$ in $\str B_2$. But then $u\leq_{\str B_1}^\Block w$ if and only if $w_i\leq_{\str B_1}^\Block w$ and $v\leq_{\str B_2}^\Block w$ if and only if $w_i\leq_{\str B_2}^\Block w$. Both $w_i$ and $w$ are in $A$, thus $u\leq_{\str C_0}^\Block w$ if and only if $v\leq_{\str C_0}^\Block w$, which is a contradiction.
\end{proof}

We intend to reduce the strong amalgamation property for general convexly ordered $\Semig$-metric spaces (satisfying some conditions) to the strong amalgamation property for convexly ordered $\Lambda$-ultrametric spaces. Again, we know that by Theorem~\ref{thm:shortestpath} the classes $\mathcal M_\Semig\cap \Forb(\mathcal F)$ have the strong amalgamation property and thus it is sufficient to be able to complete the orders. And they only depend on the block structure, hence it is enough to look at the \emph{reducts} of the structures where distances are replaced by their blocks. Note that these are not reducts in the sense of Definition~\ref{defn:expansion}, they however correspond to the more general model-theoretical definition of a reduct.
\begin{corollary}\label{cor:convamalgamation}
Assume that the block order of $\Semig$ is a distributive lattice and let $\mathcal F$ be an omissible family of $\Semig$-edge labelled cycles containing all disobedient ones which \textbf{synchronizes meets}. Then $\overrightarrow{\mathcal M}_\Semig\cap \Forb(\mathcal F)$, the subclass of $\overrightarrow{\mathcal M}_\Semig$ omitting homomorphic images of members of $\mathcal F$, is a strong amalgamation class.
\end{corollary}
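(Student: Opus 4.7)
The plan is to reduce to Braunfeld's Theorem~\ref{thm:samstrongamalg}. The convex orders $\leq^\Block$ depend only on the block structure of distances, so once we have a strong amalgam of the unordered reducts and a dictionary translating the shortest path completion in $\Semig$ into the lattice operations of the block lattice, the order-completion step should be a direct transcription of Braunfeld's argument.

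Given $\str A,\str B_1,\str B_2\in \overrightarrow{\mathcal M}_\Semig\cap\Forb(\mathcal F)$ with embeddings $\alpha_i\colon \str A\to\str B_i$, I would first apply Theorem~\ref{thm:shortestpath} to the unordered reducts: the shortest path completion $\str C$ of the free amalgam $\str C_0$ over $\str A$ is a strong amalgam in $\mathcal M_\Semig\cap\Forb(\mathcal F)$. The partial orders $\leq^\Block_{\str B_1}$ and $\leq^\Block_{\str B_2}$ already agree on $\str A$ and thus glue to a partial order on the pairs lying inside one $\str B_i$; what remains is to extend it over all pairs $u\in B_1\setminus A$, $v\in B_2\setminus A$ satisfying the defining condition of $\leq^\Block$ in $\str C$ for each $\Block\in I_\Semig$.

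The key step is to observe that, by meet synchronization together with the fact that paths in $\str C_0$ from $u$ to $v$ factor through $\str A$ and (by the triangle inequality inside $\str B_1$ and $\str B_2$) may be assumed to consist of a single intermediate vertex $w\in A$, one has
\[
\Block\bigl(d_{\str C}(u,v)\bigr) \;=\; \bigwedge_{w\in A}\bigl(\Block(d_{\str B_1}(u,w))\vee \Block(d_{\str B_2}(w,v))\bigr)
\]
computed in the distributive block lattice. If $\Block\in I_\Semig$ is meet-irreducible and $u\sim_\Block v$ in $\str C$, then distributivity (exactly as in the proof of Theorem~\ref{thm:samstrongamalg}) forces a single $w\in A$ with $u\sim_\Block w$ in $\str B_1$ and $w\sim_\Block v$ in $\str B_2$. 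With this lemma, the rest of Braunfeld's argument runs verbatim: it rules out triples $u\leq^\Block_{\str B_1}w\leq^\Block_{\str B_2}v$ with $u\sim_\Block v$ forced in $\str C$ (using convexity of $\leq^\Block$ in $\str B_1$ and $\str B_2$ via the witness $w$), so the union of $\leq^\Block_{\str B_i}$ on each $\Block$-class of $\str C$ is consistent and admits a convex linear extension. Doing this independently for every $\Block\in I_\Semig$ produces the required convexly ordered strong amalgam in $\overrightarrow{\mathcal M}_\Semig\cap\Forb(\mathcal F)$.

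The main obstacle is the block-meet formula above: one must verify that the triangle inequality inside $\str B_i$ and meet synchronization together reduce the infimum of $\Semig$-lengths of paths in $\str C_0$ to the meet of blocks of lengths of single-vertex paths through $A$, and that all infima and meets appearing in the formula are in fact defined in our setting. Once this translation from the $\Semig$-structure to the block lattice is in hand, the order-completion is (as noted) a direct transcription of Braunfeld's proof.
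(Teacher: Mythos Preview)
Your proposal is correct and essentially matches the paper's proof: both reduce the order-completion to Braunfeld's Theorem~\ref{thm:samstrongamalg} by observing that meet synchronization makes the block of the shortest-path distance equal the lattice meet of the blocks of single-vertex paths through $\str A$, so the orders depend only on the block lattice $\Lambda$. The paper packages this slightly more modularly---it passes to the $\Lambda$-ultrametric reduct $\str C_0^-$ (replacing each distance $a$ by $\Block(a)$), notes that meet synchronization makes this reduct commute with shortest-path completion, and then invokes Theorem~\ref{thm:samstrongamalg} as a black box---whereas you inline Braunfeld's meet-irreducibility argument directly, but the mathematical content is the same.
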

\begin{proof}
Let $\str C_0$ be the free amalgam of $\str B_1$ and $\str B_2$ over $\str A$ where $\str A, \str B_1, \str B_2\in \overrightarrow{\mathcal M}_\Semig\cap \Forb(\mathcal F)$. We will show that $\str C_0$ has a strong completion $\str C'\in\overrightarrow{\mathcal M}_\Semig\cap \Forb(\mathcal F)$ which is then the strong amalgam of $\str B_1$ and $\str B_2$ over $\str A$.

By Theorem~\ref{thm:shortestpath} we know that if we forget the orders, $\str C_0$ has a strong $\Semig$-metric completion. Thus it remains to show that one can complete the orders.

Consider the reduct $\str C_0^-$ of $\str C_0$ where we replace every distance $a$ with $\Block(a)$. From the definition of the block order and from the assumption that $\mathcal F$ synchronizes meets we get that $\str C_0^-$ is a free amalgam of convexly ordered $\Lambda$-ultrametric spaces, that is, semigroup-valued metric spaces where the semigroup is $\Lambda = (\{\Block(a); a\in \Semig\}, \vee, \wedge)$. Furthermore the shortest path completion of the distances in $\str C_0^-$ is the same as the reduct of the shortest path completion of the distances in $\str C_0$.

By Theorem~\ref{thm:samstrongamalg} we know that the class of convexly ordered $\Lambda$-ultra\-metric spaces has the strong amalgamation property and this gives us the completion of the partial orders which we needed.
\end{proof}
\begin{remark}
We essentially just used the fact that the \emph{shortest path completion functor} and the \emph{reduct to $\Lambda$-ultrametric spaces functor} commute. In order for them to commute, one needs $\mathcal F$ to synchronize meets.
\end{remark}

\section{Ordering the ball vertices}
If $(X, \leq_X)$ and $(Y, \leq_Y)$ are two ordered sets then the \emph{lexicographic order} $\leq_\mathrm{lex}$ on $X\times Y$ is given by $(x,y)\leq_\mathrm{lex} (x',y')$ if and only if either $x<_X x'$ or $x=x'$ and $y\leq_Yy'$. This naturally generalizes to the product of several orders.

For every $\Block \in I_\Semig$, define $\Block^+$ to be the unique block such that for every $\Block'\mgeq \Block$ it holds that $\Block\mlt \Block^+ \mleq \Block'$ (its existence and uniqueness follow from the fact that there are only finitely many blocks and that all meets are defined).

We are going to show that from the partial orders $\leq^\Block$, one can define linear orders of balls of every diameter. There are many possible ways to do it and we need to pick one. There is nothing special about the particular choices in the following definition.
\begin{definition}\label{defn:u}
Assume an arbitrary but fixed choice of a linear order $\trianglelefteq$ of all blocks of $\Semig$ such that $\Block_i\trianglelefteq \Block_j$ whenever $\Block_i\mgeq \Block_j$. Then define the function $U\colon R_\Semig\cup I_\Semig\to (R_\Semig\cup I_\Semig)^2$ as follows:
\begin{enumerate}
\item If $\Block \in R_\Semig$ then $U(\Block) = (\Block_1, \Block_2)$, where $(\Block_1, \Block_2)$ is the $\trianglelefteq$-lexicographically smallest pair of blocks such that $\Block_1, \Block_2\neq \Block$ and $\Block = \Block_1\meet \Block_2$.
\item If $\Block \in I_\Semig$ then $U(\Block) = (\Block,\Block)$.
\end{enumerate}
\end{definition}

\begin{definition}\label{defn:definable}
Let $U$ be the function from Definition~\ref{defn:u}. Given $\str{A}\in \overrightarrow{\mathcal M}_\Semig$, define inductively for every block $\Block$ of $\Semig$ the relation $\ll^\Block$ on balls of diameter $\Block$ in $\str A$.

If $\Block$ is the largest block of $\Semig$, then there is only one ball of diameter $\Block$ and $\ll^\Block$ is trivial. Otherwise $B_1\ll^\Block B_2$ if and only if one of the following holds:
\begin{enumerate}
 \item\label{lem:definable:reducible} $\Block \in R_\Semig$, $U(\Block)=(\Block_1,\Block_2)$ and the unique pair $B^1_1,B^2_1$ of blocks of diameter $\Block_1$ and $\Block_2$ respectively containing $B_1$
is lexicographically (in the orders $\ll^{\Block_1}$ and $\ll^{\Block_2}$) smaller than the unique pair $B^1_2,B^2_2$ of blocks of diameter $\Block_1$ and $\Block_2$ respectively containing $B_2$.
 \item\label{lem:definable:inball} $\Block \in I_\Semig$ and there exist $u\in B_1$, $v\in B_2$ such that $u\leq^\Block v$.
 \item\label{lem:definable:outball} $\Block \in I_\Semig$, $B'_1$ and $B'_2$ are the unique balls of diameter $\Block^+$ containing $B_1$ and $B_2$ respectively, and $B'_1\ll^{\Block^+} B'_2$.
\end{enumerate}
\end{definition}

\begin{lemma}\label{lem:definable}
All relations $\ll^{\Block}$ given by Definition~\ref{defn:definable} are linear orders.

\end{lemma}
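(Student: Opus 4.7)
I would prove the lemma by induction along the reverse of the block order $\mleq$, starting from the unique maximum block and working downwards. The base case is trivial: for the maximum block there is a single ball of that diameter, so $\ll^\Block$ is vacuously a linear order. The inductive step splits naturally into the two cases $\Block\in R_\Semig$ and $\Block\in I_\Semig$ appearing in Definition~\ref{defn:definable}.

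For $\Block\in R_\Semig$, write $U(\Block)=(\Block_1,\Block_2)$ and note $\Block_1,\Block_2\mgt\Block$, so by induction $\ll^{\Block_1}$ and $\ll^{\Block_2}$ are already linear orders. The definition sends each $\Block$-ball $B$ to the pair of unique $\Block_i$-balls containing it, and compares these pairs lexicographically. The only thing to verify beyond the well-known fact that lex products of linear orders are linear is that this map is injective: if two $\Block$-balls had the same pair of super-balls, then by Observation~\ref{obs:ballintersection} each of them equals the intersection of the same two super-balls, hence they coincide.

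For $\Block\in I_\Semig$, the block $\Block^+$ exists uniquely and by induction $\ll^{\Block^+}$ is linear on the balls of diameter $\Block^+$. Given two distinct $\Block$-balls $B_1,B_2$, let $B'_1,B'_2$ be their (unique) enclosing $\Block^+$-balls. I would dichotomize: if $B'_1\neq B'_2$ then clause~\ref{lem:definable:outball} of Definition~\ref{defn:definable} inherits the comparison from $\ll^{\Block^+}$; if $B'_1=B'_2$ then for any $u\in B_1,v\in B_2$ we have $u\not\sim_\Block v$ and $u\sim_{\Block^+} v$, which forces $u\sim_{\Block'}v$ for every $\Block'\mgt\Block$ because $\Block^+\mleq \Block'$ by the defining property of $\Block^+$, so by clause~(1) of Definition~\ref{defn:Js} one of $u\leq^\Block v$ or $v\leq^\Block u$ holds, yielding a comparison through clause~\ref{lem:definable:inball}. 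Irreflexivity in the $\Block\in I_\Semig$ case follows because clause~\ref{lem:definable:inball} requires $u\not\sim_\Block v$ and clause~\ref{lem:definable:outball} requires $B'_1\neq B'_2$ together with inductive irreflexivity of $\ll^{\Block^+}$. Condition~(2) of Definition~\ref{defn:Js} shows that the chosen $u,v$ in clause~\ref{lem:definable:inball} do not matter, so the relation is well defined on balls.

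The main bookkeeping obstacle is transitivity in the case $\Block\in I_\Semig$, because each of the two steps in $B_1\ll^\Block B_2\ll^\Block B_3$ may be witnessed by either clause~\ref{lem:definable:inball} or clause~\ref{lem:definable:outball}. I would handle the four subcases as follows: if both steps use clause~\ref{lem:definable:outball}, transitivity of $\ll^{\Block^+}$ finishes it; if exactly one step uses clause~\ref{lem:definable:outball}, the $\Block^+$-balls of the endpoints still differ strictly in $\ll^{\Block^+}$, giving clause~\ref{lem:definable:outball} for the composed pair; if both steps use clause~\ref{lem:definable:inball} then $B'_1=B'_2=B'_3$, and after replacing the witness in $B_2$ by a common one via condition~(2) of Definition~\ref{defn:Js}, transitivity of the partial order $\leq^\Block$ delivers a witness for $B_1\ll^\Block B_3$ through clause~\ref{lem:definable:inball}. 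Antisymmetry is then immediate from irreflexivity and transitivity, and combined with the totality argument above this yields that $\ll^\Block$ is a linear order, completing the induction.
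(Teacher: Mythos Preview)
Your proof is correct and follows essentially the same approach as the paper's: induction along a non-increasing enumeration of the blocks, with the same case split between $\Block\in R_\Semig$ (lexicographic order via $U(\Block)$) and $\Block\in I_\Semig$ (dichotomy on whether the enclosing $\Block^+$-balls coincide). Your version is in fact more thorough than the paper's, which leaves the verification of transitivity and irreflexivity in the $I_\Semig$ case implicit; your four-subcase analysis for transitivity and your use of Observation~\ref{obs:ballintersection} for injectivity in the $R_\Semig$ case make these points explicit.
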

\begin{proof}
Enumerate blocks of $\Semig$ as $\Block_1, \Block_2,\ldots, \Block_n$ in non-increasing order of $\mleq$ (i.e. there is no $\Block_i\mlt \Block_j$ for $1\leq i\leq j\leq n$). Given $\str{A}\in \overrightarrow{\mathcal M}_\Semig$, we verify by induction that for every block $\Block_i$ the relation $\ll^{\Block_i}$ is indeed a linear order of balls of diameter $\Block_i$. As $\Block_1$ is the largest block, the statement is trivial for $\Block_1$.

Now the induction step. If $\Block_k\in R_\Semig$, $U(\Block)=(\Block(a),\Block(b))$ then by the induction hypothesis we know that $\ll^{\Block(a)}$ and $\ll^{\Block(b)}$ are linear orders of balls of diameter $\Block(a)$ and $\Block(b)$. Because every ball of diameter $\Block_k$ lies in a unique intersection of $\Block(a)$ and $\Block(b)$, these two orders uniquely define lexicographically the linear order $\ll^{\Block_k}$.

Otherwise $\Block_k\in I_\Semig$. Now for every pair $B_1$, $B_2$ of distinct balls of diameter $\Block_k$ we have that $\leq^{\Block_k}$ is defined 
for an arbitrary choice of $u\in B_1$, $v\in B_2$ if and only if both blocks $B_1$ and $B_2$ belong to the same ball of diameter $\Block^+_k$. In this case we use rule~\ref{lem:definable:inball}, otherwise the order is defined by rule~\ref{lem:definable:outball}.
\end{proof}

Note that, as a special case of Lemma~\ref{lem:definable}, we get that $\ll^\str{0}$ is a definable linear order on vertices of $\str{A}$.

Now we need to transfer the convex order to the $L^\star$-expansions, which will make it possible to apply Theorem~\ref{thm:hn}.
\begin{definition}\label{defn:mstarleq}
Denote by $L^{\star,\leq}_\Semig$ the expansion of $L^\star_\Semig$ adding the order $\leq$.

For a given convexly ordered metric space $\str{A}\in \overrightarrow{\mathcal M}_\Semig$, denote by $L^{\star,\leq}(\str{A})$ the $L^{\star,\leq}_\Semig$-structure $\str{A}^{\star,\leq}$ created by the following procedure:
\begin{enumerate}
\item Start with $\str{A}^{\star,\leq}=L^\star(\str A^-)$ given by Definition~\ref{defn:mstar}, where $\str A^-$ is the $L_\Semig$-reduct of $\str A$ (that is, we forget the orders).
\item Define the linear order $\leq_{\str{A}^{\star,\leq}}$  as follows:
\begin{enumerate}
 \item Order vertices of $\str{A}$ according to $\ll^{\str{0}}$ and let them form an initial segment of $\leq_{\str{A}^{\star,\leq}}$.
 \item For every pair of ball vertices $v^i_\Block$ and $v^j_{\Block'}$ (see Definition~\ref{defn:mstar}) put $v^i_\Block\leq_{\str{A}^{\star,\leq}} v^j_{\Block'}$ if and only if $\Block\triangleleft \Block'$ or $\Block = \Block'$ and $E^i_{\Block}\ll^{\Block} E^j_{\Block}$ (see Definitions~\ref{defn:u} and~\ref{defn:definable}).
\end{enumerate}
\end{enumerate}
Denote by $\Mstarleq$ the class of all $L^{\star,\leq}(\str{A})$ for $\str A\in \overrightarrow{\mathcal M}_\Semig$ and by $\Hstarleq$ the smallest hereditary superclass of $\Mstarleq$.
\end{definition}

\begin{lemma}\label{lem:bidefinable}
The categories $\classstarleqf$ and $\overrightarrow{\mathcal M}_\Semig\cap \Forb(\mathcal F)$ are isomorphic. In other words, there is a bijection between $\classstarleqf$ and $\overrightarrow{\mathcal M}_\Semig\cap \Forb(\mathcal F)$ which preserves embeddings and their compositions.
\end{lemma}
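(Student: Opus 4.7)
The plan is to check that $L^{\star,\leq}$ from Definition~\ref{defn:mstarleq} is the desired isomorphism of categories, by mimicking (with extra bookkeeping for orders) the very short proof of Proposition~\ref{prop:nonorderisomorphic}. First I would verify that $L^{\star,\leq}$ maps $\overrightarrow{\mathcal M}_\Semig\cap \Forb(\mathcal F)$ into $\classstarleqf$: for $\str A$ in the source class, the unordered reduct $L^\star(\str A^-)$ lies in $\Mstarf$ by Proposition~\ref{prop:nonorderisomorphic}, and Lemma~\ref{lem:definable} ensures that every $\ll^\Block$ is a linear order, so the two-step definition of $\leq_{\str A^{\star,\leq}}$ really does yield a linear order on all vertices.

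Next I would construct the inverse functor. Given $\str B\in\classstarleqf$, forget the ball vertices and the order to produce an $L_\Semig$-structure on the original vertices; by the inverse of $L^\star$ (Proposition~\ref{prop:nonorderisomorphic}) this is a member of $\MF$. To recover the convex orders, observe that in $\str B$ the original vertices form an initial segment of $\leq$, and the ball vertices are $\leq$-ordered first by their block (under $\triangleleft$) and then by the induced order on balls. In particular, for each $\Block\in I_\Semig$ the restriction of $\leq$ to ball vertices for $\Block$ recovers $\ll^{\Block}$. Then $\leq^\Block$ can be read off as prescribed by Definition~\ref{defn:Js}: whenever $u\nsim_\Block v$ but $u\sim_{\Block'}v$ for every $\Block'\mgt\Block$ (both of which are decidable purely from equalities among the values $\func{B}{\Block'}(u),\func{B}{\Block'}(v)$), set $u\leq^\Block v$ iff $\func{B}{\Block}(u)\leq\func{B}{\Block}(v)$ in $\str B$. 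A short induction on the enumeration of blocks used in the proof of Lemma~\ref{lem:definable} shows that this recipe applied to $L^{\star,\leq}(\str A)$ returns precisely the original $\leq^\Block$ of $\str A$, so the two functors are mutually inverse on objects.

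Finally I would verify functoriality. Both the functor and its inverse act as the identity on original vertices and on the underlying $\Semig$-metric structure, and on ball vertices they are determined canonically by the block-structure. Hence an embedding $f\colon\str A\to\str A'$ in $\overrightarrow{\mathcal M}_\Semig\cap \Forb(\mathcal F)$ extends uniquely to an embedding $L^{\star,\leq}(\str A)\to L^{\star,\leq}(\str A')$ (the extension sends each ball vertex to the ball vertex representing the image of any of its original members), and conversely any embedding in $\classstarleqf$ restricts on original vertices to an embedding in $\overrightarrow{\mathcal M}_\Semig\cap \Forb(\mathcal F)$. Since these extensions and restrictions respect composition, the bijection on objects is promoted to an isomorphism of categories.

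The main obstacle is the compatibility check between the inductive definition of $\ll^\Block$ in Definition~\ref{defn:definable} and the order on ball vertices assigned by Definition~\ref{defn:mstarleq}: one has to verify case by case (the meet-reducible block case, the ``in-ball'' case using $\leq^\Block$ directly, and the ``out-of-ball'' case using $\ll^{\Block^+}$) that the order induced on ball vertices by $\leq_{\str A^{\star,\leq}}$ agrees with $\ll^\Block$, so that the recovery procedure for $\leq^\Block$ inverts Definition~\ref{defn:mstarleq}. Once this matching is established, the rest of the argument is a straightforward transcription of the unordered case.
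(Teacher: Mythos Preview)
Your proposal is correct and follows essentially the same approach as the paper: define the inverse by recovering $\leq^\Block$ via the rule $u\leq^\Block v$ iff $u\nsim_\Block v$, $u\sim_{\Block'}v$ for all $\Block'\mgt\Block$, and $\func{}{\Block}(u)\leq\func{}{\Block}(v)$, then declare the remaining verifications straightforward. The paper's proof is two sentences long and simply asserts that ``it is then straightforward to check'' that this is the inverse of $L^{\star,\leq}$ and gives an isomorphism of categories; you have merely spelled out in more detail what that check entails (and, incidentally, your ``main obstacle'' about compatibility with Definition~\ref{defn:definable} is slightly overstated, since by Definition~\ref{defn:mstarleq} the order on ball vertices for a fixed block is $\ll^\Block$ by construction).
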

\begin{proof}
Given $\str{A}^{\star,\leq}\in \classstarleqf$ we want to construct $\str{A}\in \overrightarrow{\mathcal M}_\Semig\cap \Forb(\mathcal F)$ such that $L^{\star,\leq}(\str A) = \str{A}^{\star,\leq}$. For this it is enough to reconstruct the partial orders $\leq^\Block$. This can be done by putting $u\leq^\Block v$ if and only if $u\nsim_\Block v$, $\func{A}{\Block}(u)\leq_\str{A} \func{A}{\Block}(v)$ and $u\sim_{\Block'} v$ for every $\Block'\mgeq \Block$.

It is then straightforward to check that indeed this is the inverse of $L^{\star,\leq}$ and that they give an isomorphism of categories.
\end{proof}
\begin{corollary}\label{cor:starleqamalgamation}
Let $\ESemig$ be a \pocs{} with finitely many blocks such that the block order is a distributive lattice and let $\mathcal F$ be an omissible family of $\Semig$-edge-labelled cycles containing all disobedient ones which synchronizes meets. Then $\classstarleqf$ is a strong amalgamation class.
\end{corollary}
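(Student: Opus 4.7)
The plan is to mirror the proof of Corollary~\ref{cor:lstaramalgamation}, substituting Corollary~\ref{cor:convamalgamation} for the unordered strong amalgamation and Lemma~\ref{lem:bidefinable} for Proposition~\ref{prop:nonorderisomorphic}. The key observation is that the extra ordering relations $\leq^\Block$ and the derived order $\leq$ on ball vertices are completely determined by data already covered by the two cited results, so the argument reduces to transferring strong amalgamation through the category isomorphism $L^{\star,\leq}$.

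Concretely, I would start with $\str A^{\star,\leq}, \str B_1^{\star,\leq}, \str B_2^{\star,\leq} \in \classstarleqf$ and embeddings $\alpha_i \colon \str A^{\star,\leq} \to \str B_i^{\star,\leq}$. Lemma~\ref{lem:bidefinable} yields unique preimages $\str A, \str B_1, \str B_2 \in \overrightarrow{\mathcal M}_\Semig \cap \Forb(\mathcal F)$ with embeddings $\tilde\alpha_i$ such that $L^{\star,\leq}(\tilde\alpha_i) = \alpha_i$. Corollary~\ref{cor:convamalgamation} produces a strong amalgam $\str C \in \overrightarrow{\mathcal M}_\Semig \cap \Forb(\mathcal F)$ with embeddings $\tilde\beta_i$. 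I then propose $\str C^{\star,\leq} := L^{\star,\leq}(\str C)$ with $\beta_i := L^{\star,\leq}(\tilde\beta_i)$ as the candidate strong amalgam; functoriality of $L^{\star,\leq}$ gives $\beta_1 \circ \alpha_1 = \beta_2 \circ \alpha_2$.

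What remains is to verify that no vertex of $\str C^{\star,\leq}$ outside the two images is identified. Strong amalgamation of the original vertices is immediate, since $L^{\star,\leq}$ preserves the underlying $\Semig$-metric space and $\str C$ is already a strong amalgam. For the ball vertices, the argument is exactly the one sketched in Corollary~\ref{cor:lstaramalgamation}: suppose for contradiction that a ball vertex $b_1 \in \beta_1(B_1^{\star,\leq}) \setminus \beta_1(\alpha_1(A^{\star,\leq}))$ coincides with $b_2 \in \beta_2(B_2^{\star,\leq}) \setminus \beta_2(\alpha_2(A^{\star,\leq}))$ in $\str C^{\star,\leq}$. Both vertices then represent the same $\sim_\Block$-class in $\str C$, so there are $u \in B_1 \setminus A$ and $v \in B_2 \setminus A$ with $u \sim_\Block v$. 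Because $\str C$ arises from the shortest path completion of the free amalgam, every witness path runs through $\str A$, and by meet synchronization (Definition~\ref{defn:meetsync}) the block of the resulting infimum coincides with the block of some vertex $a \in A$ satisfying $u \sim_\Block a$ in $\str B_1$ and $v \sim_\Block a$ in $\str B_2$. Hence both balls represented by $b_1$ and $b_2$ already contain a ball coming from $\str A^{\star,\leq}$, contradicting the choice of $b_1, b_2$.

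The main technical obstacle is precisely the last step, and it is the only place where meet synchronization is essential; otherwise unrelated balls in $\str B_1 \setminus \str A$ and $\str B_2 \setminus \str A$ could merge under shortest path completion and destroy strong amalgamation. No independent check on the linear order $\leq$ of $\str C^{\star,\leq}$ is required: by Definition~\ref{defn:mstarleq} it is fully determined by the convex order on $\str C$ via the orders $\ll^\Block$ from Definition~\ref{defn:definable}, which are themselves amalgamated correctly inside $\overrightarrow{\mathcal M}_\Semig \cap \Forb(\mathcal F)$ by Corollary~\ref{cor:convamalgamation}.
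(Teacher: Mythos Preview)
Your proposal is correct and follows essentially the same approach as the paper: use the category isomorphism (Lemma~\ref{lem:bidefinable}) together with Corollary~\ref{cor:convamalgamation} to get amalgamation, then invoke meet synchronization as in Corollary~\ref{cor:lstaramalgamation} to rule out spurious identifications of ball vertices and hence obtain \emph{strong} amalgamation. One small wording issue: the phrase ``the block of the resulting infimum coincides with the block of some vertex $a\in A$'' is not meaningful as written (vertices do not have blocks); what you need---and what your next clause in fact states---is that meet synchronization together with distributivity and the fact that ball vertices correspond to meet-irreducible blocks yields a single $a\in A$ with $u\sim_\Block a$ and $v\sim_\Block a$, exactly as in the proof of Theorem~\ref{thm:samstrongamalg}.
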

\begin{proof}
By the category isomorphism we immediately get the amalgamation property. Strong amalgamation property follows from meet synchronization as in Corollary~\ref{cor:lstaramalgamation}.
\end{proof}
\begin{remark}
Again, the class $\classstarleqf$ is not hereditary. One gets $\Hstarleqf$ by adding structures containing some ball vertices which are not linked to any original vertices. Later we shall observe that this still is a strong amalgamation class. Note that, thanks to having a linear order, $\Hstarleqf$ consists of irreducible structures, which is another requirement of Theorem~\ref{thm:hn}.
\end{remark}

\section{Completing the order}\label{sec:completingorder}
For the Ramsey property we need order. This is why we introduced the $\classstarleqf$ and $\Hstarleqf$ classes (see Definition~\ref{defn:mstarleq}). In this section we show that the structures from $\GstarS$ with an additional relation $\leq$ which is ``tame enough'' have a completion in $\classstarleqf$. This will imply both the strong amalgamation property and the Ramsey property of $\Hstarleqf$ and also the Ramsey property of $\classstarleqf$, which is what we aim for. But first we observe that if $\mathcal F$ is confined while containing all disobedient cycles (and synchronizing meets), it has some consequences for the block lattice.

\begin{lemma}\label{lem:meets}
Let $\ESemig$ be a \pocs{} and $\mathcal F$ be an $\Semig$-omissible, \textbf{confined} family of $\Semig$-labelled cycles containing all disobedient ones.
\begin{enumerate}
\item\label{lem:meets:nonsync} Suppose that $\Semig$ has finitely many blocks. Then for every two blocks $\Block_1$ and $\Block_2$ of $\Semig$ their meet $\Block_1\meet \Block_2$ is defined, that is, the order of blocks is a lattice. Furthermore $\str 0$ is meet-irreducible.

\item\label{lem:meets:sync} Suppose that $\mathcal F$ synchronizes meets. Then the order of blocks is a distributive lattice. Furthermore $\str 0$ is meet-irreducible.
\end{enumerate}
\end{lemma}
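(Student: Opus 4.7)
The plan is to prove the two parts in sequence, using Theorem~\ref{thm:shortestpath} to produce the required $\Semig$-metric spaces in $\MF$ at each step.

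For Part~\ref{lem:meets:nonsync}, existence of meets is essentially set-theoretic. Given blocks $\Block_1, \Block_2$, the set $S = \{\Block : \Block \mleq \Block_1 \text{ and } \Block \mleq \Block_2\}$ is finite and nonempty (it contains $\str 0$), and closed under the join of blocks: if $\Block, \Block' \in S$ with $c \in \Block$, $c' \in \Block'$, pick upper bounds $x, y \in \Block_1$ with $c \mleq x$ and $c' \mleq y$; then $c \oplus c' \mleq x \oplus y \in \Block_1$ since $\Block_1$ is closed under $\oplus$, so $\Block \vee \Block' \mleq \Block_1$, and symmetrically $\mleq \Block_2$. The maximum of $S$ is then $\Block_1 \meet \Block_2$. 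For meet-irreducibility of $\str 0$, I would argue by contradiction: if $\str 0 = \Block_1 \meet \Block_2$ with both blocks non-$\str 0$, pick $a \in \Block_1$, $b \in \Block_2$ and observe that the $2n$-cycle $C_n$ with $n$ edges of length $a$ followed by $n$ edges of length $b$ is non-$\Semig$-metric, since the two antipodal vertices lie on walks of total length $n \times a \in \Block_1$ and $n \times b \in \Block_2$, so any completion would require a distance in a non-$\str 0$ block $\mleq \Block_1, \Block_2 = \str 0$. I expect this to be the hardest step: by omissibility point~\ref{defn:omissible:metric} the cycles $C_n$ themselves lie outside $\mathcal F$, so the goal is to combine downward closure (Definition~\ref{defn:omissible}, point~\ref{defn:omissible:downwards}) with the disobedience condition applied to auxiliary shortest-path completions of sub-patches of the $C_n$ in order to extract infinitely many distinct cycles of $\mathcal F$ whose edge labels come from a finite subset of $\{n \times a, n \times b : n \geq 1\}$, contradicting confinedness.

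For Part~\ref{lem:meets:sync}, I would use Theorem~\ref{thm:infinitelyblocks} to reduce any given finite portion of the block lattice to the finite-block setting of Part~\ref{lem:meets:nonsync}, thereby obtaining the lattice structure and meet-irreducibility of $\str 0$; only distributivity remains. Meet synchronization provides a clean description: for any finite family of blocks $(\Block_1, \ldots, \Block_k)$, the block of $\inf(\mathcal P)$ for any path family $\mathcal P$ in $\MF$ with $\Block(\|P_i\|) = \Block_i$ depends only on $(\Block_1, \ldots, \Block_k)$ and equals the iterated meet $\bigwedge_i \Block_i$. To prove $\Block \meet (\Block_1 \vee \Block_2) = (\Block \meet \Block_1) \vee (\Block \meet \Block_2)$, I would apply Theorem~\ref{thm:shortestpath} to build a metric space in $\MF$ containing two vertices $u, v$ and a path family that can be organized in two ways: first, as a path of block $\Block$ paired with a composite path of total length in $\Block_1 \vee \Block_2$ (this gives the left-hand side after meet synchronization); and second, with that composite path decomposed through an intermediate vertex into sub-paths of blocks $\Block_1$ and $\Block_2$, arranged so the infima land in $(\Block \meet \Block_1) \vee (\Block \meet \Block_2)$. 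The main difficulty is to engineer the underlying edge-labelled graph so that its shortest path completion actually lies in $\MF$, which is where omissibility, the disobedience condition, and confinedness of $\mathcal F$ all enter crucially; once the metric space is in place, meet synchronization applied to the bijection between the two organizations of the path family forces the two block expressions to coincide.
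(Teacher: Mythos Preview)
Your set-theoretic argument for the existence of meets in Part~\ref{lem:meets:nonsync} is correct and in fact cleaner than what the paper does: a finite join-semilattice with bottom is automatically a lattice, and this needs none of the hypotheses on~$\mathcal F$. The paper instead constructs the meet explicitly as $\Block = \lim_{n\to\infty}\Block(c_n)$ where $c_n = \inf(n\times a, n\times b)$, which is more work but pays off because meet-irreducibility of~$\str 0$ is then free: each $c_n$, when defined, is an actual element of~$\Semig$, so $\Block(c_n)\neq\str 0$.

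Your plan for meet-irreducibility of $\str 0$ has a real gap. You argue that $C_n$ is non-$\Semig$-metric and hence by omissibility point~\ref{defn:omissible:metric} lies \emph{outside} $\mathcal F$, and then try to extract other cycles of~$\mathcal F$ from this. The paper's move is the opposite: when $\inf(n\times a, n\times b)$ is undefined, the cycle $C_n$ is \emph{disobedient}, and since $\mathcal F$ contains all disobedient cycles, $C_n\in\mathcal F$. Confinedness then gives the contradiction directly (only finitely many $\{a,b\}$-labelled cycles can be in $\mathcal F$). Your misreading is that omissibility point~\ref{defn:omissible:metric} is the syntactic condition $a_i\mlt\bigoplus_{j\neq i}a_j$, which $C_n$ with $n\geq 2$ \emph{satisfies}; it does not bar $C_n$ from $\mathcal F$.

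For Part~\ref{lem:meets:sync} your reduction via Theorem~\ref{thm:infinitelyblocks} is circular: that theorem has ``all meets of blocks of $\Semig$ are defined and $\str 0$ is meet-irreducible'' among its \emph{hypotheses}. The paper avoids this by adapting the $c_n$ construction directly: meet synchronization forces $\Block(c_n)=\Block(c_m)$ whenever neither $C_n$ nor $C_m$ lies in $\mathcal F$, and confinedness then makes the limit well-defined even without a finite-block assumption. For distributivity the paper again works concretely: by confinedness pick $m$ larger than the size of any $\{a,b,c\}$-labelled cycle in $\mathcal F$, set $a'=m\times a$, $b'=m\times b$, $c'=m\times c$, and then simply compute $a'\oplus\inf(b',c')=\inf(a'\oplus b', a'\oplus c')$ using disobedience, reading off the block identities from meet synchronization. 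No auxiliary metric space needs to be engineered.
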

On the first sight, it might seem surprising that having a family $\mathcal F$ can have some implications for the block structure of $\Semig$. But on the second sight, the existence of a \textbf{confined} family $\mathcal F$ with the desired properties is quite a strong condition.
\begin{proof}
If $\Block_1 \mleq \Block_2$ or vice-versa, the smaller of them is their meet. So now suppose that $\Block_1$ and $\Block_2$ are not comparable and thus different from $\str 0$.

Take arbitrary $a\in \Block_1$ and $b\in \Block_2$ and define the sequence $(c_n)_{n=1}^\infty$ by putting $c_n = \inf(n\times a, n\times b)$.

Observe that there are only finitely many $c_n$'s undefined: If $\inf(n\times a, n\times b)$ is undefined for $n\geq 2$, then the cycle $\str C_n = (a, \ldots, a, b, \ldots, b)$ with $n$ edges of length $a$ and $n$ edges of length $b$ is disobedient and hence $\str C_n\in \mathcal F$. But as $\mathcal F$ is confined, there can only be finitely many such cycles in $\mathcal F$.

Now note that whenever $m\leq n$ and both $c_m$ and $c_n$ are defined, then $c_m\mleq c_n$ and thus $\Block(c_m) \mleq \Block(c_n)$.
 
Put
$$\Block = \lim_{n\rightarrow \infty} \Block(c_n),$$
where we ignore all the $n$'s with $c_n$ undefined. Note that if this limit is defined, then it certainly is different from $\str 0$. Thus if we prove that $\Block$ is the meet of $\Block_1$ and $\Block_2$, we in particular get that $\str 0$ is not the meet of any two non-$\str 0$ blocks and hence is meet-irreducible.

\medskip
For part~\ref{lem:meets:nonsync} note that the sequence of blocks $(\Block_{c_n})$ is non-decreasing and $\Semig$ has only finitely many blocks. Thus the limit is well-defined.

For part~\ref{lem:meets:sync} we may have infinitely many blocks, but we now have the assumption that $\mathcal F$ synchronizes meets. This means that whenever $\Block(c_n) \neq \Block(c_m)$, then one of the cycles $\str C_n$, $\str C_m$ needs to be in $\mathcal F$ (indeed, if this wasn't so, then their disjoint union would have a completion in $\mathcal M_\Semig\cap \Forb(\mathcal F)$ by Theorem~\ref{thm:shortestpath} and this would contradict Definition~\ref{defn:meetsync}). As $\mathcal F$ is confined, there can only be finitely many such forbidden $\str C_n$'s, hence the limit is again well-defined.
\medskip

Blocks are archimedean, hence for every $a'\in \Block_1$ there are $m$ and $n$ with $m\times a'\mgeq a$ and $n\times a\mgeq a'$, the same holds also for $\Block_2$. This implies that $\Block$ doesn't depend on the choice of $a$ and $b$.

We shall prove that $\Block$ is the meet of $\Block_1$ and $\Block_2$. The fact that $\Block\mleq \Block_1, \Block_2$ follows straight from the definition of $\Block$. Take some $x\in \Semig$ such that there are $a\in \Block_1$ and $b\in \Block_2$ such that $a,b\mgeq x$. Then also $n\times a, n\times b\mgeq x$ for every $n$. Let $n$ be an arbitrary integer such that $\inf(n\times a, n\times b)$ is defined (by confinedness of $\mathcal F$ there is such $n$). Clearly $\inf(n\times a, n\times b)\mgeq x$, but also there is $c\in \Block$ such that $c\mgeq \inf(n\times a, n\times b)$, hence $c\mgeq x$.
\medskip

To prove distributivity for part~\ref{lem:meets:sync} it suffices to check $$\Block_1\vee(\Block_2\meet\Block_3) = (\Block_1\vee\Block_2)\meet(\Block_1\vee\Block_3).$$ Take arbitrary $a\in \Block_1, b\in \Block_2, c\in \Block_3$ and let $m$ be larger than the number of vertices of any cycle in $\mathcal F$ which contains only distances from $\{a,b,c\}$ ($\mathcal F$ is confined and thus there is such a finite $m$). From archimedeanity we get $a'=m\times a\in \Block_1$, $b'=m\times b\in \Block_2$ and $c'=m\times c\in \Block_3$. First note that $$a'\oplus\inf(b', c') = \inf(a'\oplus b', a'\oplus c'),$$ because $\mathcal F$ contains all disobedient cycles. Clearly $$\Block\left(a'\oplus b'\right) = \Block_1\vee\Block_2$$ and $$\Block\left(a'\oplus c'\right) = \Block_1\vee \Block_3.$$ But $\mathcal F$ also synchronizes meets, this means that $$\Block\left(\inf(b', c')\right) = \Block_2\meet\Block_3$$ and $$\Block\left(\inf(a'\oplus b', a'\oplus c')\right) = (\Block_1\vee\Block_2)\meet (\Block_1\vee \Block_3).$$ Finally $$\Block\left(a'\oplus\inf(b', c')\right) = \Block_1\vee(\Block_2\meet\Block_3),$$ hence indeed $$\Block_1\vee(\Block_2\meet\Block_3) = (\Block_1\vee\Block_2)\meet(\Block_1\vee\Block_3).$$
\end{proof}

We want to prove the following proposition.
\begin{prop}
\label{prop:finite4value2}
Let $\ESemig$ be a \pocs{} and $\mathcal F$ be a family of $\Semig$-labelled cycles. Suppose that the following conditions hold:

\begin{enumerate}
\item $\mathcal F$ is $\Semig$-omissible;
\item $\mathcal F$ contains all $\Semig$-disobedient cycles;
\item $\mathcal F$ is confined;
\item $\mathcal F$ synchronizes meets; and
\item $\Semig$ has finitely many blocks.
\end{enumerate}

Then the class $\classstarleqf$ of all finite $L^{\star,\leq}_\Semig$-expansions of $\Semig$-metric spaces omitting $\mathcal F$ has the Ramsey property.
\end{prop}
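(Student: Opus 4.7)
The plan is to apply the Hubi\v cka--Ne\v set\v ril theorem (Theorem~\ref{thm:hn}) to $\mathcal K = \Hstarleqf$, sitting inside a Ramsey class $\mathcal R$ of irreducible structures, and then deduce the Ramsey property for $\classstarleqf$ from the inclusion $\classstarleqf \subseteq \Hstarleqf$ together with Proposition~\ref{prop:starfcompletion} adapted to the ordered setting. For $\mathcal R$ I would take the class of all finite linearly ordered $L^{\star}_\Semig$-structures; since every pair of distinct vertices lies in $\leq$, every structure in $\mathcal R$ is irreducible, and by Theorem~\ref{thm:NRclosures} the class $\mathcal R$ is Ramsey.

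First I would observe that the assumptions of Proposition~\ref{prop:finite4value2} imply, via part~\ref{lem:meets:sync} of Lemma~\ref{lem:meets}, that the block order of $\Semig$ is a distributive lattice and $\str 0$ is meet-irreducible. Hence Corollary~\ref{cor:starleqamalgamation} gives that $\classstarleqf$ is a strong amalgamation class, and by the same argument as in Corollary~\ref{cor:hstarfamalg} (using an ordered analogue of Lemma~\ref{lem:freeingstars} together with a completion step) this extends to strong amalgamation of $\Hstarleqf$.

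The main task is verifying that $\Hstarleqf$ is a locally finite subclass of $\mathcal R$. I would introduce an ordered analogue $\overrightarrow{\mathcal G}^\star_\Semig \cap \Forb(\mathcal F)$ of $\GstarS$ consisting of $L^{\star,\leq}_\Semig$-structures whose $L^\star_\Semig$-reduct lies in $\GstarS$, whose $\leq$ is a linear order, and whose linear order is \emph{convex-compatible} in the sense that it agrees, on every set of original vertices sharing a common ball of meet-irreducible diameter, with an order that could be produced by Definition~\ref{defn:mstarleq}. Given such a structure $\str C$ satisfying the hypotheses of Definition~\ref{def:locallyfinite}, I would run Proposition~\ref{prop:starfcompletion} on its $L^\star_\Semig$-reduct to obtain $\str A'\in \classstarf$, reconstruct the partial orders $\leq^\Block$ as in the proof of Lemma~\ref{lem:bidefinable}, and then derive a single linear order on $\str A'$ via Definitions~\ref{defn:u} and~\ref{defn:definable}. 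Convex-compatibility of $\str C$ guarantees this extends the order already present.

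The hard part is controlling the obstacles to convex-compatibility by bounded-size substructures: if $\leq$ on $\str C$ fails to induce a convex order on some ball, I need a small witness of this failure. I would show that any violation of convex-compatibility is witnessed by at most four original vertices (two inside a ball of meet-irreducible diameter $\Block$ and two outside, arranged so that $\leq$ cannot be extended to a $\leq^\Block$ in the sense of Definition~\ref{defn:Js}) together with their associated ball vertices, and that these small substructures are themselves not completable to $\classstarleqf$. Adjoining these witnesses to the obstacles from Observation~\ref{obs:indeedlocallyfinite} keeps the bound $n(S,T)$ finite, so $\Hstarleqf$ is locally finite in $\mathcal R$. Theorem~\ref{thm:hn} then yields the Ramsey property for $\Hstarleqf$, which by Lemma~\ref{lem:bidefinable} (and the fact that every member of $\Hstarleqf$ has an automorphism-preserving completion in $\classstarleqf$) gives the Ramsey property for $\classstarleqf$ as claimed.
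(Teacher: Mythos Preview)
Your overall architecture matches the paper exactly: apply Theorem~\ref{thm:hn} with $\mathcal K=\Hstarleqf$ inside the class $\mathcal R$ of linearly ordered $L^\star_\Semig$-structures (Ramsey by Theorem~\ref{thm:NRclosures}), check strong amalgamation and local finiteness, and then pass down to $\classstarleqf$ by completing the Ramsey witness.

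Where you diverge is in the local-finiteness step, and here you are doing unnecessary work. You introduce a separate notion of ``convex compatibility'' of the order and propose to enumerate four-vertex obstacles for it. The paper avoids this entirely via Lemma~\ref{lem:ordercompletion}, and the observation you are missing is the following. If $x\leq_{\str C} y$, then the closure of $\{x,y\}$ under the functions $\func{}{\Block}$ and $\func{}{\Block,\Block'}$ is already an \emph{irreducible} substructure: any nontrivial decomposition into function-closed pieces separates $x$ from $y$ and therefore loses the $\leq$-edge in the free amalgam. By hypothesis this closure lies in $\Hstarleqf$, where $\leq$ is a full linear order which, by Definition~\ref{defn:mstarleq}, is the lexicographic order determined by the ball vertices. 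So the partial order $\leq_{\str C}$ is automatically, pair by pair, consistent with the lexicographic order on ball vertices; every potential ``convexity violation'' is itself an irreducible substructure not in $\Hstarleqf$ and is thus already excluded by condition~2 of Definition~\ref{def:locallyfinite}. The paper therefore just picks an arbitrary linear extension $\leq_0$ of $\leq_{\str C}$, re-imposes the lexicographic recipe using $\leq_0$, and checks that the result extends $\leq_{\str C}$. No additional family of order-obstacles is needed on top of those coming from Observation~\ref{obs:indeedlocallyfinite}.

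Two minor slips. First, in your intermediate class you require $\leq$ to be a linear order, but the structures $\str C$ that Definition~\ref{def:locallyfinite} hands you only carry a partial order with a linear extension (coming from the homomorphism-embedding to $\str C_0$); the paper's Lemma~\ref{lem:ordercompletion} is phrased accordingly. Second, in the final transfer to $\classstarleqf$ you cite Lemma~\ref{lem:bidefinable}, but that is the category isomorphism with $\overrightarrow{\mathcal M}_\Semig\cap\Forb(\mathcal F)$ and is not the relevant statement; what you need is that every $\str C\in\Hstarleqf$ embeds into some $\str C'\in\classstarleqf$, which is again Lemma~\ref{lem:ordercompletion}.
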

In order to do that, we first need some auxiliary lemmas. For the rest of this section fix $\Semig$ and $\mathcal F$ as in Proposition~\ref{prop:finite4value2}.

The following lemma is, similarly to Definition~\ref{defn:gstars} and the subsequent lemmas, stated in a more abstract setting in order to avoid having to repeat the proof twice.
\begin{lemma}\label{lem:ordercompletion}
Let $\str C_0$ be an $L^{\star,\leq}_\Semig$-structure such that every irreducible substructure of $\str C_0$ is from $\Hstarleqf$, its $L^\star_\Semig$ reduct is from $\GstarS$ and there is a linear extension $\leq_0$ of $\leq_{\str C_0}$. Then $\str C_0$ has a completion in $\classstarleqf$.
\end{lemma}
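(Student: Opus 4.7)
The plan is to reduce the problem to equipping the completed metric space with a convex ordering, using Lemma~\ref{lem:bidefinable}, and then to construct the convex ordering by extracting partial data from $\leq_{\str C_0}$ and extending it via the strong amalgamation property of Corollary~\ref{cor:convamalgamation}.

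First, I would apply Proposition~\ref{prop:starfcompletion} to the $L^\star_\Semig$-reduct of $\str C_0$ (which lies in $\GstarS$ by hypothesis) to obtain an automorphism-preserving completion $\str A'\in\classstarf$; without loss of generality $\str C_0\subseteq\str A'$ in the $L^\star_\Semig$-language. By Proposition~\ref{prop:nonorderisomorphic} there is a unique $\str A\in\MF$ with $\str A'=L^\star(\str A)$, and by Lemma~\ref{lem:bidefinable} completing $\str C_0$ to a member of $\classstarleqf$ is equivalent to choosing a convex ordering on $\str A$ whose induced linear order in $L^{\star,\leq}(\str A)$ (computed from Definitions~\ref{defn:definable} and~\ref{defn:mstarleq}) extends $\leq_{\str C_0}$ on $C_0$.

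Next, I would extract from $\str C_0$ a partial convex ordering on its original vertices, mimicking the inverse map used in the proof of Lemma~\ref{lem:bidefinable}: for $\Block\in I_\Semig$ and original vertices $u\neq v$ with $u\not\sim_\Block v$ and $u\sim_{\Block'}v$ for every $\Block'\mgt\Block$, set $u\leq^\Block v$ iff $\func{\str C_0}{\Block}(u)\leq_{\str C_0}\func{\str C_0}{\Block}(v)$ (when the right-hand side is defined). The assumption that every irreducible substructure of $\str C_0$ lies in $\Hstarleqf$ ensures each such binary datum is realized in some $L^{\star,\leq}(\str Y)$, so it satisfies condition (2) of Definition~\ref{defn:Js} locally, and the existence of the linear extension $\leq_0$ rules out global inconsistencies among these constraints. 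Hence the original vertices of $\str C_0$, equipped with these relations, form a convexly ordered substructure $\str X\in\overrightarrow{\mathcal M}_\Semig\cap\Forb(\mathcal F)$ of (the underlying metric space) $\str A$. By Corollary~\ref{cor:convamalgamation} the class $\overrightarrow{\mathcal M}_\Semig\cap\Forb(\mathcal F)$ has the strong amalgamation property, so I can extend the convex ordering from $\str X$ to all of $\str A$ by iterated one-point strong amalgams (adding each vertex of $A\setminus X$ with the distances already fixed by $\str A$ and some choice of convex ordering against $\str X$). Setting $\str C'=L^{\star,\leq}(\str A)$ for this convexly ordered structure produces, by construction of the linear order in Definition~\ref{defn:mstarleq}, a member of $\classstarleqf$ into which $\str C_0$ embeds as a substructure: relations and functions agree because $L^\star$ is applied to $\str A$ which already contains $\str C_0$, and the order agrees because the derived $\ll^\Block$'s extend the corresponding orders that were already witnessed in $\str C_0$.

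The main obstacle is the second step: verifying that the partial data $\{\leq^\Block\}$ extracted from $\leq_{\str C_0}$ genuinely form a partial convex ordering that can be extended, as opposed to a relation that secretly encodes an unsatisfiable constraint. Concretely, one must check that for all triples $u,v,w$ of original vertices of $C_0$ with $u\sim_\Block w$, the relation $u\leq^\Block v$ forces $w\leq^\Block v$; any violation would supply an irreducible three-vertex substructure of $\str C_0$ not embedding into any $L^{\star,\leq}(\str Y)$, contradicting the hypothesis on irreducible substructures. Once this local convexity and the global consistency granted by $\leq_0$ are in place, the strong amalgamation property takes care of extending the ordering beyond $C_0$ without any further obstructions.
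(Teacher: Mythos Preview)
Your approach differs from the paper's: rather than extracting partial convex-ordering data and extending via strong amalgamation, the paper uses the linear extension $\leq_0$ \emph{constructively}. It first completes the $L^\star_\Semig$-reduct to $\str C\in\classstarf$ (as you do), extends $\leq_0$ arbitrarily to all of $C$, and then defines $\leq_{\str C}$ directly as the lexicographic order on original vertices according to the sequence $(\func{C}{\Block_i}(u))_i$ in $\leq_0$ (and analogously for ball vertices). Since the order in every $L^{\star,\leq}(\str Y)$ is itself this lexicographic order, and $\leq_0$ extends $\leq_{\str C_0}$, agreement with $\leq_{\str C_0}$ is automatic.

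Your route has a genuine gap at the extension step. You only extract $\leq^\Block$-constraints between \emph{original} vertices of $\str C_0$, and then extend freely to the rest of $\str A$ by one-point amalgams. But $\str C_0$ may contain orphan ball vertices carrying order information: if $b_1<_{\str C_0}b_2$ are ball vertices for $\Block$ with no original vertex of $C_0$ pointing to them, Proposition~\ref{prop:starfcompletion} adds new original vertices $u_1,u_2\in A\setminus X$ with $\func{}{\Block}(u_i)=b_i$, and the eventual order between $b_1$ and $b_2$ in $L^{\star,\leq}(\str A)$ is determined by $\ll^\Block$, hence by the convex ordering you place on $u_1,u_2$. Your amalgamation step imposes no constraint on that choice, so nothing forces the derived order to agree with $b_1<_{\str C_0}b_2$. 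The same problem arises, more subtly, with order relations in $\str C_0$ between ball vertices lying in different $\Block^+$-balls: these encode constraints on $\ll^{\Block^+}$ that your inverse-map extraction (which only fires when $u\sim_{\Block'}v$ for all $\Block'\mgt\Block$) never sees.

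A secondary issue is that the extracted data need not be a \emph{complete} convex ordering on $X$ --- pairs whose ball vertices happen to be unordered in $\str C_0$ receive no $\leq^\Block$-relation --- so $\str X$ as you describe it is not yet a member of $\overrightarrow{\mathcal M}_\Semig\cap\Forb(\mathcal F)$, and the amalgamation argument cannot begin. Both problems are symptoms of the same thing: you treat $\leq_0$ only as a certificate of consistency, whereas the missing constraints are exactly the ones $\leq_0$ would supply if used to build the order directly.
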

\begin{proof}
By Lemma~\ref{lem:meets} we know that meets of all blocks are defined in $\Semig$-metric spaces which do not contains cycles from $\mathcal F$, that the block lattice is distributive and that $\str 0$ is meet-irreducible.

By Proposition~\ref{prop:starfcompletion} we know that $\str C_0$ without the order has a completion in $\classstarf$. Suppose that $\str C$ is this completed structure (that is, all functions and distances and relations $\rel{}{i,t}$ are defined and consistent and each ball vertex is pointed to by an original vertex). It remains to complete the order. We can assume that $\leq_0$ is a linear order of vertices of $\str C$ (because the vertices added to $\str C$ are not in any $\leq$ relation, hence one can add them to the order arbitrarily).

Let $\trianglelefteq$ be the order of blocks of $\Semig$ from Definition~\ref{defn:u} and enumerate $I_\Semig\setminus\{\str 0\}$ as $\Block_1\trianglerighteq \Block_2\trianglerighteq \cdots\trianglerighteq \Block_p$ (recall that this is a $\mleq$-non-decreasing order of blocks).
Now define $\leq_{\str C}$ on vertices of $\str C$ as follows:
\begin{enumerate}
 \item For every pair $u,v$ of original vertices of $\str{C}$ put $u\leq_{\str C} v$ if the sequence of vertices $(\func{C}{\Block_i}(u))_i$ is in the order $\leq_0$ lexicographically before $(\func{C}{\Block_i}(v))_i$ or they are equal and $u\leq_0 v$.
 \item For every pair $u,v$ of ball vertices such that $u$ corresponds to block $\Block_i$ and $v$ to block $\Block_j$ put $u\leq_{\str{C}} v$ if one of the following holds:
\begin{enumerate}
\item $i<j$,
\item $i=j$ and the sequence $(\func{C}{\Block_i,\Block_{i'}}(u))_{1\leq i'\leq i, \Block_i\mlt\Block_{i'}}$ is in the order $\leq_0$ lexicographically before $(\func{C}{\Block_i,\Block_{i'}}(v))_{1\leq i'\leq i, \Block_i\mlt\Block_{i'}}$,
\item $i=j$, $(\func{C}{\Block_i,\Block_{i'}}(u))_{1\leq i'\leq i, \Block_i\mlt\Block_{i'}} = (\func{C}{\Block_i,\Block_{i'}}(v))_{1\leq i'\leq i, \Block_i\mlt\Block_{i'}}$ and $u\leq_0 v$.
\end{enumerate}
 \item Finally put $u\leq_{\str C} v$ if $u$ is an original vertex and $v$ is a ball vertex.
\end{enumerate}
$\leq_{\str C}$ is clearly a linear order and by comparing the construction with Definition~\ref{defn:mstar} it can be verified that $\str C$ (with the order) is in $\classstarleqf$ and that $\leq_{\str{C}_0}\subseteq\leq_{\str C}$ (because the convex ordering coincides with the lexicographic ordering according of balls). Hence $\str C$ is a completion of $\str C_0$ in $\classstarleqf$.
\end{proof}

\begin{corollary}\label{cor:hstarleqfamalg}
The class $\Hstarleqf$ has the strong amalgamation property.
\end{corollary}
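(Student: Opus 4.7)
The plan is to reduce the corollary to Lemma~\ref{lem:ordercompletion} applied to the free amalgam. Given $\str{A}, \str{B}_1, \str{B}_2 \in \Hstarleqf$ with embeddings $\alpha_i\colon \str{A}\to \str{B}_i$, I would form the free amalgam $\str{C}_0$ in the language $L^{\star,\leq}_\Semig$: take the free amalgam on the $L^\star_\Semig$-reducts and let $\leq_{\str{C}_0}$ be the union of $\leq_{\str{B}_1}$ and $\leq_{\str{B}_2}$, identified along $\alpha_1(A)=\alpha_2(A)$. The goal is to check that $\str{C}_0$ satisfies all three hypotheses of Lemma~\ref{lem:ordercompletion}, and then the promised completion $\str{C}\in\classstarleqf\subseteq\Hstarleqf$ is automatically a strong amalgam because already the underlying free amalgam was strong.

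The first two hypotheses are essentially inherited. Every irreducible substructure of $\str{C}_0$ lives entirely inside $\alpha_1(\str{B}_1)$ or $\alpha_2(\str{B}_2)$ (this is exactly what freeness of the amalgam buys us), and is therefore in $\Hstarleqf$. The $L^\star_\Semig$-reduct of $\str{C}_0$ is the free amalgam of the $L^\star_\Semig$-reducts of $\str{B}_1$ and $\str{B}_2$, which belong to $\Hstarf$; by Lemma~\ref{lem:freeingstars} this free amalgam is in $\GstarS$, so the second hypothesis is satisfied.

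The main (though still mild) obstacle is the third hypothesis: showing that $\leq_{\str{C}_0}$ extends to a linear order, or equivalently that its transitive closure contains no directed cycle. Here I would argue that $\leq_{\str{B}_1}$ and $\leq_{\str{B}_2}$ agree on the common copy of $\str{A}$ (since the $\alpha_i$ are embeddings in $L^{\star,\leq}_\Semig$), so any hypothetical $\leq$-cycle in $\str{C}_0$ that alternates between $B_1\setminus A$ and $B_2\setminus A$ must pass through vertices of $\alpha_i(A)$ whenever it switches sides. Using that the partial order on $\alpha_i(A)$ coincides on both sides, one can splice such a cycle into a $\leq_{\str{B}_i}$-cycle inside a single $\str{B}_i$, contradicting linearity there. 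Hence the transitive closure is a partial order, and any linear extension $\leq_0$ witnesses the hypothesis.

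With all three hypotheses in place, Lemma~\ref{lem:ordercompletion} produces a completion $\str{C}\in\classstarleqf$ of $\str{C}_0$. Since $\str{C}_0$ was a strong free amalgam (no vertices outside $\alpha_1(B_1)\cup\alpha_2(B_2)$ were identified, and no vertices inside were either), $\str{C}$ provides a strong amalgam of $\str{B}_1$ and $\str{B}_2$ over $\str{A}$ in $\classstarleqf\subseteq\Hstarleqf$, proving that $\Hstarleqf$ has the strong amalgamation property.
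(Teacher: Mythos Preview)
Your proposal is correct and follows essentially the same approach as the paper: the paper's proof simply says to check that every free amalgam of structures in $\Hstarleqf$ satisfies the hypotheses of Lemma~\ref{lem:ordercompletion} (referencing the analogous Corollary~\ref{cor:hstarfamalg}), and you have spelled out exactly these verifications. Your treatment of the third hypothesis (linear extendibility of $\leq_{\str C_0}$) is more detailed than the paper's, but since $\leq$ is a genuine linear order on each $\str B_i$ this is just the standard strong amalgamation of linear orders, so nothing is missing.
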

\begin{proof}
Cf. Corollary~\ref{cor:hstarfamalg}; it is enough to check that every free amalgam of structures in $\Hstarleqf$ satisfies the conditions of Lemma~\ref{lem:ordercompletion}.
\end{proof}

Now we are ready to prove Proposition~\ref{prop:finite4value2}:
\begin{proof}[Proof of Proposition~\ref{prop:finite4value2}]
Using Lemma~\ref{lem:ordercompletion} we know that $\Hstarleqf$ is a locally finite subclass of $\mathcal R_\Semig$, the class of all $L^{\star,\leq}_\Semig$-structures where $\leq$ is a linear order, which is Ramsey by Theorem~\ref{thm:NRclosures}. We also know that $\Hstarleqf$ is a strong amalgamation class (Corollary~\ref{cor:hstarleqfamalg}) and it is hereditary, therefore by Theorem~\ref{thm:hn} we know that $\Hstarleqf$ is Ramsey.

We, however, need to show that $\classstarleqf$ is Ramsey. Because $\classstarleqf\subseteq \Hstarleqf$, it follows that for every $\str A, \str B\in \classstarleqf$ there is $\str C\in \Hstarleqf$ such that $\str C\longrightarrow (\str B)^{\str A}_2$. By Lemma~\ref{lem:ordercompletion} there is $\str C'\in \classstarleqf$ with $\str C\subseteq \str C'$. And thus $\str C'\longrightarrow (\str B)^{\str A}_2$ hence $\classstarleqf$ indeed is Ramsey.
\end{proof}

\section{Proof of Theorem~\ref{thm:main} and the expansion property}
Now we are ready to prove the main theorem of this thesis.
\begin{proof}[Proof of Theorem~\ref{thm:main}]
Theorem~\ref{thm:shortestpath} gives us the strong amalgamation property. Proposition~\ref{prop:finite4value2} shows that $\classstarleqf$ is Ramsey and thus by Lemma~\ref{lem:bidefinable} we then get that $\overrightarrow{\mathcal M}_\Semig\cap \Forb(\mathcal F)$ is Ramsey when $\Semig$ has finitely many blocks.

Suppose now that $\Semig$ has infinitely many blocks. Take $\str A\subseteq\str B\in \overrightarrow{\mathcal M}_\Semig\cap \Forb(\mathcal F)$. We want to find $\str C\in\overrightarrow{\mathcal M}_\Semig \cap \Forb\left(\mathcal F\right)$ such that $\str C\longrightarrow (\str B)^{\str A}_2$. Let $S$ be the (finite) set of distances in $\str B$. By Theorem~\ref{thm:infinitelyblocks} we get $\Semig'$ and $\mathcal F'$ such that $\mathcal M_{\Semig'}\cap \Forb(\mathcal F')\subseteq \mathcal M_{\Semig}\cap \Forb(\mathcal F)$. Therefore $\overrightarrow{\mathcal M}_{\Semig'}\cap \Forb(\mathcal F')$ has the Ramsey property by the previous paragraphs.

Clearly, in $\str A$ and $\str B$ the only relations $\leq_\Block$ which are not empty are those where $\Block$ is also a block of $\Semig'$. We can thus consider $\str A$ and $\str B$ to be from $\overrightarrow{\mathcal M}_{\Semig'}\cap \Forb(\mathcal F')$ and from the Ramsey property we get $\str C\in\overrightarrow{\mathcal M}_{\Semig'}\cap \Forb(\mathcal F')$ which is the Ramsey witness for $\str A$ and $\str B$. Then it is enough to again add empty orders to $\str C$ to get a structure from $\overrightarrow{\mathcal M}_\Semig\cap \Forb(\mathcal F)$.
\end{proof}

\begin{theorem}\label{thm:expansionprop}
Let $\Semig$ and $\mathcal F$ be as in Theorem~\ref{thm:main}. Then $\overrightarrow{\mathcal M}_{\Semig}\cap \Forb(\mathcal F)$ has the expansion property with respect to $\mathcal M_{\Semig}\cap \Forb(\mathcal F)$.
\end{theorem}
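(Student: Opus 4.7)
The plan is to derive Theorem~\ref{thm:expansionprop} from the Ramsey property of $\overrightarrow{\mathcal M}_\Semig\cap\Forb(\mathcal F)$ (Theorem~\ref{thm:main}) together with its strong amalgamation property (Corollary~\ref{cor:convamalgamation}), following the standard strategy used by Braunfeld~\cite{Sam} for $\Lambda$-ultrametric spaces which itself adapts the Ne\v set\v ril--R\"odl/Nguyen Van Th\'e approach.

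Given $\str B\in \mathcal M_\Semig\cap\Forb(\mathcal F)$, I would first enumerate the non-isomorphic convex orderings of $\str B$ as $\str B^+_1,\ldots,\str B^+_m$; the list is finite because $\str B$ is finite and each $\leq^\Block$ takes only finitely many values on it. Then, using iterated strong amalgamation in $\overrightarrow{\mathcal M}_\Semig\cap\Forb(\mathcal F)$, I would construct $\str D^+$ containing an embedded copy of every $\str B^+_i$. Applying Theorem~\ref{thm:main}, I would find $\str C^+\in \overrightarrow{\mathcal M}_\Semig\cap\Forb(\mathcal F)$ satisfying $\str C^+\longrightarrow (\str D^+)^{\str B^+_1}_m$ and let $\str C$ be its $L_\Semig$-reduct.

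The verification uses the natural coloring: given an arbitrary convex ordering $\str C'^+$ of $\str C$, color each embedding $f\colon \str B^+_1\hookrightarrow\str C^+$ by the unique index $k\in\{1,\ldots,m\}$ such that the convex ordering induced on the reduct $f(B)$ by $\str C'^+$ is isomorphic to $\str B^+_k$. Ramsey then yields a monochromatic embedding $h\colon \str D^+\hookrightarrow \str C^+$, providing one embedded $\str B^+_{k_0}\hookrightarrow\str C'^+$.

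The main obstacle is upgrading this one-type conclusion to embeddings of \emph{all} $\str B^+_i$ into $\str C'^+$. I expect to handle this by (i) applying the preceding construction simultaneously for every $i$ via iterated Ramsey — choosing $\str C^+$ so that $\str C^+\longrightarrow (\str D^+)^{\str B^+_i}_m$ holds for each $i\in\{1,\ldots,m\}$ in turn — producing for each $i$ a monochromatic witness yielding some $\str B^+_{k_i}\hookrightarrow \str C'^+$, and then (ii) showing that the set $\{k_1,\ldots,k_m\}$ must exhaust $\{1,\ldots,m\}$. Step (ii) is where the convex-ordering structure is essential: because the orders $\leq^\Block$ at distinct blocks can be chosen semi-independently (subject only to the convexity constraint from Definition~\ref{defn:Js}), strong amalgamation allows $\str D^+$ to be built so that copies of $\str B^+_i$ sit in $\str D^+$ in disjoint positions whose orderings can be permuted independently. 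This forces the monochromatic witnesses of different types to live on disjoint copies of $\str B$ and be compatible, so that each of the $m$ types must be realised somewhere in $\str C'^+$, completing the argument.
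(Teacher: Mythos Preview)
Your plan has a genuine gap in step (ii). The conclusion you draw from the iterated Ramsey argument is only that $\str B^+_{k_1},\ldots,\str B^+_{k_m}$ embed into $\str C'^+$, and there is no reason for the map $i\mapsto k_i$ to be surjective. The ordering $\str C'^+$ is \emph{adversarial}: once $\str C$ is fixed, the opponent chooses $\str C'^+$, and nothing prevents them from choosing it so that every copy of $\str B$ in $\str C$ receives the same $\leq^{\Block}$-isomorphism type. Your sentence about ``orderings can be permuted independently'' conflates the construction of $\str D^+$ (which you control) with the choice of $\str C'^+$ (which you do not). Building $\str D^+$ so that the copies of $\str B^+_i$ are disjoint tells you nothing about how $\str C'^+$ will order those copies.

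The paper's proof avoids this by never colouring copies of $\str B$ at all. Instead it colours copies of a single \emph{edge} $\overrightarrow{\str H}$ of a carefully chosen length $m_i=\mus(\Block_i,\cdot)$, with only two colours: ``$\leq^{\Block_i}_1$ agrees with $\leq^{\Block_i}_0$ on this edge'' versus ``it disagrees''. A monochromatic copy of $\str D$ then has $\leq^{\Block_i}_1$ equal to $\leq^{\Block_i}_0$ or to its reverse on all such edges, and auxiliary vertices at distance $m_i$ are inserted between every pair $u<^{\Block_i}v$ so that the agree/disagree information on $m_i$-edges determines the full order $\leq^{\Block_i}$ on the original vertices. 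Since $\str D$ was built to contain every $\leq^{\Block_i}$-ordering of $\str B_{i-1}$ (and reversing a set of all orderings still yields all orderings), both colours give what is needed. This is done separately for each meet-irreducible block, iterating the construction $b$ times. The crucial idea you are missing is this reduction to a $2$-colouring of $2$-element substructures; the ``colour by isomorphism type of the induced expansion'' scheme is too coarse to force exhaustion.
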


The proof is a generalization of Braunfeld's proof~\cite[Lemma 7.10]{Sam} which is in turn an adaptation of the standard argument. However, it is quite technically challenging and thus, as an introduction, we first sketch the proof of the case when $\Semig$ is archimedean. Then there are no ball vertices, no unary functions and the expansion is just all linear orders. Note that $\mus(\Semig, S)$ is a well-defined distance for a finite set $S\subseteq \Semig$.

Given $\str A\in \mathcal M_\Semig \cap \Forb(\mathcal F)$ we need to find $\str B\in \mathcal M_\Semig \cap \Forb(\mathcal F)$ such that every ordering of $\str B$ contains every ordering of $\str A$. Denote by $S$ the set of distances in $\str A$. Let $\overrightarrow{\str A}\in \overrightarrow{\mathcal M}_{\Semig}\cap \Forb(\mathcal F)$ be a fixed ordering of $\str A$. Enumerate all pairs of vertices $u<v\in A$ as $(u_1,v_1),\ldots,(u_k,v_k)$ and define a sequence $\overrightarrow{\str A} = \overrightarrow{\str A}^0\subseteq\overrightarrow{\str A}^1\subseteq\ldots\subseteq\overrightarrow{\str A}^k$ of structures from $\overrightarrow{\mathcal M}_{\Semig}\cap \Forb(\mathcal F)$ by induction such that $\overrightarrow{\str A}^{i+1}$ is the strong amalgam of $\overrightarrow{\str A}^i$ and the ordered triangle $\str T_i$ with vertices $u_i,v_i,x_i$ where $d_\str{T}(u_i,v_i)=d_{\overrightarrow{\str{A}}}(u_i,v_i)$, $d_\str{T}(u_i,x_i)=d_\str{T}(v_i,x_i)=\mus(\Semig, S)$ and $u_i\leq_\str{T} x_i\leq_\str{T} v_i$ over the edge $u_i,v_i$.

Now let $\overrightarrow{\str D}$ be the joint embedding of the $\overrightarrow{\str A}^k$'s for all linear orderings $\overrightarrow{\str A}$ of $\str A$. By the Ramsey property there exists $\overrightarrow{\str B}\longrightarrow (\overrightarrow{\str D})^{\overrightarrow{\str H}}_2$, where $\overrightarrow{\str H}$ is the ordered pair of vertices in distance $\mus(\Semig, S)$. Let $\leq_0$ be the linear order on vertices of $\overrightarrow{\str B}$ and finally let $\str B$ be the reduct of $\overrightarrow{\str B}$ by forgetting the order.

We claim that $\str B$ is the desired expansion property witness for $\str A$. Indeed, let $\leq_1$ be an arbitrary linear ordering of $\str B$. Define the colouring $c\colon{\overrightarrow{\str B}\choose \overrightarrow{\str H}}\rightarrow \{0,1\}$ by putting $c(\alpha) = 0$ if the orders $\leq_0$ and $\leq_1$ agree on $\alpha(\overrightarrow{\str H})$ and $1$ otherwise.

By the Ramsey property we get a copy $\overrightarrow{\str D}\subseteq \overrightarrow{\str B}$ such that $\leq_0$ and $\leq_1$ are on the copy either the same or opposite. In both cases $\overrightarrow{\str D}$ contains all possible orderings of $\str A$.

\begin{proof}[Proof of Theorem~\ref{thm:expansionprop}]
The complication now is that there are multiple (partial) orders which depend on the distances. There is no hope to find a universal distance (like $\mus(\Semig, S)$ in the archimedean case) which would allow us to place auxiliary vertices between every pair of vertices where we want to fix the order. Instead, we are going to do that separately for every meet-irreducible block (and thus every $\leq^\Block$). This will hence need an iterated use of the Ramsey property.

Let $\str A\in \mathcal M_\Semig \cap \Forb(\mathcal F)$ be given and let $S$ be the set of distances in $\str A$. Enumerate the non-maximal meet-irreducible blocks of $\Semig$ (including $\str 0$) which ``non-trivially appear in $\str A$'' as $\Block_1,\ldots,\Block_b$, that is, $\Block \in I_\Semig$ is in the sequence if and only if $\leq^\Block_{\overrightarrow{\str A}}$ is nonempty in some convex ordering $\overrightarrow{\str A}$ of $\str A$. Also define a sequence of distances $m_1,\ldots,m_b$ where $m_i = \mus(\Block_i, S\cup\{m_j;j< i\})$. By induction we define $\str A = \str B_0\subseteq \str B_1\subseteq\cdots\subseteq \str B_b=\str B$ as follows.

Let $\str B_i$ be defined. We now do a very similar construction as in the archime\-de\-an case with $\str B_i$ playing the role of $\str A$ and $\str B_{i+1}$ being the expansion property witness. Fix an expansion $\overrightarrow{\str B}_i$ of $\str B_i$ in $\overrightarrow{\mathcal M}_{\Semig}\cap \Forb(\mathcal F)$. Enumerate the balls of diameter $\Block_{i+1}$ in $\overrightarrow{\str B}_i$ as $E_1, \ldots, E_\ell$ and pick a representative $w_j\in E_j$ for each ball $E_j$ arbitrarily. Finally enumerate by $(u_1, v_1), \ldots, (u_k, v_k)$ all pairs of the representatives such that $u_j <^{\Block_{i+1}}_{\overrightarrow{\str B}_i} v_j$.

Now define a sequence $\overrightarrow{\str B}_i = \overrightarrow{\str B}_i^0\subseteq\overrightarrow{\str B}_i^1\subseteq\ldots\subseteq\overrightarrow{\str B}_i^k$ by induction such that $\overrightarrow{\str B}_i^{j+1}$ is the strong amalgam of $\overrightarrow{\str B}_i^{j}$ and the ordered triangle $\str T$ over the edge $u_j,v_j$, where $\str T$ is the triangle with vertices $u_j,v_j,x_j$ with $u_j<^{\Block_{i+1}}_{\str T}x_j<^{\Block_{i+1}}_{\str T}v_j$, $d_\str{T}(u_j,v_j)=d_{\overrightarrow{\str{B}}_i}(u_j,v_j)$ and $d_\str{T}(u_j,x_j)=d_\str{T}(v_j,x_j)=m_{i+1}$.

Put $\overrightarrow{\str D}_{i+1}$ be the joint embedding of the $\overrightarrow{\str B}_i^k$'s for all expansions $\overrightarrow{\str B}_i$ of $\str B_i$ in $\overrightarrow{\mathcal M}_{\Semig}\cap \Forb(\mathcal F)$. By the Ramsey property there exists $\overrightarrow{\str B}_{i+1}\longrightarrow (\overrightarrow{\str D}_{i+1})^{\overrightarrow{\str H}_{i+1}}_2$, where $\overrightarrow{\str H}_{i+1}$ is the ordered pair of vertices in distance $m_{i+1}$. Let $\leq^{\Block_{i+1}}_0$ be the partial order for $\Block_{i+1}$ of $\overrightarrow{\str B}_{i+1}$ and finally let $\str B_{i+1}$ be the reduct of $\overrightarrow{\str B}_{i+1}$ forgetting all the orders.

By an analogous argument as in the archimedean case (after remembering that if $u\sim^{\Block} v$, then $u\leq^{\Block}w$ if and only if $v\leq^{\Block}w$) we get that in every expansion of $\str B_{i+1}$ in $\overrightarrow{\mathcal M}_{\Semig}\cap \Forb(\mathcal F)$ there are all possible orders $\leq^{\Block_{i+1}}$ of $\str B_i$.

It follows that in every expansion of $\str B=\str B_b$ there are all possible expansions of $\str A$ as desired.
\end{proof}

\begin{remark}
Similarly as for EPPA and the strong amalgamation property, we can further restrict our classes to omit a family of Henson constraints and have the same theorems (in particular the same Ramsey expansions) for them.
\end{remark}

\chapter{Applications}\label{ch:applications}
It is trivial to check that the (partially) ordered commutative semigroups connected to the $S$-metric spaces with $\oplus_S$ defined or to $\Lambda$-ultrametric spaces satisfy the axioms of Theorem~\ref{thm:main}. Similarly, we also get Ramsey expansions and EPPA for the multiplicative metric spaces (which has not been done before). Another original corollary of our results is EPPA for Sauer's $S$-metric spaces, Braunfeld's $\Lambda$-ultrametric spaces (our methods would work even for the ones with meet-reducible identity) and non-semi-archimedean Conant's generalised metric spaces.

Corollary-wise, it remains to show how metrically homogeneous graphs fit into our framework. This is more complicated, in fact, they were in fact the motivation for introducing the general conditions on the family $\mathcal F$.

\section{Primitive metrically homogeneous graphs}\label{sec:methom}
A metrically homogeneous graph is a countable connected graph which gives rise to a homogeneous metric space when one computes the distances between all vertices. Cherlin~\cite{Cherlin2011b,Cherlin2013} recently gave a catalogue of the known homogeneous metric spaces of this type. A key role in the catalogue is played by the so-called \emph{primitive 3-constrained cases}. They can be described as $\{1,\ldots,\delta\}$-edge-labelled graphs with some triangles forbidden. And these triangles can be described in terms of five parameters $(\delta, K_1, K_2, C_0, C_1)$.

Ramsey expansions and other combinatorial properties of the classes from Cherlin's catalogue have been found by Aranda, Bradley-Williams, Hubi{\v c}ka, Karamanlis, Kompatscher, Pawliuk and the author~\cite{Aranda2017, Aranda2017a, Aranda2017c} and were also studied by others~\cite{Coulson, Sokic2017}. In~\cite{Aranda2017} the key ingredient is an explicit completion procedure quite similar to the shortest path completion.

To interpret every primitive 3-constrained case as a semigroup-valued metric space would take a lot of mechanical work and inequality checking. In this thesis, we only sketch the proof for part of the primitive 3-constrained cases (namely we omit the cases when $|C_0-C_1|>1$) and therefore only give the minimum necessary introduction and definitions. For a broader overview see~\cite{Aranda2017} or the author's Bachelor thesis~\cite{Konecny2018bc}.

\begin{definition}
We say that a sequence of integers $(\delta, K_1, K_2, C)$ is \emph{relevant} if the following conditions hold:
\begin{itemize}
\item $3\leq \delta < \infty$;
\item $1\leq K_1\leq K_2\leq \delta$;
\item $2\delta+2\leq C\leq 3\delta+2$;
\item one of the following holds:
\begin{enumerate}[label=(\Roman*)]
  \setcounter{enumi}{1}

\setlength\itemsep{0em}
\item\label{II} $C\leq 2\delta+K_1$, and:
\begin{itemize}
 \setlength\itemsep{0em}
 \item $C=2K_1+2K_2+1$;
 \item $K_1+K_2\geq \delta$;
 \item $K_1+2K_2\leq 2\delta-1$,
\end{itemize}
\item\label{III} $C\geq 2\delta+K_1+1$, and:
\begin{itemize}
 \setlength\itemsep{0em}
 \item $K_1+2K_2\geq 2\delta-1$ and $3K_2\geq 2\delta$;
 \item If $K_1+2K_2=2\delta-1$ then $C\geq 2\delta+K_1+2$.
\end{itemize}
\end{enumerate}
\end{itemize}
\end{definition}
This definition is a subset of the union of two of Cherlin's definitions (\emph{acceptability} and \emph{admissibility}) and is tailored so that it only contains the classes interesting for our purposes, which is also why Case~I is missing.

Hubi\v cka, Kompatscher and the author~\cite{Hubickacycles2018} characterized all the $\{1,\ldots,\allowbreak\delta\}$-edge-labelled graphs which have a completion into one of Cherlin's classes with relevant parameters as $\Forb(\Fclass)$ for $\Fclass$ being a family of the following cycles, where we say that a cycle has distances $a_1,\ldots a_k$ if it has $k$ edges labelled by $a_1,\ldots,a_k$ in some order. A perimeter of a cycle is the sum of its distances.
\begin{description}
\item[$C$-cycles:] Cycles with distances $d_0, d_1, \ldots, d_{2n}, x_1, \ldots, x_k$ for some $n\geq 0$ such that $$\sum_{i=0}^{2n} d_i > n(C - 1) + \sum_{i=1}^k x_i.$$
\item[$K_1$-cycles:] Non-$C$-cycles of odd perimeter with distances $x_1, \ldots, x_k$ such that $$2K_1 > \sum_{i=1}^k x_i.$$
\item[$K_2$-cycles:] Non-$C$-cycles of odd perimeter with distances $d_1,\ldots,d_{2n+2},\allowbreak x_1, \ldots,\allowbreak x_k$ such that $$\sum_{i=1}^{2n+2}d_i > 2K_2 + n(C-1) + \sum_{i=1}^k x_i.$$
\end{description}
Notice that a non-metric cycle is a $C$-cycle with $n=0$. Cherlin's class $\Aclass$ can then be defined as the subclass of $\Forb(\Fclass)$ containing only complete graphs. It is then already determined by only forbidding the triangles from $\Fclass$. Cherlin proved that all the $\Aclass$'s are strong amalgamation classes.

In~\cite{Aranda2017} we proved that the classes $\Aclass$ are locally finite subclasses of the class of all $\{1,\ldots,\delta\}$-edge-labelled complete graphs using a special \emph{magic completion algorithm} for which the following definition was key (it was stated in a different way).
\begin{definition}[Magic semigroup]\label{defn:magicsemigroup}
Let $3\leq \delta<\infty$, $2\delta+2\leq C\leq 3\delta+1$ and $\lceil\frac{\delta}{2}\rceil\leq M \leq \frac{C-\delta-1}{2}$ be integers. Then the operation $\oplus\colon\{1,\ldots,\delta\}^2\rightarrow \{1,\ldots,\delta\}$ is defined as follows
$$x \oplus y =
  \begin{cases}
    |x-y| & \text{if } |x-y| > M \\
    \min\left(x+y, C-1-x-y\right) & \text{if } \min\left(\ldots\right) < M \\
    M & \text{otherwise}.\end{cases}$$
\end{definition}
The magic completion is a refinement of the shortest path completion algorithm which step-by-step adds edges according to the $\oplus$ operation. Observe that in particular $M\oplus x = M$ for every $x$.

It is straightforward to check that $\oplus$ is associative and commutative, hence $(\{1,\ldots,\delta\},\oplus)$ is indeed a commutative semigroup for every valid choice of $\delta$, $C$ and $M$.

Fix $\delta$, $C$ and $M$ from Definition~\ref{defn:magicsemigroup}. The \emph{natural partial order} $\mleq$ on $\{1,\ldots,\delta\}$ is defined by $a\mleq b$ if and only if $a=b$ or there is $1\leq c\leq \delta$ such that $b=a\oplus c$. It is again straightforward to check that $\mleq$ is a partial order and that the \emph{(ordered) magic semigroup} $\magicsemig=(\{1,\ldots,\delta\},\oplus, \mleq)$ is a \pocs{}. Note that if $C= 3\delta+1$ and $M=\delta$ then $\magicsemig$ is just $\{1,\ldots,\delta\}$ with the standard order and addition capped by $\delta$.

\begin{figure}
\centering
\includegraphics{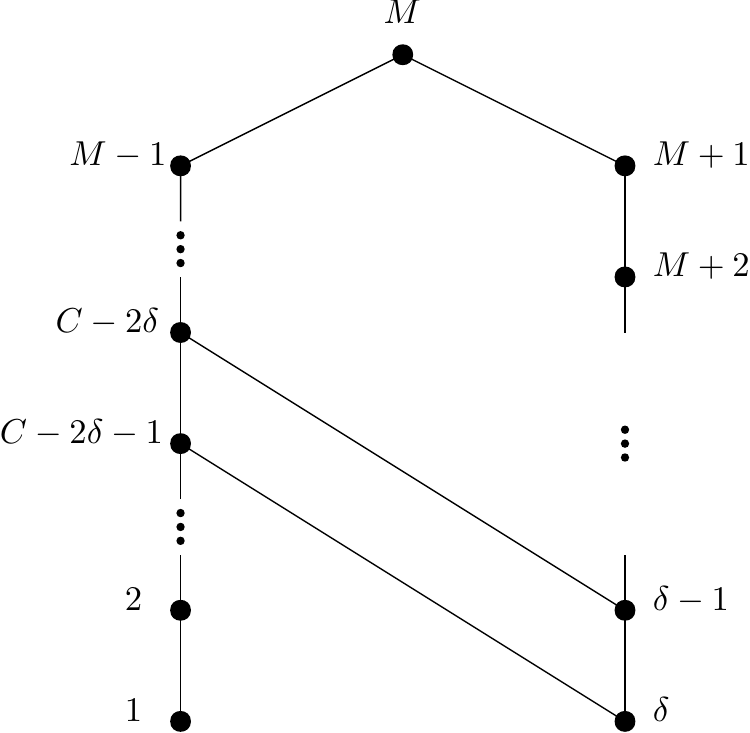}
\caption{The Hasse diagram of the magic order $\mleq$.}
\label{fig:magicorder}
\end{figure}

\begin{observation}\label{obs:incomparable}
In $\magicsemig$ we have $a\mleq b$ if and only if one of the following holds (see Figure~\ref{fig:magicorder}):
\begin{enumerate}
\item $a\leq b\leq M$;
\item $a\geq b\geq M$; or
\item $a\geq M$ and $C-1-\delta-a\leq b\leq M$.
\end{enumerate}
In other words, $a$ is $\mleq$-incomparable with $b$ (without loss of generality we can assume $a\geq b$) if and only if $a>M$, $b<M$ and $C-1-\delta-a>b$.
\end{observation}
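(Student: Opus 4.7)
The statement unpacks the definition of the natural partial order on $\magicsemig$, which puts $a\mleq b$ exactly when $a=b$ or $b=a\oplus c$ for some $c\in\{1,\ldots,\delta\}$. The natural approach is a direct case analysis on which of the three clauses of Definition~\ref{defn:magicsemigroup} produces the output $b$. The key structural observation is that the three clauses of $\oplus$ partition its range: clause 1 yields values strictly greater than $M$, clause 2 yields values strictly less than $M$, and clause 3 yields exactly $M$. This aligns with the three cases of the observation, which roughly correspond to $b\leq M$, $b\geq M$, and the ``cross-edges'' from high $a$ to low $b$ respectively.

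For the backward direction, I would exhibit an explicit witness $c$ in each case. In case~(1) with $a<b\leq M$ the choice $c=b-a$ satisfies $|a-c|\leq a+c=b\leq M$ and $a+c=b$, so clause 2 of $\oplus$ (if $b<M$) or clause 3 (if $b=M$) produces $b$. In case~(2) with $a>b\geq M$ the choice $c=a-b$ gives $|a-c|=b$, so clause 1 of $\oplus$ applies when $b>M$, while $M\leq (C-\delta-1)/2$ ensures clause 3 applies when $b=M$. In case~(3) with $a\geq M$ and $b<M$ the choice $c=C-1-a-b$ is legal because $C-1-\delta-a\leq b$ forces $c\leq \delta$, and one checks $|a-c|\leq M$ and $\min(a+c,C-1-a-c)=C-1-a-c=b<M$, so clause 2 of $\oplus$ fires; for $b=M$ within case~(3), the existence of a suitable $c$ satisfying the clause 3 inequalities follows from $C\geq 2\delta+2$.

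For the forward direction, assume $b=a\oplus c$ and split on which clause of $\oplus$ fired. Clause 1 with $b=a-c$ gives $a>b>M$, matching case~(2). Clause 2 with $b=a+c$ gives $a<b<M$, matching case~(1). Clause 2 with $b=C-1-a-c$ gives $a+b\geq C-1-\delta$ from $c\leq\delta$; together with $b<M$ and the bound $M\leq(C-\delta-1)/2$, one deduces $a>C-1-M-\delta\geq M$, matching case~(3). Finally, clause 3 gives $b=M$, whence case~(1) covers $a\leq M$ and case~(2) covers $a\geq M$.

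The main obstacle is the remaining subcase of clause 1 of $\oplus$, namely $b=c-a$ with $c=a+b\leq\delta$ and $b>M$: here $a<b$, and $a$ can be small enough that none of cases~(1), (2), or~(3) applies directly. The plan for this subcase is to use the parameter constraints $\lceil\delta/2\rceil\leq M$ and $C\geq 2\delta+2$ to show that this configuration either forces $a\geq M$ (reducing it to case~(3)) or is absorbed by the other cases via the symmetry and transitivity of $\mleq$, so the witnessing $c$ can be replaced by a different $c'$ of smaller magnitude falling under clause 2 or 3 of $\oplus$. This delicate bookkeeping, together with cross-checking the resulting Hasse diagram in Figure~\ref{fig:magicorder}, is where the technical work is concentrated.
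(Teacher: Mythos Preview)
The paper does not prove this observation; it is stated without argument as a routine unpacking of the definition, so there is nothing in the paper to compare your attempt against. Your clause-by-clause case analysis is the natural approach, and the backward-direction witnesses you write down are correct.

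You correctly isolate the one delicate forward subcase: clause~1 of $\oplus$ firing with $b=c-a>M$. Here $a\leq c-M-1\leq\delta-M-1<M$ (using $M\geq\lceil\delta/2\rceil$), so $a<M<b$ and none of cases (1)--(3) applies. Your tentative plan to dispose of this subcase via the parameter constraints, or by replacing $c$ with a different witness falling under clause~2 or~3, does not work in general. For $(\delta,M,C)=(6,3,14)$, which satisfies $\lceil\delta/2\rceil\leq M\leq(C-\delta-1)/2$, one has $1\oplus 6=|1-6|=5$, so $1\mleq 5$ by the definition of the natural order; but all three cases fail for $(a,b)=(1,5)$, and the ``in other words'' clause declares $5$ and $1$ incomparable. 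Moreover $c=6$ is the \emph{only} witness for $1\mleq 5$ (the values $1\oplus c$ for $c=1,\ldots,6$ are $2,3,3,3,4,5$), so no alternative $c'$ exists. Thus the obstacle you flagged is a genuine gap: the observation as literally stated does not hold for all admissible triples $(\delta,M,C)$, and your argument cannot be completed without either restricting the parameters further or amending the statement.
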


Clearly $\magicsemig$ is archimedean. In order to plug it into the machinery developed here, we need to understand the $\mleq$-infima, their distributivity with $\oplus$ and what the non-$\magicsemig$-metric cycles are.

\begin{prop}\label{prop:magiccanonicalform}
Let $x_1,\ldots,x_k$ and $d_1,\ldots,d_n$ be two sequences of integers such that $1\leq x_i < M$ and $M<d_i\leq \delta$ for every $i$. Denote $S=x_1\oplus\cdots\oplus x_k\oplus d_1\oplus\cdots\oplus d_n$. Then one of the following happens:
\begin{enumerate}
\item $S=M$;
\item $S<M$, $n$ is even and $$S=\frac{n}{2}(C-1) + \sum_{i=1}^k x_i - \sum_{i=1}^n d_n;$$
\item $S>M$, $n$ is odd and $$S=\sum_{i=1}^n d_n-\frac{n-1}{2}(C-1) - \sum_{i=1}^k x_i.$$
\end{enumerate}
\end{prop}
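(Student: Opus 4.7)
We argue by induction on $n+k$. The base case $n+k=1$ is immediate: either $(n,k)=(0,1)$ giving $S=x_1<M$ which matches Case 2 with $\tfrac{0}{2}(C-1)+x_1-0=x_1$, or $(n,k)=(1,0)$ giving $S=d_1>M$ which matches Case 3 with $d_1-0-0=d_1$.

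For the induction step, by commutativity and associativity of $\oplus$ we may write $S=S_0\oplus y$, where $S_0$ is the $\oplus$-sum of all but one term (satisfying the inductive hypothesis) and $y$ is the remaining term. Two inequalities derived from Definition~\ref{defn:magicsemigroup} will be used throughout: $\delta\leq 2M$ (from $M\geq\lceil\delta/2\rceil$) and $2M+\delta+1\leq C$ (from $M\leq(C-\delta-1)/2$). A useful preliminary observation is that $M$ is absorbing, i.e.\ $M\oplus z=M$ for every $z\in\{1,\ldots,\delta\}$: we have $|M-z|\leq M$ (by $\delta\leq 2M$), eliminating the first branch of the definition, and both $M+z$ and $C-1-M-z$ are $\geq M$ (by the two inequalities), eliminating the second branch.

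The plan is case analysis on the three possible forms of $S_0$ and on whether $y$ is small or large, giving six subcases. Introduce the shorthand $\phi=\tfrac{n}{2}(C-1)+X-D$ for $n$ even and $\phi=D-X-\tfrac{n-1}{2}(C-1)$ for $n$ odd, where $X=\sum x_i$ and $D=\sum d_j$; the claim then reduces to showing $S=\phi$ when $\phi$ lies in its good range ($(0,M)$ for even $n$, $(M,\delta]$ for odd $n$) and $S=M$ otherwise. Note that $\phi$ is automatically positive for even $n$ and at most $\delta$ for odd $n$ by the bounds $D\leq n\delta$ and $(C-1)/2\geq\delta$, so only one side of the good range can fail. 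When passing to the extended sequence, $\phi_{new}$ relates to $\phi_{old}$ (which equals $S_0$ when $S_0\neq M$) in a simple way: adding a small $y$ shifts $\phi$ by $\pm y$ without changing parity, while adding a large $y$ flips parity and yields $\phi_{new}=y-\phi_{old}$ or $\phi_{new}=C-1-\phi_{old}-y$ depending on the parity transition.

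For a representative subcase, suppose $S_0\in(M,\delta]$ (Case 3) and $y$ is large. Then $|S_0-y|<M$ by $\delta\leq 2M$, so the second branch of the definition applies; direct substitution gives $\phi_{new}=C-1-S_0-y$. If $\phi_{new}<M$ this is the smaller of the two $\min$-arguments (the other being $S_0+y>2M>M$), so the result is $\phi_{new}$, which places us in Case 2 (positivity $\phi_{new}\geq C-1-2\delta\geq 1$ holds by $C\geq 2\delta+2$). If $\phi_{new}\geq M$, both $\min$-arguments are $\geq M$, so the result is $M$, placing us in Case 1. The remaining five subcases are handled analogously: for instance, when $S_0\in(0,M)$ and $y$ is small, one checks directly that $S_0\oplus y=S_0+y$ when $S_0+y<M$ (matching the shifted $\phi$) and equals $M$ otherwise (the second candidate $C-1-S_0-y$ being $\geq M$ by $S_0+y<2M$ and $2M+\delta+1\leq C$). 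The main obstacle is simply the bookkeeping of the six subcases together with the sub-sub-cases according to whether the result lands in the good range; each verification is a short arithmetic check using the two key inequalities, so no conceptual difficulty arises.
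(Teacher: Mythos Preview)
Your proof is correct and follows the same strategy as the paper's: induction on $k+n$. The paper's own proof is literally the single sentence ``By induction. Clearly holds for $k+n\leq 2$, and the induction step is straightforward.'' --- you have simply carried out that straightforward induction step in full, with the six subcases and the two governing inequalities $\delta\leq 2M$ and $2M+\delta+1\leq C$ made explicit.
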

\begin{proof}
By induction. Clearly holds for $k+n\leq 2$, and the induction step is straightforward.
\end{proof}

\begin{corollary}
Let $1\leq a,b\leq \delta$ be $\mleq$-incomparable and let $\str K$ be a cycle with distances $a_1,\ldots,a_k,b_1,\ldots,b_\ell$ such that $k+\ell\geq 3$, $a=a_1\oplus\cdots\oplus a_k$ and $b=b_1\oplus\cdots\oplus b_\ell$. Then $\str K$ is a $C$-cycle.
\end{corollary}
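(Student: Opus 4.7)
Proof plan:

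The strategy is to apply Proposition~\ref{prop:magiccanonicalform} to both sums $a=a_1\oplus\cdots\oplus a_k$ and $b=b_1\oplus\cdots\oplus b_\ell$, and then use the resulting canonical formulas to directly verify the $C$-cycle inequality for the partition of the edges of $\str{K}$ into ``large'' (greater than $M$) and ``small'' (less than $M$) distances.

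First, by Observation~\ref{obs:incomparable}, the incomparability of $a$ and $b$ means (after swapping if necessary) that $a>M>b$ and hence in particular $a>b$. Note that $a\neq M$ and $b\neq M$, so by Proposition~\ref{prop:magiccanonicalform} neither any $a_i$ nor any $b_j$ can equal $M$ (otherwise the whole $\oplus$-sum would collapse to $M$, see Case~1 of the proposition). Thus every edge-distance in $\str{K}$ is either strictly greater than $M$ or strictly less than $M$. Let $n_a$ (resp.\ $n_b$) be the number of $a_i$'s (resp.\ $b_j$'s) greater than $M$. By Cases~2 and~3 of the proposition, $n_a$ is odd (in particular $n_a\geq 1$, because $a>M$) and $n_b$ is even, so $n_a+n_b$ is odd; write $n_a+n_b=2n+1$ with $n\geq 0$.

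Let $D$ be the sum of all edge-distances in $\str{K}$ that exceed $M$, and $X$ the sum of all those below $M$. Splitting $D=D_a+D_b$ and $X=X_a+X_b$ by origin, the proposition gives
\[
a = D_a - X_a - \tfrac{n_a-1}{2}(C-1),\qquad b = X_b - D_b + \tfrac{n_b}{2}(C-1).
\]
Subtracting, one obtains
\[
D - X \;=\; (a-b) + \tfrac{n_a+n_b-1}{2}(C-1) \;=\; (a-b) + n(C-1).
\]
Since $a>b$, this gives $D>n(C-1)+X$. Calling the $n_a+n_b=2n+1$ large distances $d_0,\ldots,d_{2n}$ and the remaining (small) distances $x_1,\ldots,x_m$, this is exactly the defining inequality $\sum_{i=0}^{2n}d_i > n(C-1)+\sum_{j=1}^{m}x_j$ of a $C$-cycle, so $\str K$ is a $C$-cycle as required.

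The argument is essentially a one-line calculation once Proposition~\ref{prop:magiccanonicalform} is invoked; the only care needed is to rule out edges of length exactly $M$ (handled by noting that $a,b\neq M$) and to correctly track the parities of $n_a$ and $n_b$ coming from the two cases of the proposition. No genuine obstacle is expected.
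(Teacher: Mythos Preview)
Your proof is correct and follows essentially the same approach as the paper: apply Proposition~\ref{prop:magiccanonicalform} to each of the two sums, subtract the resulting formulas, and read off the $C$-cycle inequality from the fact that one of $a,b$ lies above $M$ and the other below. Your write-up is slightly more explicit about why no edge can have length exactly $M$ and about tracking the parities $n_a,n_b$, but the underlying argument is the same computation the paper gives.
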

\begin{proof}
We can without loss of generality (cf. Observation~\ref{obs:incomparable}) assume $a<M$ and $b>M$.

By Proposition~\ref{prop:magiccanonicalform} we get $a=\frac{n_a}{2}(C-1)+\sum_{i=1}^{m_a} x^a_i-\sum_{i=1}^{n_a} d^a_i$ and $b=\sum_{i=1}^{n_b} d^b_i-\frac{n_b-1}{2}(C-1) - \sum_{i=1}^{m_b} x^b_i$ for $\{d^a_i;1\leq i\leq n_a\}\cup \{x^a_i;1\leq i\leq m_a\} = \{a_i;1\leq i\leq k\}$ with the union being disjoint, analogously for $b$. As $b>M>a$, we get 
$$\sum_{i=1}^{n_b} d^b_i-\frac{n_b-1}{2}(C-1) - \sum_{i=1}^{m_b} x^b_i>\frac{n_a}{2}(C-1)+\sum_{i=1}^{m_a} x^a_i-\sum_{i=1}^{n_a} d^a_i,$$
or
$$\sum_{i=1}^{n_b} d^b_i + \sum_{i=1}^{n_a} d^a_i > \frac{n_a+n_b-1}{2}(C-1) + \sum_{i=1}^{m_b} x^b_i + \sum_{i=1}^{m_a} x^a_i,$$
which is just the $C$-inequality.
\end{proof}
This corollary implies that the family of $C$-cycles contains all disobedient ones: Indeed, whenever there are two paths between two vertices in a $\magicsemig$-metric space omitting $C$-cycles then their $\magicsemig$-lengths have to be comparable, therefore there are no non-trivial infima. And trivial infima (that is, minima) distribute with $\oplus$.

\begin{lemma}\label{lem:magicfminus}
The family $\mathcal F^-$ of all $\magicsemig$-metric $C$-cycles is $\magicsemig$-omissible.
\end{lemma}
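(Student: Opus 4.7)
The plan is to verify the three conditions of Definition~\ref{defn:omissible} for $\mathcal F^-$ using the canonical form from Proposition~\ref{prop:magiccanonicalform} and the parity structure of the $C$-cycle inequality $\sum d_i > n(C-1) + \sum x_j$.

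For condition~\ref{defn:omissible:metric} (no geodesic cycles), let $\str C=(a_1,\ldots,a_k)\in\mathcal F^-$ and suppose for contradiction that $a_i=\bigoplus_{j\neq i}a_j$ for some $i$. I would break into cases depending on whether $a_i$ is a d (i.e.\ $>M$), an x (i.e.\ $<M$), or equals $M$. In the d-case, the remaining edges contain $2n$ d's (even), so Proposition~\ref{prop:magiccanonicalform} forces $\bigoplus_{j\neq i}a_j\leq M<a_i$, a contradiction. In the x-case, the remaining edges contain $2n+1$ d's (odd), so by the proposition either the $\oplus$-sum equals $M$ (ruled out since $a_i<M$) or equals $\sum d-n(C-1)-\sum_{l\neq i}x_l$, which by the $C$-inequality is strictly greater than $x_i$, contradicting equality. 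The $a_i=M$ case is impossible because $M$ is absorbing so $\bigoplus_{j\neq i}a_j=M$ would force every other edge to lie in a specific configuration inconsistent with the strict $C$-inequality.

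For condition~\ref{defn:omissible:upwards} (upwards closedness), fix an edge $a_i\in \str C$ and a family $\mathcal P$ of paths no two of which form a non-metric or forbidden cycle, with $\inf(\mathcal P)=a_i$. Since the magic semigroup is archimedean and linearly ordered restricted to each of the regions $\{x:x<M\}$, $\{x:x=M\}$, $\{x:x>M\}$, and since any two paths in $\mathcal P$ cannot close up into a non-metric cycle or a $C$-cycle, their lengths must be $\mleq$-comparable; hence $\inf(\mathcal P)=\min_\mleq(\|\str P\|)$ is realised by some $\str P\in\mathcal P$. Substituting $a_i$ by $\str P$ yields a longer cycle $\str C'$; I would then use Proposition~\ref{prop:magiccanonicalform} applied to $\str P$ to rewrite its $\oplus$-length as an alternating expression in its own d's, x's, and multiples of $C-1$, and observe that this relation plugs into the original $C$-inequality for $\str C$ and turns it into the corresponding $C$-inequality for $\str C'$ with updated counts $n'$ and sums. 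Thus $\str C'\in\mathcal F^-$.

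For condition~\ref{defn:omissible:downwards} (downwards closedness) I would first handle the split without a chord: suppose $\str C$ splits into $\str C'=(a_i,\ldots,a_j)$ and $\str C''=(a_1,\ldots,a_{i-1},a_{j+1},\ldots,a_k)$. Write $D_1,X_1,p_1$ for the sum of d's, sum of x's, and number of d's in $\str C'$, and analogously $D_2,X_2,p_2$ for $\str C''$, noting $p_1+p_2=2n+1$ so exactly one $p_s$ is odd. Say $p_1=2n_1+1$ is odd; then summing the possible $C$-inequalities $D_1>n_1(C-1)+X_1$ and $D_2>n_2(C-1)+X_2$ (with $n_2=n-n_1$) recovers the original inequality, so at least one must hold, and the corresponding subcycle is a $C$-cycle — hence either in $\mathcal F^-$ or non-$\magicsemig$-metric. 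For the chord version with added $c$, I would split into the cases $c<M$, $c>M$, $c=M$, compute in each case which subcycle has an odd number of d's after inserting $c$, and again apply the inequality-splitting argument. The hard subcase is when the ``$C$-inequality for the odd subcycle'' fails by a small margin; here I would prove that the other (even-d-count) subcycle must be non-$\magicsemig$-metric by exhibiting an edge $a_l$ whose $\mleq$-relation to the $\oplus$-sum of its complement is violated, using the explicit formulas from Proposition~\ref{prop:magiccanonicalform} and the characterisation of $\mleq$ from Observation~\ref{obs:incomparable} (recall in particular that for elements $>M$, $\mleq$ reverses the integer order). The most delicate piece of this final step — and the main obstacle for the whole lemma — is the case $c=M$, where $c$ is absorbing so the $\oplus$-sums degenerate; here one must argue that the relevant subcycle is non-$\magicsemig$-metric directly from the magic-order incomparabilities arising when a ``large'' edge cohabits a cycle whose other edges all collapse to $M$.
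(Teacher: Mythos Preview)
Your plan is essentially the paper's: both hinge on Proposition~\ref{prop:magiccanonicalform} and the parity of the number of edges exceeding $M$, and the paper's terse ``compute the $\oplus$-sum of each contiguous segment and use parity'' is exactly your inequality-splitting unpacked. Two refinements are worth making explicit. For part~(a) of downwards closedness, your splitting is sound, but note that the even-side inequality $D_2>n_2(C-1)+X_2$ can \emph{never} hold (each $d$ is at most $\delta$, so $D_2\le 2n_2\delta<n_2(2\delta+1)\le n_2(C-1)$); hence it is always the odd-$d$-count side that carries the $C$-inequality, and you should say so rather than leave open which side wins. For part~(b), summing your two subcycle inequalities does \emph{not} recover the original, because the chord $c$ is counted twice; so the splitting alone is insufficient there. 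Your fallback --- show the even-$d$-count subcycle is non-$\magicsemig$-metric when the odd one's $C$-inequality fails --- is exactly right: if the odd side fails one gets (via the formulas in Proposition~\ref{prop:magiccanonicalform}) that $c$ and the $\oplus$-sum $S$ of the other segment satisfy $c\nmleq S$, so that subcycle is non-metric. The $c=M$ case you flag as most delicate is in fact the easiest: a subcycle containing an $M$-edge is non-$\magicsemig$-metric unless the $\oplus$-sum of its remaining edges equals $M$, and the strict $C$-inequality on the original cycle forbids both segments from $\oplus$-summing to $M$ simultaneously.
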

\begin{proof}
We need to check that $\mathcal F^-$ contains no geodesic cycles, that it is downwards closed and that it is closed under inverse steps of the shortest path completion. Closedness under the inverse steps is straightforward: There are no non-trivial infima involved, so any edge gets replaced by a path whose $\magicsemig$-length is the length of the edge and it is easy to check that this preserves the $C$-inequality and $\magicsemig$-metricity.

From Proposition~\ref{prop:magiccanonicalform} it also follows that $\mathcal F^-$ contains no geodesic cycles (if it did then we use the correspondence from Proposition~\ref{prop:magiccanonicalform} to get a contradiction with the $C$-inequality being strict).

It remains to check downwards closedness. Take any contiguous segment of edges of a cycle $\str K\in \mathcal F^-$ and enumerate the lengths of its edges as $e_1,\ldots,e_n,\allowbreak y_1,\ldots,y_k$ where $e_i>M$ and $y_i<M$ (there is no edge of length $M$ in $\str K$ because otherwise it would be non-$\magicsemig$-metric or geodesic). The parity of $n$ determines whether $\bigoplus e_i\oplus\bigoplus y_i$ is greater than or smaller than $M$ and based on this and Proposition~\ref{prop:magiccanonicalform} we get the desired result.
\end{proof}

Again by an analysis of different reasons for $a\mleq b$ it follows that every non-$\magicsemig$-metric cycle is a $C$-cycle. This in particular implies that all $K_1$- and $K_2$-cycles are $\magicsemig$-metric. It then again takes some checking (in a similar fashion as before) that if further $K_1\leq M\leq K_2$, the $K_1$- and $K_2$-cycles are not geodesic and that their union is closed both downwards and on the inverse steps of the shortest path completion. Here one needs to use the fact that the parameters are relevant. But in the end we get the following:

\begin{theorem}\label{thm:magicissemig}
Let $(\delta, K_1, K_2, C)$ be relevant parameters and define $\mathcal F$ as the union of $\mathcal F^-$ from Lemma~\ref{lem:magicfminus} and all $K_1$- and $K_2$-cycles. Then $\mathcal F$ is $\magicsemig$-omissible, contains all $\magicsemig$-disobedient cycles and is finite. Therefore $\magicsemig$ and $\mathcal F$ satisfy the assumptions of Theorem~\ref{thm:main}.

Furthermore, the class $\magicsemig\cap\Forb(\mathcal F)$ is precisely $\Aclass$.
\end{theorem}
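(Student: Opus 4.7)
The plan is to verify, one by one, the four conditions of Theorem~\ref{thm:main} together with the final identification. I will treat finiteness (hence confinedness) and meet synchronization first, since they are essentially automatic, then turn to the main work of checking $\magicsemig$-omissibility of the new $K_1$- and $K_2$-cycles, and finally argue the equality with $\Aclass$.

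For finiteness: every distance in a cycle of $\mathcal F$ lies in $\{1,\dots,\delta\}$, so it suffices to bound the number of edges. In a $C$-cycle with the notation of Proposition~\ref{prop:magiccanonicalform}, the inequality $\sum_{i=0}^{2n} d_i > n(C-1)$ together with $d_i\leq\delta$ and $C\geq 2\delta+2$ gives $(2n+1)\delta > n(2\delta+1)$, so $n<\delta$; then $\sum x_j<(2n+1)\delta$ bounds $k$. Analogous bounds apply to $K_1$- and $K_2$-cycles using the explicit parameter constraints, so $\mathcal F$ is finite, in particular confined. For meet synchronization, note that $\magicsemig$ is archimedean and hence has exactly one non-trivial block; synchronization of meets thus reduces to a tautology, and the same observation (already invoked after Lemma~\ref{lem:magicfminus}) shows that $\mathcal F^-$, and therefore $\mathcal F$, contains every $\magicsemig$-disobedient cycle.

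The substantive part is $\magicsemig$-omissibility of $\mathcal F$. $\mathcal F^-$ is handled by Lemma~\ref{lem:magicfminus}, so only the $K_1$- and $K_2$-cycles need attention. I will first observe that no cycle in $\mathcal F$ can use the distance $M$: an $M$-edge would make any triangle through it either have value $M$ on a second edge as well or else be non-$\magicsemig$-metric. This lets me apply Proposition~\ref{prop:magiccanonicalform} to any contiguous segment of edges of a cycle from $\mathcal F$ and rewrite the $\oplus$-sum as a signed linear combination. Omissibility then splits into three routine checks: (i) no $K_1$- or $K_2$-cycle is geodesic, because the defining strict inequalities $2K_1>\sum x_i$ and $\sum d_i>2K_2+n(C-1)+\sum x_j$ combined with $K_1\leq M\leq K_2$ (forced by the relevant-parameter conditions \ref{II} and \ref{III}) preclude the central distance from matching the $\oplus$-sum; (ii) upwards closure under inverse shortest path steps, which reduces by Proposition~\ref{prop:magiccanonicalform} to the fact that replacing an edge by a path of the same $\oplus$-length leaves the signed sum unchanged, hence preserves the $K_1$- and $K_2$-inequalities; (iii) downwards closure, the most delicate case, where one splits the cycle into two pieces and checks that each piece either is non-$\magicsemig$-metric, lies in $\mathcal F^-$, or lies in the $K_1$- or $K_2$-families. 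Here the parity of the number of ``large'' edges in each piece, together with the relevance conditions on $K_1+K_2$, $K_1+2K_2$ and $3K_2$, forces one of the pieces to inherit the forbidden structure; the additional variant allowing an arbitrary new edge of length $c$ is handled by the same parity and bounding argument, distinguishing whether $c>M$ or $c<M$. This is exactly where all the numerical constraints in the definition of \emph{relevant} come into play, and it will be the main obstacle—each combination of parameter regimes has to be checked, although the computations are mechanical.

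Finally, for the equality $\magicsemig\cap\Forb(\mathcal F)=\Aclass$: by \cite{Hubickacycles2018}, the complete $\{1,\dots,\delta\}$-edge-labelled graphs in $\Forb(\Fclass)$ are precisely the members of $\Aclass$. The family $\Fclass$ differs from $\mathcal F$ only by containing all non-$\magicsemig$-metric $C$-cycles as well; but by parts~\ref{cor:suaer1:3} and~\ref{cor:suaer1:2} of Lemma~\ref{lem:shortestpath:2}, no $\magicsemig$-metric space admits a homomorphic image of a non-$\magicsemig$-metric cycle. Hence a $\magicsemig$-metric complete graph lies in $\Forb(\mathcal F)$ if and only if it lies in $\Forb(\Fclass)$, so $\magicsemig\cap\Forb(\mathcal F)\subseteq\Aclass$. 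Conversely, any $\str A\in\Aclass$ omits in particular all $C$-triangles, so every triangle of $\str A$ satisfies the $\magicsemig$-triangle inequality, making $\str A$ a $\magicsemig$-metric space and putting it in $\magicsemig\cap\Forb(\mathcal F)$. Combining this with the preceding verifications yields both the assumptions of Theorem~\ref{thm:main} and the claimed identification.
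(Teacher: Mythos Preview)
Your proposal is correct and follows essentially the same route as the paper. The paper itself only sketches this result: it proves Lemma~\ref{lem:magicfminus} for $\mathcal F^-$, observes that the corollary on incomparable elements handles disobedient cycles, and then says that ``it again takes some checking (in a similar fashion as before) that if further $K_1\leq M\leq K_2$, the $K_1$- and $K_2$-cycles are not geodesic and that their union is closed both downwards and on the inverse steps of the shortest path completion,'' deferring the mechanical inequality work to~\cite{Aranda2017,Hubickacycles2018}. Your outline reproduces exactly this structure, and your explicit finiteness bound and your handling of the identification $\mathcal M_{\magicsemig}\cap\Forb(\mathcal F)=\Aclass$ via~\cite{Hubickacycles2018} are in fact more detailed than what the paper spells out. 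One small wording point: $K_1\leq M\leq K_2$ is not \emph{forced} by relevance but is a \emph{choice} of $M$ that relevance makes available; the paper phrases this as ``if further $K_1\leq M\leq K_2$''.
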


In order to get an analogue of Theorem~\ref{thm:magicissemig} for all primitive 3-constrained metrically homogeneous graphs one only needs to do more rather mechanical inequality checking (cf.~\cite{Hubickacycles2018}).

It is worth stressing out that Theorem~\ref{thm:magicissemig} depends heavily on previous results~\cite{Aranda2017, Hubickacycles2018} and the author does not see a way to bypass this which would be simpler then redoing all the proofs in a different language.

\section{Stationary independence relations}\label{sec:sir}
In 2012 Tent and Ziegler~\cite{Tent2013} proved that the automorphism group of the Urysohn space is simple modulo the \emph{bounded automorphisms} (that is, automorphisms $\alpha$ such that there is a distance $a=a(\alpha)$ such that for every vertex $x$ it holds that $d(x, \alpha(x))\leq a$). In the paper, they defined the \emph{(local) stationary independence relation} (or \emph{SIR}), which is a ternary relation on finite subsets of a homogeneous structure satisfying some axioms.

Let $\mathbb F$ be a homogeneous structure in a relational language and let $A,B\subseteq F$ be its finite subsets.
We will identify them with the substructures induced by $\mathbb F$ on $A$ and $B$ respectively and by $AB$ we will denote the union $A\cup B$ (and hence also the substructure induced by $\mathbb F$ on $AB$). If the set $A=\{a\}$ is singleton, we may write $a$ instead of $\{a\}$.

\begin{definition}[Stationary Independence Relation]
\label{defn:sir}
Let $\mathbb{F}$ be a homogeneous structure in a relational language.  A ternary relation $\ind$ on finite subsets of $\mathbb{F}$ is called a \emph{stationary independence
relation} (\emph{SIR}) if the following conditions are satisfied:
\begin{enumerate}[label=SIR\arabic*]
 \item\label{invariance} \emph{(Invariance)}. The independence of finite subsets of $\mathbb{F}$ only depends on their type. In particular, for 
any
automorphism $f$ of $\mathbb{F}$, we have $A\ind_{C}B$ if and only if
$f(A)\ind_{f(C)}f(B)$.
 \item\label{symmetry}\emph{(Symmetry)}. If $A\ind_C B$ then $B\ind_C A$.
\item\label{monotonicity} \emph{(Monotonicity)}. If $A\ind_{C}BD$ then
$A\ind_{C}B$ and $A\ind_{BC}D$.
\item\emph{(Existence)}. \label{existence} For any $A,B$ and $C$ in $\mathbb{F}$, there
is some
$A'\models \tp(A/C)$ with $A'\ind_{C}B$. 
\item\emph{(Stationarity)}. \label{stationarity} If $A$ and $A'$ have the same
type over $C$ and are both independent over $C$ from some set $B$ then they
also have the same type over $BC$. 
\end{enumerate}
If the relation $A\ind_{C}B$ is only defined for nonempty $\str{C}$, we
call $\ind$ a \emph{local} stationary independence relation.
\end{definition}
Here, $tp(A/C)$ is the type of $A$ over $C$, which for relational homogeneous structures (with trivial algebraic closures) simply amounts to the isomorphism type of $AC$ with a given enumeration of vertices. $A'\models\tp(A/C)$ thus means that $AC\simeq A'C$ with the given enumerations of vertices.

Stationary independence relations correspond to ``canonical amalgamations'' by putting $A\ind_C B$ if and only if the canonical amalgamation of $AC$ and $BC$ over $C$ is isomorphic to $ABC$. The notion of canonical amalgamations can be formalised, see~\cite{Aranda2017}.

The following theorem is a direct consequence of Theorem~\ref{thm:shortestpath}
\begin{theorem}\label{thm:sir}
Let $\ESemig$ be a \pocs{} and let $\mathcal F$ be an $\Semig$-omissible family of $\Semig$-edge-labelled cycles containing all disobedient ones. Assume that $\MF$ is a \Fraisse{} class and let $\mathbb M$ be its \Fraisse{} limit. Then the following ternary relation $\ind$ on finite subsets of $\mathbb M$ is a local stationary independence relation, where $A\ind_C B$ if and only if $C$ is nonempty and for every $a\in A$ and every $b\in B$ it holds that $d_\mathbb M(a,b) = \inf\{d(a,c)\oplus d(b,c) : c\in C\}$.

Moreover, if $\Semig$ contains a maximum element then this definition makes sense also for empty $C$ and hence $\ind$ is a (non-local) stationary independence relation.
\end{theorem}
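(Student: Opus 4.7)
My plan is to observe that $A \ind_C B$ holds precisely when the distances on $ABC$ agree with those given by the shortest path completion (Definition~\ref{defn:shortestpath}) of the free amalgam of $AC$ and $BC$ over $C$; in other words, $ABC$ is the strong amalgam of $AC$ and $BC$ over $C$ provided by part~\ref{thm:shortestpath:amalg} of Theorem~\ref{thm:shortestpath}. This reformulation makes the five axioms largely mechanical, the only subtle point being that all required infima exist because $\mathcal{F}$ contains all disobedient cycles (Definition~\ref{defn:disobedient}); in particular $\inf\{d(a,c)\oplus d(b,c) : c\in C\}$ is well-defined once $C\neq\emptyset$.

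For \ref{invariance}, distances in $\mathbb{M}$ and the infimum formula are both automorphism-invariant, so the relation depends only on the type. \ref{symmetry} is immediate from the symmetry of $d$. For \ref{existence}, given $A,B,C$, I would take the free amalgam of the substructures of $\mathbb{M}$ induced on $AC$ and $BC$ over $C$ and apply Theorem~\ref{thm:shortestpath} to obtain its strong metric completion in $\MF$; using universality and homogeneity of $\mathbb{M}$, embed this completion into $\mathbb{M}$ fixing $BC$ pointwise; the image of $A$ is the desired $A'$, because by construction $d_\mathbb{M}(a',b)$ is the infimum of $d(a',c)\oplus d(b,c)$ over $c\in C$. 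For \ref{stationarity}, if $A$ and $A'$ realise the same type over $C$ and both are independent from $B$ over $C$, then the distances on $ABC$ and on $A'BC$ are completely determined by the common type of $AC$ (isomorphic to $A'C$) and by the shortest-path formula, so the identity on $BC$ together with the type-preserving bijection $A\to A'$ is an isomorphism $ABC\to A'BC$; homogeneity of $\mathbb{M}$ upgrades it to an automorphism, yielding $A\equiv_{BC} A'$.

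The one axiom that requires an actual calculation is \ref{monotonicity}. Assume $A\ind_C BD$. That $A\ind_C B$ follows immediately by restricting the defining condition to pairs $(a,b)\in A\times B$. To prove $A\ind_{BC} D$, fix $a\in A$ and $d\in D$; since $C\subseteq BC$ the set of sums over $BC$ is a superset of the one over $C$, so the infimum over $BC$ is $\mleq$ the infimum over $C$, which equals $d_\mathbb{M}(a,d)$. For the reverse $\mgeq$, for any $b\in B$ the triangle inequality gives $d(a,b)\oplus d(b,d)\mgeq d_\mathbb{M}(a,d)$, so every summand indexed by $BC$ is $\mgeq d_\mathbb{M}(a,d)$, hence the infimum is $\mgeq d_\mathbb{M}(a,d)$ as well; monotonicity of $\oplus$ and the existence of infima (guaranteed by $\mathcal{F}$ containing all disobedient cycles) are what make this legal. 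The main obstacle is just being careful that all infima invoked are defined; once that is in hand, the computation is one line.

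For the ``moreover'' clause, note that if $\Semig$ has a maximum element $M$, then declaring $\inf\emptyset := M$ is consistent with all axioms of a \pocs{} (since $M\mleq x\oplus M$ trivially and $M$ satisfies the triangle inequality with anything), and two vertices placed at distance $M$ cannot create non-$\Semig$-metric cycles or members of $\mathcal{F}$. Hence one can always realise $A'\ind_\emptyset B$ by embedding $A$ and $B$ disjointly with all cross-distances equal to $M$, and all five axioms extend verbatim to the case $C=\emptyset$, giving a genuine (non-local) SIR.
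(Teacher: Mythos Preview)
Your proposal is correct and is exactly the kind of verification the paper has in mind: its own proof consists of the single sentence ``It is easy to check that this relation indeed satisfies Definition~\ref{defn:sir}.'' You have supplied those checks, identifying the relation with the shortest path amalgam of Theorem~\ref{thm:shortestpath} and running through the five axioms; the only substantive computation, \ref{monotonicity}, is handled correctly (in fact the infimum over $BC$ equals $d_\mathbb{M}(a,d)$ by your two inequalities alone, without a separate appeal to disobedient cycles for existence).
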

\begin{proof}
It is easy to check that this relation indeed satisfies Definition~\ref{defn:sir}.
\end{proof}

Very recently, Evans, Hubi\v cka, Li and the author~\cite{Evanssimplicity} proved that for every non-trivial finite archimedean \pocs{} $\Semig$ the \Fraisse{} limits of all $\MF$ classes where the shortest path completion uses no non-trivial infima have simple automorphism groups. In particular, this implies that the automorphism groups of all finite diameter primitive metrically homogeneous graphs (see Section~\ref{sec:methom}) are simple. This should be taken as further evidence that the concept of semigroup-valued metric spaces is a useful one.

\medskip

Given a homogeneous $L$-edge-labelled graph with a (local) SIR, one can recover a commutative semigroup on $L$ from the SIR by looking at pairs of edges independent over a vertex. Since Theorem~\ref{thm:sir} states that every semigroup-valued metric space studied in this thesis has a SIR, a natural question is whether every homogeneous $L$-edge-labelled graph is a semigroup-valued metric space which satisfies the conditions of Theorem~\ref{thm:shortestpath}. However, the answer to this question is negative:
\begin{example}\label{ex:sharpurysohn}
Let $\mathbb B$ be the countable homogeneous structure with one equivalence relation with two equivalence classes (understood as a complete $\{N,E\}$-edge-labelled graph where $E$ means ``equivalent''). This structure has a local SIR, but is not a semigroup-valued metric space.

Let $\mathcal M^<$ be the class of all finite complete $\mathbb Q^{>0}$-valued metric spaces which moreover contain no geodesic triangles (that is, for every triangle with distances $a,b,c$ it holds that $a>b+c$). This is a \Fraisse{} class (and even has EPPA and is Ramsey with a linear order), but its \Fraisse{} limit has no (local) SIR.
\end{example}
\begin{remark}
On the other hand, the class of all finite complete $\mathbb Z^{>0}$-valued metric spaces which contain no geodesic triangles is in fact a class of semigroup-valued metric spaces where the operation is $a\oplus b = a+b-1$ and the order is the standard order.
\end{remark}

This gives rise to the following question:
\begin{question}
Under what circumstances does a strong amalgamation class of edge-labelled graphs whose \Fraisse{} limit has a local SIR admit an interpretation as a semigroup-valued metric space? Is it possible to explicitly define the order using the SIR? How to get the corresponding omissible family $\mathcal F$?
\end{question}
Conjecture~\ref{conj:universal} says that finite language and primitivity are sufficient.

\chapter{Conclusion and open problems}\label{ch:conclusion}
We found a common generalisation of results of Hubi\v cka and Ne\v set\v ril~\cite{Hubicka2016} on Ramsey expansions of $S$-metric spaces, results of Hubi\v cka, Ne\v set\v ril and the author~\cite{Hubicka2017sauerconnant} on Ramsey expansions of metric spaces with values from a linearly ordered monoid, results of Aranda, Bradley-Williams, Hubi{\v c}ka, Karamanlis, Kompatscher, Pawliuk and the author~\cite{Aranda2017, Aranda2017a, Aranda2017c} on Ramsey expansions and EPPA for metrically homogeneous graphs and Braunfeld's~\cite{Sam} results on Ramsey expansions of $\Lambda$-ultrametric spaces with strong amalgamation. These examples include also classes studied by Solecki~\cite{solecki2005}, Ne\v set\v ril~\cite{Nevsetvril2007}, Vershik~\cite{vershik2008}, Nguyen Van Th\'e~\cite{The2010}, Conant~\cite{Conant2015}, Ma\v sulovi\' c~\cite{Masulovic2017metric}, Soki\'c~\cite{Sokic2017}, Coulson~\cite{Coulson} and others. As a corollary, we also solved several open problems (e.g. EPPA for $S$-metric spaces, $\Lambda$-ultrametric spaces or Conant's generalized non-semi-archimedean metric spaces).

The unifying property of all the aforementioned classes of structures is that they admit some form of the shortest path completion. In this paper we tried to extract some properties which ensure that the shortest path completion works. We conclude with some more open questions and conjectures.

\medskip
While we do have some partial examples, it is not clear to which extent the conditions on the semigroup $\Semig$ and family $\mathcal F$, which we presented here, are necessary.
\begin{question}
Let $\Semig$ be a \pocs{} and $\mathcal F$ a family of $\Semig$-edge-labelled cycles. What are the necessary and sufficient conditions for $\mathcal M_\Semig \cap \Forb(\mathcal F)$ admitting a shortest path completion as in this paper? What are the conditions for it having a (precompact) Ramsey expansion? What do these expansions look like?
\end{question}

\paragraph{Bipartiteness.} There are two extremal variants of the 3-constrained metrically homogeneous graphs which our machinery does not cover, namely the bipartite ones and the antipodal ones. It is reasonable to ask whether one can get similar (or even more general) properties also for the semigroup-valued metric spaces, for example by considering an infinite semigroup and then identifying some distances after doing the shortest path completion.

The question on bipartiteness can be generalised. The author has a computer program which can find all amalgamation classes of complete $\{A,B,C,D,E\}$-edge-labelled graphs which are given by forbidding triangles (that is, a generalisation of~\cite[Appendix]{Cherlin1998}). If we only consider the strong amalgamation ones, it seems that the structure of definable equivalences is not too wild. In particular, it gives rise to the following conjecture and question.
\begin{conjecture}\label{con:stablestructure}
Let $L$ be a finite set and let $\mathcal C$ be a strong amalgamation class of $L$-edge-labelled graphs such that $\mathcal C = \Forb(\mathcal T)$ for $\mathcal T$ a family of $L$-edge-labelled triangles. (In other words, $\mathcal C$ is a triangle constrained strong amalgamation class with finitely many 2-types.) Then $\mathcal C$, when expanded by quotients of $0$-definable equivalence relations on vertices, has \emph{weak elimination of imaginaries} (its \Fraisse{} limit does).
\end{conjecture}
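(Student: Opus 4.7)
The plan is to follow the same template used implicitly throughout this thesis for the $L^\star_\Semig$-expansion: first identify the ``block-like'' $0$-definable equivalences on vertices, add their quotient sorts, and then show that everything else is coded by pair-wise data. First I would classify $0$-definable equivalences on vertices. Because $\mathcal C$ is triangle-constrained with finitely many $2$-types, every $0$-definable binary relation on the \Fraisse{} limit $\mathbb F$ is a union of edge-label relations, and a straightforward closure argument (using strong amalgamation to glue triangles) shows that every $0$-definable equivalence is of the form $x\sim_S y \iff x=y \text{ or } d(x,y)\in S$ for some $S\subseteq L$ with the property that if $d(x,y),d(y,z)\in S$ then $d(x,z)\in S$ whenever the triangle $xyz$ is realized in $\mathcal C$. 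There are only finitely many such $S$, and the analysis essentially recovers the block-lattice machinery of Chapter~\ref{ch:orderless} at the purely model-theoretic level (cf. Observation~\ref{obs:ballintersection} for how these equivalences meet).

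Next I would add the quotient sorts $\mathbb F/{\sim_S}$ together with the projection functions $\pi_S\colon \mathbb F\to \mathbb F/{\sim_S}$, obtaining an expansion $\mathbb F^+$. The classification above guarantees that any automorphism of $\mathbb F^+$ is just an automorphism of $\mathbb F$ acting naturally on quotients, so the automorphism group is unchanged. What changes is the family of finite tuples that we may use as canonical parameters: every ball equivalence class is now a single point in some quotient sort.

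To prove weak elimination of imaginaries I would argue by induction on the arity $n$ of the tuple defining the imaginary. Given a $0$-definable equivalence relation $R$ on $n$-tuples and a class $[\bar a]_R$ with $\bar a=(a_1,\dots,a_n)$, let $G=\mathrm{Aut}(\mathbb F^+)$ and let $G_{[\bar a]_R}$ be the setwise stabilizer of $[\bar a]_R$. I would produce a candidate real tuple $\bar b$ as the union of (i) the ``quotient-tuple'' $(\pi_S(a_i))_{S,i}$ and (ii) finitely many representative vertices selected canonically from each orbit of $G_{[\bar a]_R}$ on $\{a_1,\dots,a_n\}$. The quantifier-free type of $\bar a$ over $\bar b$ is then determined by edge labels together with ball-projections, and by strong amalgamation and the finiteness of $2$-types this type is realized by only finitely many $G$-translates of $\bar a$, giving $[\bar a]_R\in\mathrm{acl}(\bar b)$; conversely $\bar b\in\mathrm{dcl}([\bar a]_R)$ because the quotient-projections and the canonically selected representatives are $R$-invariant by construction.

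The hard part will be the canonical selection of representatives when the pointwise $2$-type data does not already distinguish coordinates — this is where Conjecture~\ref{conj:universal}'s archimedean block structure is really used. Concretely, if $\bar a$ has several coordinates lying in the same ball for every quotient sort, the equivalence $R$ could in principle treat them asymmetrically; one must show that any such asymmetry is either captured by the pair-wise edge labels or else is ``generic'', in which case the full symmetric group on those coordinates lies in $G_{[\bar a]_R}$ and no tie-breaking is needed. I expect this to reduce to a statement saying that every $0$-definable equivalence on $n$-tuples is a boolean combination of (a) the quantifier-free type in the expanded language and (b) equivalences pulled back from $0$-definable equivalences on strictly fewer than $n$ coordinates, which is exactly the form amenable to induction. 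Verifying this reduction — essentially ruling out ``genuinely $n$-ary'' imaginaries in a primitive binary strong amalgamation class with finite language — is the main obstacle, and will likely require a careful indiscernibility / back-and-forth argument using the strong amalgamation property at the level of tuples, in the spirit of the proof of Lemma~\ref{lem:freeingstars}.
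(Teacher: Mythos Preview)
The statement you are attempting to prove is a \emph{conjecture} in the paper, not a theorem: it appears in the concluding chapter as an open problem, and the paper offers no proof. So there is no paper proof to compare against; your proposal is an attack on an open question.

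As an outline, your approach is reasonable and is indeed the one the paper implicitly suggests (eliminate the binary ``block'' imaginaries by adding quotient sorts, then argue nothing higher-arity survives). But you yourself correctly identify the genuine gap: the reduction step claiming that every $0$-definable equivalence on $n$-tuples decomposes into quantifier-free type data plus equivalences pulled back from fewer coordinates. This is precisely the content of the conjecture, not a lemma you can expect to dispatch by a routine back-and-forth. In particular, strong amalgamation and finiteness of $2$-types do not by themselves rule out genuinely $n$-ary imaginaries in binary homogeneous structures --- there are known examples (outside the triangle-constrained setting) where such imaginaries exist --- so the ``indiscernibility / back-and-forth argument'' you gesture at would need to use the triangle-constrained hypothesis in an essential way that you have not indicated. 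Until that step is made precise, what you have is a plausible strategy, not a proof.
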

Informally, Conjecture~\ref{con:stablestructure} says that for finite symmetric binary relational languages, the ``only definable equivalences one needs to understand'' are $0$-definable equivalences on vertices. There are, of course, other definable equivalences such as the ones with finite equivalence classes or, say, definable equivalences on tuples which are products of equivalences on vertices, or also restrictions of $0$-definable equivalences to some $A$-types. These do not seem to be important for the applications which we have in mind.

It is easy to see that the zero-definable equivalences form a lattice (meets correspond to conjunctions, joins are their transitive closure (here we are using the fact that there are only finitely many 2-types). Braunfeld proved~\cite[Lemma~4.5]{braunfeld2016lattice} that if the equivalences satisfy the \emph{infinite index property} (that is, if $E_1$ and $E_2$ are definable equivalences such that $E_1\subseteq E_2$ then there is no bound finite on the number of different equivalence classes of $E_1$ within an equivalence class of $E_2$) then the lattice is distributive, and found an example of a non-distributive lattice of definable equivalences~\cite[Example~7]{SamPhD}. It is still worth asking if one can say anything more about the lattice in the setting of Conjecture~\ref{con:stablestructure}.

Conjecture~\ref{conj:universal} from Chapter~\ref{ch:preface} states that every strong amalgamation class in a finite language with a primitive \Fraisse{} limit fits into our framework. We have already seen examples that the finiteness of the language is necessary (such as the class of sharp metric spaces from Example~\ref{ex:sharpurysohn}). In the same example we also discuss the equivalence with two equivalence classes which shows that one cannot drop the primitivity assumption completely. To generalise Conjecture~\ref{conj:universal}, one could ask the same question with primitivity exchanged for the infinite index property. We do not have any counterexamples for this generalization.


 \bibliographystyle{alpha}

\renewcommand{\bibname}{Bibliography}


\bibliography{./bibliography.bib}


%
%

\appendix
\chapter{Vocabulary}\label{ap:vocabulary}

\begin{description}[style=nextline]
\item[$L$-edge-labelled graph] A graph such that each edge has a label from the set $L$. Can also be understood as the pair $(V,\ell)$, where $V$ is the vertex set and $\ell\colon{V\choose 2}\to L$ is a partial function. See Definition~\ref{defn:graphs}.

\item[Shortest path completion] Given an $\Semig$-edge-labelled graph $\str G$ for a \pocs{} $\Semig$, the shortest path completion sets each non-edge to be the infimum of the $\Semig$-lengths of all paths connecting the two vertices. A family of $\Semig$-edge-labelled cycles $\mathcal F$ is then introduced to ensure that this completion exists and that it has nice properties. See Definition~\ref{defn:shortestpath}.

\item[Block] A maximal archimedean subsemigroup of $\Semig$, or $\str 0$. Corresponds to definable equivalences on the $\Semig$-valued metric spaces. See Definition~\ref{defn:block}.

\item[Omissible family] A family of $\Semig$-edge-labelled cycles $\mathcal F$ which behaves nicely with respect to the shortest path completion (that is, the shortest path completion of a graph $\str G$ is in $\Forb(\mathcal F)$ if and only if $\str G$ is, furthermore $\mathcal F$ contains only non-geodesic $\Semig$-metric cycles). See Definition~\ref{defn:omissible}.

\item[Disobedient cycles] A family $\mathcal F$ contains all disobedient cycles if, in the graphs from $\Forb(\mathcal F)$, all infima for the shortest path completion are defined and furthermore distribute with $\oplus$. See Definition~\ref{defn:disobedient}.

\item[Confined family] A family $\mathcal F$ is confined if it is $S$-locally finite for every finite $S\subseteq \Semig$, which means that there are only finitely many $S$-edge-labelled cycles in $\mathcal F$. See Definition~\ref{defn:slocallyfinite}.

\item[Meet synchronization] A family $\mathcal F$ synchronizes meets if, in the situations one encounters in graphs from $\Forb(\mathcal F)$, the block in which the infimum of some set of distances lies is the meet of the blocks in which the distances lie. See Definition~\ref{defn:meetsync}.

\item[mus] An approximation of the (possibly nonexistent) maximum of a block relative to a finite set of distances $S$. Implies that if we have a non-$\Semig$-metric cycle with distances from $S$ then only a bounded number of distances from each block are important, the rest of them only represent the block. See Section~\ref{sec:nonimportant}.

\item[Original and ball vertices] If $\str A$ is an $L^\star_\Semig$ expansion of an $\Semig$-valued metric space, it contains two kinds of vertices: The original vertices which are in the distance relations and newly added ball vertices which represent the balls of irreducible diameters. See Definition~\ref{defn:mstar}.
\end{description}
\chapter{List of classes}\label{ap:classes}
\begin{description}[style=nextline]
\item[$\bm{\Forb(\mathcal F)}$] If $\mathcal F$ is a family of finite $L$-structures, then $\Forb(\mathcal F)$ is the class of all finite $L$-structures (or, if it is clear from the context, $L^+\supset L$-structures) $\str A$ such that there is no $\str F\in \mathcal F$ with a homomorphism $\str F\rightarrow \str A$. See the beginning of Chapter~\ref{ch:background}.

\item[$\bm{\mathcal M_\Semig}$] The class of all finite $\Semig$-valued metric spaces. See Definition~\ref{def:Mmetric}.

\item[$\bm{\mathcal M_\Semig\cap \Forb(\mathcal F)}$] Expectably, the class of all finite $\Semig$-valued metric spaces containing no homomorphic images of members of $\mathcal F$.

\item[$\bm{\overrightarrow{\mathcal M}_\Semig\cap \Forb(\mathcal F)}$] The class of all convexly ordered $\Semig$-valued metric spaces omitting homomorphic images from $\mathcal F$, see Definition~\ref{defn:Js}. They differ from $\mathcal M_\Semig$ by adding a partial orders $\leq^\Block$ for each non-maximal meet-irreducible block of $\Semig$, from these it is possible to define linear orders of balls of every diameter.

\item[$\bm{\classstarf}$] See Definition~\ref{defn:mstar}. This is a class of all images of members of $\mathcal M_\Semig\cap \Forb(\mathcal F)$ under the $L^\star$ functor. This means that we add ball vertices for each block in $I_\Semig$ and link the original and the ball vertices by several different unary functions, in particular to each original vertex we link the ball vertices representing the balls it lies in. This makes it possible to describe the non-metric cycles using structures of bounded size. Remember that $\mathcal M_\Semig\cap \Forb(\mathcal F)$ and $\classstarf$ are isomorphic as categories (Proposition~\ref{prop:nonorderisomorphic}).

\item[$\bm{\Hstarf}$] The ``hereditary closure'' of $\classstarf$ (see Definition~\ref{defn:mstar}). Namely, we allow for the structures in $\Hstarf$ to contain ball vertices to which no original vertex is linked. We need this class because Theorem~\ref{thm:hn} requires a hereditary class, but clearly each member of $\Hstarf$ can be completed to $\classstarf$.

\item[$\bm{\GstarS}$] A class of \textbf{incomplete} $L^\star_\Semig$-structures. All members of $\GstarS$ have a completion in $\classstarf$, on the other hand $\GstarS$ contains all free amalgams of structures from $\Hstarf$ (with distances from $S$) as well as structures which we need to complete for the Ramsey property and EPPA. See Definition~\ref{defn:gstars}.

\item[$\bm{\classstarleqf}$] This class is to $\overrightarrow{\mathcal M}_\Semig\cap \Forb(\mathcal F)$ what $\classstarf$ is to $\mathcal M_\Semig\cap \Forb(\mathcal F)$. We added the ball vertices and also a linear order which corresponds to the partial orders $\leq^\Block$ in $\overrightarrow{\mathcal M}_\Semig\cap \Forb(\mathcal F)$. Again, $\overrightarrow{\mathcal M}_\Semig\cap \Forb(\mathcal F)$ and $\classstarleqf$ are isomorphic as categories. See Definition~\ref{defn:mstarleq}.

\item[$\bm{\Hstarleqf}$] The ``hereditary closure'' of $\classstarleqf$, see Definition~\ref{defn:mstarleq}.

\end{description}

\openright
\end{document}